\theoremstyle{plain}
\newtheorem*{theorem*}{Theorem}
\newtheorem{theorem}{Theorem}[section]
\crefname{theorem}{Theorem}{Theorems}
\Crefname{theorem}{Theorem}{Theorems}
\newtheorem*{lemma*}{Lemma}
\newtheorem{lemma}[theorem]{Lemma}
\crefname{lemma}{Lemma}{Lemmas}
\Crefname{lemma}{Lemma}{Lemmas}
\newtheorem*{claim*}{Claim}
\newtheorem{claim}[theorem]{Claim}
\crefname{claim}{Claim}{Claims}
\Crefname{claim}{Claim}{Claims}
\crefname{proposition}{Proposition}{Propositions}
\Crefname{proposition}{Proposition}{Propositions}
\crefname{corollary}{Corollary}{Corollaries}
\Crefname{corollary}{Corollary}{Corollaries}
\newtheorem{conjecture}[theorem]{Conjecture}
\crefname{conjecture}{Conjecture}{Conjectures}
\Crefname{conjecture}{Conjecture}{Conjectures}
\newtheorem{question}[theorem]{Question}
\crefname{question}{Question}{Questions}
\Crefname{question}{Question}{Questions}
\newtheorem{observation}[theorem]{Observation}
\crefname{observation}{Observation}{Observations}
\Crefname{observation}{Observation}{Observations}
\crefname{example}{Example}{Examples}
\Crefname{example}{Example}{Examples}
\newtheorem{algorithm}[theorem]{Algorithm}
\crefname{algorithm}{Algorithm}{Algorithm}
\Crefname{algorithm}{Algorithm}{Algorithm}
\theoremstyle{definition}
\newtheorem{problem}[theorem]{Problem}
\crefname{problem}{Problem}{Problems}
\Crefname{problem}{Problem}{Problems}
\newtheorem{definition}[theorem]{Definition}
\crefname{definition}{Definition}{Definitions}
\Crefname{definition}{Definition}{Definitions}
\theoremstyle{remark}
\crefname{remark}{Remark}{Remarks}
\Crefname{remark}{Remark}{Remarks}
\xpatchcmd{\proof}{\itshape}{\normalfont\proofnamefont}{}{}
\newcommand{\proofnamefont}{}
\renewcommand{\proofnamefont}{\bfseries}
\newcommand{\remove}[1]{}
\newcommand{\ceil}[1]{
    \lceil #1 \rceil
}
\newcommand{\eps}{\varepsilon}
\def \cP {\mathcal{P}}
\def \cH {\mathcal{H}}
\def \cI {\mathcal{I}}
\def \cJ {\mathcal{J}}
\def \cC {\mathcal{C}}
\renewcommand{\Pr}{\mathbb{P}}
\newcommand{\Ex}{\mathbb{E}}
\DeclareMathOperator{\cand}{cand}
\DeclareMathOperator{\dist}{dist}
\DeclareMathOperator{\red}{red}
\DeclareMathOperator{\blue}{blue}
\DeclareMathOperator{\diam}{diam}
\DeclareMathOperator{\exc}{exc}
\def \Fexc {F_{\exc}}
\def \Fexcs {F_{\exc}^*}
\def \Fexcss {F_{\exc}^{**}}
\DeclareMathOperator{\la}{la}
\title{Decomposing cubic graphs into isomorphic linear forests}
\author{
    Gal Kronenberg\thanks{Mathematical Institute, University of Oxford, Andrew Wiles Building, Radcliffe Observatory Quarter, Woodstock Road, Oxford, United Kingdom.   E-mail: \texttt{kronenberg}@\texttt{maths.ox.ac.uk}. Research supported by the European Union’s Horizon
    2020 research and innovation programme under the Marie Sk\l odowska Curie grant agreement No. 101030925. }
   \and
	    Shoham Letzter\thanks{
		Department of Mathematics, 
		University College London, 
		Gower Street, London WC1E~6BT, UK. 
		Email: \texttt{s.letzter}@\texttt{ucl.ac.uk}. 
		Research supported by the Royal Society.
    }
    \and
    Alexey Pokrovskiy\thanks{
		Department of Mathematics, 
		University College London, 
		Gower Street, London WC1E~6BT, UK. 
		Email: \texttt{a.pokrovskiy}@\texttt{ucl.ac.uk}.     }
   \and
   Liana Yepremyan\thanks{
   Department of Mathematics, Emory University, 
   Atlanta, Georgia, 30322, US. Email: \texttt{lyeprem}@\texttt{emory.edu}.
   Research supported by NSF Award 2247013: Forbidden and Colored Subgraphs
   }
}
\begin{document}

\date{}
\maketitle

\begin{abstract}

\setlength{\parskip}{\medskipamount}
\setlength{\parindent}{0pt}
\noindent

A fundamental problem in graph decompositions seeks to partition the edge set of a given graph into few similar and simple subgraphs, under certain divisibility conditions.  In 1987 Wormald conjectured that the edges of every cubic graph on $4n$ vertices can be partitioned into two isomorphic linear forests. We prove this conjecture for large connected cubic graphs. Our proof uses a wide range of probabilistic tools in conjunction with intricate structural analysis, and introduces a variety of  local recolouring techniques.
\end{abstract}

\section{Introduction} \label{sec:intro}

Many problems in graph theory seek to decompose the edges of a given graph into simpler pieces. A fundamental example seeks for a decomposition of the edges into matchings, that is, a proper edge-colouring of a graph. 
According to a well-known result of Vizing~\cite{vizing1964estimate} from 1964, the \emph{chromatic index} of a graph $G$, denoted $\chi'(G)$ and defined to be the minimum number of matchings needed to decompose the edges of a simple graph $G$, is either $\Delta(G)$ or $\Delta(G)+1$.

In this paper we are interested in decompositions of graphs into few \textit{isomorphic} pieces. That is, we would like to decompose a graph into subgraphs that are not only simple, but also isomorphic, which clearly adds an additional layer of difficulty.  There is a lot of literature on isomorphic decompositions where the pieces of the graph are predetermined and the host graph is well structured. Famous examples include decompositions of complete graphs, random graphs, or quasirandom graphs into Hamilton cycles, perfect matchings, or specific trees (see~\cite{keevash2014existence, glock2023existence} and the references therein). 

Note that if we require the isomorphic pieces to be matchings then this is easy to do. Indeed, one can use Vizing's theorem to properly edge-colour any graph with maximum degree $\Delta$ using $\Delta+1$ colours. Then, noting that the union of two matchings can be decomposed into two matchings whose size differs by at most 1, one can sequentially rebalance pairs of colour classes whose sizes differ by at most 2, until no such pairs remain. Assuming that the number of edges in the graph is divisible by $\Delta + 1$, this results in a decomposition into $\Delta+1$ equally-sized matchings.
In other words, every $\Delta$-regular graph whose number of edges is divisible by $\Delta + 1$ can be decomposed into at most $\Delta+1$ isomorphic matchings.

One can ask if we can find decompositions into a smaller number of pieces that are still relatively simple. Natural candidates of such pieces are trees or forests. In this direction, for 3-regular graphs, in 1987  Wormald conjectured \cite{wormald1987problem} the following, asking for an \textit{isomorphic decomposition} into two linear forests, where a \emph{linear forest} is a forest whose components are paths.

\begin{conjecture}[Wormald]\label{WormaldConj}
    The edges of any cubic graph, whose number of vertices is divisible by $4$, can be $2$-edge-coloured such that the two colour classes are isomorphic linear forests.
\end{conjecture}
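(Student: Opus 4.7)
In any $2$-edge-coloring of a cubic graph $G$ on $4n$ vertices into two linear forests $F_1, F_2$, each vertex has degree $1$ in one of $F_1, F_2$ and degree $2$ in the other; counting degrees gives $|E(F_1)| = |E(F_2)| = 3n$, and each $F_i$ has exactly $n$ path components. Writing $a_k(F)$ for the number of $k$-edge paths of $F$, the isomorphism $F_1 \cong F_2$ is equivalent to $a_k(F_1) = a_k(F_2)$ for every $k \geq 1$. The plan is to first construct an initial decomposition whose path-length profiles are already close, then use local recolorings to correct the remaining discrepancy.

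\textbf{Stage 1 --- Near-balanced initial decomposition.} I would construct a randomised $2$-edge-coloring of $G$ into two linear forests, for instance by first assigning each vertex to one of two classes (which prescribes whether the vertex has degree $1$ in $F_1$ or $F_2$), then choosing the incident edge colours subject to the resulting local constraints. A nibble, Lov\'asz local lemma, or entropy-compression argument would then be used to ensure the outcome is genuinely a pair of linear forests (creating no cycle in either colour class). By the symmetry of the construction, $\Ex[a_k(F_1) - a_k(F_2)] = 0$ for every $k$, and concentration (Talagrand's inequality or bounded-difference martingales on the random choices) should give polynomially small typical fluctuations of each $a_k(F_1) - a_k(F_2)$. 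A secondary tail estimate should leave only $O(1)$ paths of length exceeding, say, $\log n$, localising the ``hard" part of the balancing to the bulk of moderate-length paths.

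\textbf{Stage 2 --- Local recoloring, balancing, and the main obstacle.} I would then assemble a toolkit of constant-size local operations, each making a prescribed, controllable change to the discrepancy vector $\bigl(a_k(F_1) - a_k(F_2)\bigr)_k$: colour-swaps along short alternating cycles (preserving degrees, provided no new cycle is created in either colour), splicings that merge two paths of one colour while splitting a path of the other, and endpoint slides that shift a single path endpoint by one vertex. The main obstacle is then two-fold. First, one must show that the reachable changes span every small feasible direction in the discrepancy space, so that an arbitrary Stage~1 discrepancy can in principle be cancelled; this amounts to a combinatorial dimension-counting argument on the ``move lattice" generated by the operations. Second, one must show that $G$ contains a linear number of disjoint applicable configurations of each type, so the required operations can be performed in parallel without interference; this relies on concentration of local configuration counts inherited from Stage~1, together with connectedness and largeness of $G$ to preclude pathological global obstructions. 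Finally, the $O(1)$ exceptionally long paths have to be shortened or absorbed by bespoke structural moves that use the connectivity of $G$ to route a long path through a suitable bulk region and redistribute its edges. I expect Stage~2 --- in particular the interaction of the local moves with rigid substructures of $G$ (short cycles, bridges, dense pockets) --- to be the core technical difficulty.
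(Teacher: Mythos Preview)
Your two-stage outline (random near-balanced decomposition, then local corrections) matches the paper's high-level strategy, but note first that the paper only establishes the conjecture for \emph{large connected} $G$, not in full; both hypotheses are genuinely used (connectivity to find many far-apart long geodesics, largeness for all the asymptotic estimates).

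The main structural divergence is the order of operations in Stage~2. You propose to locate correcting configurations \emph{after} the random colouring and argue that enough of each type exist. The paper does the opposite: it \emph{pre-colours} many ``$\ell$-gadgets'' (for every $3\le\ell\le O(\sqrt{\log n})$) around far-apart geodesics \emph{before} the random step, and engineers the random process so that each planted gadget survives intact with non-trivial probability. An $\ell$-gadget admits two colourings differing on two edges such that switching decreases the blue $P_\ell$-count by exactly one while leaving all longer counts and all red counts unchanged; this triangular effect lets one balance from the largest discrepant length downward and stop at $\ell=3$. Your ``move lattice spans every feasible direction'' would have to reproduce precisely this structure to terminate, and finding such configurations in an \emph{arbitrary} post-random colouring (rather than planting them) is not addressed. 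Making the planted gadgets compatible with the random process --- in particular, ensuring every cycle meeting the pre-coloured part has two consecutive uncoloured edges so that the cycle-breaking step cannot destroy the gadget --- occupies most of the paper and is exactly where short cycles in $G$ cause trouble.

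Your Stage~1 also hides the real work. A random vertex bipartition plus ``LLL to avoid cycles'' does not obviously yield linear forests: monochromatic cycles can be arbitrarily long, so a direct local-lemma argument does not apply. The paper instead starts from Thomassen's theorem (a $2$-edge-colouring whose monochromatic components are paths of length at most $5$), randomly recolours each such short path by one of its two alternating red--blue patterns (this gives maximum monochromatic degree $2$ and, crucially, bounded-influence random bits for McDiarmid), then removes monochromatic cycles by a careful two-round edge-flip (a single round creates new cycles; the ``petals'' step fixes this), and finally breaks paths longer than $O(\log n)$ by a separate random boundary-edge flip. Paths of length between $\sqrt{\log n}$ and $\log n$ --- of which there can be up to $n^{2/3}$ per length, not $O(1)$ --- are then balanced by a further LLL argument before the gadgets take over for shorter lengths.
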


We essentially settle this conjecture as stated below.

\begin{restatable}{theorem}{thmMain} \label{thm:main}
    Let $G$ be a connected cubic graph on $n$ vertices, where $n$ is large and divisible by $4$. Then there is a red-blue colouring of the edges of $G$ whose colour classes span isomorphic linear forests.
\end{restatable}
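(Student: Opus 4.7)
The plan is to produce the required isomorphic decomposition in three phases: recording the parameters forced on any valid colouring together with a structural decomposition of $G$, producing an approximately balanced colouring probabilistically, and correcting the residual errors by local recolourings. At each vertex of a valid colouring the red/blue degrees must lie in $\{(1,2),(2,1)\}$ (otherwise a colour class contains a vertex of degree $3$), so $V(G)$ partitions into a red-heavy set $R$ and a blue-heavy set $B$. Double counting forces $|R|=|B|=n/2$, each colour class to have exactly $3n/4$ edges, and each to contain exactly $n/4$ paths; the divisibility hypothesis $4\mid n$ is exactly what makes these counts integers. For the two linear forests to be isomorphic we must additionally equalise their path-length multisets.

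As a structural anchor I would use a decomposition $E(G)=F\cup M$ with $F$ a $2$-factor and $M$ a perfect matching; this exists by Petersen's theorem when $G$ is bridgeless, and a short local reduction handles the few bridges a connected cubic graph can contain. I would then colour $F$ cycle-by-cycle using a random near-alternating pattern and independently colour each edge of $M$ uniformly at random, with a small local adjustment to prevent any vertex from receiving three edges of the same colour. This automatically yields a $(1,2)$ or $(2,1)$ degree pattern at every vertex, and standard concentration (Azuma--McDiarmid, Chernoff) gives with high probability that $|R|-|B|=O(\sqrt{n\log n})$, that only $O(\log n)$ monochromatic cycles survive (all living on short $F$-cycles), and that the red and blue path-length profiles agree up to an $O(\sqrt{n\log n})$ error in every length.

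The hardest and longest phase is the final clean-up: we must restore exact balance $|R|=|B|=n/2$, eliminate the remaining monochromatic cycles, and make the two path-length multisets coincide exactly. Each individual correction is carried out by a local swap along a short alternating red/blue trail; such a swap preserves the $(1,2)$ or $(2,1)$ degree pattern at every internal vertex and changes exactly one global parameter, such as splitting one red path into two of prescribed lengths, merging two blue paths, opening a monochromatic cycle, or flipping a single vertex between $R$ and $B$. The main obstacle, and where I expect the ``intricate structural analysis'' mentioned in the abstract to enter, is arranging that all corrections required after the probabilistic phase can be realised simultaneously by \emph{disjoint} gadgets: this means reserving in advance an absorber consisting of many short alternating structures, proving non-interference between swaps, and, crucially, handling the short cycles of $F$ (where the alternation argument fails and monochromatic components concentrate) by an explicit case analysis that uses the connectivity of $G$ to route local recolourings through matching edges between different components of $F$.
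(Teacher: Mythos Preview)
Your high-level architecture---random approximate colouring followed by local corrections via pre-reserved absorbers---matches the paper's, but several load-bearing steps in your outline either fail outright or hide almost all of the real work.

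First, the $2$-factor route is problematic. Petersen's theorem needs bridgelessness, and a connected cubic graph can have $\Theta(n)$ bridges (e.g.\ a path of copies of $K_4$ minus an edge joined by bridges), so there is no ``short local reduction''. More seriously, even when a $2$-factor exists, its cycles can be odd, and an odd cycle admits no alternating $2$-colouring; your ``random near-alternating pattern'' will necessarily produce a vertex with two same-coloured $F$-edges, and once the matching edge is added you may get monochromatic degree~$3$. The paper sidesteps both issues by starting instead from Thomassen's theorem, which gives a $2$-colouring whose monochromatic components are already paths of length at most~$5$; the first random step then re-colours each such path by one of its two alternating red/blue colourings. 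This guarantees from the outset that no vertex is monochromatic, and the subsequent cycle-breaking and path-shortening steps (the paper's $\chi_2,\chi_3,\chi_4$) are designed so that this invariant is never violated and so that the random choices remain independent enough for McDiarmid's inequality.

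Second, your clean-up phase is where the paper spends the overwhelming majority of its effort, and your description does not engage with the actual difficulty. You cannot in general ``change exactly one global parameter'' by a swap along a short alternating trail while keeping everything else fixed; the paper's gadgets are carefully engineered subgraphs (built around long geodesics, with two specific colourings) whose effect on the path-length profile is provably localised: activating a blue $\ell$-gadget decreases $b(P_\ell)$ by exactly one, leaves $b(P_t)$ for $t>\ell$ and all $r(P_t)$ unchanged, and only perturbs shorter blue counts. Crucially, these gadgets must be \emph{planted in advance}, before the random colouring, and the random process must be shown to leave enough of them intact---this is why the paper's approximate lemma is stated for graphs with an ``extendable'' pre-colouring and includes a survival clause. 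Finding gadgets with all the required side-conditions (no monochromatic vertices created, every potentially monochromatic cycle has two consecutive uncoloured edges, etc.) in the presence of short cycles occupies Sections~6--8 and is genuinely delicate; ``an explicit case analysis'' does not begin to cover it. Finally, note that the paper handles intermediate path lengths (between $\Theta(\sqrt{\log n})$ and $\Theta(\log n)$) by a separate Lov\'asz Local Lemma argument rather than by gadgets, since gadgets of such lengths cannot be guaranteed.
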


Note that the host graphs in our setting will be arbitrary regular graphs, so not particularly well-structured, and the isomorphic pieces are not predetermined, thus we will require different approaches as the methods commonly used in the quasi-random settings will not be applicable. 
We further note that our proof can be modified to relax the requirement of $G$ being connected to $G$ having at least one large connected component, meaning a component of size at least a certain (large) universal constant. This strengthening is described in the concluding section, \Cref{sec:conc}.   In addition, our proof can be modified to show that if the number of vertices is not divisible by 4 then we can guarantee
that the two linear forests are isomorphic up to removing or adding an edge. Specifically, we can guarantee that the
only difference in component structure is that the red graph has two extra edge components, and the
blue graph has one extra component which is a path of length 3. 

Prior to our work, Wormald's Conjecture was known to be true only for some very specific cubic graphs. It was proved for Jaeger graphs in work of Bermond, Fouquet, Habib and P\'eroche \cite{fouquet2007linear} and Wormald~\cite{wormald1987problem}, and for some further classes of cubic graphs by Fouquet, Thuillier, Vanherpe and Wojda \cite{fouquet2009isomorphic}. 
On a related note, in 1984, Bermond, Fouquet, Habib, and P\'eroche \cite{bermond1984linear} conjectured that not only can every cubic graph be decomposed into two linear forests as proven by the validity of the linear arboricity conjecture for cubic graphs, but it can be done in such a way that every path in each of the two linear forests has length at most $5$. In 1996 Jackson and Wormald \cite{jackson1996linear} proved this conjecture with the constant $18$ instead of $5$. Later this was improved by Aldred and Wormald \cite{aldred1998more} to 9, and the conjecture was finally resolved in 1999 by Thomassen  \cite{thomassen1999two}.

\begin{theorem}[Thomassen] \label{thm:thomassen}
    Any cubic graph can be $2$-edge-coloured such that every monochromatic component is a path of length at most five.
\end{theorem}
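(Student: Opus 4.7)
The plan is induction on $|V(G)|$. Small cubic graphs can be verified directly (one only needs a finite check), so the work is in the inductive step. Let $G$ be a minimum counterexample. The strategy is to identify a \emph{reducible configuration}: a bounded subgraph $H\subseteq G$ such that deleting the edges of $H$ (and suppressing any resulting degree-$2$ vertices) produces a strictly smaller cubic (multi)graph $G'$, so that $G'$ admits a valid $2$-colouring by induction, which can then be extended to a valid colouring of $G$ by colouring the few edges around $H$ and performing a Kempe-type swap along a short monochromatic path to absorb any violation created at the boundary. Natural candidate configurations are short cycles (triangles, $4$-cycles, $5$-cycles), two short cycles joined by a short path, and short paths whose internal vertices have a prescribed local structure. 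The second half of the argument is to show that every cubic graph must contain one of these configurations, which is typically done by a discharging argument on vertices and faces (or by a direct combinatorial count using the fact that large-girth cubic graphs contain particular substructures).

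A natural alternative worth trying first is to work with a $2$-factor. If $G$ is bridgeless, Petersen's theorem produces a perfect matching $M$, so $F=G\setminus M$ is a $2$-factor. One would then try to alternate the two colours along each cycle of $F$ (accepting a single ``defect'' edge on each odd cycle) and choose the colours of the matching edges so that no monochromatic component has length more than~$5$. Since each vertex is incident to exactly one matching edge and two cycle edges, the colour of each matching edge dictates whether that vertex closes off or extends a monochromatic path segment; the task reduces to choosing colours on $M$ so that all such segments have length at most~$5$. Bridges, when present, can be handled by splitting $G$ along a bridge, inducting on each side, and matching the two boundary colours.

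The main obstacle in either approach is the global coordination of local choices. In the $2$-factor approach, colour choices on matching edges propagate through the cycles of $F$, so ruling out long monochromatic paths requires either a clever parity/flow argument or a Lov\'asz-local-lemma-style probabilistic argument, neither of which is obvious for small cycles of $F$ that interact strongly through $M$. In the reducible-configuration approach, the tension is between having enough reducible configurations that every cubic graph must contain one (which the discharging argument demands), while keeping each configuration small enough that the inductive extension can be performed; the extension itself is delicate because the inductively obtained colouring of $G'$ may have several short monochromatic paths approaching the boundary of $H$ from different directions, and one must show that these can always be locally reshaped so that no component grows past length~$5$. Balancing these two constraints is, I expect, where Thomassen's proof invests most of its ingenuity.
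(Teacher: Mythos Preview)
The paper does not contain a proof of this theorem: it is quoted as a known result of Thomassen (reference \cite{thomassen1999two} in the paper) and used as a black box, chiefly in the construction of the initial colouring $\chi_0$ in \Cref{sec:approx}. So there is no ``paper's own proof'' to compare against. The only related argument the authors give is in \Cref{sec:conc}, where they sketch a much weaker statement (monochromatic paths of length at most some unspecified constant $c$, not $5$) via a $2$-colouring into linear forests plus a local-lemma/independent-transversal argument; they explicitly say this does \emph{not} recover Thomassen's bound of $5$.

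As for your proposal itself: it is not a proof but a survey of two plausible strategies, each with an openly acknowledged gap. In the reducible-configuration route you neither specify the list of configurations nor give the discharging argument nor the extension lemmas, and you correctly identify that the extension step is the hard part. In the $2$-factor route you reduce to colouring a perfect matching subject to constraints propagating along the $2$-factor cycles, and again correctly note that no argument is given for why such a colouring exists (the local-lemma idea you float is exactly the kind of thing that yields a large constant, as in the paper's concluding remarks, but not the sharp $5$). So there is a genuine gap: you have outlined where the difficulty lies without resolving it. If you want the actual constant $5$, you would need to consult Thomassen's original paper, which carries out a detailed case analysis well beyond what you have sketched.
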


Thomassen's beautiful result is an important building block for our proof of our main result.

It is worth mentioning that, although our proof of Conjecture~\ref{WormaldConj} is quite long and involved, we can prove the following approximate version of Wormald’s conjecture with a fairly short and neat proof (note that here we do not require divisibility or connectivity).

\begin{theorem} \label{thm:main-approx}
    Let $G$ be a cubic graph on $n$ vertices, where $n$ is large. Then $G$ can be red-blue coloured so that all monochromatic components are paths of length $O(\log n)$, and the numbers of red and blue components which are paths of length $t$ differ by at most $n^{2/3}$, for every $t \ge 1$.
\end{theorem}

This approximate version is proved in \Cref{sec:approx} (it is a special case of \Cref{lem:MainCol}), and the rest of the paper (\Cref{sec:Exact,sec:partial-colouring,sec:gadgets,sec:GeodesicNoCommonNghbs,sec:GeodesicWithCommon}) is dedicated to proving \Cref{thm:main} using \Cref{lem:MainCol}.

It is natural to ask what happens for higher degrees. To state a reasonable question in this direction, we feel an interlude to a related conjecture, namely the linear arboricity conjecture, is necessary, where we no longer demand the forests to be isomorphic. Perhaps the most famous work in this direction is the Nash-Williams theorem \cite{nash1964decomposition} from 1964 regarding the \emph{arboricity} of a graph $G$, namely the minimum number of forests needed in order to decompose the edges of $G$. As an interesting interpolation between decompositions into matchings and into forests, in 1970 Harary \cite{harary1970covering} suggested to study the minimum number of linear forests needed to decompose the edges of a given graph $G$. This parameter is called the \emph{linear arboricity} of $G$ and is denoted $\la(G)$. Clearly, $\la(G)\leq \chi'(G)$ for every graph $G$, as a matching is also a linear forest. A well-known conjecture by Akiyama, Exoo, and Harary from  1980 \cite{akiyama1980covering} states that by using linear forests instead of matchings, one can reduce the number of pieces needed by a factor of almost 2, namely, that $\la(G)\leq \ceil{\frac {\Delta(G)+1}{2}}$.  This conjecture is known in the literature as the \textit{linear arboricity conjecture}. It is easy to check that the upper bound is tight. Also, since every graph of maximum degree $\Delta$ can be embedded in a $\Delta$-regular graph, an equivalent statement of the conjecture, which is more commonly studied, is the following.

\begin{conjecture}[The linear arboricity conjecture; Akiyama, Exoo, and Harary \cite{akiyama1980covering}] Every $\Delta$-regular graph $G$ satisfies $\la(G)= \lceil\frac{\Delta+1}2\rceil$.
\end{conjecture}

The case of cubic graphs (that is, $\Delta=3$), where the conjecture predicts $\la(G)=2$, was verified already in the paper of Akiyama, Exoo, and Harary \cite{akiyama1980covering}, and later a shorter proof was presented by Akiyama and Chv\'atal \cite{akiyama1981short} in 1981. (It is worth mentioning that we use  Akiyama's and Chv\'atal's switching techniques as a basis and build more complicated switching blocks inspired by them.) In 1988 Alon~\cite{alon1988linear} showed that the linear arboricity conjecture holds asymptotically, meaning that if $G$ has maximum degree at most $\Delta$ then $\la(G)= \Delta/2+o(\Delta)$; his $o(\Delta)$ term was of order $O\left(\Delta \frac{\log{\log{\Delta}}}{\log{\Delta}}\right)$.  In the same paper Alon also showed that the conjecture holds for graphs with high girth, that is, when the girth of the graph is $\Omega(\Delta)$. Subsequently, using Alon's approach, the linear arboricity conjecture was proved to hold for random regular graphs by Reed and McDiarmid \cite{mcdiarmid1990linear} in 1990. Alon and Spencer~\cite{alon2016probabilistic} in 1992 improved Alon's asymptotic error term to $O(\Delta^{2/3}(\log{\Delta})^{1/3})$. After almost thirty years,  this error term was very recently improved; at first to $O(\Delta^{2/3-\varepsilon})$, for some absolute constant $\varepsilon>0$, by Ferber, Fox and Jain~\cite{ferber2020towards}, and finally to $O(\sqrt{\Delta} (\log{\Delta})^4)$ by Lang and Postle \cite{lang2020improved}. All these approximate results make use of classical probabilistic tools, specifically, R\"{o}dl's nibble and Lov\'asz's local lemma. Apart from approximate results and random regular graphs, the linear arboricity conjecture  was verified for $d\in \{3,4\}$ \cite{akiyama1981short,akiyama1980covering,akiyama1981covering}, $d\in \{5, 6\}$ \cite{enomoto1981linear,peroche1982partition,tomasta1982note}, $d =8$ \cite{enomoto1984linear}, $d=10$ \cite{guldan1986linear}, as well as for various other families of graphs such as complete graphs, complete bipartite graphs, trees, planar graphs, some $k$-degenerate graphs and binomial random graphs~\cite{akiyama1980covering,akiyama1981covering,enomoto1984linear,guldan1986linear,wu1999linear,wu2008linear,ChenHaoYu, glock2016optimal}.  However, in its full generality the conjecture remains  open.

 We are finally ready to pose the following generalization of Wormald's conjecture to higher degrees which also generalizes the linear arboricity conjecture.

\begin{conjecture}\label{conj:wormaldgeneralized}
    Let $d \ge 3$ be an integer. Every large connected $d$-regular graph, whose number of edges is divisible by $\ceil{(d+1)/2}$ can be decomposed into $\ceil{(d+1)/2}$ isomorphic linear forests.
\end{conjecture}

Theorem~\ref{thm:main} essentially settles this conjecture for the $\Delta=3$, yielding the first tight result for isomorphic (non-trivial) decompositions for general $\Delta$-regular graphs.  
We believe that the tools we developed in this paper could provide a new avenue for progress towards the linear arboricity conjecture as well.

\section{Notation}
 Given a graph $G$ and an edge-colouring $\chi$ of $G$, will denote by $b_{G,\chi}(P_t)$ and $r_{G,\chi}(P_t)$ the number of blue and red paths of length $t$ in $G$. For two paths $P,Q$ in a graph $G$, we denote by $d_G(P,Q)$  the length of the shortest path between them. When $G$ or $\chi$ are clear from the context, we will skip the corresponding subscript. In a graph $G$, a \emph{geodesic}  is defined to be the shortest path between some two vertices.  Given an edge-coloured graph $G$, we call a vertex \emph{monochromatic} if all edges incident to it have the same colour.  In this paper $\log n$ is the natural logarithm. In a graph $G$, we say a subgraph $H\subseteq G$ \emph{touches} an edge $e$ if $e$ has exactly one endpoint in $H$.

\section{Proof overview}

We now give a detailed sketch of our proof. 
The main idea is to first colour a small part of the graph in a very structured way, so that it can later be used to make small fixes to the full colouring, and then colour the rest of the graph in a random way, while guaranteeing that the  monochromatic components are (not too long) paths. Using the randomness, we show that the two colour classes are almost isomorphic. We then use the pre-coloured graph to fix the imbalance and make the colour classes isomorphic, thus completing the proof.   The starting point of the proof of \Cref{thm:main} is \Cref{thm:main-approx}, and as such, we now briefly sketch the proof of the latter, approximate theorem.
One natural way to split a cubic graph into almost isomorphic parts is to colour each edge either red or blue uniformly at random and independently. However, the two colour classes will have many vertices of degree $3$, and will thus be far from being a linear forest. 
To get a colouring which is both balanced with high probability, and whose colour classes are ``close'' to being linear forests, instead of using the random colouring described above, we use a semi-random colouring scheme whose colour classes have maximum degree at most 2. Before describing how we do so, we point out that this is not the end of the road; we still need to eliminate monochromatic cycles and long monochromatic paths. Doing so requires some clever tricks, which achieve these goals using local steps, without harming the nice properties we have achieved, and without introducing too many dependencies.

For the semi-random colouring, we start with a 2-colouring of $G$ where each monochromatic component is a path, and recolour each path randomly and independently, reminiscently of Kempe changes.
For technical reasons (related to the concentration inequality which we apply at the end), we need every monochromatic component in the initial colouring to be a \emph{short} path. 
This can be achieved through Theorem~\ref{thm:thomassen}.

We will, in fact, prove a more general statement than the one in \Cref{thm:main-approx} (see \Cref{lem:MainCol}) that is applicable also for cubic graph with a given partial colouring with nice properties. This will allow us to pre-colour small parts of the graph in advance and obtain an ``almost balanced'' colouring which preserves large parts of the pre-colouring.
The pre-coloured subgraph will contain many ``gadgets'', which are small subgraphs with two colourings with the property that, by replacing one colouring by the other, the difference between the number of red and blue components which are paths of a given length $\ell$ decreases by 1, and the numbers of longer monochromatic components does not change. Since \Cref{lem:MainCol} guarantees that many gadgets for each relevant $\ell$ survive, we can use them straightforwardly to correct the imbalance between red and blue component counts.\footnote{Actually, since we can only guarantee gadgets of length $o(\log n)$, and the components resulting from \Cref{lem:MainCol} can be longer, we need a separate argument to balance intermediate lengths.}

Finding gadgets that interact well with the rest of the graph is a pretty subtle process, as we need to consider not only the various different forms the structure of the gadget itself can take, but we also need to consider its neighbourhood, e.g.\ to make sure that its colouring can be extended to a colouring of the whole graph where monochromatic components are paths. This is particularly challenging when the graph contains many short cycles and to overcome this we need sophisticated ways of tracking a certain colouring process around a geodesic (see \Cref{sec:GeodesicWithCommon}).

 

Next we give a more detailed sketch of the proof. The rest of the paper will be structured as follows. In \Cref{sec:approx} we will state and prove \Cref{lem:MainCol} about the existence of an almost-balanced colouring. In section \Cref{sec:Exact} we will state \Cref{lem:partialColMainLemma} about the existence of a good partial colouring, and in \Cref{sec:finish} will show how to use \Cref{lem:MainCol} and \Cref{lem:partialColMainLemma} in order to prove \Cref{thm:main}. In \Cref{sec:gadgets,sec:GeodesicNoCommonNghbs,sec:GeodesicWithCommon,sec:partial-colouring} we will prove \Cref{lem:partialColMainLemma}.

\subsection{The approximate result} \label{subsec:approx}
While this is not the first step in the process, we now describe an approximate solution of Wormald's conjecture, and later explain how to obtain an appropriate partial pre-colouring. For the purpose of this explanation, our task is to red-blue colour a (large) given cubic graph such that the colour classes are ``almost" isomorphic, that is, the difference between the number of red and blue components isomorphic to a path of length $t$ is small, for all $t$. For this, we wish to colour the graph randomly, while maintaining certain properties such as the monochromatic components being paths.  

Our random colouring will consist of three steps.
For the first step, we use Thomassen's result (\Cref{thm:thomassen}) about the existence of a $2$-colouring where each monochromatic component is a path of length at most $5$\footnote{The exact constant here is not important. We could even make do with a polylogarithmic bound on the lengths, and, additionally, we can allow for even cycles, but not odd ones.}; we denote the two colours here by purple and green.
The first random step colours each purple or green component by one of the two possible alternating red-blue colourings, chosen uniformly at random and independently.
Notice that this random red-blue colouring of $G$ has no \emph{monochromatic} vertices meaning a vertex who is incident to three edges of the same colour. Moreover, the symmetry between the colours and the bound on the lengths of purple and green paths would allow us to show that the colours are, in some sense, close to being isomorphic. However, there is nothing preventing the appearance of cycles, and we could not rule out the existence of very long monochromatic paths. This is the more technical part for applying concentration inequalities and doing the final re-balancing.

This brings us to the second random step, which will be broken into two parts, and whose purpose is to eliminate monochromatic cycles. 
Here, we first do something very intuitive: we simply flip the colour of one edge $e_C$ of each monochromatic cycle $C$, choosing the edge $e_C$ uniformly at random and independently. Unsurprisingly, while this breaks all monochromatic cycles that existed before the first step, new monochromatic cycles can appear. Luckily, a small fix saves us and eliminates all monochromatic cycles. The fix essentially consists of re-swapping the colour of $e_C$ for some of the originally monochromatic cycles $C$, and swapping the colour of one of the two neighbouring edges of $e_C$ in $C$, while  again making choices randomly and independently.

The next and final random step is designed to break ``long'' paths. After this process monochromatic paths have length of order $O(\log{n})$. Here the idea is less intuitive. We let each monochromatic path $P$ choose one of the possibly four \emph{boundary edges}, that is the edges of the opposite colour that touch an end of $P$ uniformly at random and independently. Then, for each edge $e$ that was chosen by two paths, we flip the colour of $e$ with probability $1/2$, independently. This somewhat strange process has several benefits: first, with high probability, it swaps an edge of each monochromatic path of length at least $1000 \log n$; second, it creates no monochromatic cycles; and, third, it does not allow more than two monochromatic paths to join up (more precisely, monochromatic paths in the new colouring have at most one edge whose colour was swapped). 

Finally, we analyse the resulting random colouring, and show that its colour classes are almost isomorphic. The proof of this is conceptually simple: observing that the distribution of the red and blue graphs is identical, thanks to all steps being performed simultaneously for red and blue, all we need to do is to prove that the number of red components isomorphic to $P_t$ is concentrated, for every $t \ge 1$. We accomplish this goal via McDiarmid's inequality (\Cref{thm:mcdiarmid}), using the independence of the various random decisions, as well as the fact that each decision has a small impact on the resulting graph. 

\subsection{From approximate to exact result}

Going from the approximate result to the exact result we wish to balance the number of red and blue components isomorphic to $P_t$, for every $t$. For this, the main idea is to pre-colour a small part of the graph, thereby creating many \emph{gadgets} that can later be used for balancing the number of paths in each length. This is done before finding the approximate partition.

We define a blue \emph{$\ell$-gadget} in a cubic graph $G$ to be a subgraph $H \subseteq G$ together with two red-blue edge-colourings $\chi, \chi'$ such that for every red-blue colouring of $G$ that extends $(H, \chi)$ and whose monochromatic components are paths, the monochromatic component counts change as follows if we switch the colouring of $H$ from $\chi$ to $\chi'$: $b_{\chi',G}(P_{\ell})= b_{\chi,G}(P_{\ell})-1$; $b_{\chi,G}(P_t)=b_{\chi',G}(P_t)$ with $t > \ell$; $b_{\chi,G}(P_t)$ with $t < \ell$ changes only slightly in $\chi'$; and $r(P_t)$ with any $t$ does not change. Such a gadget (and its counterpart with roles of colours reversed) will be used to equalise $r(P_t)$ and $b(P_t)$ iteratively, and we balance the path counts from the longest to the shortest, ensuring that the process terminates. Due to a simple counting argument, we only need to run the process until $\ell=3$. 

The most difficult and lengthy part of this paper is to find gadgets. 
We will not describe here the concrete structure of the gadgets we shall find (this can be found in \Cref{sec:gadgets}), but let us just say that these gadgets consist of a long path along with some pending edges and paths.
As such, it makes sense to look for gadgets in the neighbourhood of a ``long'' geodesic.
This is indeed what we do in \Cref{sec:GeodesicNoCommonNghbs,sec:GeodesicWithCommon}. We start off with a geodesic $P$, whose length is sufficiently larger than $\ell$, and then colour it and its neighbourhood appropriately. The difficulty is that, in addition to obtaining the gadget structure, we also need to make sure that other desirable properties are maintained, an obvious one being the non-existence of monochromatic vertices. For this, we define a colouring algorithm that colours the geodesic and some ball around it to guarantee this property. The fact that we started from a geodesic (rather than any arbitrary path) will allow us to claim that our colouring algorithm ends after a small number of steps while maintaining the desirable properties. The main obstruction to obtaining these desired property are short cycles. Indeed, our proof when no two vertices in the geodesic $P$ have a common neighbour outside of $P$ is much simpler (we treat it separately in \Cref{sec:GeodesicNoCommonNghbs}), and this property holds automatically if the girth of $G$ is at least $5$. If the girth is required to be at least $7$, we get an even simpler proof; indeed, with this girth assumption only the first case in \Cref{sec:GeodesicNoCommonNghbs} can occur.
For the rest of this overview, let us assume that we know how to find a gadget (with some desirable properties) in a small ball around a geodesic of length $c \cdot \ell$, for some constant $c$.

There are three things to keep in mind now. First, we need to find many well-separated geodesics of length $c \cdot\ell$, for all $\ell=3$ up to $\ell=O(\sqrt{\log{n}})$. Second, our strategy for the approximate result should be adapted so as to allow the underlying graph being partially pre-coloured because of the gadgets. And, third, we need many of the gadgets to survive at the end of the random process that we run to prove the approximate result.

For the first part, it is easy enough to show in any $n$-vertex cubic graph there are $n^{1 + o(1)}$ gadgets of length $L$ that are far enough from each other, for any $L = o(\log n)$. This already presents a small hurdle: our strategy for the approximate result yields colourings whose monochromatic components are paths of length at most $1000 \log n$, a bound which is too large for our gadgets to be applicable for balancing. Luckily, we can balance paths of length at least $c \sqrt{\log n}$, for some constant $c$, using a separate argument, not involving gadgets. Suppose there are $x$ many more blue paths of length $t$ than red ones, we let $\mathcal{I}$ to be the collection of these $x$ blue $P_t$-paths and break each one of them into segments of length $\Theta(\sqrt{\log n})$; denote the collection of segments by $\cI$. Now, using the Lov\'asz Local Lemma, we show that there is a choice of an edge $e_I$ from each segment $I \in \cI$, such that if the colours of these edges are flipped, then we create no cycles and no new monochromatic paths of length at least $c' \sqrt{\log n}$, for some appropriate constant $c'$. 

Regarding the second point, luckily the strategy we described in \Cref{subsec:approx} is amenable to the underlying graph being partially pre-coloured, as long as the following hold:
\begin{itemize}\item vertices incident to two pre-coloured edges see edges of both colours,
\item the coloured components in the pre-colouring are quite small,
\item and a version of Thomassen's result holds for the graph of uncoloured edges. 
\end{itemize}

The first property is the main property that we address when building gadgets. The second one holds naturally since we only colour edges in small neighbourhoods of geodesics of length $O(\sqrt{\log{n}})$. The third requires an additional argument, which can be found in \Cref{sec:partial-colouring}. With these properties at hand, the only modification of the process described in \Cref{subsec:approx} is in the first random step. Now we have some short purple and green paths, but also various small red-blue coloured components. We colour the purple and green components as before, and for each coloured component, we either keep it as is, or swap the colours of all its edges with the decision being made uniformly at random and independently. All other steps are carried out exactly as before. Notice in particular that, importantly, for any graph $H$, the red and blue copies of $H$ will have the same distribution.

Finally, for the third point it suffices to make sure that each gadget survives the whole random process intact, with decent probability. This is easy to check for the first and third steps, making the second step the only real obstruction. Recall that in the second step, for each monochromatic cycle $C$, we choose an edge $e_C$ in it, uniformly at random, and at the end of the second step either it or one of its neighbouring edges in $C$ has its colour swapped (and all other edges remain as they were). Thus, to guarantee that a gadget $H$ has a decent survival chance, we need to make sure that every monochromatic cycle $C$ in some red-blue colouring of $G$ extending $H$ has two consecutive edges not in $H$. This property is quite annoying to deal with; having to maintain it influences the arguments in \Cref{sec:GeodesicNoCommonNghbs,sec:GeodesicWithCommon,sec:partial-colouring}.


\section{Proof of approximate version}\label{sec:approx}

In this section we prove a weaker version of \Cref{WormaldConj}; see \Cref{lem:MainCol} below. Roughly speaking, we show, using a probabilistic approach, that the edges of the graph can be 2-coloured such that each monochromatic component is a linear forest, and the two graphs are ``almost isomorphic''. 

For the proof we will use McDiarmid's Inequality to show concentration. 
\begin{theorem}[McDiarmid's Inequality \cite{mcdiarmid1989method}] \label{thm:mcdiarmid}
    Let $X = (X_1, \ldots, X_n)$ be a family of independent random variables, with $X_i$ taking values in a set $A_i$, for $i \in [n]$.
    Suppose that $c > 0$ and $f$ is a real-valued function defined on $\prod_{i \in [n]}A_i$ that satisfies $|f(x) - f(x')| \le c$ whenever $x$ and $x'$ differ in a single coordinate. Then, for any $m \ge 0$,
    \begin{equation*}
        \Pr\left(f(X) - \Ex[f(X)] \ge m \right) \le 2 \exp\left(\frac{-2m^2}{c^2n}\right).
    \end{equation*}
\end{theorem}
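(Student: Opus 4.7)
The plan is to prove McDiarmid's inequality via the classical Doob-martingale argument, combined with a conditional Hoeffding-type bound. First I would set up the Doob martingale $Z_0 := \Ex[f(X)]$ and $Z_i := \Ex[f(X) \mid X_1, \ldots, X_i]$ for $i = 1, \ldots, n$, so that $Z_n = f(X)$ and the deviation $f(X) - \Ex[f(X)]$ telescopes as $\sum_{i=1}^n D_i$, where $D_i := Z_i - Z_{i-1}$. Write $\mathcal{F}_{i-1} := \sigma(X_1, \ldots, X_{i-1})$ for the natural filtration.

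The key step is to show that, conditionally on $\mathcal{F}_{i-1}$, each $D_i$ lies in an interval of length at most $c$. Using independence of the $X_j$, I would express $Z_i$ as $g_i(X_1, \ldots, X_i)$, where $g_i(x_1, \ldots, x_i) := \Ex[f(x_1, \ldots, x_i, X_{i+1}, \ldots, X_n)]$; since $f$ has the bounded-differences property with constant $c$, so does $g_i$ in its last argument. Hence $A_i := \inf_x g_i(X_1, \ldots, X_{i-1}, x)$ and $B_i := \sup_x g_i(X_1, \ldots, X_{i-1}, x)$ are $\mathcal{F}_{i-1}$-measurable with $B_i - A_i \le c$, and $D_i = g_i(X_1, \ldots, X_i) - \Ex[g_i(X_1, \ldots, X_{i-1}, X_i) \mid \mathcal{F}_{i-1}]$ takes values in an interval of length at most $c$ almost surely given $\mathcal{F}_{i-1}$.

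The finish is standard. Applying Hoeffding's lemma conditionally gives $\Ex[e^{\lambda D_i} \mid \mathcal{F}_{i-1}] \le \exp(\lambda^2 c^2 / 8)$ for every $\lambda > 0$, and iterating via the tower property yields $\Ex[\exp(\lambda(f(X) - \Ex[f(X)]))] \le \exp(n \lambda^2 c^2 / 8)$. A Chernoff step produces $\Pr(f(X) - \Ex[f(X)] \ge m) \le \exp(-\lambda m + n \lambda^2 c^2 / 8)$, and optimising at $\lambda = 4m/(nc^2)$ gives $\exp(-2m^2/(nc^2))$; the factor of $2$ in the stated bound is then easily absorbed (it appears automatically if one phrases the conclusion as a two-sided tail by replacing $f$ with $-f$ and taking a union bound).

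The main obstacle is the bounded-differences verification for the $D_i$. The subtle point is that one must trap $D_i$ inside an interval of length $c$, rather than merely bound $|D_i| \le c$: this requires using independence of $X_i$ from $\mathcal{F}_{i-1}$ in an essential way, via the $A_i, B_i$ coupling above, and it is precisely this refinement that produces the constant $2$ in the exponent instead of the weaker $1/2$ that a naive $|D_i| \le c$ bound would yield through Azuma's original inequality.
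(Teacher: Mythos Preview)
Your proof sketch is correct and is the standard Doob-martingale/Hoeffding's-lemma argument for McDiarmid's inequality. However, the paper does not provide its own proof of this statement: it is quoted as a known tool with a citation to McDiarmid's original paper \cite{mcdiarmid1989method}, so there is nothing to compare against. Your approach is exactly the classical one and is fine.
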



In order to use the ``almost balanced'' colouring and turn into a decomposition into two isomorphic linear forests, we will pre-colour a small part of the graph that will be used later to balance the colouring. However, we need to make sure that the pre-coloured graph will combine nicely with the almost balanced colouring. For this purpose we define the following. 

\begin{definition}\label{def:GoodPartialCol}
  Let $G$ be a cubic graph of order $n$. A subgraph $G_0 \subseteq G$ whose edges are red-blue coloured is called \textit{extendable} if all of the following hold.  Let $G_0(R)$ and $G_0(B)$ be the red and blue subgraphs of $G_0$, respectively.    
  \begin{enumerate}[label = \rm(E\arabic*)]
    \item \label{itm:extend-degree}
        Every vertex $v$ with $d_{G_0}(v) \ge 2$ satisfies $d_{G_0(R)}(v), d_{G_0(B)}(v) \ge 1$.
    \item \label{itm:extend-compt-size}
        The size of each connected component in $G_0$ is $O(\sqrt{\log n})$ and every two components are of distance at least 10 from each other.
    \item \label{itm:extend-cycle-technical}
        Every cycle $C$ in $G$ which does not contain both a red and a blue edge from the same component of $G_0$ has two consecutive edges outside of $G_0$.
    \item \label{itm:extend-Hs}
        The graph $G\setminus G_0$ has an edge-decomposition into graphs $H_1, H_2, H_3$ such that $H_2$ and $H_3$ are vertex disjoint and have the following properties.
    \begin{itemize}
        \item 
            $H_1$ is a subdivision of a simple graph $F$ all of whose vertices have degrees 1 or 3, where each edge in $F$ is subdivided $O(\sqrt{\log n})$ times to create $H_1$.
        \item 
            $H_2$ is a disjoint union of even cycles of length $O(\sqrt{\log n})$, each cycle touches at most two edges in $H_1$.
        \item 
            $H_3$ is vertex-disjoint collection of subdivisions of  multi-edges with multiplicity 3, where each edge is subdivided at most $O(\sqrt{\log n})$ times. 
    \end{itemize}
    \end{enumerate}
  \end{definition}

We note that  $G_0 = \emptyset$  is always extendable.
The purpose of this section is to prove the following lemma, showing that any extendable partial pre-colouring $G_0$ in a cubic graph $G$ can be extended to a red-blue colouring of $G$ where the two colour classes are almost isomorphic linear forests.

\begin{lemma}\label{lem:MainCol}
Given a cubic graph $G$ of order $n$, where $n$ is sufficiently large, and a red-blue coloured subgraph $G_0$ which is extendable according to \Cref{def:GoodPartialCol}, there exists a red-blue colouring of $G$ such that.
\begin{enumerate}[label = \rm(Q\arabic*)]
    \item \label{itm:approx-only-short-paths}
        Every monochromatic component is a path of length $O(\log n)$.
    \item \label{itm:approx-no-spiders}
        There is no red path of length at most $(\log n)^{2/3}$ that intersects at least $100 \sqrt{\log n}$ blue paths of length at least $100 \sqrt{\log n}$, and similarly with the roles of red and blue reversed.
    \item \label{itm:approx-concentration}
        $|r(P_t) - b(P_t)| \le n^{2/3}$ for every $t \ge 1$.
    \item \label{itm:approx-gadget-survive}
        For every red-blue coloured subcubic graph $H$, if $G_0$ has at least $n^{0.99}$ components that are isomorphic to $H$, then at least $n^{0.9}$ of them appear in $G$ with colours unchanged. 
\end{enumerate}
\end{lemma}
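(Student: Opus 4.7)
The plan is to implement the three-step random colouring sketched in \Cref{subsec:approx}, adapted to accommodate the partial pre-colouring $G_0$, and then verify \ref{itm:approx-only-short-paths}--\ref{itm:approx-gadget-survive} by concentration and union-bound arguments. First I would produce an auxiliary purple/green colouring of $G \setminus G_0$ in which every monochromatic component is a short path, say of length $O(\sqrt{\log n})$. The decomposition of $G\setminus G_0$ into $H_1 \cup H_2 \cup H_3$ furnished by \ref{itm:extend-Hs} is tailored for exactly this: each $H_2$-component is an even cycle, which admits an alternating colouring; each $H_3$-component is a subdivided triple edge with an obvious $2$-colouring; and $H_1$ is (essentially) a subdivision of a cubic multigraph $F$, so Thomassen's theorem applied to $F$ and lifted to $H_1$ does the job. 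Combining these colourings so that the interface with $G_0$ still yields short monochromatic components is the role of the machinery in \Cref{sec:partial-colouring}, which I take as a black box here.

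Next I would run the three random steps. In Step~1, for each purple/green path I independently pick one of the two alternating red-blue recolourings, and independently for each component $K$ of $G_0$ I either keep $K$ as is or swap all its colours. Condition \ref{itm:extend-degree} guarantees no monochromatic vertex is created (vertices meeting two pre-coloured edges already see both colours, and elsewhere the colouring is alternating along the purple/green paths). Step~2 kills monochromatic cycles: for each such cycle $C$ I sample a uniform edge $e_C$ and, with appropriate probabilities, either flip $e_C$ or flip one of its neighbours in $C$, as in \Cref{subsec:approx}, in such a way that all original cycles die and no new ones are born. Step~3 breaks long monochromatic paths: every monochromatic path of length at least $1000 \log n$ chooses a uniform boundary edge, and edges selected by exactly two paths are independently flipped with probability $1/2$. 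Throughout, the distributions of the red and blue graphs are by design identical, which will be crucial for \ref{itm:approx-concentration}.

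To verify the four conclusions: \ref{itm:approx-only-short-paths} follows from a union bound showing that no monochromatic path of length $\Theta(\log n)$ escapes Step~3 unflipped, combined with the fact that Step~3 introduces at most one flipped edge per resulting component. \ref{itm:approx-no-spiders} is a union bound over short red paths and tuples of long blue paths, using that each long monochromatic path is determined by $\Omega(\sqrt{\log n})$ independent Step~1 decisions, so the probability that $100\sqrt{\log n}$ prescribed long blue paths all meet a fixed short red path is doubly exponentially small. \ref{itm:approx-concentration} is the main application of McDiarmid: by the red--blue symmetry of every step we have $\Ex[r(P_t) - b(P_t)] = 0$; each elementary random choice affects only $O(\log n)$ monochromatic components (using \ref{itm:approx-only-short-paths}), and there are $O(n)$ choices, so \Cref{thm:mcdiarmid} gives concentration at scale $O(\sqrt{n} \log n) \ll n^{2/3}$, and a union bound over $t = O(\log n)$ suffices. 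Finally, \ref{itm:approx-gadget-survive} uses \ref{itm:extend-compt-size} and \ref{itm:extend-cycle-technical}: a pre-coloured component $K$ is preserved provided (a) Step~1 did not swap it, probability $1/2$; (b) no monochromatic cycle in Step~2 forces a flip inside $K$, which is precisely what \ref{itm:extend-cycle-technical} rules out once cycles not respecting a $G_0$-component are forced to have two consecutive edges outside $G_0$; and (c) no Step~3 boundary flip lands in $K$, a local event of small probability. Because \ref{itm:extend-compt-size} forces the $n^{0.99}$ copies of $H$ to be pairwise far apart, the survival events are essentially independent and a Chernoff bound yields at least $n^{0.9}$ surviving copies.

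The hard part will be \ref{itm:approx-concentration}: one must bound the Lipschitz constant of $r(P_t)$ as a function of the elementary random choices. The bound $O(\log n)$ per flip is available only \emph{after} \ref{itm:approx-only-short-paths} is established, so the argument must either bootstrap (first prove component lengths are $O(\log n)$ with high probability, then condition on this event when running McDiarmid) or redefine the random variables as bounded functionals. A secondary obstacle is the interaction between Step~2 and \ref{itm:approx-gadget-survive}: a single cycle intersecting $K$ could in principle wreck $K$, which is exactly why condition \ref{itm:extend-cycle-technical} is imposed and why keeping it through all three steps requires some care.
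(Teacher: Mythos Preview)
Your overall architecture matches the paper's, but there are two genuine problems.

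\textbf{Step 3 is mis-specified.} You have only the \emph{long} monochromatic paths select a boundary edge. This cannot break long paths: the flipping rule requires an edge to be chosen by \emph{two} paths, so a blue edge $e=xy$ inside a long blue path $P$ only gets flipped if the two \emph{red} paths $Q_1,Q_2$ containing $x$ and $y$ both select $e$ --- and there is no reason $Q_1,Q_2$ are long. In the paper, \emph{every} maximal monochromatic path (of any length) picks a boundary edge; then for each internal edge $e$ of a long blue path, the two short red paths at its endpoints select $e$ with probability $\ge 1/16$, and $e$ flips with probability $\ge 1/32$. Having all paths participate is also what makes the ``at most one flipped edge per resulting component'' claim true, since the only flippable edges touching a given path $P$ are the at-most-four boundary edges, and only $e_P$ can actually flip.

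\textbf{The McDiarmid argument does not need, and cannot use, \ref{itm:approx-only-short-paths}.} You correctly flag the circularity of bounding the Lipschitz constant via path lengths, but your proposed fixes (condition on \ref{itm:approx-only-short-paths}, or truncate) are unnecessary and awkward. The paper's key observation is that changing a single elementary random variable alters the \emph{colouring} $\chi_4$ on only $O(\sqrt{\log n})$ edges, deterministically. The $\sqrt{\log n}$ comes from \ref{itm:extend-compt-size} and the purple/green component bound: a Step-1 choice recolours one $O(\sqrt{\log n})$-size component, and each subsequent step only multiplies the number of affected edges by a constant (a short case analysis tracing through cycles/petals and boundary-edge choices). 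Since $|r_{\chi_4}(P_t)-r_{\chi_4'}(P_t)|$ is at most twice the number of edges on which $\chi_4$ and $\chi_4'$ differ, the Lipschitz constant is $O(\sqrt{\log n})$ with no appeal to path lengths, and McDiarmid gives fluctuations $O(\sqrt{n\log n})\ll n^{2/3}$.

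A smaller point: your sketch of \ref{itm:approx-gadget-survive} underestimates the work. The survival events for distinct components $K$ are \emph{not} automatically independent, because a monochromatic cycle in $\chi_1$ can pass through several $K$'s, coupling their Step-2 fates. The paper manufactures independence by splitting each such cycle into segments near each $K$ (using the distance-$10$ separation from \ref{itm:extend-compt-size}), letting each segment independently pick a candidate for $e_C$, and then realising the uniform choice of $e_C$ as a two-stage procedure. Condition \ref{itm:extend-cycle-technical} guarantees each segment contains two consecutive edges outside $K$, so the per-segment survival probability is $\Omega(1/\sqrt{\log n})$; multiplying over the $O(\sqrt{\log n})$ segments gives $\Pr[A_K]\ge n^{-0.01}$, and now the $A_K$ are genuinely independent.
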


Note that $G_0$ can be taken to be the empty graph, and then \Cref{lem:MainCol} yields a red-blue colouring of $G$ for which the two colours span \textit{almost} isomorphic linear forests . Indeed, this follows from Items \ref{itm:approx-only-short-paths} and \ref{itm:approx-concentration}; Items \ref{itm:approx-no-spiders} and \ref{itm:approx-gadget-survive} are only needed for proving the exact result.  In particular, this gives a short probabilistic proof for an approximate version of \Cref{WormaldConj}, as written in \Cref{thm:main-approx}. As mentioned above, the more involved part will be to prove the conjecture in its full power, which will be the content of the next sections of this paper.

We now turn to the proof of \Cref{lem:MainCol}. 

\begin{proof}
    We will define four random red-blue colourings $\chi_1, \ldots, \chi_4$ of $G$, with the purpose of showing that $\chi_4$ satisfies \ref{itm:approx-only-short-paths} to \ref{itm:approx-gadget-survive}, with positive probability. We will start our colouring process with a deterministic colouring $\chi_0$.
    
    \paragraph{Defining $\chi_0$.}
        Write $G_1 := G \setminus G_0$.
        Let $H_1, H_2, H_3$ be an edge-decomposition of $G_1$ satisfying \ref{itm:extend-Hs}.
        We first define a purple-green colouring of $G_1$.
        Let $F$ be the graph with degrees $1$ and $3$ that $H_1$ is a subdivision of. 
        By \Cref{thm:thomassen}, there is a purple-green colouring of $F$ whose monochromatic components are paths of length at most $5$. This defines a purple-green colouring of $H_1$ in a natural way: if an edge $e$ in $F$ is subdivided into a path $P_e$ in $H_1$ then colour the edges of $P_e$ by the colour of $e$. 
        Now colour $H_2$ as follows. Fix a component $C$ in $H_2$; so $C$ is an even cycle with at most two edges of $G_1 \setminus C$ touching it. If $C$ has no edges of $G_1 \setminus C$ touching it, colour $C$ purple; if $C$ has exactly one such edge $e$ touching it, colour $C$ by the opposite colour to $e$; finally if there are two such edges $e$ and $f$, if they both have the same colour then colour $C$ by the opposite colour, and if they have opposite colours, colour one segment of $C$ between $e$ and $f$ purple and the other green.
        Finally, we colour $H_3$. Let $H'$ be a component in $H_3$; then $H_3$ consists of three paths $P_1, P_2, P_3$ whose interiors are mutually vertex-disjoint and which have the same ends points. Without loss of generality, the lengths of $P_1$ and $P_2$ have the same parity. Colour $P_1 \cup P_2$ purple, and colour $P_3$ green.
     
        Denote the resulting colouring by $\chi_0$.
        
        \begin{claim}\label{claim:proper} 
            $\chi_0$ is a purple-green colouring of $G_1$, whose monochromatic components are even cycles or paths, both of length $O(\sqrt{\log n})$.   Additionally, for every vertex with degree at least  $2$ in $G_1$, there are two edges incident to it which have the same colour in $G_1$.
        \end{claim}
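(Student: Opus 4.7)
The plan is to verify both parts of \Cref{claim:proper} by analysing $\chi_0$ on each of the three subgraphs $H_1, H_2, H_3$ in turn, and then handling the shared vertices. For $H_1$, \Cref{thm:thomassen} gives a purple-green colouring of $F$ whose monochromatic components are paths of length at most $5$; since each $F$-edge is subdivided into a same-coloured path in $H_1$ of length $O(\sqrt{\log n})$, the monochromatic components of $\chi_0$ restricted to $H_1$ are paths of length $O(\sqrt{\log n})$. Each cycle $C$ of $H_2$ is even of length $O(\sqrt{\log n})$; the case analysis on the $0$, $1$ or $2$ touching $H_1$-edges produces either a monochromatic even cycle or two monochromatic arcs. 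In each theta-component of $H_3$, the same-parity choice makes $P_1 \cup P_2$ an even cycle and $P_3$ a monochromatic path, each of length $O(\sqrt{\log n})$.

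For the vertex condition, $d_{G_1}(v) = 3$ is handled immediately by the pigeonhole principle. For $d_{G_1}(v) = 2$, note that any vertex lying on an $H_2$-cycle or in an $H_3$-component already has degree at least $2$ within that subgraph, and $H_2, H_3$ are vertex-disjoint; hence both $G_1$-edges at $v$ must lie in the same $H_i$. A direct case check then shows that two incident edges at a common vertex inside a fixed $H_i$ always share a colour: both subdivide a common $F$-edge in $H_1$, both lie on a non-touching segment of a cycle in $H_2$, or both lie on a single $P_j$ in $H_3$.

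Next, I would verify that combining the colourings at shared vertices preserves the claimed structure of monochromatic components. Since $H_2$ and $H_3$ are vertex-disjoint, cross-subgraph merging occurs only at (a) $H_1$-$H_2$ touching-edge endpoints and (b) $H_1$-$H_3$ shared subdivision vertices of $H_3$. In case (a), the $H_2$-colouring rules were engineered precisely so that the $H_1$-edge at the touching vertex has a colour compatible with the incident $C$-edges; the merged monochromatic component is either a still-monochromatic even cycle or a path obtained by concatenating an $H_1$-path with an arc of $C$, of combined length $O(\sqrt{\log n})$.

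The main obstacle is case (b), the $H_1$-$H_3$ interaction: at a subdivision vertex $v$ of $H_3$ having an $H_1$-edge, a degree-$3$ monochromatic vertex would arise if the $H_1$-edge at $v$ matched the colour of the $P_j$ containing $v$. Since the Thomassen colouring of $F$ is fixed independently of $H_3$, I would handle this either by invoking structural restrictions on the decomposition implicit in \ref{itm:extend-Hs} (which may forbid or constrain such shared vertices) or by exploiting the flexibility left at leaves of $F$ (where such $v$ must lie, since their $H_1$-degree is $1$) together with the freedom of which theta-path is labelled $P_3$, to arrange compatible colours at every shared vertex.
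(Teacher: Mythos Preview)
Your plan is considerably more careful than the paper's own two-line proof, which really only treats the vertex condition and leaves the component structure implicit. Your argument for the vertex condition is correct; the key step --- that a degree-$2$ vertex of $G_1$ must have both of its edges in a single $H_i$, because every vertex lying in $H_2$ or $H_3$ already has degree at least $2$ there --- is exactly right and in fact cleaner than what the paper writes.

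You have also correctly isolated the one genuinely delicate point, the possible $H_1$--$H_3$ overlap at a subdivision vertex of a theta-path that is simultaneously a leaf of $F$. Of your two proposed resolutions, the first is the correct one and it works cleanly: although \ref{itm:extend-Hs} in \Cref{def:GoodPartialCol} does not say so explicitly, the only way extendability is ever established in the paper is through \Cref{Lemma_subcubic_subdivision}, which explicitly constructs $H_3$ with \emph{no vertices in $H_1$}. Thus the problematic case simply does not occur. With $H_3$ vertex-disjoint from $H_1\cup H_2$, each $H_3$-component is isolated and contributes exactly the even cycle $P_1\cup P_2$ and the path $P_3$. In $H_1\cup H_2$, a monochromatic component is either a full $H_2$-cycle (even by hypothesis), or a single maximal monochromatic $H_1$-path of length $O(\sqrt{\log n})$ possibly extended at each leaf-endpoint by one $H_2$-arc; by the colouring rule for $H_2$, such an arc always terminates at the other touching vertex of its cycle, whose $H_1$-edge has the opposite colour, so no further extension is possible and the component is a path of length $O(\sqrt{\log n})$.

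Your alternative fix --- tweaking leaf colours in the Thomassen colouring of $F$, or relabelling which theta-path is $P_3$ --- should be dropped: nothing in \ref{itm:extend-Hs} bounds the number or pattern of $H_1$-leaves on a single theta-path, so these local adjustments need not be simultaneously satisfiable.
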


        \begin{proof}
        Note that the vertices who have degree two either are part of even cycles of $H_2$ or internal vertices of the subdivision paths of $H_3$, so they are adjacent to two edges of the same colour. Similarly, by considering the vertices in $H_1$, $H_2$, $H_3$ separately, it is easy to verify that vertices of degree three have at least two edges adjacent to them of the same colour.
        \end{proof}
        
    \paragraph{Defining $\chi_1$.}       
        We now define $\chi_1$ to be the random red-blue colouring of $G$ as follows.  For each monochromatic component in $(G_1, \chi_0)$ we uniformly at random colour it alternating starting red, or starting blue (this colouring step is similar to the first colouring step in \cite{das2021isomorphic}). Then for each monochromatic component of $G_0$ we either with probability $1/2$ keep the colouring of the component  or swap  the colours of all of its edges (red becomes blue, blue becomes red). All these colourings are made independently.
        
        We claim that every vertex in $(G, \chi_1)$ is incident with two red edges and one blue, or vice versa. Indeed, this holds for vertices with at least two neighbours in $G_0$ by \ref{itm:extend-degree}, and by the properties of the green-purple colouring which imply that every vertex with at least two neighbours in $G_1$ is incident to two edges of the same colour (see the second part of Claim~\ref{claim:proper}).
        
        Our next goal is to get rid of monochromatic cycles. 
        In order to later prove concentration, we would like all monochromatic cycles to be broken simultaneously (as opposed to iteratively, like in Akiyama--Chv\'atal's proof \cite{akiyama1981short}), as this makes it easy to analyse the random process. We will break all monochromatic cycles  in the next two steps. 
        
    \paragraph{Defining $\chi_2$.}
        For every monochromatic cycle $C$ in $(G, \chi_1)$, choose, uniformly at random and independently, an edge $e_C$ in $C$ and flip its colour.
        
        Note that it is possible to create new monochromatic cycles in $\chi_2$. In the next step we get rid of them, and  this process does not propagate any further.

    \paragraph{Defining $\chi_3$.}
          Let $C$ be a monochromatic cycle in $\chi_2$ and let $e_1, \ldots, e_k$ be the edges of $C$ whose colour was changed in the previous step; notice that there is at least one such edge. 
        Note that if $C$ and hence $e_1, \ldots e_k$ are blue, then each $e_i$ closes a red cycle $C_i$ ($C_i$ was the red cycle in $\chi_1$ which prompted the colour change of $e_i$), and similarly with the roles of red and blue reversed. We call these $k$ formerly red cycles the \textit{petals} of $C$. We call a monochromatic cycle $C$ together with its petals $C_1,\dots,C_k$ a \textit{monochromatic cycle-petal configuration} (see \Cref{fig:petals}). It is easy to see that all monochromatic cycles together their petals are vertex-disjoint from each others.
        
        \begin{figure}[h]
            \centering
            \begin{subfigure}[b]{.45\textwidth}
                \centering
                \includegraphics[scale = .8]{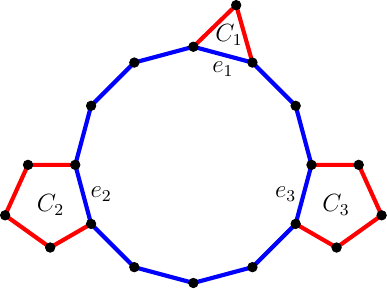}
                \caption*{A monochromatic cycle-petal configuration} 
            \end{subfigure}
            \hspace{1cm}
            \begin{subfigure}[b]{.45\textwidth}
                \centering 
                \includegraphics[scale = .8]{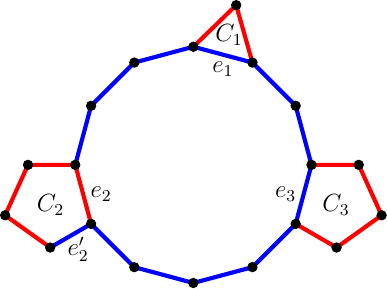}
                \caption*{Its recolouring at $e_2$ and $e_2'$}
            \end{subfigure}
            \caption{Getting rid of monochromatic cycles}
            \label{fig:petals}
        \end{figure}
         
        It is easy to check that the following is true. 
        \begin{claim}\label{cl:cyclespetalsdisjoint}
            Any two distinct  monochromatic cycles-petals configurations in $\chi_2$ are vertex-disjoint.
        \end{claim}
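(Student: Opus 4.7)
The plan is to derive the claim from a single invariant: under $\chi_2$, every vertex has colour degrees $(d_\red, d_\blue) \in \{(2,1),(1,2)\}$. This holds because (a) the same is true under $\chi_1$ (as justified just before \Cref{claim:proper}); and (b) each vertex lies on at most one monochromatic cycle of $\chi_1$, since both monochromatic subgraphs of $\chi_1$ have maximum degree $2$. Consequently at most one of the flips defining $\chi_2$ acts at a given vertex, shifting its colour degrees by at most $(\pm 1,\mp 1)$ and preserving the invariant.

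Next, I would spell out the structure of a single configuration $\{C,C_1,\dots,C_k\}$, taking $C$ red in $\chi_2$ without loss of generality. Each petal $C_i$ is a blue cycle of $\chi_1$; the petals are pairwise vertex-disjoint (as distinct blue cycles in the max-blue-degree-$2$ subgraph of $\chi_1$), and each $C_i$ meets $C$ exactly at the two endpoints of its flipped edge $e_i$. Using the invariant, I would then classify the vertices: $v \in V(C)$ has $\chi_2$-red degree $2$; whereas $v \in V(C_i) \setminus V(C)$ is not incident to any $e_j$ (the endpoints of $e_j$ lie on $C$), so no flip occurs at $v$, and it retains its $\chi_1$-blue degree $2$.

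Now suppose a second configuration $\{D,D_1,\dots\}$ shares a vertex $v$ with $\{C,C_1,\dots\}$. I would split into cases by the $\chi_2$-colours of $C$ and $D$ and by whether $v$ lies on a centre or on a pure petal in each. Two of the cases fail immediately because the required colour degrees at $v$ sum to more than $3$. If $v$ lies on both centres $C$ and $D$, forcing them to share a colour, the invariant implies $C$ and $D$ use the same two degree-$2$ edges at $v$, and tracing the unique cycle component through $v$ of the $\chi_2$-monochromatic subgraph (of maximum degree $2$) yields $C=D$. If $v$ is a pure petal vertex in both configurations, say on $C_i$ and $D_j$ (both cycles of the same colour in $\chi_1$), then vertex-disjointness of $\chi_1$-monochromatic cycles forces $C_i=D_j$; hence $e_i=f_j$ lies on both $C$ and $D$, contradicting the just-established disjointness of same-colour centres.

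The main obstacle is the remaining case: $C$ and $D$ have opposite $\chi_2$-colours, and $v \in V(C)$ is a pure petal vertex of some $D_j$ (or the symmetric situation). Here I would argue that the two red edges at $v$ (where $C$ is red) lie simultaneously on $C$ and on the path $D_j \setminus \{f_j\}$, which is red in $\chi_2$ because $D_j$ is red in $\chi_1$ and only the edge $f_j$ of $D_j$ was flipped. Since the red component of $v$ in $\chi_2$ is uniquely determined and equals $C$, we obtain $D_j \setminus \{f_j\} \subseteq C$, putting both endpoints of $f_j$ on $C$; as $f_j \in D$, these endpoints lie in $V(C) \cap V(D)$, contradicting the disjointness of opposite-colour centres (again immediate from the invariant). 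Bookkeeping which edges of $D_j$ are flipped versus unchanged is the most delicate aspect, but the maximum-degree-$2$ structure of both monochromatic subgraphs of $\chi_2$ makes the component tracing straightforward.
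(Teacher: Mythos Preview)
Your argument is correct. The paper does not actually prove this claim; it simply asserts that ``it is easy to see'' that the configurations are pairwise vertex-disjoint and moves on. Your write-up supplies the missing details along exactly the lines one would expect: the colour-degree invariant $(d_{\red},d_{\blue})\in\{(2,1),(1,2)\}$ persists from $\chi_1$ to $\chi_2$ because each vertex is incident to at most one flipped edge, and then every vertex of a cycle-petal configuration has degree $2$ in a specific colour of $\chi_2$, which forces the case analysis to collapse.

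Two small remarks. First, your cross-reference is off: the invariant for $\chi_1$ is stated and justified just \emph{after} \Cref{claim:proper}, not before it. Second, in the mixed case you could shortcut slightly: once you know the two red edges at $v$ lie on both $C$ and on $D_j\setminus\{f_j\}$, you already have that $D_j\setminus\{f_j\}$ is a subpath of the cycle $C$ (since the red graph in $\chi_2$ has maximum degree $2$), so the endpoints of $f_j$ land on $C$ immediately. Your version is fine; this is just to note that the ``bookkeeping'' you flag as delicate is in fact automatic from the degree-$2$ structure, as you yourself observe.
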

        

        For each monochromatic cycle $C$ in $\chi_2$ with petals $C_1, \ldots, C_k$ and petal edges $e_1, \ldots, e_k$, pick $i \in [k]$ uniformly at random and then pick one of the two edges next to $e_i$ on $C_i$ uniformly at random, call it $e_i'$, and swap the colours of $e_i$ and $e_i'$ (see \Cref{fig:petals}). This clearly destroys the monochromatic cycle $C$; moreover, it does not create vertices of red degree $3$ or blue degree $3$. We do this for all cycles simultaneously, and Claim~\ref{cl:cyclespetalsdisjoint} implies that  the following holds. 
        
        \begin{claim} \label{claim:no-cycles}
            The monochromatic components in $(G, \chi_3)$ are paths.
        \end{claim}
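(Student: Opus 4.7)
}

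The goal is to verify two things: first, that $(G,\chi_3)$ has no monochromatic vertex (so every monochromatic component is a path or a cycle), and second, that no monochromatic cycle survives in $\chi_3$. The first point is essentially a local degree check; the second is where the work lies, and this is the main obstacle. By \Cref{cl:cyclespetalsdisjoint}, the recolourings performed around distinct monochromatic cycle-petal configurations in $\chi_2$ are vertex-disjoint, so I can analyse one configuration at a time and only need to track the local effect of a single swap. So fix a monochromatic cycle $C$ of $\chi_2$ (say blue) with petals $C_1,\dots,C_k$, petal edges $e_1,\dots,e_k$, and let $e_{i_0}, e_{i_0}'$ be the pair whose colours are swapped in the step from $\chi_2$ to $\chi_3$.

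For the degree check, I would recall that in $\chi_1$ every vertex has red and blue degree in $\{1,2\}$ (by \Cref{claim:proper} together with \ref{itm:extend-degree}), and then verify that this property is preserved by both $\chi_1\to\chi_2$ and $\chi_2\to\chi_3$. The former is easy: flipping one edge $e_C$ on a monochromatic cycle $C$ of $\chi_1$ converts its endpoints from degree $(2,1)$ to $(1,2)$ in the two colours. For the latter, letting $v_1,v_2$ denote the endpoints of $e_{i_0}$ (with $e_{i_0}'$ meeting $e_{i_0}$ at $v_1$, and let $u_1$ be the other endpoint of $e_{i_0}'$), I would compute: at $v_1$ the two flipped edges cancel, at $v_2$ the $(2,1)$-distribution just swaps, and at $u_1$ the distribution in $\chi_2$ was $(2,1)$ in red/blue (with the off-$C_{i_0}$ edge blue by the $\chi_1$ property), so after flipping $e_{i_0}'$ we get $(1,2)$. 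All four vertices end with both colour degrees in $\{1,2\}$.

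The heart of the proof is ruling out monochromatic cycles in $\chi_3$. Monochromatic cycles of $\chi_1$ are broken in $\chi_2$ and remain broken in $\chi_3$, since the $\chi_3$-swap affects only $e_{i_0}$ and $e_{i_0}'$ inside the configuration, and $C_{i_0}$ now has the distinct wrong-colour edge $e_{i_0}'$. The cycle $C$ itself is destroyed because $e_{i_0}$ changes colour. So only the creation of a new monochromatic cycle by the swap has to be excluded. Suppose there is a new red cycle through $e_{i_0}$ in $\chi_3$; removing $e_{i_0}$ leaves a red $v_1$--$v_2$ path in $\chi_3$. Since the swap is the only local change, one checks that in $\chi_2$ the only red edge at $v_1$ is $e_{i_0}'$ (all other edges at $v_1$ are blue in $\chi_2$), so the path must use $e_{i_0}'$, but $e_{i_0}'$ is blue in $\chi_3$—contradiction. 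A symmetric argument handles a hypothetical new blue cycle through $e_{i_0}'$: such a cycle minus $e_{i_0}'$ would give a blue $v_1$--$u_1$ path in $\chi_3$, which must begin with the edge $g_1$ (the other $C$-edge at $v_1$), traverse $C\setminus e_{i_0}$ to $v_2$, and hence end at $v_2$; but in $\chi_2$ the vertex $v_2$ has blue degree $2$ (both its blue edges on $C$), so it cannot be the endpoint $w\neq v_1$ of the $\chi_2$-blue path starting from $u_1$ via $h_1$.

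Combining the degree preservation with the non-existence of monochromatic cycles, every monochromatic component of $(G,\chi_3)$ has maximum degree $2$ and no cycle, hence is a path, as required. The main obstacle is the last paragraph—carefully ruling out the two shapes of newly created cycle by exploiting the precise colour pattern at the four vertices $v_1,v_2,u_1$ (and the structural fact that $v_2\notin V(C_{i_0}\setminus\{v_1,v_2\})$ and $v_2$ lies on the original blue cycle $C$).
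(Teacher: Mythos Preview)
Your proof is correct and follows essentially the same approach as the paper: localize to one cycle--petal configuration via \Cref{cl:cyclespetalsdisjoint}, verify that no monochromatic vertex is created, and rule out new monochromatic cycles through $e_{i_0}$ and $e_{i_0}'$ by tracking colour-degrees at $v_1,v_2,u_1$. Your final clause about the $\chi_2$-blue path from $u_1$ via $h_1$ is superfluous---the argument is already complete once you observe that the blue path from $v_1$ along $C\setminus e_{i_0}$ terminates at $v_2\neq u_1$ (blue degree $1$ there in $\chi_3$)---but the logic is sound.
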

        
        \begin{proof} Note that using \Cref{cl:cyclespetalsdisjoint} we know that no edge changes their colour more than once while defining $\chi_3$, thus it is enough to show that after a swap for one particular cycle $C$, $e_i$ and $e_i'$ are not involved in new monochromatic cycles. Let us assume the edge $e_i$ which was swapped for cycle $C$ was blue before the swap. That means the cycle $C$ was  blue in $\chi_2$ and $e_i'$ was red.   Let $v$ be the vertex common to $e_i$ and $e_i'$ and write $e_i = uv$, and assume $e_i'=vw$. Notice that $v$ has blue degree $2$ in $\chi_2$ and so $e_i$ which is red in $\chi_3$ is not in a monochromatic cycle. For the monochromatic cycles involving $e_i'$, there are two options, either they use $e_i'$ and the vertex $u$ or they use another vertex of cycle $C$; the point being every such cycle has to use $w$. For the first case, $C \setminus \{e_i\} \cup \{e_i'\}$ is a blue path ending with the vertex $u$ which has red degree $2$, showing that $e_i'$ does not belong to a monochromatic cycle which uses the vertex $u$. For the second case, $e_i'$ cannot be involved in a blue cycle using some other vertex of $C$ because the blue graph has maximum degree two. 
        \end{proof}

    \paragraph{Defining $\chi_4$.}
        For each maximal monochromatic path $P$ in $(G, \chi_3)$, pick, uniformly at random and independently, an edge $e_P$ of the opposite colour which is incident to an end of $P$ (so $e_P$ is chosen out of a set of at most four edges).
        For each edge $e = xy$, if there are two distinct paths $P$ and $Q$ such that $e = e_P = e_Q$ (so $x$ is an end of one of exactly one of these paths and $y$ is an end of only the other path), swap the colour of $e$ with probability $1/2$; if no such paths exist, $e$ remains unchanged. Call the resulting colouring $\chi_4$.
        
        \def \chirb {\chi_3^{r \to b}} 
        \def \chibr {\chi_3^{b \to r}} 
        Define $\chirb$ to be the colouring of $G$ obtained from $\chi_3$ by swapping the colour of edges whose colour was swapped from red to blue when going from $\chi_3$ to $\chi_4$ (and keeping the colour of all other edges), and define $\chibr$ analogously with the roles of red and blue reversed. 
        \begin{claim} \label{claim:psi}
            Every red path $P$ in $\chi_3$ touches at most one edge whose colour changed from blue to red when going from $\chi_3$ to $\chibr$. Moreover, such an edge must touch an endpoint of $P$ and contains no other vertices of $P$.
        \end{claim}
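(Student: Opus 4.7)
The plan is to unpack the definition of $\chibr$ and then do a direct analysis of which blue edges touching a red path $P$ in $\chi_3$ can possibly be flipped to red. First I would note the degree pattern in $(G,\chi_3)$: since no vertex is monochromatic (an invariant maintained through the construction of $\chi_1,\chi_2,\chi_3$) and $G$ is cubic, the endpoints of the red path $P$ have blue-degree $2$ and the internal vertices of $P$ have blue-degree $1$. In particular, every edge touching $P$ (i.e.\ with exactly one endpoint in $P$) is blue in $\chi_3$, so the candidates for being recoloured blue-to-red are exactly the blue pendant edges of $P$.

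Next I would use the definition of $\chibr$ to characterise which edges $e$ change colour: these are precisely those edges that were flipped in the step $\chi_3\to\chi_4$, which by the construction of $\chi_4$ happens if and only if there are two distinct red maximal paths $P'$ and $Q'$ in $\chi_3$ with $e_{P'}=e_{Q'}=e$ and the subsequent coin flip swapped the colour of $e$. Since an edge can be chosen as $e_{P'}$ only when it is blue and incident to an endpoint of $P'$, each endpoint of such a flipped edge must be an endpoint of some red maximal path in $\chi_3$.

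Now consider a touching edge $e=vv'$ of $P$ with $v\in P$ and $v'\notin P$ that gets recoloured. Since the red subgraph of $\chi_3$ has maximum degree at most $2$, the unique red maximal path through $v$ is $P$, so the requirement that $v$ be an endpoint of a red path forces $v$ to be an endpoint of $P$. This gives the second conclusion of the claim: the recoloured edge touches an endpoint of $P$, and its other endpoint lies outside $P$ by the definition of "touches". For uniqueness, the same analysis shows that $P$ itself must be one of the two red paths that picked $e$, i.e.\ $e=e_P$; since $P$ selects only a single $e_P$, at most one such touching edge can be recoloured. I do not expect any real obstacle here; the only subtlety is carefully unpacking the two-step definition of $\chibr$ via $\chi_4$ and invoking the "two distinct paths" condition to conclude that $P$ itself is forced to be one of these two paths.
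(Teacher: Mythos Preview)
Your proof is correct and follows essentially the same route as the paper's: both identify that a swapped blue edge must equal $e_{P'}=e_{Q'}$ for two distinct red paths, deduce that its endpoint in $P$ must be an end of $P$ (hence the edge is $e_P$, giving uniqueness), and note that the other endpoint lies outside $V(P)$. The only cosmetic difference is that you invoke the definition of ``touches'' for the last point, whereas the paper argues it directly from $y$ being an end of the other red path $Q$; both are valid.
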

        
        \begin{proof}
            The only blue edges touching $P$ whose colour can be swapped are those that touch the ends of $P$, and of those only $e_P$ can be swapped. This proves the first part of the claim.
            To see the second part, write $e_P = xy$ and suppose that $x$ is an end of $P$ and that the colour of $e_P$ was swapped. Notice that, by definition of $\chi_4$, if $e_P$ is indeed swapped then there is another red path $Q$ for which $e_Q = e_P$. But this means that $y$ is an end of $Q$, and so $y \notin V(P)$.
        \end{proof}
        
        The next claim easily follows.
        \begin{claim} \label{cl:chi-4-monocomponentsarepaths}
            All monochromatic components in $\chi_4$ are paths, and contain at most one edge that was of opposite colour in $\chi_3$.
        \end{claim}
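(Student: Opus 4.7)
The plan is to analyse the structure of an arbitrary monochromatic component $C$ in $\chi_4$, which I take to be red (the blue case is symmetric). The edges of $C$ split into two types: \emph{unchanged} edges that were already red in $\chi_3$, and \emph{swapped} edges that were blue in $\chi_3$ and flipped to red in going from $\chi_3$ to $\chi_4$.

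First I will verify that in $\chi_4$ every vertex has red degree in $\{1,2\}$, so that the red subgraph of $\chi_4$ is a disjoint union of paths and cycles and $C$ is a path or a cycle. Fix $v$ with red degree $1$ in $\chi_3$, so that $v$ is the endpoint of a unique red $\chi_3$-path $P$. By the definition of $\chi_4$, a blue edge $e$ at $v$ can swap to red only if $e = e_{P'} = e_{Q'}$ for two distinct red $\chi_3$-paths $P',Q'$ with endpoints at the ends of $e$; since $v$ is the endpoint of only $P$, one of $P',Q'$ must equal $P$, forcing $e_P = e$, so at most one blue edge at $v$ can swap. Moreover, the red edge at $v$ cannot swap to blue, because such a swap requires both endpoints of the edge to be endpoints of blue $\chi_3$-paths, contradicting the fact that $v$ has blue degree $2$ in $\chi_3$. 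A symmetric argument handles vertices with red degree $2$ in $\chi_3$.

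Next I will classify the two kinds of swapped edges. By \Cref{claim:psi}, any edge $e = xy$ swapping from blue to red joins the endpoints of two \emph{distinct} red $\chi_3$-paths, with $x$ an endpoint of one and $y$ an endpoint of the other. Symmetrically, any edge swapping from red to blue has both endpoints of blue degree $1$ in $\chi_3$, hence red degree $2$, and is therefore an \emph{internal} edge of some red $\chi_3$-path. Combined with the statement in \Cref{claim:psi} that at most one swapped blue-to-red edge is incident to any given red $\chi_3$-path, this shows that the auxiliary ``link graph'' on the red $\chi_3$-paths --- with one edge for each swapped blue-to-red link --- has maximum degree at most $1$, i.e.\ it is a matching.

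Finally I trace through $C$. If $C$ contains no swapped edge, then all its edges are unchanged and therefore live inside the pairwise vertex-disjoint red $\chi_3$-paths, so $C$ is contained in a single such path and is a subpath of it. If $C$ contains a swapped edge $L$ joining red paths $P_a$ and $P_b$, then since the link graph is a matching no other link attaches to $P_a$ or $P_b$; so apart from $L$ the edges of $C$ come from $P_a$ and $P_b$, forming subpaths (possibly truncated by internal red-to-blue breaks) emanating from the respective endpoints of $L$. Concatenating these two subpaths with $L$ yields $C$, which is a path (not a cycle, since $P_a,P_b$ are vertex-disjoint) containing exactly one swapped edge. The main subtlety to handle carefully is that a red $\chi_3$-path may be broken into several pieces by internal red-to-blue swaps, but only the piece containing the endpoint incident with $L$ can lie in $C$; the remaining pieces form their own link-free components, each being a subpath of a red $\chi_3$-path, so the conclusion of the claim is preserved across all components.
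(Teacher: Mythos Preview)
Your proof is correct and is precisely the natural elaboration of what the paper leaves implicit: the paper states only that the claim ``easily follows'' from \Cref{claim:psi} and gives no further argument. Your degree analysis, your observation that the blue-to-red swaps form a matching on the red $\chi_3$-paths (which is exactly the content of \Cref{claim:psi}), and your trace-through of a component together constitute the intended argument; there is no meaningful difference in approach.
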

        
        
        \begin{claim} \label{claim:chi-4-long-paths}
            With probability at least $4/5$, there are no monochromatic paths of length at least $1000\log n$ in $\chi_4$.
        \end{claim}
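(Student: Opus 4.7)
The plan is to condition on $\chi_3$ and use the randomness of the $\chi_3 \to \chi_4$ step to show that long monochromatic paths are systematically broken by swapped edges. By Claim~\ref{cl:chi-4-monocomponentsarepaths}, every monochromatic path $P^*$ in $\chi_4$ contains at most one swapped edge, and therefore either is a subpath of a single $\chi_3$ monochromatic path of the same colour (no swap) or is formed from two such subpaths joined by one swapped edge. In either case $|P^*| \le 2L + 1$, where $L$ denotes the maximum length of a ``swap-free segment'' --- a maximal subpath containing no swapped edge --- inside any $\chi_3$ monochromatic path. Thus it will suffice to prove that, with probability at least $4/5$, every length-$c\log n$ subpath of every $\chi_3$ monochromatic path contains at least one swapped edge, for some constant $c$ slightly below $500$.

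Fix a red $\chi_3$ monochromatic path $P = v_0 v_1 \ldots v_\ell$. Every interior vertex $v_j$ has a unique incident blue edge, lying inside a unique blue path $Q_{v_j}$ with $v_j$ as an endpoint. The interior edge $e_i = v_{i-1}v_i$ is swapped in $\chi_4$ precisely when $Q_{v_{i-1}} \ne Q_{v_i}$, both of these paths select $e_i$ as their boundary edge, and the swap coin attached to $e_i$ lands ``swap''. I will call $e_i$ \emph{good} when $Q_{v_{i-1}} \ne Q_{v_i}$. The bad edges correspond to matched pairs of adjacent interior vertices under the blue-path matching on $P$; since each interior vertex has at most one matched partner, no two consecutive edges of $P$ can both be bad, so every length-$d$ segment of $P$ contains at least $d/2$ good edges. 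For a good $e_i$, each of $Q_{v_{i-1}}, Q_{v_i}$ selects $e_i$ with probability at least $1/4$ (uniformly among at most four boundary edges that include $e_i$), and the swap coin is fair, so $e_i$ is swapped with probability at least $1/32$.

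To handle dependencies, I will form a conflict graph on the good edges, declaring $e_i \sim e_j$ iff $\{Q_{v_{i-1}}, Q_{v_i}\} \cap \{Q_{v_{j-1}}, Q_{v_j}\} \ne \emptyset$. Since each of $v_{i-1}, v_i$ is paired with at most one other interior vertex of $P$, each $e_i$ has at most six conflicts (two adjacent edges $e_{i\pm 1}$ via shared endpoints of the blue paths, plus up to four more via the at-most-two matched partners). In particular, any length-$d$ segment of $P$ contains an independent set of good edges of size at least $d/14$. On such an independent set the relevant random variables --- the choices of the involved blue paths and the swap coins --- lie in disjoint families, so the swap events are mutually independent, and the probability that none of them is swapped is at most $(31/32)^{d/14}$.

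Setting $d = c\log n$ and union-bounding over all length-$d$ subpaths across all $\chi_3$ monochromatic paths (whose total edge count is at most $n$) yields a failure probability of at most $n \cdot n^{-c \ln(32/31)/14}$, which for $c$ slightly below $500$ (for example $c = 480$, so that $c\ln(32/31)/14 > 1$) tends to $0$ and in particular falls below $1/5$ for $n$ large. I then conclude that with probability at least $4/5$, every swap-free segment in $\chi_3$ has length less than $c\log n$, and therefore every $\chi_4$ monochromatic path has length less than $2c\log n + 1 < 1000\log n$. The hard part will be to verify the two structural facts underlying the argument --- that bad edges are non-adjacent and that the conflict graph has bounded maximum degree --- both of which reduce to the single observation that blue paths with both endpoints on $P$ induce a matching on the interior vertices of $P$.
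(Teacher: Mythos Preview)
Your argument is correct and essentially identical to the paper's: both condition on $\chi_3$, identify candidate edges (those whose two incident opposite-colour paths are distinct) which are swapped with probability at least $1/32$, extract a large independent set in a bounded-degree conflict graph to obtain mutual independence of the swap events, and conclude via a union bound combined with Claim~\ref{cl:chi-4-monocomponentsarepaths}. The only differences are bookkeeping --- you bound the conflict-graph degree by $6$ and take $c=480$, while the paper states a degree bound of $4$ and works with segments of length $400\log n$ --- and these do not affect the structure of the proof.
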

        
        \begin{proof}
            We will show that, with probability at least $9/10$, every blue path of length $400\log n$ in $\chi_3$ contains at least one edge whose colour in $\chibr$ is red.
            Fix a path $P$ of length $400\log n$ which is blue in $\chi_3$.
            Let $\cand(P)$ be the set of edges in the interior of $P$ that touch two distinct red components. These are \emph{candidates} for possibly  flipping their colour when going from $\chi_3$ to $\chi_4$. Notice that out of any two consecutive edges in the interior of $P$, at least one is in $\cand(P)$. This is because monochromatic components are paths and they can only touch $P$ at their ends.
            Thus $|\cand(P)| \ge (|P|-2)/2$.
            Consider an edge $e = xy$ in $\cand(P)$, and let $Q_1$ and $Q_2$ be the maximal red paths touching $x$ and $y$, respectively. Then, with probability at least $1/16$ we have $e_{Q_1} = e_{Q_2} = e$ because there are at most four choices for $e_{Q_i}$ and the choices are made independently. Thus, the probability that the colour of $e$ is swapped is at least $1/32$.
            
           Consider an auxiliary graph with vertices $\cand(P)$ and where $ef$ is an edge if and only if there is a red path touching both $e$ and $f$. This graph has maximum degree at most $4$ thus, by Tur\'an's theorem, it has an independent set, let's call it $i(P) \subseteq \cand(P)$ of size at least $|\cand(P)|/5 \ge (|P|-2)/10$. So the properties of edges in $i(P)$ is that no two edges in $i(P)$ touch the same red path. This means that the events $\{\text{$e$ is blue in $\chibr$}\}$, for $e \in i(P)$, are independent. It follows that the probability that none of the edges of $P$ are blue in $\chibr$ is at most $(1 - 1/32)^{(|P|-2)/10}$.
            
            The number of blue paths of length $400\log n$ in $\chi_3$ is at most $2n$ (there are $n$ ways to pick an end point, and at most two possible directions). Hence the probability that some red path of length $400\log n$ in $\chi_3$ survives is at most 
            \begin{equation*}
                2n \cdot \left(1 - \frac{1}{32}\right)^{(400\log n - 2)/10}
                \le 2n \cdot \exp\left(-\frac{400\log n - 1}{320}\right)
                < 1/10.
            \end{equation*}
            In particular, with probability at least $9/10$, there are no blue paths of length at least $400\log n$ in $\chibr$. 
            By \Cref{cl:chi-4-monocomponentsarepaths}, this shows that there are no red paths in $\chi_4$ of length at least $800\log n + 1$. Repeating the same argument with the roles of red and blue reversed, the claim is proved.
        \end{proof}
        
        \begin{claim} \label{claim:chi-4-spiders}
            With probability at least $4/5$, there is no red path of length $(\log n)^{2/3}$ in $\chi_4$ which is intersected by at least $100\sqrt{\log n}$ blue paths of length at least $100\sqrt{\log n}$, and vice verse.
        \end{claim}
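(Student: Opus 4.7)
The plan is to adapt the argument of Claim~\ref{claim:chi-4-long-paths} to rule out such ``spider'' configurations via a union bound combined with a per-leg survival analysis.

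I would fix parameters $L = (\log n)^{2/3}$, $K = 100\sqrt{\log n}$, and $\ell = 100\sqrt{\log n}$, and reduce the claim to showing that for every pair $(P, \{v_1, \ldots, v_K\})$ consisting of a path $P$ in $G$ of length $L$ together with $K$ distinct vertices $v_1, \ldots, v_K$ on $P$, the probability that $P$ is red in $\chi_4$ and each $v_i$ lies in a blue path of length $\geq \ell$ in $\chi_4$ is at most $n^{-c}$ for some constant $c > 1$. Since the total number of such pairs is at most $2n \cdot 3^L \cdot \binom{L+1}{K} = n^{1+o(1)}$, a union bound then yields the claim (the symmetric case with red and blue swapped is identical).

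For the per-pair bound, condition on $\chi_3$. By Claim~\ref{cl:chi-4-monocomponentsarepaths}, if $v_i$ lies in a blue path of length $\geq \ell$ in $\chi_4$, then $v_i$ lies in a blue subpath $Q_i^*$ of length $\geq \lceil (\ell - 1)/2 \rceil$ of a blue component in $\chi_3$. The paths $Q_1^*, \ldots, Q_K^*$ are (essentially) vertex-disjoint, being contained in distinct maximal blue components of $\chi_3$ (or in disjoint halves of a single such component split by a flipped edge). Applying the candidate-edge argument from the proof of Claim~\ref{claim:chi-4-long-paths} to each $Q_i^*$ gives that, over the randomness used in defining $\chi_4$, the probability that no edge of $Q_i^*$ is swapped is at most $(31/32)^{\Omega(\sqrt{\log n})} = e^{-\Omega(\sqrt{\log n})}$. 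By selecting, within each leg, candidate edges whose flip events depend only on the $e_R$-choices of red paths local to that leg, the $K$ survival events can be made mutually independent, producing a joint probability of at most $e^{-\Omega(K \sqrt{\log n})} = n^{-c}$ for some constant $c > 1$.

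The main obstacle is rigorously justifying the cross-leg independence, since the $e_R$-choice of a red path $R$ simultaneously adjacent to several legs could couple their flip decisions. This is likely to be overcome by restricting to candidate edges in the deep interior of each $Q_i^*$, where the incident red paths remain within a small neighbourhood of $Q_i^*$ and hence do not interact with the other legs; alternatively one can use a Lov\'asz Local Lemma style argument to absorb mild dependencies. A secondary technicality is accounting for the single flipped edge per component permitted by Claim~\ref{cl:chi-4-monocomponentsarepaths}, which is absorbed into the halved length bound $\lceil (\ell - 1)/2 \rceil$ on $Q_i^*$.
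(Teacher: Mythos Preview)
Your overall strategy---union bound over spider configurations, combined with the candidate-edge survival analysis from Claim~\ref{claim:chi-4-long-paths}---is exactly the paper's approach, and the numerology is correct. However, the two obstacles you flag are genuine gaps, and the paper resolves both cleanly in ways you have not quite landed on.

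First, your reduction ``$v_i$ lies in a blue subpath $Q_i^*$ of length $\ge \lceil(\ell-1)/2\rceil$ in $\chi_3$'' is not right: the single flipped edge permitted by Claim~\ref{cl:chi-4-monocomponentsarepaths} may sit immediately next to $v_i$, so the $\chi_3$-blue segment containing $v_i$ itself can be arbitrarily short. The paper handles this by enumerating structures in $\chi_3$ directly (rather than abstract paths in $G$): each leg is allowed to be either a long blue path starting at $P$, or a short maximal blue path $Q_1$ from $P$, then a single red edge, then another blue path $Q_2$, with total length $50\sqrt{\log n}$. This costs only a factor $4^{100\sqrt{\log n}}$ in the count (at most four choices for the ``bending'' red edge per leg), and since $P$ is already a red path in $\chi_3$ the spine count is $2n(\log n)^{2/3}$ rather than $2n\cdot 3^L$.

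Second, the cross-leg independence issue is resolved not per leg but globally: the paper pools all $\le 200\sqrt{\log n}$ disjoint blue paths (total length $\ge 5000\log n$) and applies the Tur\'an/independent-set argument from Claim~\ref{claim:chi-4-long-paths} across all of them at once, yielding a single genuinely independent family of candidate edges and survival probability $(1-1/32)^{(5000\log n - 400\sqrt{\log n})/10}$. This sidesteps any need for LLL or ``deep interior'' restrictions.
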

        
        \begin{proof}

            \def \frP {\mathfrak{P}}
            Let $\cH$ be the collection of subgraphs $H$ of $\chi_3$ with the following structure: $H$ is the union of a red path $P$ of length at most $(\log n)^{2/3}$ with $100\sqrt{\log n}$ pairwise vertex-disjoint paths $Q$ of length $50\sqrt{\log n}$, each of which is either a blue path that touches $P$, or the union of a maximal blue path $Q_1$ with one end in $P$, a red edge touching the other end of $Q_1$, and another blue path $Q_2$ that touches $e$ and is disjoint of $Q_1$. 
            
        We claim that $|\cH| \le 2n (\log{n})^{2/3} \cdot \binom{(\log n)^{2/3}}{100\sqrt{\log n}} \cdot 4^{100\sqrt{\log n}}$. Indeed, there are at most $2n (\log{n})^{2/3} $ ways to choose a red path $P$ of length at most $\log{n}^{2/3}$, then there are  at most $\binom{(\log n)^{2/3}}{100\sqrt{\log n}}$ ways to choose the vertices on $P$ where the red paths in $\mathcal{Q}$ touch $P$. Finally, for every such interior vertex $v$ of $P$ either there exists a maximal blue path $Q$ of length at least $100\log{n}$ which touches $P$ at $v$ and we add it to $\mathcal{Q}$, or the maximal path has length less than $100\log{n}$ and at the end of it there are two red edges, each of which is possibly adjacent to two blue edges so we  have four possible choices for this specific $Q_2$, thus resulting overall $4^{100\sqrt{\log n}}$ choices.  What is the probability that such an $H$ did not have any of its blue edges swapped to red colour in $\chi_4$? This can be phrased as follows: let $\cP$ be a collection of at most $200 \sqrt{\log n}$ pairwise disjoint blue paths in $\chi_3$ whose total length is at least $5000\log n$. Similarly to the proof of \Cref{claim:chi-4-long-paths}, the probability that all edges in paths in $\cP$ remain blue in $\chi_4$ is at most $(1 - 1/32)^{(1000\log n - 400\sqrt{\log n})/10}$.Putting all this together, we get that the probability that there exists a graph $H \in \cH$ has none of its blue edges swapped is at most 
            \begin{equation*}
                2n \cdot  (\log{n})^{2/3}\cdot  \binom{(\log n)^{2/3}}{100\sqrt{\log n}} \cdot 2^{100\sqrt{\log n}} \cdot \left(1 - \frac{1}{32}\right)^{(1000\log n - 400\sqrt{\log n})/10} \le \frac{1}{10}.
            \end{equation*}

            Notice that \Cref{cl:chi-4-monocomponentsarepaths} shows that any red path of length at most $(\log n)^{2/3}$ in $\chi_4$ which is intersected by at least $100\sqrt{\log n}$ blue paths of length at least $100\sqrt{\log n}$ gives rise to a graph in $\cH$ whose blue edges did not change. The claim thus follows.
        \end{proof}
        
        \begin{claim} \label{claim:chi-4-Q3}
            Property \ref{itm:approx-concentration} holds, with probability at least $4/5$.
        \end{claim}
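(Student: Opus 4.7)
The plan is to apply McDiarmid's inequality (\Cref{thm:mcdiarmid}) to each of $r(P_t)$ and $b(P_t)$ for every $t\ge 1$, combined with a red--blue symmetry. The symmetry is immediate: every random step $\chi_0 \to \chi_1 \to \chi_2 \to \chi_3 \to \chi_4$ treats the two colours identically (the starting colour of each alternating $\chi_1$-component is uniform, each $G_0$-component is either kept or fully colour-swapped with probability $1/2$, and the random choices in $\chi_2,\chi_3,\chi_4$ are made uniformly among candidate edges, petals, and boundary edges), so the joint distribution of the red and blue subgraphs is invariant under swapping the two colour labels. In particular $\Ex[r(P_t)] = \Ex[b(P_t)]$ for every $t$, so it suffices to show that $|r(P_t)-\Ex[r(P_t)]|\le n^{2/3}/2$, and likewise for $b(P_t)$, with the required probability.

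To set up McDiarmid, encode the construction as a deterministic function of $N=O(n)$ independent random variables: one Bernoulli per component of $G_0\cup G_1$ driving $\chi_1$, plus one uniform variable on a small finite set per edge of $G$, used via fixed threshold rules to drive the choices in $\chi_2,\chi_3,\chi_4$ for the unique monochromatic cycle or path containing that edge at the relevant step. The function $f=r(P_t)$ is then a function of these $N$ variables, and the key quantitative claim is that $f$ enjoys the bounded-difference property with constant $c = O(\polylog(n))$. Assuming this, McDiarmid with $m=n^{2/3}/2$ gives
\[
    \Pr\bigl(|r(P_t)-\Ex[r(P_t)]|\ge n^{2/3}/2\bigr) \le 2\exp\bigl(-\Omega(n^{1/3}/\polylog(n))\bigr),
\]
and a union bound over $t \in \{1,\dots,1000\log n\}$ (which suffices by \Cref{claim:chi-4-long-paths}) and over the two colours yields the claim with probability $1 - o(1)$, so in particular at least $4/5$, leaving comfortable room to absorb any low-probability events we may need to condition on.

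The heart of the argument is the Lipschitz bound on $c$. A change to a single $\chi_1$-component-bit swaps the colour of the $O(\sqrt{\log n})$ edges of that component (by \ref{itm:extend-compt-size} and \Cref{claim:proper}); this can only alter the identity of monochromatic cycles in $\chi_1$, and hence of cycle-petal configurations in $\chi_2$, that pass through this local region, and by \Cref{cl:cyclespetalsdisjoint} these configurations are vertex-disjoint, so at most a bounded number of them is affected, each being modified on at most $O(\log n)$ edges; in $\chi_4$, the changes only propagate to monochromatic paths touching already-affected edges, which by \Cref{claim:chi-4-long-paths} have length $O(\log n)$. A change to an edge-variable likewise only perturbs the one cycle or path it drives. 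In all cases the number of altered edges in $\chi_4$ is $O(\polylog(n))$, and since each edge lies in at most $O(\log n)$ paths of length $t$, the value of $r(P_t)$ changes by at most $O(\polylog(n))$, giving $c=O(\polylog(n))$ as claimed.

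The main obstacle is making this Lipschitz bound rigorous: the cascading nature of the four random steps makes it subtle to certify that one coin flip has only polylogarithmic influence, and one must also be careful because \Cref{claim:chi-4-long-paths} only holds with high probability rather than deterministically. I would resolve the latter point either by invoking a typical-bounded-differences variant of McDiarmid (à la Warnke) or by redefining $r(P_t)$ to equal $\Ex[r(P_t)]$ outside the good event of \Cref{claim:chi-4-long-paths}, losing only $o(1)$ in the failure probability.
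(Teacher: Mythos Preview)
Your high-level plan---McDiarmid plus the red--blue symmetry giving $\Ex[r(P_t)]=\Ex[b(P_t)]$---is exactly what the paper does, and your encoding of the randomness as $O(n)$ independent variables is on the right track. The gap is in your Lipschitz argument.

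The paper obtains a \emph{deterministic} Lipschitz bound $c=O(\sqrt{\log n})$ by tracking, stage by stage, the number of \emph{edges} on which $\chi_j(\bar x)$ and $\chi_j(\bar y)$ differ, and showing that each transition $\chi_j\to\chi_{j+1}$ blows this number up by at most a constant factor. For instance, if $\chi_3(\bar x)$ and $\chi_3(\bar y)$ differ on $a$ edges, then the two collections of maximal monochromatic paths differ in at most $6a$ elements, and each such path contributes at most one possible edge-swap (namely its $e_P$) in the $\chi_3\to\chi_4$ step; hence $\chi_4(\bar x)$ and $\chi_4(\bar y)$ differ on $O(a)$ edges. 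The \emph{length} of those paths is irrelevant. Similarly, in $\chi_1\to\chi_2$ each monochromatic cycle that appears or disappears, however long, contributes exactly one swapped edge. The only non-constant factor enters at the very start: flipping a single $\chi_1$-component bit changes $O(\sqrt{\log n})$ edges, by \ref{itm:extend-compt-size} and \Cref{claim:proper}. Since each changed edge in $\chi_4$ alters the component count $r(P_t)$ by $O(1)$, this yields $c=O(\sqrt{\log n})$.

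Your argument instead tries to bound the \emph{size} of the affected structures (cycle--petal configurations, paths) and invokes \Cref{claim:chi-4-long-paths} for the latter. That is the wrong quantity to track---you need the number of swapped edges, not the size of the structures containing them---and it is what forces you into the high-probability-only issue you flag at the end. With the paper's edge-counting argument the Lipschitz bound holds for every outcome, so vanilla McDiarmid applies directly and a union bound over all $t\le n$ suffices; there is no need for Warnke-type machinery or for redefining $f$ on a bad event.
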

        
        \begin{proof}
            \def \bX {\bar{X}}
            \def \bY {\bar{Y}}
            \def \bx {\bar{x}}
            \def \by {\bar{y}}
            In order to show concentration, we will first describe how the random colouring described in this section can be obtained as a function of $9n$ independent random variables.
            For this, fix a permutation $\sigma$ of the edges of $G$.
            We define families of sets $A_i$, for $i \in [6]$, as follows. 
            
            \begin{itemize}
                \item 
                    Let $A_1 = A_6 = \{0,1\}$. 
                \item
                    Let $A_2, \ldots, A_5$ be the sets of permutations of $E(G)$ (so the sets $A_2, \ldots, A_5$ are identical). 
            \end{itemize}
            For each edge $e$ and $i \in [6]$, let $X_i(e)$ be the random variable obtained by picking an item from $A_i$, uniformly at random and independently.
            
            For a permutation $\tau$ of $E(G)$ and a subgraph $F \subseteq G$, let $\tau(F)$ be the edge of $F$ that appears first in $\tau$. Note that if $\tau$ is a uniformly random permutation of $E(G)$ then $\tau(H)$ is a uniformly random edge in $H$.
            
            Now define random variables $Y_i(H)$, as follows.
            \begin{itemize}
                \item
                    For $i \in \{1, 6\}$, define $Y_i(H) := X_i(\sigma(H))$ (here $\sigma$ is the permutation we fixed above). So $Y_i(H)$ is either $0$ or $1$, each with probability $1/2$, and the choice depends on $X_i(e)$, for the first edge $e$ in $H$ to appear in $\sigma$.    
                \item
                    For $i \in \{2, 3, 4\}$, define $Y_i(H) := \tau(H)$, where $\tau = X_i(\sigma(H))$. In other words, $Y_i(H)$ is an edge in $H$, chosen uniformly at random (indeed, $\tau = X_i(\sigma(H))$ is a uniformly random permutation of $E(G)$, and thus $\tau(H)$ is an edge of $H$, chosen uniformly at random), and this choice depends on $X_i(e)$, where $e$ is the first edge in $H$ to appear in $\sigma$.
                \item
                    For a path $P$, let $E_P$ be the set of edges outside of $P$ that touch an end of $P$, and define $Y_5(P) := \tau(E_P)$, where $\tau = X_5(\sigma(P))$. So $Y_5(P)$ picks an edge from $E_P$ uniformly at random, and the choice depends on $X_5(e)$, where $e$ is the first edge in $P$ to appear in $\sigma$.
            \end{itemize}
            
            We will show that the random colourings $\chi_1, \ldots, \chi_4$, defined above, can be defined as functions of the sequence of random variables $\bX := (X_i(e))_{i, e}$. Given an assignment $\bx$ of values to $\bX$, write $\chi_i(\bx)$ to be the red-blue colouring $\chi_i$ determined by $\bx$, for $i \in [4]$.
            
            We will then show that, for every $t \ge 1$, if $\bx$ and $\by$ are two assignments of values to $\bX$ that differ on exactly one coordinate, then the number of red (blue) components in $\chi_4(\bx)$ and $\chi_4(\by)$ that are isomorphic to $P_t$ differs by $O(\sqrt{\log n})$; equivalently, $|r_{\chi_4(\bx)}(P_t) - r_{\chi_4(\by)}(P_t)| = O(\sqrt{\log n})$ (with the analogous statement for blue also holding).
            
            Apply McDiarmid's inequality (\Cref{thm:mcdiarmid}), with $f(\bX) = r_{\chi_4(\bx)}(P_t)$, $m = \frac{1}{2}n^{2/3}$, $c := C \sqrt{\log n}$ for a large enough constant $C$, and $n_{\ref{thm:mcdiarmid}} = 6e(G)$. It follows that $|r_{\chi_4(\bX)}(P_t) - \Ex(r_{\chi_4(\bX)})| \le \frac{1}{2}n^{2/3}$, with probability at least $1 - 2\exp(-\Omega(n^{1/4}))$, for every $t$. By symmetry, the same holds for $b_{\chi_4(\bX)}(P_t)$. Noting that $\Ex(r_{\chi_4(\bX)}) = \Ex(b_{\chi_4(\bX)})$, again by symmetry, we have $|r_{\chi_4(bX)}(P_t) - b_{\chi_4(bX)}| \le n^{2/3}$, with probability $1 - 2\exp(-\Omega(n^{1/4}))$. Taking a union bound over all $t \le n$ implies that \ref{itm:approx-concentration} holds.
            
            Let us now explain how to obtain $\chi_i$, with $i \in [4]$, from $\bx$. 
            Fix a colouring $\chi_0$, obtained as explained at the beginning of the proof of \Cref{lem:MainCol}.
            To define $\chi_1$, for each purple or green component $H$, colour $H$ by one of its two proper red-blue edge-colourings, according to $Y_1(H)$ (so we think of $0$ as signifying one of these colourings, and $1$ as signifying the other). Similarly, for a connected component $H$ in the red-blue subgraph of $G$, leave $H$ unchanged if $Y_1(H) = 0$, and otherwise swap red and blue on $H$. Note that these choices are independent, as the subgraphs in question are pairwise edge-disjoint and thus depend on different variables $X_1(e)$.
            
            Next, to define $\chi_2$, for each monochromatic cycle $C$ swap the colour of $Y_2(C)$; as before, these choices are independent. For $\chi_3$, consider a monochromatic (say red) cycle $C$ in $\chi_2$ and let $E_C$ be the set of edges in $C$ whose colour was swapped in the previous step. Write $e := Y_3(E_C)$, let $f_1, f_2$ be the two blue edges touching $e$, and let $f := Y_4(\{f_1, f_2\})$. Swap the colours of $e$ and $f$ to get $\chi_3$. We again have independence, as each pair $f_1, f_2$ originates from a different cycle that was monochromatic in $\chi_1$.
            
            Finally, if there are two distinct maximal monochromatic paths $P$ and $Q$ such that $X_5(P) = X_5(Q)$, write $e := X_5(P)$ and swap the colour of $e$ if $X_6(e) = 1$ (and do nothing otherwise). As usual, since the relevant paths are pairwise edge-disjoint, we have independence.
            It should be easy to check that $\chi_1, \ldots, \chi_4$ defined in this way have the same distribution as their counterparts defined above.
            
            
            In what follows we show that if $\bx$ and $\by$ differ on at most one coordinate then the colourings $\chi_4(\bx)$ and $\chi_4(\by)$ differ on $O(\sqrt{\log n})$ edges. Notice that this means that the collections of monochromatic components differ on $O(\sqrt{\log n})$ elements, as claimed above.
            \begin{itemize}
                \item 
                    If $\bx$ and $\by$ differ on exactly one coordinate, with $i \in \{5, 6\}$, then the colourings $\chi_4(\bx)$ and $\chi_4(\by)$ differ on at most one edge.
                    
                    Suppose that $\bx$ and $\by$ agree on coordinates with $i \ge 5$ and $\chi_3(\bx)$ and $\chi_3(\by)$ differ on at most $a$ edges. Then the collections of maximal monochromatic paths in $\chi_3(\bx)$ and $\chi_3(\by)$ differ on at most $6a$ elements. Suppose that $e$ is an edge which is red in $\chi_4(\bx)$ and blue in $\chi_4(\by)$. Then either $e$'s colour in $\chi_3(\bx)$ and $\chi_3(\by)$ was different, or, without loss of generality, its colour was swapped by $\chi_4(\bx)$ but not $\chi_4(\by)$. In particular, in $\chi_3(\bx)$ the edge $e$ was blue, and touched the ends of two distinct maximal red paths $P$ and $Q$. If this were true for $\chi_3(\by)$ as well, with $P$ and $Q$ unchanged, then $e$'s colour would be swapped by $\chi_4(\by)$ too (using that $\bx$ and $\by$ agree on coordinates with $i \ge 5$). Thus one of $P$ and $Q$ is not a monochromatic component in $\chi_3(\by)$.
                    In summary, an edge $e$ can have a different colour in $\chi_4(\bx)$ and $\chi_4(\by)$ only if its colour in $\chi_3(\bx)$ and $\chi_3(\by)$ is not the same (this can happen at most $a$ times), or it touches the end of a path which is a monochromatic component in exactly one of $\chi_3(\bx)$ and $\chi_3(\by)$ (this can happen at most $4 \cdot 6a$ times). Thus $\chi_4(\bx)$ and $\chi_4(\by)$ differ on at most $25a$ edges.
                \item
                    If $i = 4$ then $\chi_3(\bx)$ and $\chi_3(\by)$ differ on at most two edges, and if $i = 5$ then $\chi_3(\bx)$ and $\chi_3(\by)$ differ on at most four edges. By the previous item, this shows that $\chi_4(\bx)$ and $\chi_4(\by)$ differ on at most $100$ edges.
                    
                    Suppose that $\bx$ and $\by$ agree on coordinates with $i \ge 4$ and $\chi_j(\bx)$ and $\chi_j(\by)$ differ on at most $a$ edges for $j \in [2]$. Thus the collections of monochromatic cycles-and-petals, defined according to $(\chi_1(\bx), \chi_2(\bx))$ and $(\chi_1(\bx), \chi_2(\bx))$, differ on at most $4a$ such structures. Indeed, if $H$ is a cycle-and-petals structure as defined by $(\chi_1(\bx), \chi_2(\bx))$ but not by $(\chi_2(\by), \chi_2(\by))$, then one of its edges has different colours in $\chi_j(\bx)$ and $\chi_j(\by)$, for some $j \in [2]$. Since each such $H$ gives rise to two colour swaps, and edges not in such structures do not change colours, it follows that $\chi_3(\bx)$ and $\chi_3(\by)$ differ on at most $9a$ edges. The previous item implies that $\chi_4(\bx)$ and $\chi_4(\by)$ differ on at most $225a$ edges.
                \item
                    If $i = 2$ then $\chi_2(\bx)$ and $\chi_2(\by)$ differ on at most two edges. By the previous item (using that $\chi_1(\bx) = \chi_1(\by)$), the colourings $\chi_4(\bx)$ and $\chi_4(\by)$ differ on at most $450$ edges.
                    
                    Suppose that $\bx$ and $\by$ agree on coordinates with $i \ge 2$ and that $\chi_1(\bx)$ and $\chi_1(\by)$ differ on at most $a$ edges. Then the collections of monochromatic cycles in $\chi_1(\bx)$ and $\chi_1(\by)$ differ by at most $2a$ cycles, implying that $\chi_2(\bx)$ and $\chi_2(\by)$ differ on at most $3a$ edges. By the previous item, this shows that $\chi_4(\bx)$ and $\chi_4(\by)$ differ on at most $675a$ edges.
                \item
                    If $i = 1$ then $\chi_1(\bx)$ and $\chi_2(\by)$ differ on $O(\sqrt{\log n})$ edges. Thus $\chi_4(\bx)$ and $\chi_4(\by)$ differ on $O(\sqrt{\log n})$ edges.
            \end{itemize} 
            
            To summarise, we have shown that if $\bx$ and $\by$ differ on at most one coordinate then $|r_{\chi_4(\bx)}(P_t) - r_{\chi_4(\by)}| = O(\sqrt{\log n})$, for every $t \ge 1$. It follows from McDiarmid's inequality \Cref{thm:mcdiarmid} that, with probability at least $9/10$, we have $\left|r_{\chi_4(\bX)}(P_t) - \Ex[r_{\chi_4(\bX)}(P_t)]\right| \le \frac{1}{2} n^{2/3}$, for every $t \ge 1$.
            
            Observe that red and blue were completely symmetric throughout the random process, and in all steps all actions were performed simultaneously. 
            Thus, $\Ex[r_{\chi_4(\bX)}(P_t)] = \Ex[b_{\chi_4(\bX)}(P_t)]$. It follows from the previous paragraph that 
            $\left|r_{\chi_4(\bX)}(P_t) - b_{\chi_4(\bX)}(P_t)\right| \le n^{2/3}$ for every $t \ge 1$, with probability at least $4/5$. This proves the claim.
        \end{proof}
        
        \begin{claim} \label{claim:chi-4-Q4}
            Property \ref{itm:approx-gadget-survive} holds for $\chi_4$ with probability at least $4/5$.
        \end{claim}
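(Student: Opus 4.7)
The plan is to reduce to a per-gadget survival bound and then concentrate using McDiarmid. Fix a red-blue coloured subcubic graph $H$ with $|\mathcal{H}| \ge n^{0.99}$ copies as components of $G_0$, and let $X$ be the number of $H' \in \mathcal{H}$ whose $\chi_4$-colouring agrees with $\chi_0$ on $E(H')$. I will show that for each fixed $H$, $X \ge n^{0.9}$ holds with probability $1 - o(1)$; since components of $G_0$ have size $O(\sqrt{\log n})$ by \ref{itm:extend-compt-size}, there are only $2^{O(\sqrt{\log n})} = n^{o(1)}$ relevant graphs $H$, and a union bound finishes the proof.

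The first step is to lower bound the per-gadget survival probability by $n^{-o(1)}$. The colouring of $H'$ in $\chi_1$ matches $\chi_0$ with probability $1/2$. Conditional on this, an edge $e \in E(H')$ can be recoloured in going from $\chi_1$ to $\chi_3$ only if $e = e_C$ for some $\chi_1$-monochromatic cycle $C$ meeting $H'$ (and $C$ is not rescued as a petal), or $e = e_C'$ for some $C$ chosen as a petal. Since the red and blue subgraphs of $\chi_1$ have maximum degree two, at most $|E(H')| = O(\sqrt{\log n})$ monochromatic cycles of $\chi_1$ meet $H'$, and the choices $e_C$ are independent across cycles. For each such $C$, property \ref{itm:extend-cycle-technical} supplies two consecutive edges $f_1, f_2 \in E(C)$ outside $G_0$; if $|C|$ is small, taking $e_C = f_1$ with probability $1/|C|$ and (if $C$ is chosen as a petal in step 3) $e_C' = f_2$ with probability $1/2$ keeps both swaps out of $G_0 \supseteq E(H')$, giving a per-cycle survival factor of $\Omega(1/|E(H')|)$, while if $|C| \ge 4|E(H')|$ the crude bound $\Pr[\{e_C, e_C'\} \cap E(H') = \emptyset] \ge 1/4$ already suffices. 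The product over at most $|E(H')|$ cycles is thus $|E(H')|^{-O(|E(H')|)} = n^{-o(1)}$, and step 4 contributes another $n^{-o(1)}$ factor by the same style of argument applied to the at most $O(|V(H')|)$ maximal $\chi_3$-paths touching $H'$, each of which makes one independent uniform choice among at most four candidate edges.

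By linearity, $\Ex[X] \ge |\mathcal{H}| \cdot n^{-o(1)} \ge n^{0.99 - o(1)}$. For concentration I reuse the encoding of $\chi_1, \ldots, \chi_4$ as functions of the $6 e(G)$ independent variables $\bar{X}$ built in \Cref{claim:chi-4-Q3}. The case analysis there already shows that flipping any one coordinate of $\bar{X}$ alters $\chi_4$ on at most $O(\sqrt{\log n})$ edges; since components of $G_0$ are pairwise vertex-disjoint, each such edge lies in at most one $H' \in \mathcal{H}$, and so $X$ is $O(\sqrt{\log n})$-Lipschitz in $\bar{X}$. Applying \Cref{thm:mcdiarmid} with $m = \Ex[X] - n^{0.9} \ge \tfrac12 \Ex[X] \ge n^{0.98}$ yields $\Pr[X < n^{0.9}] \le 2\exp(-\Omega(n^{0.96}/\log n))$, which is far smaller than needed to absorb the $n^{o(1)}$ union bound over $H$.

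The hard part will be the per-gadget bound, specifically the joint analysis of steps 2 and 3: a monochromatic cycle $C$ of $\chi_1$ can affect $H'$ either directly through the swap $e_C$ or indirectly through the petal swap $e_C'$, and both must simultaneously avoid $E(H')$. The ``two consecutive edges outside $G_0$'' phrasing of \ref{itm:extend-cycle-technical} is tuned exactly to this two-edge swap: it permits us to place $e_C$ and $e_C'$ on the same $G_0$-free arc of $C$ with probability $\Omega(1/|C|)$, which is what makes the per-cycle estimate work.
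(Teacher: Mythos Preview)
Your approach is correct in outline and genuinely different from the paper's. The paper does not use McDiarmid here; instead, it engineers \emph{independence} across gadgets. For each $\chi_1$-monochromatic cycle $C$ it partitions $E(C)$ into arcs $E_0, E_1, \dots, E_k$, one near each gadget $C$ meets and one residual arc, and realises the uniform choice of $e_C$ as: first pick $e_{E_i}$ uniformly in each $E_i$, then pick $i$ with probability $|E_i|/|C|$ and set $e_C = e_{E_i}$. The event ``$e_{E_i}$ and its successor lie outside $H_i$'' depends only on the local variable $e_{E_i}$, so the survival events $A_H$ become mutually independent across gadgets, and a straight binomial tail bound finishes. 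Your route---per-gadget lower bound, linearity of expectation, then McDiarmid via the Lipschitz analysis already established for \ref{itm:approx-concentration}---avoids this coupling trick entirely and is arguably cleaner, since it recycles existing machinery rather than introducing a new device.

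Two places in your write-up are loose and should be tightened. First, in steps 2--3 you assert a ``per-cycle survival factor'' and multiply, but the petal selections in step~3 and the choices of $e_C'$ couple different cycles. The clean fix is to lower-bound by the event ``for every $C_j$ meeting $E(H')$, both $e_{C_j}$ and its successor (in a fixed orientation of $C_j$) lie outside $E(H')$''. This is a \emph{sufficient} condition---the net swapped edge for $C_j$ in passing from $\chi_1$ to $\chi_3$ is always one of these two---and it depends only on the independent uniform choices $e_{C_j}$, with per-cycle probability at least $1/|C_j|$ by \ref{itm:extend-cycle-technical} (or at least $1/2$ when $|C_j|\ge 4|E(H')|$, by counting). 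Second, your step-4 argument via the $e_P$ choices is not quite right: a maximal $\chi_3$-path $P$ with both ends inside $V(H')$ may have all four of its candidate boundary edges in $E(H')$, so you cannot force $e_P\notin E(H')$. The clean argument (and the one the paper uses) is that the step-4 coin flips are independent of everything else, so with probability $(1/2)^{|E(H')|}=n^{-o(1)}$ every edge of $H'$ vetoes its own swap. With these two fixes your expectation bound $\Ex[X]\ge n^{0.99-o(1)}$ and the $O(\sqrt{\log n})$-Lipschitz McDiarmid step go through exactly as you wrote them.
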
 
        
        \begin{proof}
            Let $H$ be a red-blue subcubic graph, and let $\cH_0$ be the collection of components in $G_0$ that are isomorphic to $H$. We assume that $|\cH| \ge n^{0.99}$, as otherwise there is nothing to prove. It is easy to see that, with probability at least $0.99$, in $(G_0, \chi_1)$ there are at least $n^{0.9}$ components that are isomorphic to $H$; denote the set of such components by $\cH_1$. The main challenge of this claim is to show that many graphs in $\cH_1$ remain unchanged in $\chi_2$ and $\chi_3$, and for this we will use Property \ref{itm:extend-cycle-technical}. It is then quite easy to show that such graphs have decent probability of remaining the same in $\chi_4$, too.
            
            For this claim it will be convenient to think of the process defining $\chi_2$ and $\chi_3$, as follows. Fix a direction for each monochromatic cycle in $\chi_1$. Each such cycle chooses, uniformly at random, one of its edges $e_C^1$. Denote by $e_C^2$ the edge next to $e_C^1$ (according to the direction we fixed). Now, the colour of one of $e_C^1$ and $e_C^2$ (chosen uniformly at random) is swapped. This yields $\chi_2$. Recall that in $\chi_2$, we consider monochromatic cycles $C$ along with their edges $e_1, \ldots, e_{k_C}$, that were swapped previously, and the corresponding petals $C_1, \ldots, C_{k_C}$ (where $C_i$ is a cycle that was monochromatic in $\chi_1$ and elected to swap $e_i$). Now, $i$ is chosen uniformly at random from $[k_C]$. Notice that $e_i = e_{C_i}^j$ for some $j \in [2]$. Finally, the colour of $e_i$ is swapped back, and the colour of $e_{C_i}^{3-j}$ is swapped. It is easy to see that this emulates the processes used to obtain $\chi_3$ and $\chi_4$, with the slight difference that $e_i'$ is chosen pre-emptively.
            
            The point of this discussion is that for some $H \in \cH_1$ to survive, it suffices to make sure that $e_C^1$ is chosen so that it and its successor in $C$ are not in $H$, for every monochromatic cycle $C$ that intersects $E(H)$. While it is not hard to show that this is true with not-too-small probability for any $H \in \cH_1$, we need to work a little harder to define events which are independent.
            
            For each $H \in \cH_1$, we define a collection $\cC_H$ of subgraphs of monochromatic cycles as follows. For a monochromatic cycle $C$, let $H_1, \ldots, H_k$ be the graphs in $\cH_1$ that contain edges of $C$. Let $E_i$ be the collection of edges in $C$ that are at distance at most $2$ from $H_i$ (notice that the sets $E_i$ are pairwise disjoint by the assumption that the graphs in $\cH_1$ are at distance at least $10$ from each other). Add $E_i$ to $\cC_{H_i}$, for each $i$. Add $E_{C_0} := E(C) \setminus (E_{C, 1} \cup \ldots \cup E_{C, k})$ to a ``reserve collection'' $\cC_0$. 
            To choose $e_C^1$, we let each set $E_i$ choose an edge $e_{E_i} \in E_i$ uniformly at random and then pick $i \in [0,k]$ with probability $\frac{|E_i|}{|C|}$, and take $e_C^1 = e_{E_i}$. Observe that this process picks $e_C^1$ uniformly at random from $C$.
            
            We claim that, for every $H \in \cH_1$, every $E \in \cC_H$ contains two consecutive edges outsider of $H$. Indeed, if a monochromatic cycle $C$ intersects some $H' \in \cH_1$ other than $H$, then this holds due to the distance assumption on $H$ and $H'$, and otherwise it follows from \ref{itm:extend-cycle-technical}.
            Moreover, by \ref{itm:extend-compt-size}, every $E \in \cC_H$ has size $O(\sqrt{\log n})$.
            Thus, with probability at least $\Omega(1/\sqrt{\log n})$, both $e_E$ and its successor in its monochromatic cycle $C$ are not in $H$. Importantly, if this is the case then it is guaranteed that $e_C^1$ and its successor in $C$ are not in $H$. Denote by $A_H$ the event that $e_E$ and its successor in its monochromatic cycle are not in $H$, for every $E \in \cC_H$. Because $H$ has $O(\sqrt{\log n})$ edges, and the elements in $\cC_H$ are pairwise edge-disjoint, $\Pr[A_H] \ge (\sqrt{\log n})^{-O(\sqrt{\log n})} \ge n^{-0.01}$.
            Since the events $(A_H)_{H \in \cH_1}$ are independent, it follows that with probability at least $0.99$, at least $n^{0.8}$ of them hold, showing that the collection $\cH_3$ of graphs $H \in \cH_1$ whose colours remain unchanged in $\chi_3$ has size at least $n^{0.8}$. 
            
            To finish, notice that, with probability at least $\exp(-O(\sqrt{\log n})) \ge n^{-0.01}$, all edges in $H$ decide to veto being swapped, for each $H \in \cH_3$. Since these events are independent, it follows that with probability at least $0.99$, at least $n^{0.7}$ graphs in $\cH_3$ remain unchanged in $\chi_4$. This proves the claim.
        \end{proof}
    The proof of \Cref{lem:MainCol} now follows very easily. Indeed, with probability at least $1/5$, Properties \ref{itm:approx-only-short-paths} to \ref{itm:approx-gadget-survive} all hold, using \Cref{cl:chi-4-monocomponentsarepaths,claim:chi-4-long-paths,claim:chi-4-spiders,claim:chi-4-Q3,claim:chi-4-Q4}.
\end{proof}

\section{Exact solution - proof of the main theorem}\label{sec:Exact}
In this section we will state the main lemmas needed to turn the approximate version (\Cref{lem:MainCol}) into an exact solution of Wormald's conjecture. We will then show how to prove our main theorem (\Cref{thm:main}) using these lemmas.

An important element of these lemmas is the notion of \emph{gadgets}, defined here. We only give an abstract definition here, and later (in \Cref{sec:gadgets}) we give a concrete definition, that will be shown to satisfy the abstract one.

\begin{definition} \label{def:gadget-abstract}
    A \emph{blue $\ell$-gadget} is a red-blue subcubic graph $H$, satisfying the following. 
    There is another red-blue colouring $H'$ of $H$, such that for every red-blue cubic graph $G$ that contains $H$, and whose monochromatic components are paths, the graph $G'$, obtained from $G$ by replacing $H$ by $H'$, satisfies the following properties.
    \begin{itemize}
        \item 
            the monochromatic components in $G'$ are paths,
        \item
            $r_{G}(P_t) = r_{G'}(P_t)$ for every $t$,
        \item
                $b_{G}(P_t) - b_{G'}(P_t) = 
                \left\{
                \begin{array}{ll}
                    0 & \text{if $t > \ell$} \\
                    1 & \text{if $t = \ell$}.
                \end{array}
                \right.$
        \item
            $H$ and $H'$ differ on at most two edges,
        \item
            $H$ contains a blue path of length exactly $\ell$, whose ends are incident with two red edges.
    \end{itemize}
    
    A \emph{red $\ell$-gadget} is defined analogously, with the roles of red and blue replaced.
\end{definition}
We remark that the last two properties are not crucial, but it will later be convenient to have them, and they are satisfied naturally by the explicit gadgets that we use.

We now state a key lemma. It will give us a partial colouring of a cubic graph $G$ with properties that make it amenable to an application of \Cref{lem:MainCol}.

\begin{lemma}\label{lem:partialColMainLemma}
Let $G$ be a connected cubic graph on $n$ vertices, where $n$ is large. Then there exists a red-blue colouring of a subgraph $G_0\subseteq G$, such that the following holds.
\begin{enumerate}
    \item \label{itm:partial-extendable}
    $G_0$ is extendable according to \Cref{def:GoodPartialCol},
    \item \label{itm:partial-gadgets}
    for every $\ell\in[3,10^{10}\sqrt{\log n}]$ there are at least $n^{0.999}$ components in $G_0$ that contain a blue $\ell$-gadget.
\end{enumerate}
\end{lemma}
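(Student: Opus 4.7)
The plan is to construct $G_0$ as a disjoint union of carefully placed coloured neighbourhoods — one gadget neighbourhood for each target length $\ell$ — following the roadmap of Section 2. For each $\ell \in [3, 10^{10}\sqrt{\log n}]$, I would first extract a family $\mathcal{P}_\ell$ of at least $n^{0.9995}$ geodesics of length $c\ell$ in $G$ (for a sufficiently large absolute constant $c$ to accommodate the gadget footprint), such that all chosen geodesics across all $\ell$ are pairwise at distance at least $10^{10}\sqrt{\log n}$. Since a ball of radius $r$ in a cubic graph has at most $3 \cdot 2^r$ vertices and $\ell = O(\sqrt{\log n})$, a greedy selection of geodesic centres combined with a BFS-based counting of ``source/destination'' pairs yields the required number, using that $G$ is connected with $n$ large (in particular the diameter is at least $\log_2 n$, and through almost every vertex there is a geodesic of any prescribed length below it).

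Around each geodesic $P \in \mathcal{P}_\ell$, I would invoke the gadget-construction machinery of \Cref{sec:GeodesicNoCommonNghbs} (for the case where no two vertices of $P$ share an off-$P$ neighbour) and \Cref{sec:GeodesicWithCommon} (in general). These produce, inside a ball of radius $O(\sqrt{\log n})$ around $P$, a red-blue coloured subgraph $G_P \subseteq G$ containing a blue $\ell$-gadget in the sense of \Cref{def:gadget-abstract}, with the two additional local guarantees that every vertex with at least two neighbours in $G_P$ already sees both colours (feeding into \ref{itm:extend-degree}) and that every cycle not contained in the support of the switch $\chi \leftrightarrow \chi'$ contains two consecutive edges outside $G_P$ (feeding into \ref{itm:extend-cycle-technical}). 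Setting $G_0 := \bigcup_\ell \bigcup_{P \in \mathcal{P}_\ell} G_P$, conditions \ref{itm:extend-degree}, \ref{itm:extend-compt-size}, \ref{itm:extend-cycle-technical}, and item \ref{itm:partial-gadgets} all follow from the per-gadget guarantees together with the pairwise separation of geodesics. The remaining structural condition \ref{itm:extend-Hs} — a decomposition of $G \setminus G_0$ into a subdivided subcubic graph $H_1$, short even cycles $H_2$, and theta-subdivisions $H_3$ — would be handled by the argument of \Cref{sec:partial-colouring}: the wide separation and small size of the components of $G_0$ leave a subcubic graph whose degree-$1$ and degree-$2$ vertices (those touching $G_0$) are concentrated in well-separated small islands, and a careful surgery peels off the short even cycles and triple edges near these islands before recognising what remains as a subdivision of a graph of maximum degree three.

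The main obstacle, by a wide margin, is the gadget construction itself in the presence of short cycles near the geodesic $P$. One must realise the gadget structure, confine the coloured region to a small ball, and simultaneously enforce both \ref{itm:extend-degree} and the technical cycle condition \ref{itm:extend-cycle-technical}. A single forced colour on an edge touching $P$ can propagate along common neighbours, second neighbours, and short cycles; when two vertices of $P$ share an off-$P$ neighbour these cascades interact, and the entirety of \Cref{sec:GeodesicWithCommon} is devoted to showing that, by tracking an appropriate colouring process around $P$, they terminate inside a small ball without damaging the gadget. A secondary subtlety is that the gadget must survive the cycle-breaking steps of the random colouring from \Cref{lem:MainCol}, which is precisely what forces the awkward condition \ref{itm:extend-cycle-technical} and shapes much of the case analysis.
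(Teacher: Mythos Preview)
Your proposal is correct and follows essentially the same route as the paper: find many well-separated geodesics (\Cref{cl:geodesics}), build a gadget in a small ball around each via \Cref{lem:GoodGadgetsExist}, take the union to get \ref{itm:extend-degree}--\ref{itm:extend-cycle-technical} and item~\ref{itm:partial-gadgets}, and then invoke \Cref{sec:partial-colouring} for \ref{itm:extend-Hs}. One small inaccuracy worth flagging: obtaining \ref{itm:extend-Hs} is not merely a decomposition of $G\setminus G_0$ as your gloss suggests, but requires first \emph{extending} the colouring (\Cref{Lemma_get_E4}) to eliminate odd uncoloured cycles with at most two uncoloured-degree-$3$ vertices before the decomposition of \Cref{Lemma_subcubic_subdivision} applies --- your deference to \Cref{sec:partial-colouring} covers this, but your informal description does not.
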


Most of the work towards the proof of \Cref{lem:partialColMainLemma} will go into finding a partial colouring satisfying \ref{itm:extend-degree} and \ref{itm:extend-cycle-technical} from \Cref{def:GoodPartialCol}. Later, we will show that this colouring also satisfies \ref{itm:extend-compt-size} and \ref{itm:extend-Hs}. To obtain such a colouring, we find many geodesic paths that are sufficiently far apart from each other (see \Cref{cl:geodesics}), and colour small balls around so that the desired properties hold. The latter is the content of the next lemma.

\begin{lemma}\label{lem:GoodGadgetsExist}
    Let $P$ be a geodesic of length at least $5\ell + 200$ in a cubic graph $G$. Then there is a partial colouring $\phi_P$, such that 
    \begin{enumerate}
        \item 
            the edges coloured by $\phi_P$ are at distance at most $4$ from $P$ and form a connected subgraph,
        \item
            the edges coloured by $\phi_P$ contains a blue $\ell$-gadget, 
        \item
            the partial colouring $\phi_P$ satisfies \ref{itm:extend-degree} and \ref{itm:extend-cycle-technical} from \Cref{def:GoodPartialCol}.
    \end{enumerate}
\end{lemma}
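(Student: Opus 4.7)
My plan is to construct $\phi_P$ by placing the blue $\ell$-gadget on a central segment of the geodesic $P = v_0 v_1 \ldots v_L$ and then propagating a partial colouring to a bounded neighbourhood so as to secure the extendability conditions. I would fix a subpath $Q \subseteq P$ of length $\ell$ sitting well inside $P$ (say starting at $v_{100}$ and ending at $v_{100+\ell}$), colour its edges blue, and colour the two edges of $P$ at the endpoints of $Q$ red, giving the blue path of length exactly $\ell$ whose ends are incident with two red edges required by the last bullet of \Cref{def:gadget-abstract}. I would also colour additional edges of $P$ and some chosen pendant edges in a buffer of length roughly $100$ on each side of $Q$, so that the alternative colouring $H'$ can be obtained from $\phi_P$ by swapping at most two specific edges in the gadget region, in such a way that the blue $\ell$-component is destroyed, no blue $P_t$-component with $t > \ell$ is created or lost, and no red $P_t$-component changes. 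The hypothesis $L \ge 5\ell + 200$ gives ample room for both $Q$ and its buffer inside $P$, and the entire coloured subgraph will sit inside the ball of radius $4$ around $P$ and be connected, as demanded.

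With this skeleton in place, the next task is to satisfy (E1): any vertex incident with two coloured edges must see both colours. For each interior vertex $v_i$ of the coloured portion of $P$, I would decide whether to colour the pendant edge $v_i u$ (with $u \notin P$) and with which colour, based on which colours $v_i$ already sees on $P$. Away from the gadget core, the alternation of colours along $P$ is flexible enough for a greedy local choice to work, and near the core the gadget design itself already dictates the correct colours.

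The main obstacle is property (E3), which requires every cycle of $G$ not using both colours of some coloured component to contain two consecutive uncoloured edges. Cycles staying far from $P$ are unaffected, and cycles that visit the radius-$4$ ball only briefly automatically carry many uncoloured edges. The truly dangerous cycles are short cycles through $P$ itself, which can arise exactly when two vertices $v_i, v_j \in P$ share a common outside neighbour $u$; since $P$ is a geodesic, such a pair must satisfy $|i-j| \le 2$, severely limiting how such configurations accumulate. I expect the proof to follow the split already advertised in the overview: the clean case when no two vertices of $P$ share an outside common neighbour (\Cref{sec:GeodesicNoCommonNghbs}), in which pendant edges behave independently and both (E1) and (E3) follow from a straightforward sweep along $P$; and the harder case (\Cref{sec:GeodesicWithCommon}), where the colouring algorithm has to track each short cycle through $P$ and deliberately leave two consecutive edges on it uncoloured while still matching up the colours at shared vertices so that (E1) remains true. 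Arguing that these local conflicts can be resolved in $O(1)$ steps each, without propagating outside the radius-$4$ ball and without destroying the gadget structure in $Q$, is where the bulk of the technical work should lie.
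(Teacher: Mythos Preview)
Your outline captures the right shape of the argument --- plant a Type~II gadget on a central subpath of $P$, then repair the surroundings --- and you are correct that the case split between ``no common neighbours outside $P$'' and ``some common neighbours'' is the pivot. But two key ideas are missing, and without them the sketch will not close.

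\emph{First, the gadget itself.} Saying ``colour a length-$\ell$ blue subpath, red at the ends, and design $H'$ by swapping two edges'' does not yet produce a blue $\ell$-gadget in the sense of \Cref{def:gadget-abstract}: the hard constraint is that the red component counts $r(P_t)$ are \emph{unchanged} for every $t$, and the blue counts for $t>\ell$ are unchanged as well. To achieve this the paper uses a very specific local structure (the Type~II gadget of \Cref{sec:gadgets}), which requires a short red path $Q_1$ from $q_1$ to $q_3$ sitting off of $P$. This path is not something you can just draw; it comes from the local geometry around $P$ --- either a common neighbour of $p_{s-1}, p_{s+1}$, or a vertex of $P'\setminus P$ created earlier, or the exceptional graph $\Fexc$. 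Your proposal does not say where $Q_1$ comes from, so as stated the coloured graph need not contain any gadget at all.

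\emph{Second, the propagation.} When two vertices $p_i, p_{i+2}$ on $P$ share an outside neighbour $u$, both pendant edges $p_i u, p_{i+2} u$ get the same colour, forcing you to colour the third edge at $u$. That edge may hit another vertex already touched by the process, and so on. Your plan to ``deliberately leave two consecutive edges uncoloured'' on each short cycle is not how the paper handles \ref{itm:extend-cycle-technical}, and in fact cannot work in isolation: once the cascade forces you to colour those edges to preserve \ref{itm:extend-degree}, you lose your reserved uncoloured pair. The paper's mechanism is different and global: it orients every newly coloured edge away from the forcing vertex, shows that the resulting digraph $D$ is a forest of ``almost binary'' trees whose leaves lie on $P$, and then uses the geodesic property of $P$ (leaf count $\le$ diameter $+1$) to prove that each tree has depth at most~$3$. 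This is what pins the process to the radius-$4$ ball, and the same analysis shows that the only way a monochromatic-degree-$3$ vertex can appear is as the centre of one explicit eleven-edge graph $\Fexc$, which is then repaired by hand. None of this tree/geodesic bookkeeping is present in your sketch, and the claim that conflicts ``resolve in $O(1)$ steps each'' is exactly the statement that needs proof.
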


The case that where no two vertices in the geodesic have a common neighbour outside of $P$ is easier to handle, so we consider it separately in the next lemma. Given this lemma, the proof of \Cref{lem:GoodGadgetsExist} will focus on geodesics with (many) pairs of vertices having a common neighbour outside.

\begin{lemma}\label{lem:GoodCaterpillarGadgetsExist}
    Let $P$ be a geodesic of length at least $\ell + 36$ in a cubic graph $G$. Assume that no two vertices of $P$ have a common neighbour outside of $P$. Then there is a partial colouring $\phi_P$, such that
    \begin{enumerate}
        \item 
            the edges coloured by $\phi_P$ are at distance at most $3$ from $P$ and form a connected subgraph,
        \item
            the edges coloured by $\phi_P$ contains a blue $\ell$-gadget, 
        \item
            the partial colouring $\phi_P$ satisfies \ref{itm:extend-degree} and \ref{itm:extend-cycle-technical} from \Cref{def:GoodPartialCol}.
    \end{enumerate}
\end{lemma}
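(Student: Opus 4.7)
The plan is to colour a short segment of $P$ together with its immediate off-path neighbourhood, so that the resulting partial colouring contains a copy of the canonical blue $\ell$-gadget prescribed in \Cref{sec:gadgets} and satisfies \ref{itm:extend-degree} and \ref{itm:extend-cycle-technical}. Concretely, I would pick indices $i_1$ and $i_2 := i_1 + \ell$ with $18 \le i_1 < i_2 \le k - 18$, which is possible because $P$ has length at least $\ell + 36$. Writing $w_i$ for the unique off-$P$ neighbour of each internal vertex $v_i$ (the $w_i$ are pairwise distinct and off $P$, by the no-common-neighbour hypothesis together with the geodesic property), define $\phi_P$ by colouring the spine edges $v_iv_{i+1}$ for $i_1 \le i < i_2$ blue, the two flanking edges $v_{i_1-1}v_{i_1}$ and $v_{i_2}v_{i_2+1}$ red, and the pendants $v_iw_i$ red for every $i_1 \le i \le i_2$. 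If the concrete gadget in \Cref{sec:gadgets} calls for a small number of further coloured edges one hop further out (at some $w_i$ or further along), add them with the prescribed colours; the whole coloured subgraph $G_0$ then stays within distance $3$ of $P$ and forms a connected caterpillar-type tree.

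Containment of the blue $\ell$-gadget is by design: the sub-path $v_{i_1}\dots v_{i_2}$ is a blue $P_\ell$ whose two endpoints are each incident with two red edges, matching the gadget's prescribed backbone, and the remaining red pendants supply the rest of its structure. Property \ref{itm:extend-degree} will be immediate from the colouring pattern: every interior spine vertex sees two blues and one red in $G_0$, each endpoint sees one blue and two reds, and any auxiliary edges added near a $w_i$ can be coloured so as to present both colours at $w_i$ by a short case check.

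The delicate point to verify is \ref{itm:extend-cycle-technical}: for every cycle $C$ of $G$ that fails to contain both a red and a blue edge from the (single) $G_0$-component, I must exhibit two consecutive edges of $C$ lying outside $G_0$. Since $G_0$ is a tree, $C$ has at least one outside edge to begin with. If $C$ uses any interior spine edge or interior pendant, then $C$ passes through some $v_j$ with $i_1 < j < i_2$; all three edges at $v_j$ lie in $G_0$, two blue and one red, so $C$ is forced to pick up both colours, contradicting our assumption. Hence $C$ meets $G_0$ only at the endpoints $v_{i_1}, v_{i_2}$. At $v_{i_1}$ the two non-spine edges are red while the spine edge is blue, so $C$ must pair the two red edges consecutively at $v_{i_1}$ and then close up via a path of $G\setminus G_0$ from $w_{i_1}$ back to $v_{i_1-1}$; the no-common-neighbour hypothesis forbids the edge $w_{i_1}v_{i_1-1}$, so this closing path has length at least $2$, producing the required two consecutive outside edges. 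The symmetric analysis at $v_{i_2}$, together with any combination of the two endpoints, completes the verification.

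The main obstacle will be to correctly realise the concrete blue $\ell$-gadget of \Cref{sec:gadgets} — in particular, certifying that its prescribed two-edge swap transforms monochromatic component counts in the required way in every cubic extension — and to ensure that any auxiliary coloured edges needed for the gadget do not disrupt \ref{itm:extend-cycle-technical}. The no-common-neighbour hypothesis is precisely what rules out the short closing paths (for instance four-cycles through an edge $w_iw_j$) that would otherwise leave $C$ with only a single outside edge; this feature is exactly what fails in the general case handled later in \Cref{sec:GeodesicWithCommon}.
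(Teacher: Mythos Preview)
Your proposal has a genuine gap: the colouring you describe does not produce a blue $\ell$-gadget, and once you add the missing pieces the \ref{itm:extend-cycle-technical} argument you give no longer works.

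First, the gadget structure. A blue $\ell$-gadget is not merely a blue $P_\ell$ whose ends see two red edges; it must come with an alternate colouring $\chi'$, differing on at most two edges, whose swap decreases $b(P_\ell)$ by one while leaving all $r(P_t)$ and all $b(P_t)$ with $t>\ell$ unchanged. Your caterpillar (blue spine, red pendants, red flanks) admits no such two-edge swap: flipping a spine edge creates a monochromatic-degree-$3$ vertex, and flipping a pendant grows the blue path to length $\ell+1$. The concrete gadgets in \Cref{sec:gadgets} have a quite different shape: the backbone $Q_0$ has length $\ell+2$ with the interior edge $q_1q_2$ coloured \emph{red}, and there are red paths $Q_1,Q_2$ leaving $q_1,q_3$ together with \emph{blue} edges $f_1,f_2,f_3,f_4$ at the far ends $Q_1^+,Q_2^+$ (which in our setting are the off-path vertices $p'_{s-1},p'_{s+1}$). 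Your hedge ``if the concrete gadget calls for further edges, add them'' does not fix this, because those blue $f_i$ are not decorations but the heart of the swap mechanism.

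Second, once the blue edges $f_1,f_2$ at $p'_{s-1}$ and $f_3,f_4$ at $p'_{s+1}$ are present, your \ref{itm:extend-cycle-technical} argument collapses. A cycle through $f_1,f_2$ at $p'_{s-1}$ need not touch the spine at all, so your ``$C$ must meet $G_0$ only at the endpoints $v_{i_1},v_{i_2}$'' dichotomy fails. Concretely, if the two other neighbours $u,u'$ of $p'_{s-1}$ happen to be adjacent, the triangle $p'_{s-1}uu'$ has two blue edges and a single uncoloured edge, violating \ref{itm:extend-cycle-technical}. The hypothesis that no two vertices of $P$ share an off-$P$ neighbour says nothing about adjacencies among the $p'_i$ or one hop further out. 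This is exactly why the paper's proof of \Cref{lem:GoodCaterpillarGadgetsExist} performs a three-way case split (\Cref{cl:CaterpillarCases}) on whether $p'_{s-1}$ and $p'_{s+1}$ have common neighbours and where those neighbours sit, and then colours additional edges among $u,u',w,w'$ (or uses a Type~II gadget instead) to kill these short single-colour cycles.
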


The rest of the paper is organised as follows. In the next subsection (\Cref{sec:finish}) we show how to prove the main theorem (\Cref{thm:main}) using \Cref{lem:MainCol} and \Cref{lem:partialColMainLemma}. In \Cref{sec:gadgets} we present the explicit gadgets that we find in the proofs of \Cref{lem:GoodGadgetsExist,lem:GoodCaterpillarGadgetsExist}. 
We prove \Cref{lem:GoodCaterpillarGadgetsExist} and \Cref{lem:GoodGadgetsExist} in Sections \ref{sec:GeodesicNoCommonNghbs} and \ref{sec:GeodesicWithCommon}, respectively.
Finally, in \Cref{sec:partial-colouring} we show how to combine everything to prove \Cref{lem:partialColMainLemma}.

\subsection{Proof of the main theorem}\label{sec:finish}

In this section we prove our main result, restated here, from previously stated lemmas.
\thmMain*

The proof proceeds as follows. We start by fixing a partial colouring $G_0$, as guaranteed by \Cref{lem:partialColMainLemma}. We then swap the colours of some of the component of $G_0$, to ensure that there are many blue and red $\ell$-gadgets, for all relevant $\ell$. We then consider a full red-blue colouring of $G$, as guaranteed by \Cref{lem:MainCol}. We now equalise $r(P_t)$ and $b(P_t)$, for all $t \ge 1$, in three steps. First, we equalise $r(P_t)$ and $b(P_t)$ for all $t \ge 4000 \sqrt{\log n}$. For this, we use that all components in $G$ are monochromatic paths of length $O(\log n)$, and that $G$ has no short red paths with many long blue paths touching it. We then equalise the number of red and blue edges. Finally, we equalise $r(P_t)$ and $b(P_t)$, one by one, from the largest $t$ for which they differ, to $t = 3$, using gadgets.
It is not hard to see that if $r(P_t) = b(P_t)$ for $t \ge 3$ and the number of red and blue edges is the same, then also $r(P_2) = b(P_2)$ and $r(P_1) = b(P_1)$, so we are done.

\begin{proof}[Proof of \Cref{thm:main} using \Cref{lem:partialColMainLemma}]
    Apply \Cref{lem:partialColMainLemma} to obtain a subgraph $G_0 \subseteq G$ along with a red-blue colouring which is extendable (recall \Cref{def:GoodPartialCol}) and contains at least $n^{0.999}$ components containing blue $\ell$-gadgets, for each $\ell \in [3, 10^{10}\sqrt{\log n}]$. Swap the colours of half of the blue $\ell$-gadgets to obtain a new extendable colouring of $G_0$, denoted $G_0'$, for which there are at least $\frac{1}{2}n^{0.999}$ components containing blue (respectively red) $\ell$-gadgets. It is easy to see that this new colouring is also extendable.
    
    Notice that the number of red-blue coloured subcubic graphs of order at most $10^{10}\sqrt{\log n}$ is at most $2^{3 \cdot 10^{10}\sqrt{\log n}} \left(10^{10} \sqrt{\log n}\right)^{3 \cdot 10^{10} \sqrt{\log n}} \le n^{0.001}$, using that $n$ is large. Thus, for every $\ell \in [3, 10^{10}\sqrt{\log n}]$, there are at least $n^{0.998}$ isomorphic components in $G_0'$ containing a blue (respectively red) $\ell$-gadget.
    
    Apply \Cref{lem:MainCol} to obtain a red-blue colouring $\chi_1$ of $G$ satisfying properties \ref{itm:approx-only-short-paths} to \ref{itm:approx-gadget-survive}. 
    In the next claim we equalise the number of red and blue components that are isomorphic to $P_t$, for all $t \ge 4000\sqrt{\log n}$, while changing few edges.
    
    \begin{claim} \label{claim:chi-2}
        There is a red-blue colouring $\chi_2$ of $G$ such that
        \begin{enumerate}[label = \rm(A\arabic*)]
            \item \label{itm:sqrt-no-cycles}
                the monochromatic components in $\chi_2$ are paths of length $O(\log n)$,
            \item \label{itm:sqrt-small-change}
                $\chi_1$ and $\chi_2$ differ on at most $n^{3/4}$ edges of $G$,
            \item \label{itm:sqrt-balanced}
                $r_{\chi_2}(P_t) = b_{\chi_2}(P_t)$ for every $t \ge 4000 \sqrt{\log n}$.
        \end{enumerate}
    \end{claim}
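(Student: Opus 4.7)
I would process values of $t$ in decreasing order, from the largest $t$ with $r_{\chi_1}(P_t) \ne b_{\chi_1}(P_t)$ (at most $O(\log n)$ by \ref{itm:approx-only-short-paths}) down to $t_0 := \lceil 4000 \sqrt{\log n}\rceil$, equalising $r(P_t)$ and $b(P_t)$ at each step by flipping a small set of well-chosen edges of the current colouring. Throughout, I maintain the invariant that monochromatic components are paths of length $O(\log n)$ and that $r(P_{t'}) = b(P_{t'})$ for every $t' > t$ already processed.

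\textbf{A single iteration.} Fix $t$ and WLOG assume $b_\chi(P_t) \ge r_\chi(P_t)$. Set $x_t := b_\chi(P_t) - r_\chi(P_t)$, which is at most $n^{2/3}$ by \ref{itm:approx-concentration} (adjusted trivially for the at most $n^{3/4}$ edges changed so far in earlier iterations). Pick any $x_t$ blue $P_t$-components into a set $\mathcal{E}_t$, and partition each $P \in \mathcal{E}_t$ into consecutive sub-paths of length $K := \lceil 100\sqrt{\log n}\rceil$, producing a collection $\mathcal{J}$ of $O(n^{2/3}\sqrt{\log n})$ segments. From each $I \in \mathcal{J}$ I will pick one \emph{interior} edge $e_I$ (not incident to an endpoint of the enclosing $P$) and flip its colour. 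Since interior vertices of $P$ have red degree $1$, no vertex reaches monochromatic degree $3$; each $P$ is broken into blue fragments of length $< 2K < t_0$, so $b(P_t)$ drops by exactly $x_t$ and no new blue $P_{t'}$ with $t' \ge t_0$ is created. The only remaining tasks are to ensure (i) no monochromatic cycle arises and (ii) no new monochromatic path of length $\ge t_0$ is created.

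\textbf{Choosing the edges via LLL.} I pick each $e_I$ uniformly and independently from the interior edges of $I$, and define $A_I$ to be the event that the flip at $I$ participates in a monochromatic cycle or in a new monochromatic path of length $\ge t_0$. Call an interior edge $e = uv \in I$ \emph{safe} if the maximal red paths $R_u, R_v$ through $u$ and $v$ in the current colouring are distinct and both have length $< K/3$; flipping a safe edge produces a red path of length $\le K < t_0$ and no cycle. By \ref{itm:approx-only-short-paths} all maximal red paths have length $O(\log n)$, and by \ref{itm:approx-no-spiders}, combined with a double-counting argument that bounds the number of ``long'' red paths attached to blue $P_t$-components, a constant fraction of the interior edges of each segment are safe, giving $\Pr[A_I] \le 1/8$. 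The dependency graph of $\{A_I\}$ has degree $\polylog(n)$: $A_I$ and $A_J$ interact only when their flipped edges can be joined by a chain of old red paths into a common new path or cycle, and \ref{itm:approx-only-short-paths} bounds the number of such chains. The Lovász Local Lemma then yields a valid simultaneous choice of $(e_I)_{I \in \mathcal{J}}$.

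\textbf{Wrap-up and main obstacle.} Summing $|\mathcal{J}| = O(n^{2/3}\sqrt{\log n})$ over the $O(\log n)$ values of $t$ gives total edge changes $O(n^{2/3}(\log n)^{3/2}) \le n^{3/4}$, establishing \ref{itm:sqrt-small-change}; the invariant gives \ref{itm:sqrt-no-cycles} and \ref{itm:sqrt-balanced}. The \textbf{main obstacle} is the LLL step. Both the ``enough safe edges per segment'' bound and the dependency-degree bound require a careful exploitation of \ref{itm:approx-no-spiders} to rule out clusters of long red paths around the long blue paths in $\mathcal{E}_t$; this may force a judicious choice of $\mathcal{E}_t$, favoring blue $P_t$-components with few long red paths attached, which is itself enabled by a global counting argument using \ref{itm:approx-only-short-paths}. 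A secondary subtlety is that \ref{itm:approx-no-spiders} is guaranteed for $\chi_1$ rather than for the current colouring at each iteration, but since only $n^{3/4}$ edges are ever flipped, a slightly weakened form persists, which suffices for the argument.
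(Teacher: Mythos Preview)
Your plan is close in spirit to the paper's proof --- break each excess long monochromatic path into short segments and flip one well-chosen edge per segment via the Local Lemma --- but the paper's execution is simpler and sidesteps exactly the obstacles you flag as unresolved.

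Two differences matter. First, the paper processes all lengths $t \ge 4000\sqrt{\log n}$ \emph{simultaneously}: for each such $t$ it puts $|x_t|$ paths of the majority colour into a single pool $\cP_r$ or $\cP_b$, decomposes them all at once into segments of length in $[1000\sqrt{\log n}, 2000\sqrt{\log n}]$, and performs a single round of flips. This removes your concern about \ref{itm:approx-no-spiders} degrading across iterations.

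Second, the paper's LLL collapses to a degree-$4$ independent-transversal argument. A candidate edge $xy$ in a red segment is one whose two attached blue components are \emph{distinct} and each of length at most $100\sqrt{\log n}$; with segments of length $\ge 1000\sqrt{\log n}$, property \ref{itm:approx-no-spiders} (whose threshold is exactly $100\sqrt{\log n}$) gives $|\cand(J)| \ge |J|/5$ directly, with no further double-counting needed. The auxiliary graph joins two candidates only when they touch a common blue path; since each short blue path has two ends, each lying on at most two candidate edges in other segments, the maximum degree is $4$. The transversal condition alone (no two chosen edges touch the same blue path) already forces every new blue path to contain at most one flipped edge, hence to have length at most $201\sqrt{\log n}$, and rules out cycles; there is no need to build ``long path or cycle'' into the bad event.

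By contrast, your segment length $K = 100\sqrt{\log n}$ with safe threshold $K/3 \approx 33\sqrt{\log n}$ does not line up with the $100\sqrt{\log n}$ threshold of \ref{itm:approx-no-spiders}, so the ``constant fraction safe'' claim does not follow from it as stated; and your bad event $A_I$ depends on the random choices in neighbouring segments, so neither $\Pr[A_I] \le 1/8$ nor a $\polylog(n)$ dependency bound is justified as written. Both issues vanish once you adopt the paper's larger segment length and its pairwise transversal formulation.
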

    
    \begin{proof}
        Let $\cP_r$ and $\cP_b$ be collections of, respectively, red and blue (maximal) paths in $\chi_1$, obtained as follows.
        For $t \ge 4000\sqrt{\log n}$ write $x_t = r(P_t) - b(P_t)$; if $x_t > 0$ add $x_t$ (maximal) red paths of length $t$ to $\cP_r$, and if $x_t < 0$ add $-x_t$ (maximal) blue paths of length $t$ to $\cP_b$.
        Form $\cJ_r$ by decomposing each path in $\cP_r$ into subpaths of length in $[1000\sqrt{\log n}, 2000\sqrt{\log n}]$, and define $\cJ_b$ analogously.
        
        For a path $J \in \cJ_r$, let $\cand(J)$ be the set of edges $xy$ in $J$ such that the blue components containing $x$ and $y$ are distinct paths of length at most $c\sqrt{\log n}$. Observe that if $x, y, z$ are three consecutive vertices in $J$, and $x$ and $y$ are in the same blue path $P$, then $z \notin P$. Thus at least $(|J|-3)/2$ edges $xy$ in $J$ are such that $x$ and $y$ belong to distinct blue components. By \ref{itm:approx-no-spiders}, at most $100\sqrt{\log n}$ vertices $x$ in the interior of $J$ are incident to blue paths of length at least $100\sqrt{\log n}$. It follows that there are at most $200\sqrt{\log n} \le |J|/5$ edges $xy$ in the interior of $J$ which touch blue paths of length at least $100\sqrt{\log n}$. Altogether, it follows that $|\cand(J)| \ge |J|/5 \ge 200\sqrt{\log n}$.
        
        Form a graph $F_r$ on vertex set $I_r = \bigcup_{J \in \cJ_r}\cand(J)$, where $ef$ is an edge 
        whenever $e$ and $f$ belong to distinct paths in $\cJ_r$ and there is a blue path that touches both $e$ and $f$. We claim that $F_r$ has an \emph{independent transversal}, namely an independent set $\{e_J : J \in \cJ_r\}$, where $e_J \in \cand(J)$. This follows from Lov\'asz's local lemma and the observation that $F_r$ has maximum degree at most $4$ (in fact, Loh and Sudakov \cite{loh2007independent} prove a similar but stronger result). 
        
        Define $\cJ_b$ and $F_b$ analogously, and let $I_b = \{e_J : J \in \cJ_b\}$ be an independent transversal in $F_b$.
        
        Now obtain $\chi_2$ from by swapping the colours of edges in $I_r \cup I_b$. We claim that $\chi_2$ satisfies the requirements of the claim. 
        
        Indeed, for \ref{itm:sqrt-no-cycles}, notice that we only swap the colours of edges in the interior of a monochromatic path, and thus we do not form vertices with monochromatic degree $3$. Moreover, for every maximal monochromatic path $P$, we swap the colour of at most one edge touching $P$, and only if it touches only one vertex of $P$ (which is an end of $P)$.
        
        Notice that by \ref{itm:approx-concentration}, $|r_{\chi_1}(P_t) - b_{\chi_1}(P_t)| \le n^{2/3}$ for every $t$, and thus $|\cP_r|, |\cP_b| = O(n^{2/3} \log n)$, implying that $|\cJ_r|, |\cJ_b| = O(n^{2/3} (\log n)^2)$. 
        Since the number of edges whose $\chi_1$ and $\chi_2$ colours differ is $|\cJ_r| + |\cJ_b|$, property \ref{itm:sqrt-small-change} follows, using that $n$ is large.
        
        To see \ref{itm:sqrt-balanced}, notice that maximal red paths in $\chi_1$, whose length is at least $100 \sqrt{\log n}$ and which are not in $\cP_r$, remain maximal red paths in $\chi_2$, whereas for every path $P \in \cP_r$, at least one among any $4000\sqrt{\log n}$ consecutive edges in $P$ becomes blue in $\chi_2$. Moreover, denoting by $\chi_1'$ the colouring obtained by swapping the colour of edges in $I_b$ (but not in $I_r$), the maximal red paths in $\chi_1'$ are either maximal red paths in $\chi_1$, or consist of two maximal red paths in $\chi_1$, of length at most $100\sqrt{\log n}$ each, and a single additional edge. Thus the maximal red paths in $\chi_2$ whose length is at least $4000\sqrt{\log n}$ are exactly the maximal red paths in $\chi_1$ whose length is at least $4000\sqrt{\log n}$ and which are not in $\cP_r$. An analogous reasoning holds for maximal blue paths, and yields \ref{itm:sqrt-balanced}.
    \end{proof}
    
    \begin{claim} \label{claim:chi-3}
        There is a red-blue colouring $\chi_3$ of $G$ such that 
        \begin{enumerate}[label = \rm(B\arabic*)]
            \item \label{itm:balance-edges-no-cycles}
                the monochromatic components in $\chi_3$ are paths of length $O(\log n)$,
            \item  \label{itm:balance-edges-small-change} 
                $\chi_1$ and $\chi_3$ differ on at most $n^{4/5}$ edges,
            \item   \label{itm:balance-edges-balance-long-paths}
                $r_{\chi_3}(P_t) = b_{\chi_3}(P_t)$ for $t \ge 5000\sqrt{\log n}$.
            \item   \label{itm:balance-edges-equal-edges}
                the number of blue edges in $\chi_3$ equals the number of red edges in $\chi_3$.
        \end{enumerate}
    \end{claim}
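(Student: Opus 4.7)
The plan is to modify $\chi_2$ by flipping a small collection of carefully chosen red edges to blue (assuming without loss of generality that $e_{\chi_2}(R) \ge e_{\chi_2}(B)$) so as to equalise the total red and blue edge counts while preserving \ref{itm:balance-edges-no-cycles}, \ref{itm:balance-edges-small-change}, and \ref{itm:balance-edges-balance-long-paths}. Set $D := e_{\chi_2}(R) - e_{\chi_2}(B)$. Since $n \equiv 0 \pmod 4$, $|E(G)| = 3n/2$ is even, so $D$ is even. Using \ref{itm:sqrt-balanced} (which kills the contribution of $t \ge 4000\sqrt{\log n}$ to $D = \sum_t t(r_{\chi_2}(P_t) - b_{\chi_2}(P_t))$), together with \ref{itm:approx-concentration} and \ref{itm:sqrt-small-change} (which give $|r_{\chi_2}(P_t) - b_{\chi_2}(P_t)| \le n^{2/3} + O(n^{3/4}) = O(n^{3/4})$ for every $t$), I would derive $|D| = O(n^{3/4}\log n)$.

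Next, I would call a red edge $e = uv$ of $\chi_2$ \emph{good} if (i) $e$ is interior to a maximal red path of length at most $4000\sqrt{\log n}$, and (ii) $u$ and $v$ are endpoints of two distinct blue components, each of length at most $1000\sqrt{\log n}$. Flipping a good edge from red to blue splits its red path into two shorter red paths and merges the two adjacent blue components through the new blue edge $uv$ into a single blue path of length at most $2001\sqrt{\log n}$; consequently no monochromatic cycle is created (red cycles are impossible as we remove an edge from a red path, and blue cycles are excluded by the distinctness in (ii)), no monochromatic component of length $\ge 5000\sqrt{\log n}$ is created or destroyed, and $D$ drops by exactly~$2$.

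The key technical step is to exhibit $\Omega(n)$ good edges in $\chi_2$. Because $|D|=o(n)$, a degree-counting identity gives that the numbers of $(2,1)$- and $(1,2)$-vertices are each $(1+o(1))n/2$, so there are at least $(1-o(1))n/4$ interior red edges. Applying \ref{itm:approx-no-spiders} to length-$(\log n)^{2/3}$ segments of red paths then bounds how many interior red edges lie in long red paths or have an endpoint on a long blue component; the distinctness condition in (ii) is handled by a separate short-cycle count in $G$ (each short blue component contributes at most one forbidden edge, and the total is controlled through the spider bound).

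Finally, I would greedily select $D/2 = O(n^{3/4}\log n)$ good edges at pairwise distance at least $20000\sqrt{\log n}$. Since a ball of that radius in a cubic graph has at most $2^{O(\sqrt{\log n})} = n^{o(1)}$ vertices while good edges number $\Omega(n)$, the greedy procedure succeeds. Flipping these edges simultaneously yields $\chi_3$; the pairwise separation ensures the per-flip analysis applies without interference, delivering \ref{itm:balance-edges-equal-edges} (from the count of flips), \ref{itm:balance-edges-balance-long-paths} (the flips only affect components of length $<5000\sqrt{\log n}$), \ref{itm:balance-edges-no-cycles} (cycle-avoidance together with the $O(\log n)$ bound inherited from $\chi_2$), and \ref{itm:balance-edges-small-change} (combining the $D/2 = O(n^{3/4}\log n)$ new flips with \ref{itm:sqrt-small-change}). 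The main obstacle is the $\Omega(n)$ lower bound on good edges, and in particular ensuring the distinctness of the two blue components at endpoints of interior red edges, which requires a careful use of \ref{itm:approx-no-spiders}.
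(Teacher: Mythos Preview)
Your overall strategy---flip a few carefully chosen red edges in short red paths to equalise the edge counts without disturbing long components---is the right one, and your bound $|D|=O(n^{3/4}\log n)$ is fine. The gap is in the ``key technical step'': you assert that \ref{itm:approx-no-spiders} bounds how many interior red edges lie in \emph{long red paths}. It does not. Property \ref{itm:approx-no-spiders} only limits the number of long \emph{blue} components meeting a given short red segment (and symmetrically); it says nothing about the length distribution of the red components themselves. Nothing in \ref{itm:approx-only-short-paths}--\ref{itm:approx-concentration} rules out the scenario in which essentially all red edges of $\chi_2$ sit in maximal red paths of length well above $5000\sqrt{\log n}$, and in that scenario your condition~(i) is never satisfied and there are no good edges at all. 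So the claimed $\Omega(n)$ lower bound does not follow.

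The ingredient you are missing is \ref{itm:approx-gadget-survive}, and this is precisely what the paper uses. By \ref{itm:approx-gadget-survive} there are at least $n^{0.9}$ red $\ell_0$-gadgets in $\chi_1$ with $\ell_0=4000\sqrt{\log n}$; since $\chi_1$ and $\chi_2$ differ on at most $n^{3/4}$ edges, almost all of these survive in $\chi_2$. By the last bullet of \Cref{def:gadget-abstract}, each such gadget contains a maximal red path of length exactly $\ell_0$, giving a guaranteed stock of $\Omega(n^{0.9})$ short red paths. The paper then selects $(x_r-x_b)/2=O(n^{3/4})$ of them and reruns the argument of \Cref{claim:chi-2} (that is, \ref{itm:approx-no-spiders} plus the local lemma) to pick one non-conflicting interior edge per path. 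Once you invoke \ref{itm:approx-gadget-survive} to supply the short red paths, your distance-separation variant would also go through (a ball of radius $20000\sqrt{\log n}$ in a cubic graph has only $n^{o(1)}$ vertices, so $n^{0.9}$ good edges is ample for greedily selecting $D/2$ of them far apart); but as written, the argument for producing good edges breaks.
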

    
    \begin{proof}
        We use an argument similar to the proof of \Cref{claim:chi-2}.
        Let $x_r$ and $x_b$ be the number of, respectively, red and blue edges in $\chi_2$. By \ref{itm:approx-concentration} and \ref{itm:sqrt-small-change}, $|x_r - x_b| \le n^{2/3} + n^{3/4} \le 2n^{3/4}$. Without loss of generality, assume that $x_r \ge x_b$. Recall that by \ref{itm:approx-gadget-survive}, $\chi_1$ has at least $n^{0.9}$ red $\ell_0$-gadgets, where $\ell_0 := 4000\sqrt{\log n}$. Since at most $n^{3/4}$ of them are destroyed when going from $\chi_1$ to $\chi_2$, this means in particular that there is a collection $\cP$ of exactly $(x_r - x_b)/2$ distinct maximal red paths of length $\ell_0$ (here we use the assumption that $n$ is divisible by $4$).
        Following the reasoning in the proof of \Cref{claim:chi-2}, there is a set $I = \{e_P: P \in \cP\}$, where $e_P$ is an edge in the interior of $P$ which is incident to two distinct blue components, none of which is a path of length at least $100\sqrt{\log n}$, and moreover no two edges from $P$ touch the same blue component.
        Define $\chi_3$ to be the colouring obtained from $\chi_2$ by swapping the colour of the edges in $I$.
        The proof that items \ref{itm:balance-edges-no-cycles} to \ref{itm:balance-edges-equal-edges} hold is very similar to the end of the proof of \Cref{claim:chi-2}; we omit the details.
    \end{proof}
    
    \def \lm {\ell_{\max}}
    Let $\chi_3$ be a red-blue colouring of $G$ satisfying \ref{itm:balance-edges-no-cycles} to \ref{itm:balance-edges-equal-edges} above.
    Our plan now is to equalise the number of maximal red and blue paths of length $\ell$, starting with the maximal $\ell$ for which they differ, using $\ell$-gadgets. 
    Let $\lm$ be the maximal $\ell$ for which $r_{\chi_3}(P_t) \neq b_{\chi_3}(P_t)$; by \ref{itm:balance-edges-balance-long-paths}, $\lm \le 5000\sqrt{\log n}$.
    
    \begin{claim} \label{claim:chi-4}
        For $\ell \in [3, \lm+1]$ there is a red-blue colouring $\psi_{\ell}$ of $G$, such that
        \begin{enumerate}[label = \rm(C\arabic*)]
            \item \label{itm:apply-gadgets-no-cycles}
                all monochromatic components of $\psi_{\ell}$ are paths of length $O(\log n)$,
            \item \label{itm:apply-gadgets-paths-balanced}
                $r_{\psi_{\ell}}(P_t) = b_{\psi_{\ell}}(P_t)$ for $t \ge \ell$,
            \item \label{itm:apply-gadgets-edges-balanced}
                the number of blue edges in $\psi_{\ell}$ equals the number of edges edges,
            \item \label{itm:apply-gadgets-small-change}
                $\psi_{\ell}$ differs from $\chi_1$ on at most $f(\ell)$ edges, where $f(\ell) = n^{4/5} \cdot 6^{\lm - \ell - 1}$.
        \end{enumerate}
    \end{claim}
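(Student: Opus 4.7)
The plan is to prove the claim by downward induction on $\ell$, running from $\ell = \lm+1$ down to $\ell = 3$. For the base case I set $\psi_{\lm+1} := \chi_3$; properties \ref{itm:apply-gadgets-no-cycles}, \ref{itm:apply-gadgets-paths-balanced} and \ref{itm:apply-gadgets-edges-balanced} follow from \ref{itm:balance-edges-no-cycles}, the definition of $\lm$, and \ref{itm:balance-edges-equal-edges} respectively, while \ref{itm:apply-gadgets-small-change} is inherited from \ref{itm:balance-edges-small-change} (after a suitable choice of the implicit constant in $f$).

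For the inductive step, I assume $\psi_{\ell+1}$ has been constructed. Set $y_\ell := r_{\psi_{\ell+1}}(P_\ell) - b_{\psi_{\ell+1}}(P_\ell)$; by symmetry I may suppose $y_\ell \ge 0$, and plan to apply exactly $y_\ell$ red $\ell$-gadgets from $G_0'$ in order to drive the $P_\ell$-imbalance to zero. First I bound $|y_\ell|$: each edge colour swap perturbs $r(P_t) - b(P_t)$ by $O(1)$, so combining \ref{itm:approx-concentration} applied to $\chi_1$ with \ref{itm:apply-gadgets-small-change} for $\psi_{\ell+1}$ gives $|y_\ell| \le n^{2/3} + O(f(\ell+1))$. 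Because distinct components of $G_0'$ lie at distance at least $10$ from each other (by \ref{itm:extend-compt-size}), the chosen gadgets are pairwise edge-disjoint and can be applied simultaneously, each modifying at most two edges inside its own component.

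Next I need to exhibit $y_\ell$ \emph{available} red $\ell$-gadgets, meaning ones whose edges agree with $\chi_1$ and are undisturbed in $\psi_{\ell+1}$. The setup preceding this claim guarantees at least $\tfrac{1}{2}n^{0.998}$ components in $G_0'$ containing a red $\ell$-gadget, and by \ref{itm:approx-gadget-survive} at least $n^{0.9}$ of them survive unchanged in $\chi_1$ (using that the number of red-blue coloured subcubic graphs of size $O(\sqrt{\log n})$ is at most $n^{0.001}$). Since $\psi_{\ell+1}$ differs from $\chi_1$ on at most $f(\ell+1) \le f(3) \le n^{4/5+o(1)}$ edges and each edge lies in at most one component of $G_0'$, no more than $n^{4/5+o(1)}$ further gadgets are disturbed, leaving far more than $y_\ell$ available. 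Applying $y_\ell$ of them, \Cref{def:gadget-abstract} immediately gives \ref{itm:apply-gadgets-paths-balanced} and \ref{itm:apply-gadgets-edges-balanced}: the red and blue $P_t$-counts are preserved for $t > \ell$, the red $P_\ell$-count drops by exactly $y_\ell$ to meet the blue count, and the whole red path-length multi-set is preserved, so by conservation of total edges the blue edge count is preserved too. For \ref{itm:apply-gadgets-small-change}, the $y_\ell$ applications alter at most $2y_\ell = O(f(\ell+1)) + O(n^{2/3})$ further edges, bringing the total disagreement with $\chi_1$ to at most $f(\ell+1) + 2y_\ell \le f(\ell)$, since the factor of $6$ in $f(\ell)/f(\ell+1)$ comfortably absorbs the $O(n^{2/3})$ summand.

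The step I expect to be the main obstacle is the path-length bound in \ref{itm:apply-gadgets-no-cycles}: while the gadget property guarantees that monochromatic components remain paths, some care is needed to keep their lengths $O(\log n)$. Since each gadget lives inside a single component of $G_0'$ of size $O(\sqrt{\log n})$, and such components are at pairwise distance at least $10$, a monochromatic path of length $O(\log n)$ can meet only $O(\sqrt{\log n})$ gadgets, each of which can elongate it by at most $O(\sqrt{\log n})$; the $O(\log n)$ bound from \ref{itm:balance-edges-no-cycles} therefore survives up to an absolute constant, provided one tracks how a single application can merge monochromatic paths across the gadget's neighbourhood.
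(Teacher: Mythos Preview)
Your induction scheme, base case, bound on the imbalance $y_\ell$, count of available gadgets, and verification of \ref{itm:apply-gadgets-paths-balanced}, \ref{itm:apply-gadgets-edges-balanced} and \ref{itm:apply-gadgets-small-change} are all correct and essentially identical to the paper's proof.

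The one place you diverge is \ref{itm:apply-gadgets-no-cycles}, which you flag as the main obstacle and treat by a geometric elongation argument. This argument has two problems. First, the claim that a monochromatic path of length $O(\log n)$ meets only $O(\sqrt{\log n})$ gadgets is not justified: components of $G_0'$ are merely at mutual distance at least $10$, so a path of length $L$ can in principle visit $\Theta(L)$ of them, not $O(L/\sqrt{\log n})$. Second, even granting a constant-factor growth in the length bound per inductive step, this compounds over $\lm - 2 = \Theta(\sqrt{\log n})$ steps and destroys the $O(\log n)$ conclusion.

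In fact \ref{itm:apply-gadgets-no-cycles} needs no such analysis. By \Cref{def:gadget-abstract}, applying a red $\ell$-gadget leaves $b(P_t)$ unchanged for every $t$ and $r(P_t)$ unchanged for every $t > \ell$. Since $\ell \le \lm \le 5000\sqrt{\log n}$ and, by the inductive hypothesis \ref{itm:apply-gadgets-no-cycles} for $\psi_{\ell+1}$, we have $r_{\psi_{\ell+1}}(P_t) = b_{\psi_{\ell+1}}(P_t) = 0$ for all $t$ exceeding the fixed bound $C\log n > \ell$, these zeros persist after applying the gadgets one at a time (the chosen gadgets are pairwise vertex-disjoint, so each is still present with its original colouring when its turn comes, and the intermediate colourings all have paths as monochromatic components). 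Hence the \emph{same} constant $C$ witnesses \ref{itm:apply-gadgets-no-cycles} for every $\ell$, with no compounding; this is why the paper dismisses the point as ``easy to see''.
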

    \begin{proof}
        We prove the claim by induction on $\ell$. Notice that, by taking $\psi_{\lm+1} := \chi_3$, the claim holds for $\ell = \lm+1$. 
        Suppose that $\psi_{\ell+1}$ is a suitable for $\ell+1$, where $\ell \in [3, \lm]$. 
        Write $x = r_{\psi_{\ell+1}}(P_{\ell}) - b_{\psi_{\ell+1}}(P_{\ell})$.
        Because $\psi_{\ell+1}$ and $\chi_1$ differ on at most $f(\ell+1)$ edges, and by \ref{itm:approx-concentration}, $|x| \le n^{2/3} + 2f(\ell+1) \le n^{5/6}$. Without loss of generality, assume $x \ge 0$. By \ref{itm:approx-gadget-survive}, we may pick a collection $\cH$ of $x$ red $\ell$-gadgets in $\psi_{\ell+1}$ (that are a distance at least $10$ away from each other). For each gadget $H \in \cH$, let $H'$ be its alternate colouring, which satisfies the properties in \Cref{def:gadget-abstract}. Obtain $\psi_{\ell}$ from $\psi_{\ell+1}$ by replacing $H$ by $H'$ for each $H \in \cH$. It is easy to see that items \ref{itm:apply-gadgets-no-cycles} and \ref{itm:apply-gadgets-paths-balanced} hold for $\psi_{\ell}$. Notice that by the second item in \Cref{def:gadget-abstract}, the number of red edges in $\psi_{\ell+1}$ equals the number of red edges in $\psi_{\ell}$, and thus \ref{itm:apply-gadgets-edges-balanced} holds. Finally, notice that $\psi_{\ell+1}$ and $\psi_{\ell}$ differ only on edges in gadgets in $\cH$, and in each gadget they differ on at most two edges. Thus, they differ on at most $2x$ edges, implying that $\psi_{\ell}$ and $\chi_1$ differ on at most the following number of edges
        \begin{equation*}
            2x + f(\ell+1) \le 2 n^{2/3} + 4f(\ell+1) \le 6 f(\ell+1) = f(\ell),
        \end{equation*}
        using $f(\ell + 1) \ge n^{2/3}$. Thus \ref{itm:apply-gadgets-small-change} holds and the claim is proved.
    \end{proof}

    We claim that $\psi_3$ is a red-blue colouring of $G$ whose colour classes are isomorphic linear forests. Indeed, by \ref{itm:apply-gadgets-no-cycles}, the two colour classes are linear forests. By \ref{itm:apply-gadgets-paths-balanced}, $r(P_t) = b(P_t)$, for $t \ge 3$. By \ref{itm:apply-gadgets-edges-balanced}, the numbers of red and blue edges are the same. Because $G$ is a cubic graph all of whose vertices are incident to two edges of one colour and one edge of the other colour, this  means that the number of red $P_2$'s is the same as the number of blue $P_2$'s. Since we already know that the number of red $P_2$'s that are contained in a longer red path is the same as the number of blue $P_2$'s contained in a longer blue path, we find that $r(P_2) = b(P_2)$. Finally, a similar argument shows that $r(P_1) = b(P_1)$.
\end{proof}

\section{Completing gadgets to an extendable pre-colouring} \label{sec:partial-colouring}

In this section we prove \Cref{lem:partialColMainLemma}, which asserts that every large connected cubic graph has an extendable (according to \Cref{def:GoodPartialCol}) red-blue subgraph $G_0$ which has many gadgets of all relevant lengths. The proof will be conditional on \Cref{lem:GoodGadgetsExist}, which finds a gadget within a small-radius neighbourhood of any long geodesic. 
Thus, after this section our only remaining task will be to prove \Cref{lem:GoodGadgetsExist}.

We will apply \Cref{lem:GoodGadgetsExist} to a large collection of geodesics that are sufficiently far apart, and then modify the resulting partial colouring to an extendable one. 
The first step is an easy claim that shows that there are many long geodesics that are far apart from each other. 
Here, given a graph $G$, we write $d(u,v)$ for the distance in $G$ between the vertices $u$ and $v$, and $d(H, F)$ for the distance between subgraphs $H$ and $F$, namely the minimum of $d(u,v)$ over $u \in V(H)$, $v \in V(F)$.

\begin{claim}\label{cl:geodesics}
    Let $0 < \eps < 1$ and let $n$ be large.
    Suppose that $G$ is a connected cubic graph on $n$ vertices. Then there exists a collection $\cP$ of at least $\frac{1}{6}n^{1 - \eps}$ geodesics, each of length $m := 10^{10}\sqrt{\log n}$, such that $d(P, Q) \ge 50$ for every distinct $P, Q \in \cP$.
\end{claim}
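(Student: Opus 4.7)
The plan is a simple greedy construction, powered by the smallness of short-radius balls in a cubic graph. Specifically, since every vertex in a cubic graph has at most $3 \cdot 2^r - 2$ vertices within distance $r$, any ball of radius $m + 50 = O(\sqrt{\log n})$ has size $2^{O(\sqrt{\log n})} = n^{o(1)}$. Moreover, the eccentricity of every vertex is at least $\log_2(n/3)$, which comfortably exceeds $m$ for $n$ large, so from every vertex $v$ there is a geodesic of length exactly $m$ (take a BFS tree from $v$ and truncate a deepest branch at depth $m$).

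Given this, I would build $\cP$ greedily. Having selected geodesics $P_1, \ldots, P_k$, let $B := \bigcup_{i=1}^{k} N_{m+50}(P_i)$, where $N_r(\cdot)$ denotes the closed $r$-neighbourhood. Each $P_i$ contributes $|N_{m+50}(P_i)| \leq (m+1) \cdot (3 \cdot 2^{m+50}) = n^{o(1)}$ vertices, so $|B| \leq k \cdot n^{o(1)}$. Provided $k < \tfrac{1}{6} n^{1-\eps}$ and $n$ is large, one has $|B| < n$, so we may pick a vertex $v \notin B$ and, by the eccentricity bound, some $u$ with $d(v,u) = m$; any geodesic from $v$ to $u$ serves as $P_{k+1}$.

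The key sanity check is $d(P_i, P_{k+1}) \geq 50$ for every $i \leq k$, and this is a one-line triangle-inequality argument: any $w \in P_{k+1}$ satisfies $d(v, w) \leq m$, while $v \notin N_{m+50}(P_i)$ yields $d(v, p) > m + 50$ for every $p \in P_i$, so $d(w, p) > 50$. The process therefore continues until $|\cP| \geq \tfrac{1}{6} n^{1-\eps}$. There is no real obstacle; the only point requiring mild care is that the $n^{o(1)}$ bound on ball sizes drops below $n^{\eps}$ for sufficiently large $n$, which holds since its exponent is $O(1/\sqrt{\log n}) = o(1)$.
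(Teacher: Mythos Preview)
Your argument is correct and follows essentially the same strategy as the paper: both exploit that balls of radius $O(\sqrt{\log n})$ in a cubic graph have size $n^{o(1)}$, then use a greedy/packing argument. The paper first packs vertices at pairwise distance at least $\eps\log n$ (via a maximal set whose balls must cover $V(G)$) and then hangs a length-$m$ geodesic off each such vertex, whereas you build the geodesics directly; the two are minor reorganisations of the same idea.
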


\begin{proof}
    We will show that there are at least $k=\frac{1}{6}n^{1- \eps}$ vertices $v_1,\dots,v_k$ in $G$ such that $d(v_i,v_j)\geq \eps\log n$ for every $i\neq j$. Note that every subpath of a geodesic is also a geodesic, thus if we take one subpath of length $m$ of the geodesic touching $v_i$ and some $v_k$, for each $i$, the collection of these paths will be as desired. 
    
    Let $k_0$ be the maximum such number of vertices. Then since the graph is cubic, the number of vertices in each ball of radius $\eps\log n$ is at most $ 3\cdot 2^{\eps\log n}$. Thus, $\big|\bigcup_{i\geq k_0}B(v_i,\eps\log n)\big|\leq k_0\cdot 2^{\eps\log n + 1}=2k_0\cdot n^{\eps}$. On the other hand, by the maximality of $k_0$, we have that $\big|\bigcup_{i\geq k_0}B(v_i,\eps\log n)\big|\geq n$. This gives $k_0\geq \frac{1}{6}n^{1-\eps}$.
\end{proof}

Now we can use \Cref{lem:GoodGadgetsExist}. Recall that \Cref{lem:GoodGadgetsExist} guarantees that for every geodesic $P$ of length $5\ell + 200$ in a cubic graph $G$ (where $\ell\in[3,10^{10}\sqrt{\log n}]$), there is a partial colouring $\phi_P$, such that 
\begin{enumerate}
    \item \label{itm:phiP-1}
        the edges coloured by $\phi_P$ are at distance at most $4$ from $P$,
    \item\label{itm:phiP-2}
        the edges coloured by $\phi_P$ contain a blue $\ell$-gadget, 
    \item\label{itm:phiP-3}
        the partial colouring $\phi_P$ satisfies \ref{itm:extend-degree} and \ref{itm:extend-cycle-technical} from \Cref{def:GoodPartialCol}.
\end{enumerate}
    
We recall that \ref{itm:extend-degree} says that every vertex with coloured degree at least $2$ has both red and blue neighbours, and \ref{itm:extend-cycle-technical} says that every cycle with only blue edges or only red edges has two consecutive uncoloured edges.

We next unite all of these colouring around paths into one partial colouring $\phi$.

\begin{definition}\label{def:ParColAfterAlg}
Let $G$ be a cubic graph, let $\cP$ be a collection of geodesics as in \Cref{cl:geodesics} and, for $P \in \cP$, let $\phi_P$ be a partial colouring satisfying \ref{itm:phiP-1}, \ref{itm:phiP-2} and \ref{itm:phiP-3} above. Let $\phi$ be the colouring obtained by the union of all of those partial colourings $\phi:=\bigcup_{P\in \cP}\phi_P$ (notice that there are no conflicts, using Property \ref{itm:phiP-1} above and the assumption that any two geodesics in $\cP$ are a distance at least $50$ apart).
\end{definition}

\begin{claim}\label{cl:TotalPartialColouringE1E3}
The colouring $\phi$ satisfies \ref{itm:extend-degree} and \ref{itm:extend-cycle-technical}. In addition, the size of each connected component of the coloured graph is $O(\sqrt{\log n})$ and every two components are a distance at least 20 apart.
\end{claim}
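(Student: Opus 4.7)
The plan is to deduce all four conclusions from the corresponding properties of each individual $\phi_P$, exploiting the fact that the supports of different $\phi_P$'s are well-separated. The key preliminary observation is that each $\phi_P$ colours a connected subgraph $H_P$ contained in the $4$-neighbourhood of $P$; since the geodesics in $\cP$ satisfy $d(P,Q) \ge 50$ for distinct $P, Q \in \cP$, we obtain $d(H_P, H_Q) \ge 50 - 4 - 4 = 42$. Hence the connected components of the subgraph coloured by $\phi$ are precisely the $H_P$'s, and are pairwise at distance $\ge 42 \ge 20$. Moreover, since $G$ is cubic and $|P| = O(\sqrt{\log n})$, the $4$-neighbourhood of $P$ contains $O(\sqrt{\log n})$ vertices, so $|H_P| = O(\sqrt{\log n})$.

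For \ref{itm:extend-degree}, if $v$ has at least two coloured neighbours under $\phi$, then since distinct components of $\phi$ lie at distance $\ge 42 > 2$, the vertex $v$ lies in a unique component $H_P$ and all its coloured neighbours lie in $H_P \subseteq \phi_P$. Applying \ref{itm:extend-degree} for $\phi_P$ (supplied by property (3) of \Cref{lem:GoodGadgetsExist}) then produces red and blue neighbours of $v$ in $\phi_P$, hence in $\phi$.

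The main step is \ref{itm:extend-cycle-technical}, which I would prove by contrapositive: suppose $C$ is a cycle in $G$ with no two consecutive uncoloured edges, and show that $C$ contains both a red and a blue edge from a single component of $\phi$. The crucial sub-step is to argue that all coloured edges of $C$ belong to a single component of $\phi$. Since no two uncoloured edges of $C$ are consecutive, the coloured edges of $C$ decompose into ``runs'' separated by isolated uncoloured edges; the last vertex of one run is at distance at most $1$ in $G$ from the first vertex of the next, which, combined with the $\ge 42$ separation between components, forces all runs to lie in a single component $H_P$. With all coloured edges of $C$ inside $\phi_P$, the edges of $C$ outside $\phi_P$ are exactly the uncoloured edges of $C$, and by hypothesis these contain no two consecutive edges of $C$. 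Applying \ref{itm:extend-cycle-technical} for $\phi_P$ then yields the desired red-blue pair inside $H_P$, which is also a component of $\phi$.

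I expect the distance argument for \ref{itm:extend-cycle-technical} to be the main substantive step; everything else follows essentially formally once the components of $\phi$ are identified with the $H_P$'s. The separation $50$ from \Cref{cl:geodesics} is chosen precisely to leave a safety margin of more than $2$ after subtracting the two radius-$4$ neighbourhoods, which is exactly what this run-gluing argument needs.
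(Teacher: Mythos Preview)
Your proof is correct and follows essentially the same approach as the paper: identify the coloured components with the individual $H_P$'s using the $50-4-4$ separation, deduce \ref{itm:extend-degree} locally, and for \ref{itm:extend-cycle-technical} combine the distance between components with the local \ref{itm:extend-cycle-technical} for each $\phi_P$. Your contrapositive organisation of the \ref{itm:extend-cycle-technical} argument (runs of coloured edges separated by single uncoloured edges must lie in one $H_P$) is a clean repackaging of the paper's direct case split on whether $C$ meets one or several components, and in fact handles more transparently the edge case where $C$ carries both colours but drawn from distinct components.
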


\begin{proof}
Since the edges coloured by $\phi_P$ are at distance at most $4$ from $P$, for every $P \in \cP$, and the geodesics in $\cP$ are at distance at least 50 from each other, any two coloured components in $\phi$ are at distance at least 20 from each other. In particular, the edges coloured by distinct colourings $\phi_P$ are pairwise vertex-disjoint. Thus, because \ref{itm:extend-degree} holds for every $\phi_P$, it also holds for $\phi$.

For \ref{itm:extend-cycle-technical}, let $C$ be a cycle with no red edges or no blue edges. If $C$ contains edges coloured by at most one $\phi_P$ then $C$ has two consecutive uncoloured edges, due to $\phi_P$ satisfying \ref{itm:extend-cycle-technical}. Otherwise, it has edges from two coloured components, so by the fact that they are at distance at least 20 from each other, $C$ has two consecutive uncoloured edges. So \ref{itm:extend-cycle-technical} holds for $\phi$.

Finally, since $|P| = O(\sqrt{\log n})$ and the edges coloured by $\phi_P$ are at distance at most $4$ from $P$, every coloured component has size $O(\sqrt{\log n})$.
\end{proof}

We next show that the colouring $\phi$ can be extended to a colouring that also satisfies \ref{itm:extend-Hs}. The proof of the following lemma will be postponed to the next subsection. 

\begin{lemma}\label{Lemma_get_E4}
    Let $G$ be a cubic graph and $\chi$ a partial colouring satisfying \ref{itm:extend-degree} and \ref{itm:extend-cycle-technical}.
    
    Then there is a partial colouring $\chi'$ which extends $\chi$, satisfies \ref{itm:extend-degree}, \ref{itm:extend-cycle-technical} and \ref{itm:extend-Hs} and, additionally, every $\chi'$-coloured edge is within distance $2$ of a $\chi$-coloured edge, and connected to it via coloured edges.
\end{lemma}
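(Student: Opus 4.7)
The plan is to extend $\chi$ by colouring a bounded number of additional edges near each local obstruction to \ref{itm:extend-Hs}, and then exhibit the decomposition $G \setminus \chi' = H_1 \cup H_2 \cup H_3$ directly. Write $G_1 := G \setminus \chi$. Since $G$ is cubic, $d_{G_1}(v) = 3 - d_{G_0}(v)$, so every degree-$1$ or degree-$2$ vertex of $G_1$ touches a coloured edge and hence lies in a coloured component of $\chi$. Using that $\chi$'s coloured components are small and pairwise far apart (which holds in the intended application, cf.\ \Cref{cl:TotalPartialColouringE1E3}), every maximal path of $G_1$ whose internal vertices all have degree $2$, and every cycle consisting only of such vertices, is contained in a single coloured component of $\chi$, and hence has length $O(\sqrt{\log n})$. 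This is the key structural observation.

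Let $F^*$ be the multigraph obtained from $G_1$ by contracting each maximal degree-$2$ path of $G_1$ to a single edge; its non-isolated vertices have degrees $1$ or $3$, but $F^*$ may contain loops, double- and triple-edges, and isolated cycles. The plan is to place each triple-edge component into $H_3$ and each even isolated cycle into $H_2$; the remaining components contain the \emph{obstructions}: loops, double-edges of multiplicity two, and odd isolated cycles. For each obstruction we extend $\chi$ by colouring one judiciously chosen uncoloured edge on it, picking its colour to satisfy \ref{itm:extend-degree} at both endpoints (opposite to any existing $G_0$-colour at an endpoint); if both endpoints already see the same $G_0$-colour, we colour two adjacent edges in opposite colours instead. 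Each obstruction contains a degree-$2$ vertex of $G_1$, which touches $G_0$, so every new edge lies within distance $1$ of $\chi$ and is connected to $\chi$ through coloured edges, fulfilling the distance conclusion of the lemma.

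Now set $\chi' := \chi \cup \{\text{new edges}\}$ and $G_1' := G \setminus \chi'$, and define $H_3$ to be the triple-edge components of $F^*$, $H_2$ the even-cycle components of $G_1'$, and $H_1$ the remaining edges. After contracting degree-$2$ paths, $H_1$ yields a simple graph $F$ whose vertices have degrees $1$ or $3$, because the normalising stage destroyed every source of loops, double-edges, and odd isolated cycles. The subdivision and cycle lengths are $O(\sqrt{\log n})$ by the key observation, and each cycle of $H_2$ touches at most two edges of $H_1$, since such touches come from the at most two branching endpoints of the originating double-edge.

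The hardest part is preserving \ref{itm:extend-cycle-technical}: a newly added red edge could close a red cycle whose only pair of consecutive uncoloured edges was precisely the one we just coloured and its neighbour. We handle this by a case analysis on the three obstruction types, using that each obstruction is local within a ball of radius $O(\sqrt{\log n})$ around a coloured component. In each case, a monochromatic cycle created by a new edge either closes up locally (excluded by the colour-choice rule) or uses the rest of $G_1$ for most of its length (in which case two consecutive uncoloured edges remain elsewhere by \ref{itm:extend-cycle-technical} for $\chi$); when neither applies, colouring one additional companion edge of the opposite colour restores the property, keeping the number of extensions per obstruction bounded and the distance-$2$ condition intact.
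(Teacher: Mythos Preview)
Your strategy is essentially the paper's: identify the local obstructions to the decomposition (which, after your contraction, amount to odd uncoloured cycles with at most two vertices of uncoloured degree $3$), fix each by extending the colouring, and then read off $H_1,H_2,H_3$ from the contracted multigraph. The gaps, however, are real and not merely a matter of presentation.

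First, ``colour one judiciously chosen edge, or two if the endpoint colours clash'' is too optimistic. In the paper's analysis, when the bad cycle has two \emph{adjacent} vertices of uncoloured degree $3$, or when it touches \emph{both} colours, one is forced to colour essentially all edges of the cycle (see \Cref{Lemma_adjacent_uncoloured_3,Lemma_bad_cycle_touches_different_colours}); colouring only one or two edges either violates \ref{itm:extend-degree} or fails the next point. Second, and more seriously, you never check that fixing an obstruction creates no new ones. If you colour an edge at a vertex $v$ of uncoloured degree $3$, its uncoloured degree drops to $2$; any odd uncoloured cycle through $v$ that previously had exactly three degree-$3$ vertices now has only two and becomes a fresh obstruction. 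The paper's Lemmas~\ref{Lemma_adjacent_uncoloured_3}--\ref{Lemma_fix_bad_cycle} each establish ``all $\chi'$-\ref{itm:extend-Hs}-bad cycles are $\chi$-\ref{itm:extend-Hs}-bad'', and this is exactly what makes the iteration in \Cref{Lemma_destroy_all_bad_cycles} terminate; your one-shot ``for each obstruction'' does not address termination. Third, your \ref{itm:extend-cycle-technical} argument is too coarse: the dichotomy ``closes locally or uses $G_1$ for most of its length'' does not cover cycles that use one new coloured edge, a few old coloured edges of the same colour, and only a single uncoloured edge between them. The paper's verification traces, edge by edge, how a putative \ref{itm:extend-cycle-technical}-bad cycle must enter and leave the obstruction and why the specific colour pattern chosen forces either two consecutive uncoloured edges or both colours to appear.
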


It is now easy to prove \Cref{lem:partialColMainLemma}.
\begin{proof}[Proof of \Cref{lem:partialColMainLemma} using \Cref{lem:GoodGadgetsExist,Lemma_get_E4}]
    Let $\cP$ be a collection of $\Omega(n^{0.9999})$ geodesics of length $10^{11}\sqrt{\log n}$ that are at distance at least $50$ from each other; such a collection exists by \Cref{cl:geodesics}. For each $\ell \in [3, 10^{10}\sqrt{\log n}]$, pick $n^{0.999}$ geodesics in $\cP$ (such that each geodesic is picked at most once), and apply \Cref{lem:GoodGadgetsExist} to $P$ to find a partial colouring $\Phi_P$ that forms a blue $\ell$-gadget, its coloured edges are at distance at most $4$ from $P$, and which satisfies \ref{itm:extend-degree} and \ref{itm:extend-cycle-technical}. Consider the union $\phi$ of these partial colourings, like in \Cref{def:ParColAfterAlg}; so $\phi$ satisfies \ref{itm:extend-degree} and \ref{itm:extend-cycle-technical}, its coloured components have size $O(\sqrt{\log n})$, and are at distance at least 50 from each other. Now apply \Cref{Lemma_get_E4} to find a partial colouring $\phi'$ that extends $\phi$, satisfies \ref{itm:extend-degree}, \ref{itm:extend-cycle-technical}, and \ref{itm:extend-Hs}, and whose newly coloured edges are at distance at most $2$ from previously coloured edges. Thus coloured components in $\phi'$ have size $O(\sqrt{\log n})$ and are at distance at least $10$ from each other, as required for \ref{itm:extend-compt-size}. The partial colouring $\phi'$ thus satisfies the requirements of the lemma.
\end{proof}

\subsection{Getting property \ref{itm:extend-Hs}}
The goal of this section is to prove Lemma~\ref{Lemma_get_E4}, which allows us to extend an arbitrary colouring satisfying \ref{itm:extend-degree} and \ref{itm:extend-cycle-technical} in order to get a new colouring satisfying \ref{itm:extend-Hs}. Getting property \ref{itm:extend-Hs} turns out to be essentially equivalent to asking for a colouring without cycles satisfying the following definition:

\begin{definition}
    Let $\chi$ be a partial colouring of a graph $G$.
    Say that a cycle $C$ is $\chi$-\ref{itm:extend-Hs}-bad if it is odd, its edges are uncoloured and it contains at most two vertices of uncoloured degree $3$ (and the remaining vertices have uncoloured degree $2$).
\end{definition}
We will also use the following definition for maintaining \ref{itm:extend-cycle-technical}.

\begin{definition}
    Let $\chi$ be a partial colouring of a graph $G$.
    Say that a cycle $C$ is $\chi$-\ref{itm:extend-cycle-technical}-bad if it contains only one colour, and does not contain two consecutive uncoloured edges.
\end{definition}
We will often use the useful fact that $\chi$-\ref{itm:extend-cycle-technical}-bad cycles cannot pass through vertices with uncoloured degree 3 (as if they did, they would have consecutive uncoloured edges).

The following three lemmas allow us to get rid of particular kinds of bad cycles. Throughout this section, when we say that a colouring $\chi'$ extends $\chi$, we mean that $\chi'$ is a partial colouring that agrees with $\chi$, and each $\chi'$-coloured connected component contains a $\chi$-coloured connected component.

\begin{lemma}\label{Lemma_adjacent_uncoloured_3}
    Let $G$ be a cubic graph and $\chi$ a partial colouring satisfying \ref{itm:extend-degree} and \ref{itm:extend-cycle-technical}. Let $C$ be a $\chi$-\ref{itm:extend-Hs}-bad cycle with two adjacent vertices of uncoloured degree $3$. 
    
    Then there is a partial colouring $\chi'$ which extends $\chi$, satisfies \ref{itm:extend-degree} and \ref{itm:extend-cycle-technical}, has that all $\chi'$-\ref{itm:extend-Hs}-bad cycles are $\chi$-\ref{itm:extend-Hs}-bad, and has $C$ not $\chi'$-\ref{itm:extend-Hs}-bad.
Additionally, every $\chi'$-coloured edge touches $C$.
\end{lemma}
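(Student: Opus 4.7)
The plan is to colour the edge $uv$ (which immediately destroys $C$'s $\chi'$-\ref{itm:extend-Hs}-bad status, since $uv \in E(C)$ becomes coloured) together with one additional edge at each of $u$ and $v$ chosen to block the creation of new \ref{itm:extend-Hs}-bad cycles. Let $u'$ denote the neighbour of $u$ on $C$ other than $v$, and let $u''$ be the unique neighbour of $u$ outside $V(C)$; define $v'$ and $v''$ analogously. At each of $u$ and $v$ we have two candidates for the extra edge: a $C$-edge ($uu'$ or $vv'$) or a pendant edge ($uu''$ or $vv''$). The motivation is the following: any new $\chi'$-\ref{itm:extend-Hs}-bad cycle $C^{\prime}$ must be uncoloured in $\chi'$, hence avoid $uv$, so if $C^{\prime}$ visits $u$ it must use both $uu'$ and $uu''$, and colouring either one blocks the cycle. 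A useful structural point is that colouring the $C$-edge $uu'$ does not decrease any uncoloured-degree count on cycles through $u'$ (since $u'$, being a vertex of $C$ of uncoloured degree $2$ in $\chi$, is not an uncoloured-degree-$3$ vertex to begin with), whereas colouring the pendant $uu''$ can drop $u''$ from uncoloured degree $3$ to $2$ and hence potentially create new bad cycles through $u''$.

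The key step is the colour assignment, which must simultaneously satisfy \ref{itm:extend-degree} at $u, v$ and at the relevant neighbour (the far endpoint of the chosen extra edge). At $u$ the new coloured degree is $2$, so the extra edge must take the colour opposite to that of $uv$, and similarly at $v$. If the extra edge at $u$ is $uu'$, then \ref{itm:extend-degree} at $u'$ demands that $uu'$ take the colour opposite to the already-coloured off-$C$ edge at $u'$; if instead it is $uu''$, then \ref{itm:extend-degree} at $u''$ demands that $uu''$ not coincide with the unique colour already present at $u''$ (if there is such a colour). A short case analysis on the pre-existing colour configurations at $u', u'', v', v''$, combined with the freedom to choose (i) the colour of $uv$ and (ii) whether the extra edge at each side is the $C$-edge or the pendant, shows that a valid colour assignment always exists. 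In the pathological case where the $u$-side and the $v$-side impose opposite constraints on the colour of $uv$, we colour the full neighbourhood of $u$ (i.e., all of $uv$, $uu'$, $uu''$), which releases the constraint on that side.

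Finally, with the colouring fixed, I would verify the remaining conditions. Property \ref{itm:extend-degree} holds by construction. The absence of new $\chi'$-\ref{itm:extend-Hs}-bad cycles follows because any such cycle is uncoloured in $\chi'$ (so avoids the new coloured edges), and the choice described above ensures that the only vertices whose uncoloured-degree-$3$ status changes are already flanked by a newly coloured edge that any candidate cycle would be forced to traverse. For \ref{itm:extend-cycle-technical}, any new monochromatic-bad cycle through a newly coloured edge is ruled out by tracing through $u$ or $v$: the opposite-colour extra edge at $u$ or $v$ forces the cycle either to include a second colour (contradicting monochromaticity) or to create a pair of consecutive uncoloured edges, contradicting $\chi'$-\ref{itm:extend-cycle-technical}-badness. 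The main obstacle of the proof is the case analysis to choose which extra edges to colour and which colour to assign to $uv$ so that \ref{itm:extend-degree} is satisfied simultaneously at $u', u'', v', v''$; this is manageable thanks to the two independent degrees of freedom (colour of $uv$ and edge choice on each side), but requires careful bookkeeping of the few residual configurations.
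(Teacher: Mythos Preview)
Your approach is genuinely different from the paper's: you try to colour only $O(1)$ edges near the two degree-$3$ vertices $u,v$, whereas the paper case-splits on the status of $u''$ and, in two of the three cases, colours almost all of $C$ (propagating around so that each $c_ic_{i+1}$ gets the colour opposite to the already-coloured $c_ic_i'$). Unfortunately your local approach has two genuine gaps.

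\textbf{An \ref{itm:extend-cycle-technical} gap already in the easy case.} Suppose the off-edges at $u'$ and $v'$ are both red, so you colour $uv$ red and $uu',vv'$ blue. Take a cycle $B$ through $uv$. At $u$ it must use the uncoloured pendant $uu''$ (since $uu'$ is blue), and at $v$ it must use $vv''$. Your claim is that this forces two consecutive uncoloured edges, but that is false: if $u''$ and $v''$ each carry a red edge, then $B$ can continue as
\[
\cdots\ (\text{red})\ u''\ \underbrace{u''u}_{\text{unc.}}\ \underbrace{uv}_{\text{red}}\ \underbrace{vv''}_{\text{unc.}}\ v''\ (\text{red})\ \cdots
\]
and close up into a red/uncoloured alternating cycle with no two consecutive uncoloured edges. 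Such a $B$ was not $\chi$-\ref{itm:extend-cycle-technical}-bad (in $\chi$ the three edges $u''u,\,uv,\,vv''$ were all uncoloured), so you have created a new \ref{itm:extend-cycle-technical}-bad cycle. The paper prevents this by colouring all the way around $C$: any such $B$ must eventually leave $C$ at some $c_i$, where by construction $c_ic_{i+1}$ and $c_ic_i'$ either have opposite colours or are both uncoloured.

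\textbf{An \ref{itm:extend-Hs} gap in your pathological case.} When the off-edges at $u'$ and $v'$ have different colours and both $u''$ and $v''$ have uncoloured degree $3$, your two cycle-edge choices $uu',vv'$ impose incompatible colours on $uv$, so you are forced to colour at least one pendant, say $uu''$. This drops $u''$ from uncoloured degree $3$ to $2$. An odd uncoloured cycle through $u''$'s other two neighbours (which exists in general, and uses no newly coloured edge) can then become $\chi'$-\ref{itm:extend-Hs}-bad. Your sentence ``the only vertices whose uncoloured-degree-$3$ status changes are already flanked by a newly coloured edge that any candidate cycle would be forced to traverse'' is exactly what fails here: the candidate cycle through $u''$ avoids $uu''$ entirely. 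The paper's fix (case~(a)) is to leave $uu''$ uncoloured and instead reduce $c_4$ to uncoloured degree $1$ by colouring $c_4c_5$ (and then propagating around $C$ for \ref{itm:extend-degree}); this way the only vertex dropping to uncoloured degree $2$ is $c_3$, and any uncoloured cycle through $c_3$ must use $c_3c_4$ and hence pass through the now degree-$1$ vertex $c_4$, which is impossible.
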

\begin{proof}
    Let $C = (c_1 \ldots c_k)$ with $c_2, c_3$ having uncoloured degree $3$. 
    For each $i$, let $c_i'$ be the unique neighbour of $c_i$, different from $c_{i-1}, c_{i+1}$. 
    Since $C$ is $\chi$-\ref{itm:extend-Hs}-bad, we know that $c_i$, with $i \notin \{2,3\}$ has uncoloured degree $2$, so $c_ic_i'$ is coloured. Without loss of generality, suppose that $c_1c_1'$ is red. 
    We construct a colouring $\chi'$ that will satisfy the lemma as follows:
    \begin{enumerate}[label = \rm(\alph*)]
        \item \label{case:chi-a} {Suppose that $c_2'$ has uncoloured degree 3.}
        Colour $c_1c_2$ blue, $c_2c_3$ red. For $i=4, \dots, k$, colour $c_ic_{i+1}$ by the opposite colour of $c_ic_i'$. 
        \item \label{case:chi-b} {Suppose that $c_2'$ touches a red edge $c_2'c_2''$.}
        Colour $c_1c_2$ blue, $c_2c_2'$ blue, $c_2c_3$ red. For $i=4, \dots, k$, colour $c_ic_{i+1}$ by the opposite colour of $c_ic_i'$.
        \item \label{case:chi-c} {Suppose that $c_2'$ has touches a blue edge $c_2'c_2''$.} Colour $c_1c_2$ blue, $c_2c_2'$ red.
    \end{enumerate}
    To see that $\chi'$ satisfies  \ref{itm:extend-degree}, we need to check the property at all vertices whose edges changed colour: In case \ref{case:chi-a}, $c_1, c_2, c_4, \dots, c_k$ all see both colours, while $c_3$ has uncoloured degree $2$. In case \ref{case:chi-b}, $c_2', c_1, c_2, c_4, \dots, c_k$ all see both colours, while $c_3$ has uncoloured degree $2$. In case \ref{case:chi-c}, $c_1, c_2, c_2'$ all see both colours. 
    
    To see that $\chi'$ satisfies  \ref{itm:extend-cycle-technical}, suppose for contradiction that we have a $\chi'$-\ref{itm:extend-cycle-technical}-bad cycle $B$. Then $B$ must contain some edge whose colour changed in $\chi'$ (since $\chi$ satisfied \ref{itm:extend-cycle-technical}). Note that $B$ contains $c_ic_{i+1}$ for some $i$ (the only other edge whose colour can change between $\chi$ and $\chi'$ is $c_2c_2'$. But then $B$ would also contain another edge through $c_2$ i.e.\ the edge $c_2c_1$ or $c_2c_3$). It is impossible that $B=C$ (in \ref{case:chi-a} and \ref{case:chi-b}, $C$ contains both colours on $c_1c_2, c_2c_3$. In \ref{case:chi-c}, $C$ contains adjacent uncoloured edges $c_2c_3, c_3c_4$), so $B$ must leave $C$ at some point i.e.\ for some $i$, $B$ contains both $c_ic_{i+1}$ and $c_ic_i'$. This must mean that $i=2$ since for all other $i$, we either have that $c_ic_{i+1}$ and $c_ic_i'$ have opposite colours or $c_ic_{i+1}$ and $c_ic_i'$ are both uncoloured.  Finally, for $i=2$ note that in case \ref{case:chi-a} $B$ cannot go through $c_2'$ due to it having uncoloured degree 3, in case \ref{case:chi-b} $C'$ we cannot have $c_2c_3, c_2c_2'\in B$ due to them having opposite colours, and in case \ref{case:chi-c} $B$ cannot go through $c_3$ due to it having uncoloured degree 3.

    Suppose that there is some $\chi'$-\ref{itm:extend-Hs}-bad cycle $B$ which is not $\chi$-\ref{itm:extend-Hs}-bad. Then it would have to go through some vertex $x$ whose uncoloured degree is 3 in $\chi$ and 2 in $\chi'$. It is easy to check that the  only such vertex is $c_3$, which, in cases \ref{case:chi-a}, \ref{case:chi-b}, has the edge $c_2c_3$ coloured and $c_3c_3', c_3c_4$ uncoloured (and in Case \ref{case:chi-c} still has uncoloured degree $3$ in $\chi'$).  Then $B$ goes through $c_3c_3', c_3c_4$ also. However, in cases \ref{case:chi-a}, \ref{case:chi-b} the vertex  $c_4$ has uncoloured degree $1$, so it cannot be part of a $\chi'$-\ref{itm:extend-Hs}-bad cycle.
\end{proof}

\begin{lemma}\label{Lemma_bad_cycle_touches_different_colours}
    Let $G$ be a cubic graph and $\chi$ a partial colouring satisfying \ref{itm:extend-degree}, \ref{itm:extend-cycle-technical}. Let $C$ be a $\chi$-\ref{itm:extend-Hs}-bad cycle whose vertices touch both red and blue edges. 
    
    Then there is a partial colouring $\chi'$ which extends $\chi$, satisfies \ref{itm:extend-degree} and \ref{itm:extend-cycle-technical},  has that all $\chi'$-\ref{itm:extend-Hs}-bad cycles are $\chi$-\ref{itm:extend-Hs}-bad, and has $C$ not $\chi'$-\ref{itm:extend-Hs}-bad. Additionally, every $\chi'$-coloured edge touches $C$.
\end{lemma}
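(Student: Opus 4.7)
The idea is to exploit the hypothesis that vertices of $C$ touch edges of both colours in order to extend $\chi$ on a short stripe of edges of $C$ by a colour-alternating scheme, so that in $\chi'$ the cycle $C$ contains a coloured edge (hence is no longer $\chi'$-\ref{itm:extend-Hs}-bad) while \ref{itm:extend-degree} and \ref{itm:extend-cycle-technical} are preserved. Label each vertex $c \in V(C)$ as $S$, $T$, or $O$ according to whether its unique edge leaving $C$ is red, blue, or uncoloured; for $c \in S \cup T$ write $f(c) \in \{\text{red},\text{blue}\}$ for that outside colour. By hypothesis $S$ and $T$ are both non-empty, and $|O| \le 2$ since $C$ is \ref{itm:extend-Hs}-bad.

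\textbf{Choosing the starting triple.} Since $|C|$ is odd and the $\{S,T\}$-labels on $V(C) \setminus O$ cannot strictly alternate around the cycle (given $|O| \le 2$ and both $S,T$ present), some maximal block of constant $f$-value has length at least two, or else an $O$-vertex may play the role of a ``buffer''. Either way, one can pick consecutive vertices $c_{i-1}, c_i, c_{i+1} \in V(C)$ such that $f(c_i)$ and $f(c_{i+1})$ are defined and differ, while either $c_{i-1} \in O$ or $f(c_{i-1}) = f(c_i)$.

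\textbf{Propagation.} Assume without loss of generality that $f(c_i) = \text{red}$ and $f(c_{i+1}) = \text{blue}$. Colour $e_0 := c_i c_{i+1}$ blue; this restores \ref{itm:extend-degree} at $c_i$ but leaves $c_{i+1}$ seeing only blue. For $k \ge 1$ iteratively colour $e_k := c_{i+k} c_{i+k+1}$ with the colour opposite to $f(c_{i+k})$, stopping at the first index $K$ for which $c_{i+K+1} \in O$ or $f(c_{i+K+1}) = f(c_{i+K})$. Termination is guaranteed because $f$ must strictly alternate along the stripe, while no strict alternation can persist all the way around an odd cycle. By construction every vertex of the stripe sees both colours, and $C$ now contains coloured edges; hence \ref{itm:extend-degree} holds and $C$ is no longer $\chi'$-\ref{itm:extend-Hs}-bad.

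\textbf{Preserving \ref{itm:extend-cycle-technical}; main obstacles.} Any new $\chi'$-\ref{itm:extend-cycle-technical}-bad cycle $B$ would contain some newly coloured edge $e_k$. For $k \ge 1$, the three edges at the interior stripe vertex $c_{i+k}$ receive colours $f(c_{i+k})$, the opposite of $f(c_{i+k})$, and $f(c_{i+k})$ (the middle one being $e_k$), which prevents any monochromatic cycle from using $e_k$ at $c_{i+k}$ and so rules out $B$. For $k = 0$, $B$ must exit $c_i$ via the uncoloured edge $c_{i-1} c_i$, and the chosen condition on $c_{i-1}$ (either $c_{i-1} \in O$ or $f(c_{i-1}) = f(c_i)$) forces $B$'s other edge at $c_{i-1}$ also to be uncoloured, producing two consecutive uncoloured edges and contradicting the \ref{itm:extend-cycle-technical}-badness of $B$. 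The main technical obstacle is the case analysis required to locate the starting triple in the presence of up to two $O$-vertices; a secondary delicate point is to ensure that halting the propagation at an $O$-vertex does not create a \emph{new} $\chi'$-\ref{itm:extend-Hs}-bad cycle (since colouring $e_K$ lowers the uncoloured degree of $c_{i+K+1}$ from $3$ to $2$), which can be handled by colouring one extra edge incident to $c_{i+K+1}$, in the spirit of the local fixes used in \Cref{Lemma_adjacent_uncoloured_3}.
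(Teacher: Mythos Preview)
Your propagation scheme is a natural idea, but the argument has a genuine gap at the very first step: the starting triple $c_{i-1},c_i,c_{i+1}$ need not exist. Consider a $5$-cycle $C=(c_0c_1c_2c_3c_4)$ whose outside edges have colours red, uncoloured, blue, uncoloured, red respectively. Then $|O|=2$, both $S$ and $T$ are non-empty, and $C$ satisfies all hypotheses of the lemma; yet the only adjacent pair with both $f$-values defined is $(c_4,c_0)$, and $f(c_4)=f(c_0)=\text{red}$. There is no pair $c_i,c_{i+1}$ with $f(c_i),f(c_{i+1})$ defined and different, so your propagation cannot even start. Your sentence ``the $\{S,T\}$-labels cannot strictly alternate \ldots'' does not rule this configuration out, because when the two $O$-vertices separate the coloured vertices into singletons, no two coloured vertices are consecutive at all.

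There is a second gap even when the propagation does run: you acknowledge that stopping at $c_{i+K+1}\in O$ drops its uncoloured degree from $3$ to $2$ and may create a new \ref{itm:extend-Hs}-bad cycle, but ``colour one extra edge in the spirit of \Cref{Lemma_adjacent_uncoloured_3}'' is not an argument. Colouring the outside edge at $c_{i+K+1}$ risks violating \ref{itm:extend-degree} at $c_{i+K+1}'$, and colouring $c_{i+K+1}c_{i+K+2}$ just pushes the problem forward; in either case \ref{itm:extend-cycle-technical} must be re-verified, and you do not do so.

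The paper's approach sidesteps both difficulties. It first reduces, via \Cref{Lemma_adjacent_uncoloured_3}, to the case where no two $O$-vertices are adjacent on $C$. Under that assumption it applies a \emph{uniform} rule: for every $i$ with $c_ic_i'$ coloured, colour $c_ic_{i+1}$ by the opposite colour. No starting point has to be chosen, and the ``no adjacent $O$'s'' hypothesis is exactly what makes the verification of ``no new \ref{itm:extend-Hs}-bad cycles'' go through cleanly (a new uncoloured-degree-$2$ vertex $c_j$ must have $c_{j+1}$ of uncoloured degree $1$, blocking any uncoloured cycle).
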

\begin{proof}
    If $C$ contains two adjacent vertices of uncoloured degree $3$, then we are done by Lemma~\ref{Lemma_adjacent_uncoloured_3}, so suppose that this does not happen.
    Let $C = (c_1 \ldots c_k)$. For each $i$, let $c_i'$ be the unique neighbour of $c_i$ which is not $c_{i-1}$ or $c_{i+1}$. Define the following colouring $\chi'$ that will satisfy the lemma: for each $i$ with $c_ic_i'$ coloured in $\chi$, colour $c_{i}c_{i+1}$ by the opposite colour of $c_{i}c_i'$.
    
    For \ref{itm:extend-degree}, note that since $\chi$ satisfied \ref{itm:extend-degree}, the property can only fail at some $c_i$ with $c_ic_{i+1}$ or $c_{i-1}c_i$ coloured. If $c_ic_{i+1}$ was coloured, then $c_ic_{i+1}$, $c_i'c_i$ have different colours. If $c_{i-1}c_i$ was coloured and $c_ic_{i+1}$ was not coloured, then  $c_i'c_i$ is uncoloured and so we have that $c_{i-1}c_i$ is the only $\chi'$-coloured edge through $c_i$.
    
    For \ref{itm:extend-cycle-technical}, suppose that we have some $\chi'$-\ref{itm:extend-cycle-technical}-bad cycle $B$. Since $\chi$ satisfied \ref{itm:extend-cycle-technical}, $B$ must contain some edge $c_ic_{i+1}$ which was coloured in $\chi'$. Note that it is impossible that $B=C$, since $C$ contains both a red and a blue edge (to see this first recall that by the lemma's assumptions we have some $i,j$ with $c_i'c_i$ red and $c_j'c_j$ blue. By definition of $\chi'$, we have $c_ic_{i+1}$ blue and $c_jc_{j+1}$ red). Thus $B$ must contain the sequence $c_a'c_ac_{a+1}\dots c_bc_b'$ for some $a,b$. But then either $c_a'c_a, c_ac_{a+1}$ are both uncoloured, or they have opposite colours. In both cases, we get a contradiction to $B$ being $\chi'$-\ref{itm:extend-cycle-technical}-bad.
    
    Notice that the only new vertices of uncoloured degree $2$  are $c_i$ with $c_i'c_i$ uncoloured and $c_{i-1}'c_{i-1}$ coloured in $\chi$. Any $\chi'$-\ref{itm:extend-Hs}-bad cycle $C'$ that is not $\chi$-\ref{itm:extend-Hs}-bad must pass through such a vertex and so contain the sequence $c_{i}'c_ic_{i+1}$. If $c_{i+1}'c_{i+1}$ is coloured in $\chi$, then we have that $c_{i+1}$ has uncoloured degree 1 in $\chi'$ (and so could not be contained in the uncoloured cycle $C$). On the other hand, if $c_{i+1}'c_{i+1}$ is uncoloured in $\chi$, then we have  two consecutive vertices $c_i, c_{i+1}$ of  uncoloured degree $3$ in $\chi$ (which we have assumed does not happen).
\end{proof}

\begin{lemma}\label{Lemma_adjacent_coloured_edges}
Let $G$ be a cubic graph and $\chi$ a partial colouring satisfying \ref{itm:extend-degree} and \ref{itm:extend-cycle-technical}. Let $C$ be a $\chi$-\ref{itm:extend-Hs}-bad cycle with two consecutive red edges. 

Then there is a partial colouring $\chi'$ which extends $\chi$, satisfies \ref{itm:extend-degree}, \ref{itm:extend-cycle-technical},  has that all $\chi'$-\ref{itm:extend-Hs}-bad cycles are $\chi$-\ref{itm:extend-Hs}-bad, and has $C$ not $\chi'$-\ref{itm:extend-Hs}-bad. Additionally, every $\chi'$-coloured edge touches $C$.
\end{lemma}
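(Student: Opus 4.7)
The plan is to follow the template established in Lemmas~\ref{Lemma_adjacent_uncoloured_3} and~\ref{Lemma_bad_cycle_touches_different_colours}: identify the structural configuration guaranteed by the hypothesis, apply Lemma~\ref{Lemma_adjacent_uncoloured_3} to first dispose of the case where $C$ contains two adjacent vertices of uncoloured degree $3$, and then colour a small set of edges incident to $C$ so that $C$ ceases to be $\chi'$-\ref{itm:extend-Hs}-bad. Write $C = (c_1 c_2 \ldots c_k)$ and, for each $i$, let $c_i'$ denote the unique neighbour of $c_i$ lying outside of $C$. After relabelling one may assume the two consecutive red edges are the pendants $c_1 c_1'$ and $c_2 c_2'$; in particular $c_1$ and $c_2$ both have uncoloured degree $2$.

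The base construction is to define $\chi'$ by colouring the single $C$-edge $c_1 c_2$ blue. Property~\ref{itm:extend-degree} at $c_1$ and $c_2$ is immediate, since each of these vertices now sees its red pendant together with the newly blue edge $c_1 c_2$. The cycle $C$ now contains a coloured edge, so $C$ is no longer $\chi'$-\ref{itm:extend-Hs}-bad. Since the only vertices whose uncoloured degree changes ($c_1$ and $c_2$) already had uncoloured degree $2$, no new vertex of uncoloured degree $2$ is created, and a straightforward argument --- any hypothetical new $\chi'$-\ref{itm:extend-Hs}-bad cycle cannot contain $c_1 c_2$ (which is coloured) nor pass through $c_1$ or $c_2$ at all, because both of $c_1$'s and both of $c_2$'s remaining edges are either the red pendant or the new blue $c_1 c_2$ --- shows that every $\chi'$-\ref{itm:extend-Hs}-bad cycle was already $\chi$-\ref{itm:extend-Hs}-bad, and that every newly coloured edge touches $C$ by construction.

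The main obstacle is maintaining \ref{itm:extend-cycle-technical}. A potential new $\chi'$-\ref{itm:extend-cycle-technical}-bad cycle $B$ must contain $c_1 c_2$ and be blue monochromatic. The red pendants force $B$ to leave $c_1$ along the uncoloured edge $c_k c_1$ and $c_2$ along the uncoloured edge $c_2 c_3$. In $\chi$, the triple $c_k c_1, c_1 c_2, c_2 c_3$ was three consecutive uncoloured edges of $B$, and any other pair of consecutive uncoloured edges that saved $B$ from being $\chi$-bad survives unchanged to $\chi'$. The only dangerous case is that this triple was the unique block of consecutive uncoloured edges of $B$ in $\chi$, which forces $B$ to leave $C$ at $c_k$ and $c_3$ via blue pendants $c_k c_k'$ and $c_3 c_3'$. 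To handle this residual case I would refine $\chi'$ by additionally colouring, say, $c_k c_1$ with a colour depending on $c_k c_k'$ --- choosing red if $c_k c_k'$ is blue, and blue if $c_k c_k'$ is red or uncoloured --- so that the blue extension of $B$ at $c_1$ is blocked. The most delicate part of the proof will be checking that this additional colouring does not create a new \emph{red} bad cycle in turn; the analysis involves further sub-cases on the colours of the nearby pendants, analogous to the sub-case structure of Lemma~\ref{Lemma_adjacent_uncoloured_3}, and the assumption that no two adjacent vertices of $C$ have uncoloured degree $3$ will again be essential in closing each branch.
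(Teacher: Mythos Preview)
Your core construction --- colour the single edge $c_1c_2$ blue --- is exactly what the paper does, and your verification of \ref{itm:extend-degree}, of $C$ ceasing to be \ref{itm:extend-Hs}-bad, and of ``no new \ref{itm:extend-Hs}-bad cycles'' is fine. The gap is in the \ref{itm:extend-cycle-technical} analysis, where you identify the dangerous case (a blue-only cycle $B$ forced through $c_kc_1\,c_1c_2\,c_2c_3$ and then out of $C$ via blue pendants $c_kc_k'$ and $c_3c_3'$) but then propose an open-ended additional recolouring of $c_kc_1$ with unspecified sub-case analysis. That part is not a proof; in particular, colouring $c_kc_1$ red may itself spawn a red-only \ref{itm:extend-cycle-technical}-bad cycle through $c_1c_1'$ and $c_kc_{k-1}$, and you have not shown how this terminates.

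The paper avoids all of this with one line you overlooked: reduce first via Lemma~\ref{Lemma_bad_cycle_touches_different_colours} (not Lemma~\ref{Lemma_adjacent_uncoloured_3}). If $C$ touches both colours, that lemma already finishes. So one may assume every coloured pendant of $C$ is red. Then the ``dangerous case'' simply cannot occur: the edge $c_3c_3'$ is either uncoloured (giving two consecutive uncoloured edges $c_2c_3,\,c_3c_3'$ in $B$) or red (so $B$ would contain both colours). Either way $B$ is not \ref{itm:extend-cycle-technical}-bad, and the proof ends with just the single blue edge $c_1c_2$ and no further recolouring. Your invocation of Lemma~\ref{Lemma_adjacent_uncoloured_3} at the outset is harmless but does no work here; swapping it for Lemma~\ref{Lemma_bad_cycle_touches_different_colours} is the missing step.
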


\begin{proof}
By Lemma~\ref{Lemma_bad_cycle_touches_different_colours}, we can assume that $C$ touches only red edges.
Let $C = (c_1 \ldots c_k)$. For each $i$, let $c_i'$ be the unique neighbour of $c_i$ outside $C$.
Colour $c_1c_2$ blue to get a colouring $\chi'$. We claim that $\chi'$ satisfies the lemma.

For \ref{itm:extend-degree}, note that since $\chi$ satisfied \ref{itm:extend-degree}, the property can only fail at $c_1$ or $c_2$. But both $c_1, c_2$ touch edges of both colours.

For \ref{itm:extend-cycle-technical}, suppose that we have some $\chi'$-\ref{itm:extend-cycle-technical}-bad  cycle $B$. Since $\chi$ satisfied \ref{itm:extend-cycle-technical}, $B$ must contain $c_1c_2$. Note that since $c_1'c_1, c_2'c_2$ are red, and  $c_1c_2$ is blue, $B$ must contain the sequence $c_1 c_2 c_3$ and so contain one of the sequences $c_1c_2c_3c_4$ or $c_1c_2c_3c_3'$. The sequence $c_1c_2c_3c_4$ contains two consecutive uncoloured edges. If $c_3'c_3$ is uncoloured, then $c_1c_2c_3c_3'$ contains two consecutive uncoloured edges. If $c_3'c_3$ is coloured, then it must be red, and so $c_1c_2c_3c_3'$ contains both red and blue edges.

To see that all $\chi'$-\ref{itm:extend-Hs}-bad cycles are $\chi$-\ref{itm:extend-Hs}-bad, notice that there are no new vertices of uncoloured degree $2$.
\end{proof}

Combining the above three lemmas allows us to eliminate any given bad cycle.
\begin{lemma}\label{Lemma_fix_bad_cycle}
    Let $G$ be a cubic graph and $\chi$ a partial colouring satisfying \ref{itm:extend-degree} and \ref{itm:extend-cycle-technical}. Let $C$ be a $\chi$-\ref{itm:extend-Hs}-bad cycle. 
    
    Then there is a partial colouring $\chi'$ which extends $\chi$, satisfies \ref{itm:extend-degree} and \ref{itm:extend-cycle-technical},  has that all $\chi'$-\ref{itm:extend-Hs}-bad cycles are $\chi$-\ref{itm:extend-Hs}-bad, and has $C$ not $\chi'$-\ref{itm:extend-Hs}-bad. Additionally, every $\chi'$-coloured edge touches $C$.
\end{lemma}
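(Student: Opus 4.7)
The plan is to reduce to one of the three preceding lemmas by a case split on the local structure of $C$. Recall that a $\chi$-\ref{itm:extend-Hs}-bad cycle is odd, has all its edges uncoloured, at most two vertices of uncoloured degree $3$, and the remaining vertices each of uncoloured degree $2$ (so each touches exactly one coloured edge off $C$).

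If $C$ has two adjacent vertices of uncoloured degree $3$, then \Cref{Lemma_adjacent_uncoloured_3} applies directly. Otherwise the vertices of uncoloured degree $3$ on $C$ are pairwise non-adjacent. If $C$ touches both red and blue edges, then \Cref{Lemma_bad_cycle_touches_different_colours} applies. In the remaining subcase every coloured edge incident to $C$ has the same colour, say red, and the goal becomes exhibiting two adjacent vertices of $C$ whose outside edges are both red, so that \Cref{Lemma_adjacent_coloured_edges} finishes the proof.

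To find such an adjacent pair, let $s \in \{0,1,2\}$ be the number of uncoloured-degree-$3$ vertices on $C$. When $s \le 1$, the uncoloured-degree-$2$ vertices form a path of at least $|C| - 1 \ge 2$ consecutive vertices in $C$, so I am done. When $s = 2$, the two such vertices are non-adjacent and $|C|$ is odd, which forces $|C| \ge 5$; removing them splits $C$ into two arcs containing in total $|C| - 2 \ge 3$ uncoloured-degree-$2$ vertices, so by pigeonhole one arc has at least two such vertices, which are adjacent in $C$. In each of the three cases the invoked sub-lemma delivers a $\chi'$ which extends $\chi$, preserves \ref{itm:extend-degree} and \ref{itm:extend-cycle-technical}, creates no new $\chi'$-\ref{itm:extend-Hs}-bad cycles, renders $C$ no longer $\chi'$-\ref{itm:extend-Hs}-bad, and only colours edges touching $C$. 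The only point requiring care is verifying exhaustiveness of the case split, which is precisely the pigeonhole observation in the $s = 2$ case.
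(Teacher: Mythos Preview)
Your proof is correct and follows essentially the same approach as the paper: both reduce to one of \Cref{Lemma_adjacent_uncoloured_3}, \Cref{Lemma_bad_cycle_touches_different_colours}, or \Cref{Lemma_adjacent_coloured_edges} via a short case analysis. The only difference is cosmetic---the paper orders the split as ``both colours / one colour with $|C|\ge 5$ / one colour with $|C|=3$'', whereas you order it as ``adjacent degree-$3$ vertices / both colours / one colour with pigeonhole on the arcs''---and your pigeonhole argument in the $s=2$ subcase is spelled out a bit more explicitly than the paper's one-line observation that $|C|\ge 5$ forces two consecutive red-touching vertices.
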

\begin{proof}
    If $C$ touches both colours, then we are done by Lemma~\ref{Lemma_bad_cycle_touches_different_colours}. So $C$ touches just one colour. Without loss of generality, $C$ only touches red edges. If $|C|\geq 5$, then it will have two consecutive vertices touching red edges (since in a $\chi$-\ref{itm:extend-Hs}-bad cycle there are at most two vertices which do not touch coloured edges). Thus, when $|C|\geq 5$, Lemma~\ref{Lemma_adjacent_coloured_edges} applies to give what we want. The remaining case, not covered by  Lemma~\ref{Lemma_adjacent_coloured_edges}, is when $|C|=3$ and $C$ has only one vertex touching red edges (recalling that a $\chi$-\ref{itm:extend-cycle-technical}-bad cycle is odd by definition). Then the remaining two vertices are adjacent vertices of uncoloured degree 2, and so Lemma~\ref{Lemma_adjacent_uncoloured_3} gives what we want.
\end{proof}

By iterating the previous lemma, we can get rid of all $\chi$-\ref{itm:extend-Hs}-bad cycles by extending a colouring. 
\begin{lemma}\label{Lemma_destroy_all_bad_cycles}
    Let $G$ be a cubic graph and $\chi_0$ a partial colouring satisfying \ref{itm:extend-degree} and \ref{itm:extend-cycle-technical}. 
    
    Then there is a partial colouring $\chi'$ which extends $\chi$, satisfies \ref{itm:extend-degree} and \ref{itm:extend-cycle-technical}, and has no $\chi'$-\ref{itm:extend-Hs}-bad cycles. Additionally, every $\chi'$-coloured edge is within distance $2$ of a $\chi$-coloured edge.
\end{lemma}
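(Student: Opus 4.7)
The plan is to apply \Cref{Lemma_fix_bad_cycle} iteratively to remove \ref{itm:extend-Hs}-bad cycles one at a time. Starting from $\chi_0$, I would produce a sequence of partial colourings $\chi_0, \chi_1, \chi_2, \ldots$, where $\chi_{i+1}$ is obtained by applying \Cref{Lemma_fix_bad_cycle} to $\chi_i$ with some currently $\chi_i$-\ref{itm:extend-Hs}-bad cycle $C_{i+1}$. The process stops once a colouring $\chi_k$ has no bad cycles, and I set $\chi' := \chi_k$. At each step, \Cref{Lemma_fix_bad_cycle} preserves \ref{itm:extend-degree} and \ref{itm:extend-cycle-technical}, and extensions compose, so $\chi'$ will extend $\chi_0$ and satisfy the two required properties.

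For termination, the crucial observation is that \Cref{Lemma_fix_bad_cycle} guarantees every $\chi_{i+1}$-bad cycle is already $\chi_i$-bad. By induction, every $\chi_i$-bad cycle is $\chi_0$-bad. Since $C_{i+1}$ itself is no longer $\chi_{i+1}$-bad, the set of bad cycles strictly decreases at each step while remaining inside the finite set of $\chi_0$-bad cycles in $G$. So the process halts after finitely many iterations, and by construction $\chi'$ has no $\chi'$-\ref{itm:extend-Hs}-bad cycles.

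The delicate point is the distance bound: every $\chi'$-coloured edge should lie within distance $2$ of a $\chi_0$-coloured edge. At step $i+1$, \Cref{Lemma_fix_bad_cycle} ensures the newly coloured edges touch $C_{i+1}$. The key is that $C_{i+1}$, being $\chi_i$-bad, is also $\chi_0$-bad by the transitive property above; hence at most two of its vertices have uncoloured-in-$\chi_0$ degree $3$, and every other vertex of $C_{i+1}$ is incident to some $\chi_0$-coloured edge. Now consider a new edge $e$ with endpoint $x$ on $C_{i+1}$. If $x$ is non-exceptional, $e$ shares the vertex $x$ with a $\chi_0$-coloured edge, so the distance is $0$. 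If $x$ is exceptional, then since there are at most two exceptional vertices on $C_{i+1}$ and $x$ has two distinct neighbours on the cycle, at least one neighbour $y$ of $x$ on $C_{i+1}$ is non-exceptional and touches a $\chi_0$-coloured edge $f$; since $x$ and $y$ are adjacent, $d(e,f) \le 1$. In either case the new edge is within distance at most $1$ (and in particular $2$) of a $\chi_0$-coloured edge.

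I expect the main subtlety to be precisely this distance bound: a naive argument would only give distance $1$ to the current coloured graph $\chi_i$, causing distances to $\chi_0$ to accumulate across iterations. The resolution is to never argue through intermediate colourings but always directly through the $\chi_0$-bad cycle being fixed, which is legitimate because every cycle we ever need to fix is $\chi_0$-bad thanks to the transitivity of the ``all bad cycles are old bad cycles'' property in \Cref{Lemma_fix_bad_cycle}.
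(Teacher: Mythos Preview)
Your proof is correct and follows essentially the same approach as the paper: iterate \Cref{Lemma_fix_bad_cycle}, using transitivity of ``all new bad cycles are old bad cycles'' both for termination and to reduce the distance bound to a statement about $\chi_0$-bad cycles. Your distance analysis is in fact slightly sharper (yielding distance $\le 1$), whereas the paper simply observes that every vertex of a $\chi_0$-bad cycle is within distance $2$ of a $\chi_0$-coloured edge.
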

\begin{proof}
Let $D_1, \dots, D_t$ be the $\chi_1$-\ref{itm:extend-Hs}-bad cycles listed in some order.  Construct colourings $\chi_2, \chi_3, \dots$ as follows. For each $i$, if $D_i$ is $\chi_i$-\ref{itm:extend-Hs}-bad, then apply Lemma~\ref{Lemma_fix_bad_cycle} to $D_i, \chi_i$ to get a colouring $\chi_{i+1}$ extending $\chi_i$. Otherwise, if $D_i$ is not $\chi_i$-\ref{itm:extend-Hs}-bad, then set $\chi_{i+1}=\chi_i$. In either case we have $D_i$ is not $\chi_{i+1}$-\ref{itm:extend-Hs}-bad and that all $\chi_{i+1}$-\ref{itm:extend-Hs}-bad cycles are $\chi_i$-\ref{itm:extend-Hs}-bad. Putting these together gives that $D_1, \dots, D_t$ are not $\chi_{t+1}$-\ref{itm:extend-Hs}-bad  and that all $\chi_{t+1}$-\ref{itm:extend-Hs}-bad cycles are $\chi_1$-\ref{itm:extend-Hs}-bad  i.e.\ there are no $\chi_{t+1}$-\ref{itm:extend-Hs}-bad cycles at all.

For the ``additionally'' part, note that all $\chi_{t+1}$-coloured edges that are not $\chi_1$-coloured touch some $D_i$. In each $D_i$, there are at most two vertices of uncoloured degree $3$ in $\chi_1$, and so every vertex is within distance $2$ of a $\chi_1$-coloured edge.
\end{proof}

The following lemma characterises graphs which do not contain odd cycles with at most three degree 3 vertices.
\begin{lemma}\label{Lemma_subcubic_subdivision}
    Let $G$ be a subcubic graph in which every odd cycle contains at least three vertices of degree 3. Then we can edge-decompose $G$ into $H_1, H_2, H_3$ where $H_2, H_3$ are vertex disjoint and:
    \begin{enumerate}[label = \rm(\arabic*)]
    \item \label{itm:H1} $H_1$ is a subdivision of a simple graph with all degrees $\in \{1,3\}$.
    \item \label{itm:H2} $H_2$ is a union of disjoint even cycles, each of which has at most two vertices in $H_1$.
    \item \label{itm:H3} $H_3$ is a collection of subdivisions of multiedges with multiplicity 3, each of which has no vertices in $H_1$.
    \end{enumerate} 
\end{lemma}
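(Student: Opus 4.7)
The plan is to contract the degree-$2$ vertices of $G$ and read the decomposition off the resulting multigraph. Let $F$ be the multigraph whose vertices are the degree-$1$ and degree-$3$ vertices of $G$, with one edge of $F$ per maximal path of $G$ whose internal vertices have degree $2$ (such paths may be single edges). A connected component of $G$ that has no vertex of degree $3$ is a cycle through only degree-$2$ vertices; since it contains zero degree-$3$ vertices, the hypothesis forces it to be even, so I place all such cycles into $H_2$ with no contribution to $H_1$. From now on every remaining component of $G$ contains a vertex of degree $3$, and every vertex of $F$ has degree $1$ or $3$ when loops are counted with multiplicity $2$ and multi-edges with their multiplicity.

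I then dispose of the loops and multi-edges of $F$. A loop at $v \in V(F)$ corresponds to a cycle in $G$ through $v$ whose only degree-$3$ vertex is $v$; since one is fewer than three, the hypothesis forces this cycle to be even, and I push it into $H_2$. Likewise, a pair of parallel edges between $u,v \in V(F)$ is a cycle in $G$ with exactly two vertices of degree $3$, hence even by the hypothesis, and again goes into $H_2$. Three parallel edges between $u$ and $v$ already saturate both endpoints, so they form an entire connected component of $F$; the corresponding subdivided triple edge of $G$ is added to $H_3$. A key observation is that every vertex of $F$ is incident to at most one loop or multi-edge, since two such structures would force its degree to be at least $4$.

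Finally, let $H_1$ be what remains of $G$ after removing the edges assigned to $H_2$ and $H_3$. A vertex of $F$ incident to a loop or a double edge loses exactly $2$ from its degree upon deletion of that cycle, so its degree drops from $3$ to $1$; a vertex incident to a triple edge is removed entirely; and every other vertex of $F$ keeps its degree $1$ or $3$. Hence $H_1$ is a subdivision of a simple graph with all degrees in $\{1,3\}$, giving \ref{itm:H1}. For \ref{itm:H2}, the cycles placed in $H_2$ are pairwise vertex-disjoint by the degree argument of the previous paragraph, and the per-case analysis shows each meets $H_1$ in at most two vertices (zero for an isolated cycle, one for a loop, two for a double edge). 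For \ref{itm:H3}, triple-edge components are entire connected components of $G$, so they are automatically vertex-disjoint from both $H_1$ and $H_2$.

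The heart of the argument is the parity step: the hypothesis that every odd cycle contains at least three vertices of degree $3$ is precisely what rules out odd isolated cycles, odd loops, and odd double edges, which are exactly the three kinds of cycles I need to absorb into $H_2$. I therefore do not expect a serious obstacle beyond carefully verifying this case analysis and the bookkeeping of vertex degrees in the reduced graph.
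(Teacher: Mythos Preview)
Your proof is correct and takes essentially the same route as the paper's: suppress the degree-$2$ vertices to form a multigraph, peel off loops and parallel edges into $H_2$ and $H_3$, and check that the simple remainder has all degrees in $\{1,3\}$; the only cosmetic difference is that the paper halts the contraction just before creating loops (so a would-be loop shows up instead as a double edge with one degree-$2$ endpoint). One small slip: a connected component with no degree-$3$ vertex could also be a path rather than a cycle, but such components are handled correctly by your general procedure anyway, since they contribute a single simple edge of $F$ between two degree-$1$ vertices and hence land in $H_1$.
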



\begin{proof}
    Repeatedly contract vertices of degree $2$ for as long as possible without creating loops. The result is a subcubic multigraph $H$ in which the only degree 2 vertices occur as the endpoint of some multiedge (of multiplicity $2$). 
    Let $G_2, G_3\subseteq G$ be the unions of multiedges of multiplicity $2$ and $3$, respectively. Since $\Delta(G)\leq 3$, we have that the multiedges in $G_2\cup G_3$ are all vertex-disjoint. Let $G_1=G \setminus E(G_2\cup G_3)$, noting that $G_1$ is a simple graph. 
    Also, each multiedge in $G_2$ can trivially intersect at most two vertices of $G_1$.
    We claim that $G_1$ has no vertices of degree $2$. Indeed suppose that $v$ is such a vertex. If $d_G(v)=2$, then $v$ is part of some multiedge which shows that $d_{G_1}(v)=0$. Otherwise, if $d_G(v)=1$ or $3$, note that we either have $d_{G_1}(v)=d_G(v)$
    or  $d_{G_1}(v)\leq d_G(v)-2$. So in all cases, we cannot get $d_{G_1}(v)=2$. 
    
    Uncontract $G_1, G_2, G_3$ to get $H_1, H_2, H_3$ decomposing $G$. Properties \ref{itm:H1} and \ref{itm:H3} are immediate from the construction of $G_1, G_3$. For \ref{itm:H2}, it is immediate that $H_2$ is a union of cycles each of has at most vertices in $H_1$. The property ``every odd cycle contains at least three vertices of degree 3'' of  $G$ shows that all the cycles in $H_2$ are even.
\end{proof}

We are now ready to prove \Cref{Lemma_get_E4}.

\begin{proof}[Proof of \Cref{Lemma_get_E4}]
    Let $\chi'$ be the colouring given by Lemma~\ref{Lemma_destroy_all_bad_cycles}. Properties \ref{itm:extend-degree} and \ref{itm:extend-cycle-technical} are immediate from that lemma, as is ``every $\chi'$-coloured edge is within distance $2$ of a $\chi$-coloured edge''. For \ref{itm:extend-Hs}, recall that Lemma~\ref{Lemma_destroy_all_bad_cycles} and the definition of ``$\chi'$-\ref{itm:extend-Hs}-bad cycle'', tell us that all uncoloured odd cycles contains at least three vertices of uncoloured degree 3. By \Cref{cl:TotalPartialColouringE1E3}, each coloured component in $\chi'$ has size $O(\sqrt{\log n})$, and by Lemma~\ref{Lemma_subcubic_subdivision}, the $\chi'$-uncoloured edges can be partitioned into graph $H_1, H_2, H_3$ having the three properties in \Cref{Lemma_subcubic_subdivision}. Notice that each component $K$ in $H_2$ or $H_3$ has all but at most two of its vertices incident to a coloured edge in $\chi$. Because coloured components in $\chi$ are far away from each other, all the coloured edges touching $K$ belong to the same coloured component, and thus $|K| = O(\sqrt{\log n})$.
\end{proof}

\section{Explicit structure of gadgets} \label{sec:gadgets}

    From this section onward, we will focus on the proof of \Cref{lem:GoodGadgetsExist}, which will be the final element of the proof of the main theorem. Recall that in \Cref{sec:Exact} we gave an abstract definition of a gadget. In this section, we will explicitly describe the gadgets that we find in our graph. In the two subsequent sections, we show that such gadgets can be found in small balls around sufficiently long geodesics: in \Cref{sec:GeodesicNoCommonNghbs} we will prove \Cref{lem:GoodGadgetsExist} for geodesics whose vertices have no common neighbours (which is actually \Cref{lem:GoodCaterpillarGadgetsExist}), and in \Cref{sec:GeodesicWithCommon} we will prove \Cref{lem:GoodGadgetsExist} for the remaining case. 
    Throughout the next three sections, we describe the gadgets both in words and in a figure, with the expectation that the figure will be much easier the understand.

\begin{definition}[Type I gadgets]\label{def:gadget}
    Let $\ell \geq 3$.
    An \emph{$\ell$-gadget of Type I} is a graph $H$, along with two red-blue colourings $\chi_1$ and $\chi_2$, defined as follows. The graph $H$ consists of paths $Q_0, Q_1, Q_2$ (where the ends of $Q_i$ are denoted $Q_i^+, Q_i^-$) and edges $e_1, \ldots, e_5, f_1, \ldots, f_4$, satisfying the following properties (see \Cref{fig:gadget-type-1}).
    \begin{itemize}
        \item 
            $Q_0 = (q_0, \ldots, q_{\ell+2})$, so $Q_0$ has length $\ell+2$,
        \item
            $Q_1$ and $Q_2$ are vertex-disjoint and have the same length,
        \item
            $Q_1^- = q_1$, $Q_2^- = q_3$, and $Q_i \setminus \{Q_i^-\}$ is vertex-disjoint of $Q_0$, for $i \in [2]$,
        \item
            the edges $e_i$ and $f_j$ are edge-disjoint of $Q_0, Q_1, Q_2$, for $i \in [5]$ and $j \in [4]$,
        \item
            $e_1, e_2$ are distinct edges that contain $q_0$; $e_3$ contains $q_2$; $e_4, e_5$ are distinct edges containing $q_{\ell+2}$; $f_1, f_2$ are distinct edges containing $Q_1^+$; and $f_3, f_4$ are distinct edges containing $Q_2^+$.
    \end{itemize}
    We remark that an edge $e_i$ or $f_j$ could intersect vertices of $Q_0 \cup Q_1 \cup Q_2$ unless specified otherwise (e.g.\ $e_1$ cannot intersect $q_1$), and similarly two elements in $(e_1, \ldots, e_5, f_1, \ldots, f_4)$ could be the same edge or be intersecting edges, unless specified otherwise.
    
    Let $\chi_1$ to be the red-blue colouring of $H$, defined as follows.
    \begin{equation*}
    \chi_1(e) = 
    \left\{\begin{array}{ll}
        \red & e \in Q_1 \cup Q_2 \text{ or } e \in \{e_1, \ldots, e_5,\, q_1 q_s\}, \\
        \blue & e \in Q_0 \setminus \{q_1 q_2\} \text{ or } e \in \{f_1, \ldots, f_4\}.
    \end{array}\right.
    \end{equation*}
    Let $\chi_2$ be the red-blue colouring of $H$, obtained from $\chi_1$ by swapping the colours of $q_1q_2$ and $q_2q_3$. Namely,
    \begin{equation*}
    \chi_2(e) = 
    \left\{\begin{array}{ll}
        \red & e \in Q_1 \cup Q_2 \text{ or } e \in \{e_1, \ldots, e_5,\, q_2 q_3\}, \\
        \blue & e \in Q_0 \setminus \{q_2 q_3\} \text{ or } e \in \{f_1, \ldots, f_4\}.
    \end{array}\right.
    \end{equation*}
    \begin{figure}[h]
        \centering
        \begin{subfigure}[b]{.45\textwidth}
            \includegraphics[scale = .8]{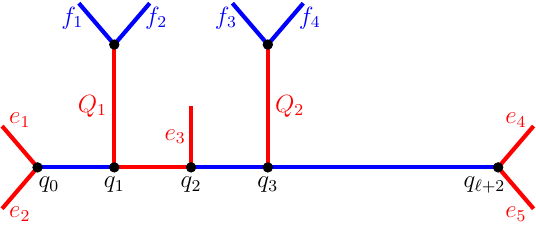} 
            \vspace{.1cm}
            \caption*{The colouring $\chi_1$}
        \end{subfigure}
        \hspace{.4cm}
        \begin{subfigure}[b]{.45\textwidth}
            \includegraphics[scale  = .8]{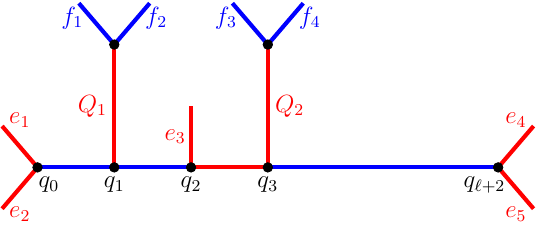} 
            \vspace{.1cm}
            \caption*{The colouring $\chi_2$}
        \end{subfigure}
        \caption{The two colourings of a Type I gadget}
        \label{fig:gadget-type-1}
    \end{figure} 
\end{definition}
    
\begin{observation}\label{obs:gadget-type-1}
    An $\ell$-gadget of Type I is a blue $\ell$-gadget, for $\ell \ge 3$.
\end{observation}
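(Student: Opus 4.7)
The plan is to verify directly the five bullet points in Definition~\ref{def:gadget-abstract}, taking $H' := (H, \chi_2)$. Two of the bullets are immediate from the explicit construction: $\chi_1$ and $\chi_2$ agree except on the two edges $q_1q_2$ and $q_2q_3$, and $(H, \chi_1)$ contains the blue subpath $q_2 q_3 \cdots q_{\ell+2}$ of $Q_0$, which has length $\ell$ and whose endpoints $q_2$ and $q_{\ell+2}$ are incident to the red edges $\{q_1q_2, e_3\}$ and $\{e_4, e_5\}$ respectively. For the remaining three bullets I would fix an arbitrary cubic $G \supseteq H$ whose monochromatic components are paths, and let $G'$ be obtained by switching $\chi_1$ to $\chi_2$ on $H$. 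Since the two colourings agree off $\{q_1q_2, q_2q_3\}$, every monochromatic component avoiding the three vertices $q_1, q_2, q_3$ is identical in $G$ and $G'$, so the entire analysis localises to what happens at these vertices.

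For the red graph, the key point is that $q_0, q_1, q_2, q_3, q_{\ell+2}, Q_1^+, Q_2^+$ have all of their incident edges inside $H$, so their red degrees are read off directly from the colouring. In $\chi_1$ the red degrees at $q_1, q_2, q_3$ are $2, 2, 1$, so the red edge $q_1q_2$ lies on a maximal red path of the form $Q_1^+ \to \cdots \to q_1 \to q_2 \to (\text{via } e_3) \to \cdots$ of length $|Q_1| + 2 + s$, where $s$ is the length of the continuation past $e_3$ in $G$, while $Q_2$ forms a separate red path $Q_2^+ \to \cdots \to q_3$ of length $|Q_2|$. In $\chi_2$ these degrees become $1, 2, 2$, and the analogous trace shows the two affected red components become $Q_1^+ \to \cdots \to q_1$ of length $|Q_1|$ and $Q_2^+ \to \cdots \to q_3 \to q_2 \to (\text{via } e_3) \to \cdots$ of length $|Q_2| + 2 + s$. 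Using $|Q_1| = |Q_2|$, the multiset of red path lengths is unchanged, giving $r_G(P_t) = r_{G'}(P_t)$ for every $t$. Crucially, the two red paths being rewired are distinct (hence vertex-disjoint) components of the red subgraph of $G$, so the rewiring cannot create a red cycle; all other red components are untouched, so the red subgraph of $G'$ is still a disjoint union of paths.

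The blue analysis is parallel. Reading off blue degrees, in $\chi_1$ the gadget contributes the blue components $q_0q_1$ (length $1$) and $q_2q_3\cdots q_{\ell+2}$ (length $\ell$), while in $\chi_2$ it contributes $q_0q_1q_2$ (length $2$) and $q_3q_4\cdots q_{\ell+2}$ (length $\ell-1$); no blue cycle is introduced since $q_2$ has blue degree $1$ in both colourings. The blue components passing through $Q_1^+$ via $f_1, f_2$ and through $Q_2^+$ via $f_3, f_4$, as well as all other blue components in $G$, are entirely unaffected. Consequently $b_G(P_t) - b_{G'}(P_t)$ equals $0$ for every $t \notin \{1, 2, \ell-1, \ell\}$ and equals $+1$ exactly when $t = \ell$, so in particular it is $0$ for $t > \ell$ and $1$ for $t = \ell$, matching Definition~\ref{def:gadget-abstract}. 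I expect the only real subtlety to be the bookkeeping for the two red components being rewired through $q_2$; once one notes that they are distinct paths in $G$ (forced by the assumption that the red subgraph of $G$ is a disjoint union of paths together with the fact that $Q_2$ already has red degree $1$ at $q_3$ in $\chi_1$), the absence of cycles and the preservation of the length multiset follow immediately.
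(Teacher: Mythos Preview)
Your proof is correct and follows essentially the same approach as the paper: you identify the two affected blue components ($q_0q_1$ and $q_2\cdots q_{\ell+2}$ become $q_0q_1q_2$ and $q_3\cdots q_{\ell+2}$) and the two affected red components (the path $Q_1 q_1 q_2 e_3\cdots$ and $Q_2$ become $Q_1$ and $Q_2 q_3 q_2 e_3\cdots$), and use $|Q_1|=|Q_2|$ to conclude the red length multiset is preserved. Your treatment is in fact slightly more careful than the paper's, which simply writes down the symmetric differences $\cP_b'\setminus\cP_b$, $\cP_r'\setminus\cP_r$ and declares the rest ``easy to check''; you explicitly justify why the two rewired red components are distinct (hence vertex-disjoint) paths, so that no red cycle can form.
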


\begin{proof}
    Consider an $\ell$-gadget of Type I, defined by $H, \chi_1, \chi_2$. Let $G$ be a red-blue cubic graph, whose monochromatic components are paths, and suppose that $G$ contains a copy $H'$ of $(H, \chi_1)$. Consider the red-blue coloured graph $G'$ obtained from $G$ by replacing the colouring of $H'$ by $(H, \chi_2)$. Let $\cP_b, \cP_r$ be the collections of blue and red components in $G$, and let $\cP_b', \cP_r'$ be the collections of blue and red components in $G'$. It is easy to check that 
    \begin{align*}
        \cP_b' & = \left(\cP_b \setminus \{q_0 q_1,\, q_2 \ldots q_{\ell+2}\}\right) \cup \{q_0 q_1 q_2, \, q_3 \ldots q_{\ell+2}\}   \\
        \cP_r' & = \left(\cP_r \setminus \{Q_2, \,\text{the red component in $G$ containing $Q_1 q_1 q_2 e_3$}\} \right) \\
        & \qquad \,\,\cup \{Q_1, \,\text{the red component in $G'$ containing $Q_2 q_3 q_2 e_3$}\}.
    \end{align*}
    Since $Q_1$ and $Q_2$ have the same length, the red component in $G$ containing $Q_1 q_1 q_2 e_3$ has the same length as the red component in $G'$ containing $Q_2 q_3 q_2 e_3$.
    Thus $r_G(P_k) = r_{G'}(P_k)$ for every $k$, and
    \begin{equation*}
        b_{G'}(P_k) = \left\{
            \begin{array}{ll}
                b_G(P_k) & k \neq \ell, \ell-1, 1, 2 \\
                b_G(P_k) + 1 & k \in \{\ell-1, 2\} \\ 
                b_G(P_k) - 1 & k \in \{\ell, 1\}.
            \end{array}
        \right.
    \end{equation*}
    Notice also that $(H, \chi_1)$ and $(H, \chi_2)$ differ on exactly two edges, and that $(H, \chi_1)$ has a maximal blue path of length $\ell$ whose ends are incident with two red edges.
    It follows that $(H, \chi_1, \chi_2)$ is a blue $\ell$-gadget.
\end{proof}

\begin{definition}[Type II gadgets]
    Let $\ell \ge 3$. An \emph{$\ell$-gadget of Type II} is a graph $H$, along with two red-blue colourings $\chi_1$ and $\chi_2$ of $H$, defined as follows. The graph $H$ consists of paths $Q_0, Q_1$ (where the ends of $Q_1$ are denoted $Q_1^+, Q_1^-$) and edges $e_1, \ldots, e_5$, such that
    \begin{itemize}
        \item 
            $Q_0 = (q_0 \ldots q_{\ell+2})$, so $Q_0$ has length $\ell+2$,
        \item
            $Q_1^- = q_1$ and $Q_1^+ = q_3$, and $Q_1 \setminus \{q_1^+, q_1^-\}$ is vertex disjoint of $Q_0$,
        \item
            the edges $e_i$ are edge-disjoint of $Q_0, Q_1$, for $i \in [5]$,
        \item
            $e_1, e_2$ are distinct edges containing $q_0$; $e_3$ contains $q_2$; and $e_4, e_5$ are distinct edges containing $q_{\ell+2}$.
    \end{itemize}
    Let $\chi_1$ be the red-blue colouring of $H$, defined as follows.
    \begin{equation*}
        \chi_1(e) = \left\{
            \begin{array}{ll}
                \red & e \in Q_1 \text{ or } e \in \{e_1, \ldots, e_5, \,q_1q_2\}, \\
                \blue & e \in Q_0 \setminus \{q_1 q_2\}.
            \end{array}
        \right. 
    \end{equation*}
    Let $\chi_2$ be the red-blue colouring of $H$, obtained from $\chi_1$ by swapping the colours of $q_1 q_2$ and $q_2 q_3$. Namely,
    \begin{equation*}
        \chi_2(e) = \left\{
            \begin{array}{ll}
                \red & e \in Q_1 \text{ or } e \in \{e_1, \ldots, e_5, \,q_2q_3\}, \\
                \blue & e \in Q_0 \setminus \{q_2 q_3\}.
            \end{array}
        \right. 
    \end{equation*}
    \begin{figure}[h!]
        \centering
        \begin{subfigure}[b]{.45\textwidth}
            \includegraphics[scale = .8]{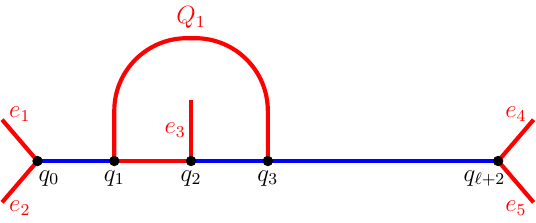} 
            \caption*{The colouring $\chi_1$}
        \end{subfigure}
        \hspace{.4cm}
        \begin{subfigure}[b]{.45\textwidth}
            \includegraphics[scale = .8]{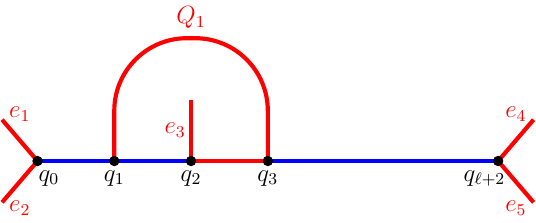}
            \caption*{The colouring $\chi_2$}
        \end{subfigure}
        \caption{The two colourings of a Type II gadget}
        \label{fig:gadget-type-2}
    \end{figure}
\end{definition}

The following observation can be proved similarly to \Cref{obs:gadget-type-1}; we omit the details.
\begin{observation}
    An $\ell$-gadget of Type II is a blue $\ell$-gadget, for $\ell \ge 3$.
\end{observation}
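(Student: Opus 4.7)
The plan is to mimic the proof of \Cref{obs:gadget-type-1} almost verbatim, but being careful about how the single red path $Q_1$ interacts with the two edges $q_1 q_2$ and $q_2 q_3$ that we swap. Fix a red-blue cubic graph $G$ with monochromatic path components containing a copy of $(H, \chi_1)$, and form $G'$ by replacing this copy with $(H, \chi_2)$. I would first check locally at $q_0, q_1, q_2, q_3, q_{\ell+2}$ that every vertex still has monochromatic degrees $(1,2)$ or $(2,1)$ in $G'$, and that (since only the colours of $q_1 q_2, q_2 q_3$ change) no new monochromatic cycle is created.

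The crux is that in $\chi_1$ the maximal red path containing $e_3$ runs
\[
\cdots - e_3 - q_2 - q_1 - Q_1 - q_3,
\]
stopping at $q_3$ because $q_2 q_3$ and $q_3 q_4$ are blue, while in $\chi_2$ the same path has become
\[
\cdots - e_3 - q_2 - q_3 - Q_1 - q_1,
\]
stopping at $q_1$ because $q_1 q_2$ and $q_0 q_1$ are blue. These two paths have the same length (the external red segment attached to $e_3$ plus $e_3, q_1q_2$ or $q_2q_3$, plus $|Q_1|$), and no other red component is affected. Hence $r_G(P_t) = r_{G'}(P_t)$ for every $t$.

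For the blue side, the only affected blue components are those meeting $\{q_1, q_2, q_3\}$: in $\chi_1$ they are the length-$1$ path $q_0 q_1$ and the length-$\ell$ path $q_2 q_3 \ldots q_{\ell+2}$ (bounded by red edges $q_1q_2, e_3$ at one end and $e_4, e_5$ at the other), whereas in $\chi_2$ they become $q_0 q_1 q_2$ of length $2$ and $q_3 q_4 \ldots q_{\ell+2}$ of length $\ell-1$. For $\ell \ge 3$ these changes only affect blue path counts at lengths $\ell, \ell-1, 2, 1$, all at most $\ell$, and exactly one blue path of length $\ell$ is destroyed. Finally, $\chi_1, \chi_2$ differ on exactly the two edges $q_1 q_2, q_2 q_3$, and in $(H, \chi_1)$ the blue path $q_2 q_3 \ldots q_{\ell+2}$ has length $\ell$ with ends incident to the red edges $\{q_1 q_2, e_3\}$ and $\{e_4, e_5\}$ respectively. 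All the bullet points of \Cref{def:gadget-abstract} thus hold, so $(H, \chi_1, \chi_2)$ is a blue $\ell$-gadget. The only real obstacle compared to Type I is keeping straight which endpoint of $Q_1$ the red path terminates at under each colouring, but once this bookkeeping is done the argument is essentially identical.
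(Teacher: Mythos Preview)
Your argument is correct and follows exactly the approach the paper indicates: the paper omits the proof entirely, simply stating it ``can be proved similarly to \Cref{obs:gadget-type-1}'', and your write-up carries out precisely that analogous computation. The key bookkeeping point you highlight---that the maximal red path through $e_3$ in $\chi_1$ ends at $q_3$ via $q_2 q_1 Q_1$, while in $\chi_2$ it ends at $q_1$ via $q_2 q_3 Q_1$, hence has the same length---is exactly the Type~II analogue of the $Q_1/Q_2$ symmetry used for Type~I, and your treatment of the blue side and the remaining bullet points of \Cref{def:gadget-abstract} is accurate.
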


\section{Geodesic with no common neighbours (\Cref{lem:GoodCaterpillarGadgetsExist})}\label{sec:GeodesicNoCommonNghbs}

 In this section we prove \Cref{lem:GoodCaterpillarGadgetsExist}, which finds a gadget in a small ball around a sufficiently long geodesic whose vertices do not have common neighbours (see \Cref{lem:comb-colouring}).
 
 Recall that here we assume that $P$ is a geodesic of length $\ell + 36$, no two of whose vertices have a common neighbour outside of $P$. 
 Throughout this section, fix such a geodesic $P$.
 Write $L = \ell + 36$ and $P = (p_0 \ldots p_{L})$, and denote by $p_i'$ the unique neighbour of $p_i$ outside of $P$, for $i \in [L-1]$; so the $p_i$'s are distinct.

\begin{claim}\label{cl:CaterpillarCases}
    One of the following holds.
    \begin{enumerate}[label = \rm\textbf{\Roman*.}, ref = \rm\textbf{\Roman*}]
        \item \label{case:alg-comb-a}
            There exists $s \in [10, L - \ell - 10]$ such that $p'_{s-1}$ and $p'_{s+1}$ have no common neighbours.
        \item \label{case:alg-comb-b}
           For every $i \in [12, L - \ell - 12]$, the vertices $p_{i-1}'$ and $p_{i+1}'$ have a common neighbour in $\{p_{i-2}', p_i', p_{i+2}'\}$, and $\{p_j' : j \in [14, L - \ell - 14]\}$ induces no cycles.
        \item \label{case:alg-comb-c}
            There exists $s \in [12, L - \ell - 10]$ such that $p'_{s-1}$ and $p'_{s+1}$ have a unique common neighbour $w$, and there is no $i$ such that $w = p_i'$.
    \end{enumerate}
\end{claim}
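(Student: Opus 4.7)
The plan is to prove the trichotomy by supposing Cases \textbf{I} and \textbf{III} both fail, and showing that Case \textbf{II} must then hold. First I would collect the basic consequences of the geodesic hypothesis: since $P = (p_0 \ldots p_L)$ is a geodesic, $d_G(p_i, p_j) = |i - j|$; since no two vertices of $P$ share a neighbour outside $P$, the vertices $\{p_i'\}$ are pairwise distinct and no $p_i'$ is adjacent in $G$ to any $p_j$ with $j \neq i$; consequently any edge $p_j' p_k'$ in $G$ forces $|j - k| \le 3$, via the length-$3$ path $p_j - p_j' - p_k' - p_k$. From this one immediately obtains a \emph{classification lemma}: for any $s$, any common neighbour $w$ of $p_{s-1}'$ and $p_{s+1}'$ must lie outside $V(P)$, and if in addition $w = p_j'$ for some $j$, then the distance bounds $|j - (s \pm 1)| \le 3$ together with $w \notin \{p_{s-1}', p_{s+1}'\}$ force $j \in \{s-2, s, s+2\}$.

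I would then establish the first half of Case \textbf{II} by applying the classification lemma at $s = i$ for each $i \in [12, L - \ell - 12]$. Failure of Case \textbf{I} supplies at least one common neighbour $w$ of $p_{i-1}'$ and $p_{i+1}'$. If $w$ is the unique common neighbour, failure of Case \textbf{III} forces $w$ to be some $p_j'$, which by the classification lemma lies in $\{p_{i-2}', p_i', p_{i+2}'\}$, as desired. The real obstacle is the subcase of two common neighbours $w_1, w_2$, neither of the form $p_j'$: then the cubic-degree constraint pins $N_G(p_{i-1}') = \{p_{i-1}, w_1, w_2\}$ and $N_G(p_{i+1}') = \{p_{i+1}, w_1, w_2\}$, so neither $p_{i-1}'$ nor $p_{i+1}'$ can serve as a common neighbour at the shifted indices $s' = i \pm 1$. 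Re-applying the classification lemma at these $s'$ together with failure of Cases \textbf{I} and \textbf{III} then either forces one of $w_1, w_2$ to equal some $p_j'$ after all, or propagates the obstruction past the end of the allowed range, yielding a contradiction.

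For the no-cycles half, suppose for contradiction that $\{p_j' : j \in [14, L - \ell - 14]\}$ induces a cycle $C$. Let $j_{\max}$ be the maximum index appearing in $C$; both cycle-neighbours of $p_{j_{\max}}'$ have strictly smaller index, so $N_G(p_{j_{\max}}') = \{p_{j_{\max}}, p_a', p_b'\}$ with $a, b < j_{\max}$. By failure of Case \textbf{I} at $s = j_{\max} + 1 \in [15, L - \ell - 13]$, the pair $p_{j_{\max}}', p_{j_{\max}+2}'$ has a common neighbour; since $p_{j_{\max}}$ is not adjacent to $p_{j_{\max}+2}'$ (external-neighbour uniqueness), this common neighbour must be $p_a'$ or $p_b'$, and the distance bound $|a - (j_{\max}+2)| \le 3$ forces $a$ (or $b$) to equal $j_{\max} - 1$. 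But then $p_{j_{\max}-1}'$ is adjacent in $G$ to all three of $p_{j_{\max}-1}$, $p_{j_{\max}}'$, $p_{j_{\max}+2}'$, exhausting its degree, so its two cycle-neighbours in $C$ must be $p_{j_{\max}}'$ and $p_{j_{\max}+2}'$, contradicting $j_{\max}$ being the maximum index in $C$. The main obstacle I anticipate is the clean handling of the two-new-common-neighbours subcase in the first half, where careful iteration of the classification lemma across several shifted indices, with attention to the boundary effects of the ranges $[10, L - \ell - 10]$, $[12, L - \ell - 10]$ and $[12, L - \ell - 12]$, is needed.
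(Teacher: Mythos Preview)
Your overall strategy matches the paper's: assume Case~\textbf{I} fails, set up the classification lemma (common neighbours of $p_{s-1}',p_{s+1}'$ lie outside $V(P)$, and if equal to some $p_j'$ then $j\in\{s-2,s,s+2\}$), and split into the unique-common-neighbour and two-common-neighbour subcases. Your classification lemma is correct, and your no-cycles argument via $j_{\max}$ is the mirror image of the paper's argument via the minimum index; it works equally well.

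The one concrete error is in the two-common-neighbour subcase, where you propose shifting to $s'=i\pm 1$. This shift does not engage with $w_1,w_2$: at $s'=i+1$ you are looking at common neighbours of $p_i'$ and $p_{i+2}'$, and neither of these vertices has its neighbourhood pinned down by the hypothesis $N(p_{i\pm 1}')=\{p_{i\pm 1},w_1,w_2\}$, so there is no reason the common neighbour you find should be $w_1$ or $w_2$. The correct shift, and what the paper does, is to $s'=i\pm 2$. At $s'=i+2$ the common neighbour of $p_{i+1}'$ and $p_{i+3}'$ must lie in $N(p_{i+1}')\setminus\{p_{i+1}\}=\{w_1,w_2\}$, say $w_1$. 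If it is the \emph{unique} common neighbour, then failure of Case~\textbf{III} at $s=i+2\in[12,L-\ell-10]$ forces $w_1=p_j'$ for some $j$, contradicting your assumption. If it is not unique, then $w_2$ is also adjacent to $p_{i+3}'$, so both $w_1,w_2$ already have full neighbourhood $\{p_{i-1}',p_{i+1}',p_{i+3}'\}$; but failure of Case~\textbf{I} at $s=i-2\in[10,L-\ell-10]$ now demands a common neighbour of $p_{i-3}'$ and $p_{i-1}'$ inside $N(p_{i-1}')\setminus\{p_{i-1}\}=\{w_1,w_2\}$, and neither is adjacent to $p_{i-3}'$. (The paper runs this same argument in the forward direction, directly exhibiting Case~\textbf{III} at $s=i+2$ rather than assuming its failure.) With this one-line correction to the shift, your proof goes through.
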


\begin{proof}
    Assume that Case \ref{case:alg-comb-a} does not hold; then $p_{i-1}'$ and $p_{i+1}'$ have a common neighbour for all $i \in [10, L - \ell - 10]$. We claim that if there is a pair $p_{i-1}', p_{i+1}'$, with $i \in [12, L - \ell -12]$, with two common neighbours, then Case \ref{case:alg-comb-c} must hold. Indeed, suppose that for some $i \in [12, L - \ell - 12]$ the vertices $p_{i-1}'$ and $p_{i+1}'$ have two common neighbours $x, y$. Then, since $p_{i+1}'$ and $p_{i+3}'$ also have a common neighbour, it must be either $x$ or $y$, as otherwise $p_{i+1}'$ would have degree 4. The same holds for $p'_{i-3}$ and $p'_{i-1}$. Without loss of generality, $x$ is joined to $p_{i+3}'$. But then $x \notin \{p_j' : j \in [L-1]\}$, because otherwise $x$ would have degree at least $4$. Moreover, $p_{i+1}$ and $p_{i+3}$ have no common neighbour other than $x$ (because that neighbour would have to be $y$, making $y$'s degree $4$). Thus Case \ref{case:alg-comb-c} holds for $s = i+2$. 
    
    So now we are left with the case where all pairs of vertices $p'_{i-1}$, $p'_{i+1}$, with $i \in [12, L - \ell - 12]$, have exactly one common neighbour. If for one such $i$ the common neighbour is not in $\{p_j' : j \in [L-1]\}$, then Case \ref{case:alg-comb-c} holds, so suppose otherwise. Namely, for $i \in [12, L - \ell - 12]$, the unique common neighbour of $p'_{i-1}$ and $p'_{i+1}$ is $p_j'$, for some $j \in [L-1]$. Clearly, $j \notin \{i-1, i+1\}$. Also, $j$ is not larger than $i+2$ or smaller than $i-2$ because $P$ is a geodesic. Thus $j \in \{i-2, i, i+2\}$.
    It follows that, for every $i \in [14, L - \ell - 14]$, the vertex $p_i'$ has exactly two neighbours in $\{p_{i-3}', p_{i-1}', p_{i+1}', p_{i+3}'\}$ (because it has a common neighbour with both $p_{i+2}'$ and $p_{i-2}'$, and that cannot be the same neighbour because such a neighbour would have degree at least $4$). 
    We show that $\{p_j' : j \in [14, L - \ell - 14]\}$ has no cycles. Suppose to the contrary that $C$ is a cycle in this set, and let $j$ be the least index of a vertex in $C$. Then $p_j'$ is joined to both $p_{j+1}'$ and $p_{j+3}'$, and $p_{j+1}'$ has is joined to either $p_{j+2}'$ or $p_{j+4}'$. It follows that $p_{j-2}'$ and $p_j'$ have no common neighbour (otherwise some vertex would have degree larger than $3$), a contradiction.
    Thus Case \ref{case:alg-comb-b} holds.
\end{proof}

We now show how to proceed in each of the above three cases.
        
\begin{algorithm}\label{alg:comb}
    Let $P$ be as above. 
    \begin{enumerate}[label = \rm\textbf{\Roman*.}, ref = \rm\Roman*]
        \item \label{step:alg-comb-a}
            If Case \ref{case:alg-comb-a} of \Cref{cl:CaterpillarCases} holds, fix $s$ for which it holds, and do the following.
            \begin{itemize}
                \item
                    Colour the edges $p_{s-3}p_{s-2}$, $p_{s-1}p_s$ and $p_{s+\ell} p_{s+\ell+1}$ red, and colour all other edges in the path $(p_{s-10} \ldots p_{s+\ell+10})$ blue.
                \item
                    Colour the edges $p_{s-3} p_{s-3}'$ and $p_{s+\ell}p_{s+\ell}'$ blue, and colour all other edges $p_ip_i'$ with $i \in [s-9, s+\ell+9]$ red.
                \item
                    Denote the two uncoloured neighbours of $p_{s-1}'$ by $u, u'$ and the two uncoloured neighbours of $p_{s+1}'$ by $w, w'$.
                    Colour the edges $p_{s-1}' u, p_{s-1}' u', p_{s+1} w, p_{s+1} w'$ blue (note that these are three or four edges, depending on whether or not $p_{s-1} p_{s+1}$ is an edge).
                \item   
                    Colour all uncoloured edges with both end points in $\{u, u', w, w'\}$ red, unless they contain a cycle (of length $3$ or $4$), in which case we colour one edge in a cycle, with one end in $\{u, u'\}$ and one end in $\{w, w'\}$, blue, and the rest red.
                \item
                    This gives a blue $\ell$-gadget of type II, with $Q_0 = (p_{s-2} \ldots p_{s+\ell})$, $Q_1 = p_{s-1} p_{s-1'}$, and $Q_2 = p_{s+1} p_{s+1}'$; $e_1 = p_{s-3} p_{s-2}$, $e_2 = p_{s-2} p_{s-2}'$, $e_3 = p_sp_s'$, $e_4 = p_{s+\ell} p_{s+\ell+1}$, and $e_5 = p_{s+\ell} p_{s+\ell}'$; $f_1 = p_{s-1}' u$, $f_2 = p_{s-1}' u'$, $f_3 = p_{s+1}' w$ and $f_4 = p_{s+1}' w'$.
            \end{itemize}
            
            \begin{figure}[h]
                \centering
                \includegraphics[scale = .78]{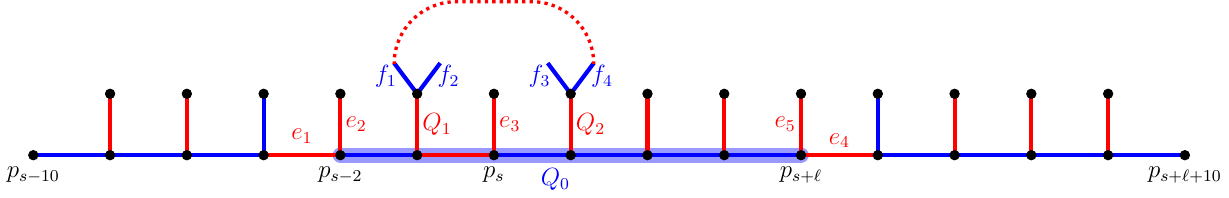}
                \caption{Case \ref{case:alg-comb-a}}
                \label{fig:new-case-1}
            \end{figure}
        \item \label{step:alg-comb-b}
        If Case \ref{case:alg-comb-b} of \Cref{cl:CaterpillarCases} holds, take $s = 18$, and do the following.
            \begin{itemize}
                \item 
                    Colour the edges $p_{s-3} p_{s-2}$, $p_{s-1} p_s$ and $p_{s+\ell} p_{s+\ell+1}$ red, and colour all other edges in the path $(p_{s-2} \ldots p_{s+\ell})$ blue.
                \item
                    Colour all edges $p_i p_i'$, with $i \in [s-2, s+\ell]$, red.
                \item Colour the four edges touching $p'_{s-1}$ and $p'_{s+1}$ and not touching $P$ blue.
                \item
                    This gives a blue $\ell$-gadget of type II, with $Q_0 = (p_{s-2} \ldots p_{s+\ell})$, $Q_1 = p_{s-1} p_{s-1}'$, and $Q_2 = p_{s+1} p_{s+1}'$; $e_1 = p_{s-3} p_{s-2}$, $e_2 = p_{s-2} p_{s-2}'$, $e_3 = p_sp_s'$, $e_4 = p_{s+\ell} p_{s+\ell+1}$, and $e_5 = p_{s+\ell} p_{s+\ell}'$; $f_1, f_2$ are the two blue edges at $p_{s-1}'$, and $f_3, f_4$ are the two blue edges at $p_{s+1}'$.
            \end{itemize}
            \begin{figure}[h]
                \centering
                \includegraphics[scale = .6]{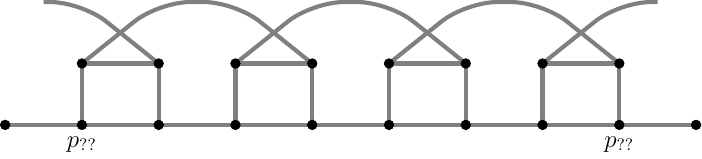}
                \hspace{.5cm}
                \includegraphics[scale = .6]{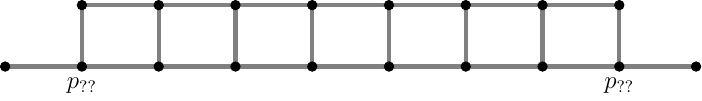}
                
                \vspace{.3cm}
                \includegraphics[scale = .6]{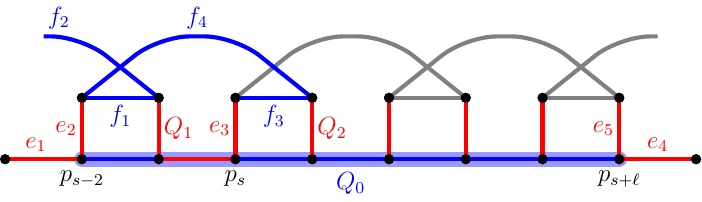}
                \hspace{.5cm}
                \includegraphics[scale = .6]{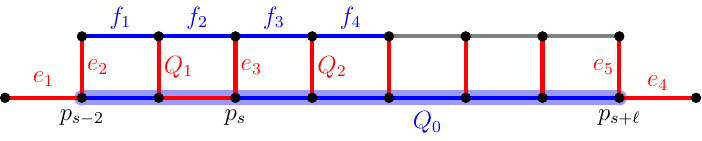}
                \caption{Case \ref{case:alg-comb-b}}
                \label{fig:new-case-2}
            \end{figure}
        \item \label{step:alg-comb-c}
           If Case \ref{case:alg-comb-c} of \Cref{cl:CaterpillarCases} holds, fix $s$ for which it does, and do the following.
           \begin{itemize}
                \item 
                    Colour the edges $p_{s-3} p_{s-2}$, $p_{s-1} p_s$ and $p_{s+\ell} p_{s+\ell+1}$ red, and colour all other edges in the path $(p_{s-2} \ldots p_{s+\ell})$ blue.
                \item
                    Colour all edges $p_i p_i'$, with $i \in [s-2, s+\ell]$, red.
                \item 
                    Colour the edges $p_{s-1}' w$ and $p_{s+1}' w$ red.
                \item
                    Repeat the following steps as long as possible:
                    \begin{itemize}
                        \item 
                            Colour all uncoloured edges incident with a vertex, whose red degree is $2$, blue.
                        \item
                            Colour all uncoloured edges incident with a vertex, whose blue degree is $2$, red.
                    \end{itemize}
                \item
                    This yields a blue $\ell$-gadget of type 2, with $Q_0 = (p_{s-2} \ldots p_{s+\ell})$ and $Q_1 = (p_{s-1} p_{s-1}' w p_{s+1}' p_{s+1})$; $e_1 = p_{s-3} p_{s-2}$, $e_2 = p_{s-2}p_{s-2}'$, $e_3 = p_s p_s'$, $e_4 = p_{s+\ell} p_{s+\ell+1}$ and $e_5 = p_{s+\ell} p_{s+\ell}'$.
            \end{itemize}
            \begin{figure}[h]
                \centering
                \includegraphics[scale = .8]{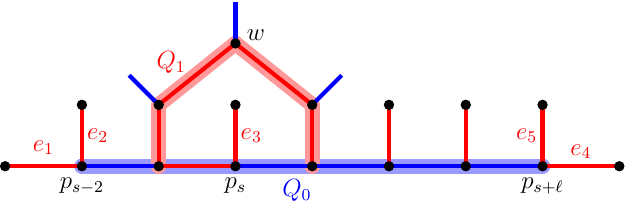}
                \caption{Case \ref{case:alg-comb-c}}
                \label{fig:new-case-3}
            \end{figure}
    \end{enumerate}

\end{algorithm}
    \begin{lemma}\label{lem:comb-colouring}
        The partial colouring $\phi$ obtained by \Cref{alg:comb} satisfies the following, and thus satisfies the requirements of \Cref{lem:GoodCaterpillarGadgetsExist}.
        \begin{enumerate}[label = \rm(\arabic*)]
            \item \label{itm:comb-phi-3-dist}
                all coloured edges are at distance at most 3 from the geodesic $P$ and form a connected subgraph,
            \item \label{itm:comb-phi-3-deg}
                if a vertex $u$ is incident to at least two coloured edges, then at least one of them is red and at least one of them is blue,
            \item \label{itm:comb-phi-3-cycle}
                if $C$ is a cycle in $G$ containing no red edges or no blue edges, then $C$ has two consecutive edges that are not coloured by $\phi_3$.
        \end{enumerate}
    \end{lemma}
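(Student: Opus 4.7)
The plan is a case analysis over the three cases \ref{case:alg-comb-a}, \ref{case:alg-comb-b}, \ref{case:alg-comb-c} produced by \Cref{cl:CaterpillarCases}; I verify properties \ref{itm:comb-phi-3-dist}, \ref{itm:comb-phi-3-deg}, \ref{itm:comb-phi-3-cycle} in each.

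For \ref{itm:comb-phi-3-dist}, the argument is by direct inspection of \Cref{alg:comb}. In Cases \ref{case:alg-comb-a} and \ref{case:alg-comb-b}, every coloured edge either lies on $P$, is of the form $p_i p_i'$, or has both endpoints in the closed neighbourhoods of $p'_{s-1}$ and $p'_{s+1}$; thus each coloured edge lies within distance $2$ of $P$. Connectedness is immediate, as every coloured edge shares a vertex with a coloured spoke $p_i p_i'$ or a coloured segment of $P$. For Case \ref{case:alg-comb-c}, the iterative rule only colours edges adjacent to already-coloured ones, preserving connectedness; to bound the radius at $3$ I plan to argue that once the vertices at distance $\le 2$ from $P$ have resolved their coloured-degree constraints, no further step of the iteration can fire, since vertices at distance $3$ that have just become incident to a single coloured edge do not yet have coloured degree $\ge 2$ in either colour.

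Property \ref{itm:comb-phi-3-deg} I would check vertex by vertex. Along the coloured segment of $P$, the explicit alternating-plus-three-red-breakpoints pattern at $p_{s-3}p_{s-2}, p_{s-1}p_s, p_{s+\ell}p_{s+\ell+1}$ gives each interior $p_i$ both colours among its coloured edges. Every $p_i'$ other than $p'_{s-1}, p'_{s+1}$ (and $w$ in Case \ref{case:alg-comb-c}) is incident to at most one coloured edge, so the condition is vacuous there. At $p'_{s\pm 1}$ the red edge $p_{s\pm 1} p'_{s\pm 1}$ is present together with the blue edges added explicitly. In Case \ref{case:alg-comb-a} each of $u, u', w, w'$ sees a blue edge into $p'_{s\pm 1}$, and the cycle-breaking rule for $3$- and $4$-cycles guarantees that at least one red edge among $\{u,u',w,w'\}$ remains incident to it. In Case \ref{case:alg-comb-c} the property is an invariant maintained and then enforced by the iterative completion rule itself.

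Property \ref{itm:comb-phi-3-cycle} is the main obstacle. The global reduction is: because every coloured edge lies within distance $3$ of $P$, any vertex $v$ with $d(v,P)>3$ has no coloured incident edge, so the two $C$-edges at $v$ are uncoloured and provide the required consecutive pair immediately. Hence I may assume $V(C)$ lies in the distance-$3$ ball around $P$. In Cases \ref{case:alg-comb-a} and \ref{case:alg-comb-b} the verification is a short combinatorial check: a monochromatic-blue $C$ must use a segment of the blue backbone on $P$, but the three red breakpoints together with the fact that $P$ is a geodesic (so no short chords exist and, in Case \ref{case:alg-comb-a}, $p'_{s-1}, p'_{s+1}$ share no common neighbour) prevent $C$ from closing without traversing two consecutive uncoloured edges at its entry/exit; a monochromatic-red $C$ either uses only spokes $p_i p_i'$ (which forces $C$ to leave the coloured region through consecutive uncoloured edges) or, in Case \ref{case:alg-comb-a}, consists of red edges among $\{u,u',w,w'\}$ and spokes, where the cycle-breaking rule was explicitly designed to destroy any such closed configuration. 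The hardest subcase is Case \ref{case:alg-comb-c}: here I plan to show by induction on the iteration that the red coloured subgraph and the blue coloured subgraph each remain acyclic throughout the procedure. The base case uses that the seed $\{p'_{s-1} w, w p'_{s+1}\}$ together with the spokes is a tree (using uniqueness of $w$ and $w \notin V(P)$), and each iteration step colours an edge at a vertex of coloured degree $2$, which adds a pendant edge to the relevant colour class and thus cannot close a monochromatic cycle; an arbitrary cycle in $G$ that is monochromatic among its coloured edges must therefore leave the coloured region, producing two consecutive uncoloured edges by the same distance argument applied locally at the boundary.
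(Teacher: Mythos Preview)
Your overall case split and the treatment of \ref{itm:comb-phi-3-dist} and \ref{itm:comb-phi-3-deg} are along the right lines, but your plan for \ref{itm:comb-phi-3-cycle} has a genuine gap, and your termination argument for the iteration in Case~\ref{case:alg-comb-c} is not yet a proof.

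For \ref{itm:comb-phi-3-cycle}, the implication ``the red (resp.\ blue) coloured subgraph is acyclic $\Rightarrow$ every cycle $C$ in $G$ with no blue (resp.\ red) edges has two consecutive uncoloured edges'' is false as stated. A cycle could in principle alternate coloured--uncoloured--coloured--uncoloured and never produce two consecutive uncoloured edges, while the coloured edges it uses form a matching (certainly acyclic). Your phrase ``must therefore leave the coloured region, producing two consecutive uncoloured edges by the same distance argument applied locally at the boundary'' does not bridge this: leaving the coloured region at a boundary vertex yields \emph{one} uncoloured $C$-edge, not two. The paper's mechanism is different and stronger: it checks that almost every coloured edge lies on a monochromatic path one of whose endpoints has degree~$2$ in the \emph{opposite} colour. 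Such an edge cannot sit on any single-colour cycle at all (the cycle would be forced to use one of the two opposite-colour edges at that endpoint). This immediately eliminates all but a handful of ``dangerous'' coloured edges---in Case~\ref{case:alg-comb-a} the red edges inside $\{u,u',w,w'\}$ and a few extremal blue paths, in Case~\ref{case:alg-comb-b} the edges $f_1,\dots,f_4$ and the two red paths $e_1e_2$, $e_4e_5$, and in Case~\ref{case:alg-comb-c} only $e_1e_2$ and $e_4e_5$---which are then dispatched by short explicit arguments (the cycle-breaking rule, the no-cycles hypothesis of Case~\ref{case:alg-comb-b}, distance on the geodesic, etc.). Your sketch does not isolate these dangerous edges and so cannot complete the argument.

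Separately, your bound on the iteration in Case~\ref{case:alg-comb-c} (``once the vertices at distance $\le 2$ from $P$ have resolved their constraints, no further step can fire'') is not a valid termination argument: you need to show the iteration runs for a bounded number of steps, not merely that it eventually stalls. The paper argues concretely that the first blue round creates three blue edges not all sharing a vertex (using uniqueness of $w$), that thereafter at most one single-colour vertex exists at any time, and hence that the whole loop runs for at most four steps; this is what pins the coloured set to distance~$3$ from $P$.
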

    
    \begin{proof}
        We call a cycle that has no red edges or no blue edges a \textit{single-colour cycle} (note that such a cycle can have uncoloured edges, and so this is not the same as a monochromatic cycle). We call a vertex with at least two coloured edges, all in the same colour, a \textit{single-colour} vertex.
        
        We first point out that the last step in Part \ref{step:alg-comb-c} of \Cref{alg:comb} does not last very long, leaves no single-colour vertices, and the edges it colours are not involved in single-colour cycles. The key observation is that initially three blue edges are added, not all sharing a vertex (if they did all share a vertex, then $p_{s-1}'$ and $p_{s+1}'$ would have two common neighbours, contrary to the specification of the Case \ref{case:alg-comb-c}), and from then onward, there is always at most one single-colour vertex. Since each iteration in this step colours exactly one edge, this means that such a single-colour vertex has coloured degree exactly $2$. Thus, at the end of the step, there are no single-colour vertices. 
        In addition, since every newly coloured edge has one endpoint with degree 2 in the opposite colour, none of them is involved in a single-colour cycle.
        It remains to show that the step has few iterations.
        If no two of the initial three blue edges share a vertex then the process stops immediately. Otherwise, since we started with only two vertices with blue degree at least $1$ and colour degree at most $2$, it is easy to check that during the last step of \ref{step:alg-comb-c} we will repeat the second item at most twice. This means that the process has at most four iterations. Since each iteration involves at least one edge which was already coloured in previous steps of the algorithm, all edges coloured in this step are at distance at most 3 from $P$.
        
        Items \ref{itm:comb-phi-3-dist} and \ref{itm:comb-phi-3-deg} are immediate in Case \ref{case:alg-comb-a} and \ref{case:alg-comb-b} and follow from the last paragraph otherwise.
        
        It remains to prove \ref{itm:comb-phi-3-cycle}. For this, first note that if an endpoint of a monochromatic path has both of its other edges coloured in the opposite colour, then this path cannot participate in a single-coloured cycle. Another important observation is that, since $P$ is a geodesic,  there are no edges of the form $p'_jp'_{j+r}$ when $r\geq 4$. So connecting two such vertices will require more than one edge. We consider each case separately. 
        
        Case \ref{case:alg-comb-a}. \,
        All red edges, except for those with two ends in $\{u, u', w, w'\}$, belong to a red path with an end with blue degree 2. Since all the edges in $\{u, u', w, w'\}$ are coloured, and do not contain a red cycle (since this is a graph with maximum degree $2$ and four vertices, there is at most one cycle and such a cycle is not red due to the fourth step in \ref{step:alg-comb-a} in \Cref{alg:comb}. Thus any $C$  with no blue edges has all its red edges in $\{u, u', w, w'\}$ and has at least one vertex $v \notin \{u, u', w, w'\}$, showing that it has two consecutive uncoloured edges.
        Therefore, it remains to consider cycles with no red edges.
        Suppose that $C$ is such a cycle. If it contains the left-most blue path (starting from $p_{s-3}'$), then, because $p_{s-10}$ is at distance at least $5$ from all other blue edges which might participate in a single-colour cycle, $C$ contains at least 5 consecutive uncoloured edges.
        A similar argument works if $C$ contains the right-most path. So it remains to consider the case where $C$ contains $f_1 f_2$ or $f_3 f_4$ and not other blue edges. But there is no cycle with vertices in $\bigcup_{i \in [4]} V(f_i)$ that has only blue edges (due to the fourth step in this case).
        
        Case \ref{case:alg-comb-b}. \, 
        If $C$ is a cycle with blue edges but no red edges, then its blue edges are in $\{f_1, f_2, f_3, f_4\}$. 
        By the specifications of \Cref{case:alg-comb-b}, there are no cycles in the set $\bigcup_{i \in [4]}V(f_i)$, and so $C$ must contain a vertex outside of this set, showing that $C$ has two consecutive uncoloured edges.
        If $C$ has only red edges then it contains one (or both) of the paths $e_1 e_2$ and $e_3 e_4$ (and no other red edges). But if it contains only one of them, then it has length at least $4$ (using the assumption on $P$), and if it contains both, then it has length at least $10$ (because $p_{s-2}$ and $p_{s+\ell}$ are at distance at least $5$ from each other). Either way, it has two consecutive uncoloured edges.
        
        Case \ref{case:alg-comb-c}. \,
        There is no cycle with no red edges. If $C$ is a cycle with red edges but not blue edges, then it contains one or both of $e_1 e_2$ and $e_3 e_4$ (any red edge added in the last step has an end with two blue edges). As in the previous case, $C$ contains two consecutive uncoloured edges.
    \end{proof}

\section{Geodesic with common neighbours (\Cref{lem:GoodGadgetsExist})}\label{sec:GeodesicWithCommon}
This section will be devoted to proving \Cref{lem:GoodGadgetsExist}, which finds a gadget in a small ball around a sufficiently long geodesic (see \Cref{lem:PartialColProp}). Given \Cref{lem:GoodCaterpillarGadgetsExist}, which does the same for geodesics whose vertices have no common neighbours, we only need to prove the former lemma for a geodesic $P$ many pairs of its vertices have a common neighbour outside of $P$. For this, we define a partial colouring process of the gadget and its neighbourhood. 
The partial colouring process is divided into three stages, each will be described in a different subsection.
\begin{itemize}
    \item{\textbf{Stage I} (\Cref{sec:StageI})}\textbf{.} Initial partial colouring. In this stage we partially colour a small ball around them according to \Cref{alg:preColouring}, yielding a long blue path and avoiding vertices with red degree $2$ and blue degree $0$ or vice versa. This colouring will be denoted $\phi_1$.
    A lot of this work in this section will be devoted to the analysis of the colouring $\phi_1$, particularly to showing that monochromatic degree $3$ vertices only arise in a specific structure.
    \item{\textbf{Stage II} (\Cref{sec:StageII})}\textbf{.} Fixing monochromatic vertices. In this stage we modify the colouring $\phi_1$ (in \Cref{alg:FixPreCol}) so as to avoid vertices of monochromatic degree $3$, by recolouring some of the edges around each vertex of monochromatic degree $3$ in $\phi_1$. This colouring will be denoted $\phi_2$.
    \item{\textbf{Stage III} (\Cref{sec:StageIII})}\textbf{.} Creating a gadget. In this stage we will further modify the colouring $\phi_2$ (in \Cref{alg:CreateGadgetsInPreCol}) to create a copy of a type II gadget. This partial colouring will be denoted $\phi_3$.
\end{itemize}

We will finish this section with the proof of \Cref{lem:GoodGadgetsExist}.

\subsection{Stage I.\, Initial partial colouring (\Cref{alg:preColouring})}\label{sec:StageI}
The purpose of this section is to present \Cref{alg:preColouring} that generates a partial colouring $\phi_1$ around a geodesic $P$, and prove key properties of $\phi_1$. 
The algorithm starts in \ref{itm:step-11} by colouring the edges of $P$ blue, and then, in \ref{itm:step-12} and for technical reasons, modifying this colouring slightly for any pair of consecutive vertices in $P$ that have a common neighbour. Next, in \ref{itm:step-13}, \ref{itm:step-14} and \ref{itm:step-15}, whenever we encounter a vertex with two edges of the same colour and a third edge uncoloured, we coloured the third edge with the opposite colour. The purpose of this should be clear: we want to make sure that each vertex with coloured degree at least $2$ is incident to red and blue edges. At the end of the process, namely in \ref{itm:step-16}, we recolour some of the edges touching the ends of $P$, again for the same purpose but also keeping in mind an additional property (namely, \ref{itm:mono-vx-path} below). 

Throughout the process we orient all edges as they are coloured, except for edges of $P$ and the edges coloured at the last step of the process. These orientations are instrumental when proving various properties of the colouring. Additionally, we output a blue path $P'$, modified from $P$ in \ref{itm:step-12}. This path $P'$ (or, more precisely, a modification of $P'$ obtained in the next section) will be the basis of the gadget that we will eventually construct.

\begin{algorithm}[Initial partial colouring]\label{alg:preColouring}  
\hfill

{\bf Input.} A geodesic $P$ in a cubic graph $G$.

{\bf Output.} A partial red-blue colouring $\phi_1$ of $G$; an oriented graph $D$ whose underlying graph is a subgraph of $G$; and a path $P'$.

{\bf Initialisation.} Set $P' = P$; no edges of $G$ are coloured; $D$ is empty.

{\bf Procedure.}
First, perform \ref{itm:step-11} below. Then, repeat \ref{itm:step-12} as long as possible, and, afterwards, repeat \ref{itm:step-13} as long as possible. Next, repeat \ref{itm:step-14} and \ref{itm:step-15} as long as possible, prioritising the former. 
Finally, repeat \ref{itm:step-16}.

\begin{enumerate}[label = \rm(S\arabic*)]
    \item \label{itm:step-11}
        Colour the edges in $P$ blue.
    \item \label{itm:step-12}
        Given this configuration: \raisebox{-11pt}{\includegraphics[scale = .95]{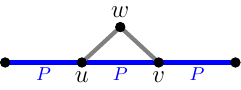}}, recolour like this: \raisebox{-11pt}{\includegraphics[scale = .95]{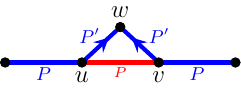}}.
        
        Namely, suppose: $uv$ is a blue edge in the interior of $P$; $uw, wv$ are uncoloured edges; $w \notin V(P)$. 
        
        Then: recolour $uv$ red and $uw, wv$ blue; add $uw, vw$ to $D$; update $P'\leftarrow (P'\setminus\{uv\})\cup\{uw,vw\}$. 
        
    \item \label{itm:step-13}
        Given this configuration: \raisebox{-11pt}{\includegraphics[]{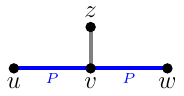}}, colour like this: \raisebox{-11pt}{\includegraphics[]{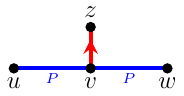}}.
        
        Namely, suppose: $uv, vw$ are blue edges in $P$; $vz$ is uncoloured.
        
        Then: colour $vz$ red; add $vz$ to $D$. 
        
    \item \label{itm:step-14}
        Given this configuration: \raisebox{-11pt}{\includegraphics[]{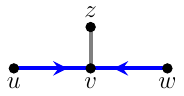}}, colour like this: \raisebox{-11pt}{\includegraphics[]{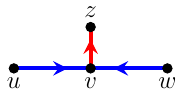}}.
        
        Namely, suppose: $uv, wv$ are directed blue edges; $vz$ uncoloured.
        
        Then: colour $vz$ red; add $vz$ to $D$.
        
    \item \label{itm:step-15}
        Given this: \raisebox{-11pt}{\includegraphics[]{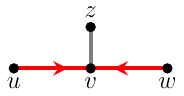}}, colour like this: \raisebox{-11pt}{\includegraphics[]{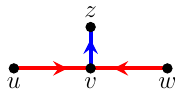}}.
        
        Namely, suppose: $uv, wv$ directed red edges; $vz$ uncoloured.
        
        Then: colour $vz$ blue, add $vz$ to $D$.
    
    \item \label{itm:step-16}
        Given this: \raisebox{-11pt}{\includegraphics[]{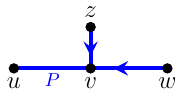}}, recolour like this: \raisebox{-11pt}{\includegraphics[]{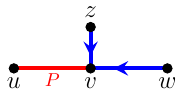}}.
        
        Namely, suppose: $uv$ is a blue edge in $P$; $wv$ and $zv$ two directed blue edges.
        
        Then: recolour $uv$ red and set $P' \leftarrow P' \setminus \{uv\}$.
        
        Given this: \raisebox{-11pt}{\includegraphics[]{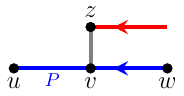}}, recolour like this: \raisebox{-11pt}{\includegraphics[]{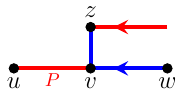}}.
        
        Namely, suppose: $uv$ blue edge in $P$; $wv$ directed blue edge; $vz$ uncoloured edge; $z$ has a red in-neighbour.
        
        Then: colour $vz$ blue, recolour $uv$ red and set $P' \leftarrow P' \setminus \{uv\}$.
        
        Given this: \raisebox{-11pt}{\includegraphics[]{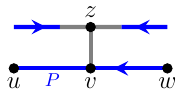}}, recolour like this: \raisebox{-11pt}{\includegraphics[]{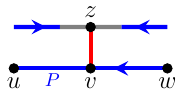}}.
        
        Namely, suppose: $uv$ blue edge in $P$; $wv$ directed blue edge; $vz$ uncoloured edge; each of the other two edges at $z$ are either blue and directed into $z$ or uncoloured.
        
        Then: colour $vz$ red.
\end{enumerate}
\end{algorithm}

In \Cref{lem:OnlyMonoVertex} below we state various properties of any partial colouring obtained by running \Cref{alg:preColouring}. One important property considers vertices with monochromatic degree $3$. It turns out that they only appear given one specific structure, which we describe now.

\begin{definition}\label{def:OnlyMonoVertex}

    Let $\Fexc = $ \raisebox{-11pt}{\includegraphics[]{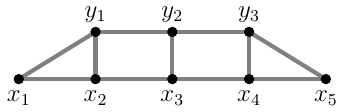}}. Namely, $F$ is the graph on vertices $\{x_1, \ldots, x_5, y_1, y_2, y_3\}$ and edges
    $\{x_1x_2,x_2x_3,x_3x_4,x_4x_5,y_1y_2,y_2y_3,x_1y_1,x_2y_1,x_3y_2,x_4y_3,x_5y_3 \}$. 
    
    Given a path $P$ in a graph $G$, a copy of $\Fexc$ is \emph{parallel} to $P$ if $x_1, \ldots, x_5$ are consecutive vertices in $P$ and $y_1, y_2, y_3 \notin V(P)$. 
    
    Let $\Fexcs = $ \raisebox{-11pt}{\includegraphics[]{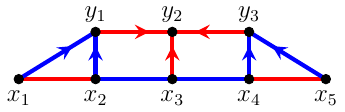}}. 
    Namely, $\Fexcs$ is a red-blue coloured and partially oriented copy of $\Fexc$, where the edges $y_1 y_2, x_3 y_2, y_3 y_2$ are red and directed; $x_1 y_1, x_2 y_1, x_4 y_3, x_5 y_3$ are blue and directed; $x_1 x_2, x_4 x_5$ are red and undirected; and $x_2 x_3, x_3 x_4$ are blue and undirected. 
\end{definition}

Here is the main lemma of the subsection.

\begin{lemma}\label{lem:OnlyMonoVertex}
    Let $G$ be a cubic graph and let $P$ be a geodesic in $G$.
    Consider the colouring and orientation process described in \Cref{alg:preColouring}, applied with $P$. 
    Then 
    \begin{enumerate}[label = \rm(\arabic*)]
        \item \label{itm:mono-vx-two}
            if a vertex is incident to exactly two coloured edges then one of them is red and the other blue,
        \item \label{itm:mono-vx-path}
            every maximal monochromatic path, except for possibly $P'$, has an end touching two edges of the opposite colour,
        \item \label{itm:mono-vx-cycle}
            there are no monochromatic cycles,
        \item \label{itm:mono-vx-dist}
            all coloured edges are at distance at most $3$ from $P$, 
        \item \label{itm:mono-vx-F}
            a vertex has monochromatic degree $3$ if and only if it is the vertex $y_2$ in a copy of $\Fexcs$ which is parallel to $P$.
    \end{enumerate}
\end{lemma}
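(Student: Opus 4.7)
My plan is to trace the evolution of the partial colouring through the phases S1--S6 of \Cref{alg:preColouring}, establishing invariants at each stage. For (1), the key point is by design: after S3, S4, S5 are exhausted, no vertex can be left with two coloured edges of the same colour together with an uncoloured third edge, since one of these steps (with the right orientation bookkeeping provided by $D$) would still apply. A short inspection of the three substeps of S6 confirms that none of them creates such a configuration, completing (1). Property (2) follows quickly from (1): a maximal monochromatic path distinct from $P'$ must have an endpoint outside $V(P)$, and at such an endpoint the two other edges are both coloured (propagation has halted), hence by (1) they are both of the opposite colour.

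For (4), I would observe that S1, S2, S3, and S6 only colour edges at distance at most $1$ from $P$, so the remaining edges arise from S4 and S5. Each such firing at a vertex $v$ requires two already-directed in-edges of the same colour, so the frontier advances by at most one step per firing. A short combinatorial argument then shows the chain of propagations cannot exceed two additional levels: after the first propagation step the receiving vertex has only one incoming $D$-edge of each colour, so a subsequent firing there needs a second in-edge, which by the structure of $D$ must originate from distance at most $2$ from $P$. For (3) the key is that $D$ is acyclic --- each oriented edge is added pointing towards a vertex whose uncoloured degree was exactly $1$ at firing time, so it is the unique ``entry'' for that vertex in $D$. Since $P$ is a geodesic (hence acyclic) and every coloured edge lies in $P \cup D$, no monochromatic cycle can form.

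The delicate property is (5). Let $y$ be a vertex of, say, red monochromatic degree $3$. I would first rule out S1 (colours only $P$ blue), S2 (creates only vertices of mixed colour in its neighbourhood), and each substep of S6 by direct inspection, so $y$'s third red edge must be added during S3, S4, or S5. Since S4 and S5 colour the new edge opposite to the triggering pair, a firing at $y$ itself cannot produce monochromatic degree $3$ at $y$; hence each of the three red edges at $y$ must have been added by a firing at the opposite endpoint. Tracing the three incoming chains of directed red edges at $y$ back to $P$, and using the geodesic property of $P$ to restrict the possible common $P$-neighbours, the only way all three chains can coexist is: one chain of length $1$ coming from an S3-firing at some $x_3 \in V(P)$, together with two chains of length $2$ coming from S4-firings at vertices $y_1, y_3 \notin V(P)$ that were themselves activated by S2 pairs at consecutive $P$-vertices $x_1x_2$ and $x_4x_5$ respectively. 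This is precisely the configuration $\Fexcs$ with $y = y_2$. Conversely, running the algorithm on any copy of $\Fexc$ parallel to $P$ is easily verified to reproduce this picture. The forward classification here is the main obstacle, as it requires a careful enumeration of how the three oriented in-chains at $y$ can interact with $P$ without violating the geodesic assumption.
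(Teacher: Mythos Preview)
Your sketch for (1) is close to the paper's, but (2)--(4) have real gaps, and (5) is missing the structural backbone the paper relies on.

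For (3), your argument does not work: acyclicity of $D$ together with $P$ being a path does not preclude monochromatic cycles, since a cycle can mix edges of $P$ with edges of $D$. (Also, your orientation claim is off: when $vz$ is added to $D$ it is directed \emph{from} the vertex $v$ with two same-coloured in-edges \emph{to} $z$; so $D$ has maximum out-degree $1$, and a non-root vertex can have in-degree $2$, not $1$.) The paper instead derives (3) from (2): every coloured edge outside $P'$ has an endpoint with two edges of the opposite colour, so no such edge can sit on a monochromatic cycle, and $P'$ itself is a path. Your version of (2) also has a hole: you assert that an endpoint outside $V(P)$ must have both other edges coloured, but nothing forces this --- the process may simply never reach that vertex again.

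The most serious gap is (4). Your ``short combinatorial argument'' that propagation halts after two extra levels is not valid, and in particular it does not use the geodesic hypothesis, which is essential. The paper's route is substantially different: it shows that each component $T$ of $D$ is an almost-binary tree whose non-root leaves lie in $P$, and then uses the geodesic property to deduce the key inequality $\diam(T)\ge \ell(T)-1$ (a ``good'' tree). From this it proves, via an induction on subtrees, that every vertex of a good almost-binary tree lies within distance $3$ of a leaf --- hence within distance $3$ of $P$. Without this diameter-versus-leaf-count bound there is no reason the S4/S5 chains cannot run arbitrarily far.

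The same tree machinery is what makes (5) tractable. The paper shows a monochromatic-degree-$3$ vertex must be the root of its component $T$, that $T$ is then $3$-binary, and proves that every good $3$-binary tree is a caterpillar. Only after reducing to caterpillars does a manageable case analysis on how the leaves sit along $P$ pin down the unique configuration $\Fexcs$. Your proposal jumps straight to ``trace the three chains back to $P$'', but without the caterpillar reduction you would have to handle arbitrarily branching trees, and the enumeration you sketch is not a proof.
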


\begin{proof}[Proof of \Cref{lem:OnlyMonoVertex}]
    We start by proving \ref{itm:mono-vx-two}, namely that every vertex which is incident to exactly two coloured edges, is incident to one red edge and one blue edge. To see this, notice that interior vertices of $P$ have all three edges touching them coloured in \ref{itm:step-11}, \ref{itm:step-12} and \ref{itm:step-13}. Moreover, \ref{itm:step-16} prevents the ends of $P$ from being incident to exactly two coloured edges, both of the same colour. It remains to consider vertices that are not in $P$. Let $v$ be such a vertex, and suppose that it is incident to exactly two colour edges. Then they are both directed (because the only undirected edges are either in $P$ or introduced in \ref{itm:step-16} and do not create vertices of red degree 2 and blue degree 0 or vice versa) and they are directed into $v$ (because whenever a directed edge $xy$ is introduced and coloured, $x$ is already incident to two edges of the other colour). Thus, these edges have different colours (otherwise, the third edges would be coloured in \ref{itm:step-14} or \ref{itm:step-15}), as required for \ref{itm:mono-vx-two}.
    
    Next, we note that right before \ref{itm:step-16} is performed, every coloured edge which is not in $P'$ has a vertex which is incident to two edges of the opposite colour; in particular, if $xy$ is directed and not in $P'$ then $x$ is incident to two edges of the opposite colour. 
    Now, taking $u, v, w, z$ to be as in \ref{itm:step-16}, if $uv$ is recoloured red then $v$ is incident with two blue edges, and so the other red edge at $u$ is in a red path with an end incident to two blue edges. 
    Similarly, if $vz$ is coloured in this step then either it is coloured red and $v$ has two blue neighbours, or it is coloured blue and intersects a blue edge (namely $vw$) with an end with two red neighbours.
    We conclude that every maximal monochromatic path, except for possibly $P'$, has an end with two neighbours of the opposite colour, as stated in \ref{itm:mono-vx-path}.
    
    Now, by the last paragraph, no edges outside of $P'$ are contained in a monochromatic cycle. Since $P'$ is a path, it follows that there are no monochromatic cycles, namely \ref{itm:mono-vx-cycle} holds.
    
    For the remaining items, we will look at the digraph $D$ of edges directed by \Cref{alg:preColouring}. We will show that this subgraph has out-degree at most 1, and that, ignoring directions, it is a forest. This forest will turn out to have a very specific structure: its leaves are vertices of $P$ and all its vertices, except for (maybe) the roots, have degree 1 or 3. In addition, the diameter of each tree in the forest is at least the number of leaves in the tree minus 1. Using these structural properties, we will show that the only ``problematic" trees are ``caterpillars", and among them only the one defined in \Cref{def:OnlyMonoVertex} yields a monochromatic vertex. 

    \begin{claim}\label{cl:MaxDegInTree}
        $D$ has maximum out-degree $1$, and its underlying graph is a forest.
    \end{claim}
    \begin{proof}
        Suppose that $xy$ is a directed edge in $D$. Then when \Cref{alg:preColouring} directed this edge, the other edges touching $x$ were already coloured, and they were either undirected or directed towards $x$. Thus $x$ cannot have an additional out-neighbour besides $y$, showing that $D$ has maximum out-degree (at most) $1$.
        
        Suppose that $(x_1 x_2 \ldots x_t x_1)$ is a cycle in the underlying graph of $D$. By the maximum out-degree being $1$, this is, in fact, a directed cycle in $D$. Assume that the direction of the edges follows the direction of the cycle, namely $x_1 x_2 \in E(D)$, etc.\ (if this is not the case, we can look at $(x_t x_{t-1} \ldots x_1 x_t)$ instead).
        Without loss of generality, $x_1x_2$ was the last edge to be coloured. Then $x_2 x_3$ was directed before $x_1 x_2$ was coloured, a contradiction.
    \end{proof}

\begin{claim}\label{cl:StructureOfTrees}
    Let $T$ be a component of $D$. Then
    \begin{enumerate}[label = \rm(\roman*)]
        \item \label{itm:tree-directed}
            $T$ is a tree whose edges are directed towards a single vertex, namely, a sink (which we think of as the root of the tree),
        \item \label{itm:tree-indegree}
            every non-root vertex in $D$ has in-degree $0$ or $2$,
        \item \label{itm:tree-leaves}
            all non-root leaves of $D$ lie in the interior of $P$,
        \item \label{itm:tree-root}
            if the root is in $P$ then it is an end of $P$ and incident to both red and blue edges.
    \end{enumerate}
\end{claim}

\begin{proof}
    Note that if the minimum out-degree of $T$ is 1, then $T$ contains a cycle, contradicting \Cref{cl:MaxDegInTree}. Thus $T$ has a vertex with out-degree 0, namely, a sink $s$. Let $S$ be the set of vertices for which there is a directed path from them to $s$. Assume that $S\cup \{s\}\neq V(T)$ and let $U := V(T) \setminus (S \cup \{s\})$. Since $T$ is connected, there is an edge connecting a vertex $u$ in $U$ with a vertex $v$ in $S \cup \{s\}$. 
    We claim that this edge must be directed from $u$ to $v$. Indeed, this is true if $v = s$, because $s$ is a sink. Suppose instead that $v \in S$. By definition of $S$, all vertices in $S$ have out-degree at least 1 in $T[S\cup \{s\}]$, and since the maximum out-degree of $T$ is 1, the edges $uv$ must again be directed towards $v$, as claimed. This means that $u \in S$, a contradiction. We conclude that $V(T)=S\cup \{s\}$, proving \ref{itm:tree-directed}.
    
    Now, let $v\in S$ and let $u$ be the only out-neighbour of $v$ in $T$. 
    Then $vu$ was directed in one of the steps \ref{itm:step-12} to \ref{itm:step-15} of \Cref{alg:preColouring}.
    If $vu$ was directed in \ref{itm:step-12} or \ref{itm:step-13} then $v$ has in-degree $0$, and otherwise it has in-degree $2$, proving \ref{itm:tree-indegree}.
    In particular, $v$ is a non-root leaf if and only if it was directed in \ref{itm:step-12} or \ref{itm:step-13}, in which case $v$ is in the interior of $P$, as required for \ref{itm:tree-leaves}.
    
    In fact, all vertices in the interior of $P$ are non-root leaves, because if $v$ is in the interior of $P$, it is incident to an edge $vz$ which is not in $P$. Because $P$ is a geodesic, $z \notin V(P)$. It follows that $vz$ is directed in \ref{itm:step-12} or \ref{itm:step-13}. 
    
    Suppose that the sink $s$ is in $P$. Then by the previous paragraph, $s$ is an end of $P$, proving the first part of \ref{itm:tree-root}.
    Let $u$ be the neighbour of $s$ in $P$.
    We now show that $s$ is incident to both red and blue edges. Indeed, if $s$ has two in-neighbours $w_1$ and $w_2$ then either at least one of them is red, so \ref{itm:step-16} does nothing and $s$ ends up incident with both red and blue edges, or $s$ has two blue in-neighbours and then after \ref{itm:step-16} it is incident to both blue and red edges. Otherwise, $s$ has exactly one in-neighbour $w$. If $ws$ is red, then \ref{itm:step-16} does nothing and $s$ has both blue and red neighbours, and if $ws$ is blue then a quick case analysis shows that after \ref{itm:step-16}, $s$ has both red and blue edges.
\end{proof}

Given a rooted tree (directed or not), let $L(T)$ be the set of non-root leaves of $T$ and let $\ell(T) := |L(T)|$. We say that a rooted tree $T$ is \emph{good} if $\diam(T) \ge \ell(T) - 1$.

\begin{claim}\label{cl:DiameterOfTree}
    Every component $T$ of $D$ is good.
\end{claim}

\begin{proof}
    By \Cref{cl:StructureOfTrees}, we have that $L(T)\subseteq V(P)$. Let $x$ be the left-most non-root leaf of $T$ in $P$ and $y$ the right-most vertex. Then $\dist_P(x,y)\geq \ell(T)-1$. As $P$ is a geodesic, $\dist_P(x,y)\leq \dist_T(x,y)\leq \diam(T)$. 
\end{proof}

We say that a rooted (undirected) tree $T$ is \emph{$x$-binary} if its root has degree $x$ and its other vertices have degree either $1$ or $3$ (i.e.\ non-leaf non-roots have degree $3$, and thus have two children). We say that a rooted tree is \emph{almost binary} if it is $x$-binary for some $x \in [3]$, and that it is \emph{binary} if it is $2$-binary. 
A big part of the remainder of this proof will analyse the structure of good almost binary trees, ignoring directions.



\begin{claim}\label{cl:goodSubTree}
    Let $T$ be an almost binary tree, and let $T'$ be an almost binary subtree of $T$.
    If $T$ is good then so is $T'$.
\end{claim}

\begin{proof}
    We prove the claim by induction on $|T| - |T'|$. If $|T| - |T'| = 0$, then $T = T'$, and the claim holds trivially.
    So suppose $|T| > |T'|$. We may also assume $|T'| \ge 2$, because a singleton is a good tree.
    Denote the roots of $T$ and $T'$ by $r$ and $r'$, respectively.
    
    We claim that one of the following holds.
    \begin{enumerate}[label = \rm(\alph*)]
        \item \label{itm:Tprime-a}
            $r' \neq r$ and $\deg(r') = 2$,
        \item \label{itm:Tprime-b}
            $r' \neq r$ and $\deg(r') = 1$,
        \item \label{itm:Tprime-c}
            $r' = r$ and $\deg(r') < \deg(r)$,
        \item \label{itm:Tprime-d}
            there is a non-root leaf $v$ in $T'$ which is not a leaf in $T$.
    \end{enumerate}
    Indeed, if $r' \neq r$ then $\deg(r') \in \{1, 2\}$ (because every neighbour of $r'$ in $T'$ is a child of $r'$ in $T$ and so $\deg(r') \le 2$, and moreover $\deg(r') \ge 1$ because $|T'| \ge 2$). If $r' = r$ and $\deg(r') = \deg(r)$, then there is a non-root vertex $v$ in $T'$ which has a smaller degree in $T'$ than in $T$. But then, by the trees being almost-binary, $v$ is a leaf in $T'$ but not in $T$.
    
    In all cases, we will modify $T'$ slightly to obtain an almost binary tree $T''$ which is a rooted subtree of $T$, apply induction to deduce that $T''$ is good, and then conclude that the same holds for $T'$.
    
    Let us first define $T''$ in each of the four cases: if \ref{itm:Tprime-a} holds, form $T''$ by adding a new vertex $r''$ to be the root of $T''$, and attaching $r''$ to $r'$; if \ref{itm:Tprime-b} or \ref{itm:Tprime-c} holds, form $T''$ by adding a new child to $r'$; and if \ref{itm:Tprime-d} holds, add two new children to $v$.
    
    Notice that in all cases $T''$ is an almost binary subtree of $T$, which is larger than $T'$. Thus, by induction, $T''$ is good, and so $\diam(T'') \ge \ell(T'') - 1$.
    In case \ref{itm:Tprime-a}, $\diam(T') = \diam(T'')$ and $\ell(T') = \ell(T'')$.
    In all other cases, $\diam(T') \ge \diam(T'') - 1$ and $\ell(T') = \ell(T'') - 1$. 
    Either way, it follows that $\diam(T') \ge \ell(T') - 1$, so $T'$ is good, as required.
\end{proof}

\begin{claim}\label{cl:CloseLeaves}
    Let $T$ be an almost binary good tree. Then every vertex in $T$ has a non-root leaf at distance at most 3 from it.
\end{claim}

\begin{proof}
    Assume that there is a  non-root non-leaf vertex $u$ with no non-root leaf vertex at distance at most 3 from it. Consider the subtree $T_u$ of $T$ rooted at $u$. It is easy to check that $T_u$ is a binary tree, and by \Cref{cl:goodSubTree} it is good. 
    By assumption on $u$, the tree $T_u$ has depth at least 3. Let $T'\subseteq T_u$ be the subtree of $T_u$ rooted in $u$ with depth 3; by \Cref{cl:goodSubTree}, $T'$ is good. But $T'$ has eight non-root leaves and diameter 6, a contradiction. 
    So every non-root non-leaf has a non-root leaf at distance at most 2. Thus the root has a non-root leaf at distance at most 3.
\end{proof}

Recall that by \Cref{cl:StructureOfTrees}~\ref{itm:tree-directed} and \ref{itm:tree-indegree} and by \Cref{cl:DiameterOfTree}, every component of $D$ is a good almost binary tree. Thus, by \Cref{cl:CloseLeaves}, every edge coloured by \Cref{alg:CreateGadgetsInPreCol} is at distance at most $3$ from $P$, proving \ref{itm:mono-vx-dist}.
It remains to prove \ref{itm:mono-vx-F}.

Observe that in every copy of $\Fexc$ which is parallel to $P$, with vertices labelled as in \Cref{def:OnlyMonoVertex}, \Cref{alg:preColouring} colours it according to $\Fexcs$. In particular, $y_2$ has red degree 3. Thus, to prove \ref{itm:mono-vx-F}, we need to show that every vertex of monochromatic degree $3$ is the vertex $y_2$ in some copy of $\Fexc$ which is parallel to $P$.

\begin{definition}\label{def:caterpillar}
    A caterpillar $T$ is a tree with $V(T)=\{y_0,\dots,y_s,z_1,\dots,z_{s-1}\}$ and $E(T)=\{y_{i-1}y_i : i\in[s]\}\cup \{y_iz_i : i\in [s-1]\}$.
    In other words, $T$ consists of a path $(y_0 \ldots y_s)$, along with a pendant edge at each vertex in the interior of the path.
    \begin{figure}[h!]
        \centering
        \includegraphics{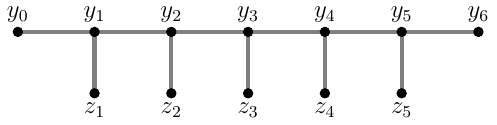}
        \caption{A caterpillar}
        \label{fig:caterpillar}
    \end{figure}
\end{definition}

\begin{claim}\label{cl:cubicTreesCaterpiller}
    All good $3$-binary trees are caterpillars.
\end{claim}

\begin{proof}
    Suppose not, and take $T$ to be a minimal good $3$-binary tree that it is not a caterpillar. Let $P=(x_0\dots x_t)$ be a longest path in $T$; so the ends $x_0$ and $x_t$ are leaves in $T$. If $t\leq 2$, then, using that $T$ is $3$-binary, $T$ is a star and thus a caterpillar, a contradiction.
    
    So $t\geq 3$. Since $x_0$ and $x_t$ are leaves, and $T$ is $3$-binary, $x_1$ and $x_{t-1}$ have degree 3. Let $x_0'$ and $x_{t}'$ be the neighbours of $x_1$ and $x_{t-1}$ 
    that are not in $P$, respectively. By possibly relabelling the vertices of $P$, we may assume that $x_1$ is not the root of $T$. 
    Let $T'=T\setminus \{x_0,x_0'\}$. Then $T'$ is a $3$-binary tree with the same root as $T$, and $T'\subseteq T$. By \Cref{cl:goodSubTree}, $T'$ is good, and by minimality of $T$ we get that $T'$ is a caterpillar. 
    
    Write $V(T')=\{y_0,\dots,y_s,z_1,\dots,z_{s-1}\}$ and $E(T')=\{y_{i-1}y_i : i\in[s]\}\cup \{y_iz_i : i\in [s-1]\}$. We now want to understand how to obtain $T$ from $T'$. To do so, we need to determine which of the vertices of $T'$ is $x_1$. 
    We consider three cases: $x_1 \in \{y_1, \ldots, y_{s-1}\}$; $x_1 \in \{y_0, y_s, z_1, z_{s-1}\}$; and $x_1 \in \{z_2, \ldots, z_{s-2}\}$.
    In the first case, $T$ has a vertex of degree $5$, contradicting $T$ being $3$-binary. In the second case, $T$ is a caterpillar, contradicting the choice of $T$. And in the third case, $T$ has diameter $s$ and $s+2$ non-root leaves, contradicting $T$ being good.
    We have thus reached a contradiction and thereby proved the claim.
\end{proof}

In the following claim we identify the only possible monochromatic vertices. Note that up to this point we did not care if the edges are coloured red or blue, only that they were coloured during \Cref{alg:preColouring}. In the sequel we look at the precise colouring. 

For the remainder of the proof, let $u$ be a vertex in $D$ with monochromatic degree $3$, and let $T$ be the component of $D$ that contains $u$. We will show that $T$ is a copy of $\Fexcs$ which is parallel to $P$, and that $u$ is the vertex $y_2$ in $\Fexcs$.

\begin{claim}\label{cl:MonoVxAreRoots}
    $T$ is a caterpillar, whose leaves are in $P$ and whose interior does not intersect $P$, and $u$ is a non-leaf root in $T$.
\end{claim}

\begin{proof}
    By \Cref{cl:StructureOfTrees}, $u$ is not in $P$ and the leaves in $T$ are in the interior of $P$. Thus all edges incident to $u$ are directed. Moreover, these edges are directed into $u$, because if there is an edge directed from $u$, it arises in \ref{itm:step-14} or \ref{itm:step-15}, which ensure that $u$ is not monochromatic.
    Recall that, by \Cref{cl:StructureOfTrees}, $T$ is an almost binary tree, directed towards its root. Since $u$ is a sink with in-degree $3$, this means that $T$ is a $3$-binary tree rooted at $u$. By \Cref{cl:DiameterOfTree}, $T$ is good, and thus by \Cref{cl:cubicTreesCaterpiller}, $T$ is a caterpillar. To complete the proof, notice that non-root non-leaves of $T$ have in-degree $1$ and out-degree $2$ and thus are not in $P$ (the steps involving $P$, namely \ref{itm:step-11}, \ref{itm:step-12} and \ref{itm:step-16} do not create such a vertex).
\end{proof}

By the previous claim, $T$ is a caterpillar with a non-leaf root $u$. We may thus write $V(T) = \{y_0, \ldots, y_s, z_1, \ldots, z_{s-1}\}$, for some $s \ge 2$. The claim implies that $u \in \{y_1, \ldots, y_{s-1}\}$, because all other vertices are leaves of $T$. Notice that the vertices of $T$ which are in $P$ form an interval in $P$ (though possibly the order of their appearance in $P$ does not correspond to their order in $T$), because the distance between any two of them in $T$ is at most $s$, and if they do not form an interval then the distance between some two of them in $P$ is at least $s+1$, a contradiction to $P$ being a geodesic. In a similar vain, we may assume that the left-most vertex is $y_0$ and the right-most vertex is $y_s$.

\begin{claim}
    The vertex $u$ has exactly one neighbour in $P$. 
\end{claim}

\begin{proof}
    Notice that by \Cref{cl:MonoVxAreRoots}, $u$ has at least one neighbour in $P$. Thus we need to rule out the possibility that $u$ has either two or three neighbours in $P$.
    Suppose first that $u$ has three neighbours in $P$. This means $s = 2$ and $u = y_1$. Since $y_0, z_1, y_2$ form an interval, it follows from \ref{itm:step-12} and \ref{itm:step-13} that $u$ has two blue in-edges and one red in-edge, contradicting its being monochromatic.
    
    Now suppose $u$ has two neighbours in $P$ and $s \ge 3$. Without loss of generality, this means $u = y_1$. Notice that its neighbours $y_0, z_1$ in $P$ are either consecutive or at distance $2$ in $P$, as $P$ is a geodesic. In the former case, $u$ gets two blue in-edges in \ref{itm:step-12} and its third edge is directed out of $u$ and coloured red in \ref{itm:step-14}, a contradiction. 
    
    So suppose that $y_0$ and $z_1$ are at distance $2$ in $P$. 
    We claim that $z_2$ must appear between $y_0$ and $z_1$. Indeed, recalling that $y_0$ is the left-most vertex in $P$ and $y_s$ is the right-most vertex, otherwise the vertex $w$ between $y_0$ and $z_1$ is at distance $s-1$ from $y_s$ in $T$ but at distance $s-2$ from $y_s$ in $T$, a contradiction to $P$ being a geodesic. It follows that $u$ receives two red in-edges in \ref{itm:step-12} and $y_1$ receives at least one red in-edge in the same step. But this means that the edge $y_1 u$ is directed in \ref{itm:step-15} and is blue, a contradiction.
\end{proof}

By the previous claim, $u = y_i$ for some $i \in \{2, \ldots, s-2\}$. In particular, $s \ge 4$. Notice that if $s = 4$ then $u = y_2$, as required for \ref{itm:mono-vx-F}. So suppose that $s \ge 5$. Without loss of generality, $u \neq y_{s-2}$. Notice that both $u = y_i$ and $y_{i+1}$ have exactly one neighbour in $P$, and thus they receive a red in-edge in \ref{itm:step-13}. But then if $y_{i+1}y_i$ is ever directed and coloured, this happens in \ref{itm:step-15} and the colour is blue, preventing $u$ from being monochromatic.
\end{proof}

\subsection{Stage II: Fixing monochromatic vertices (\Cref{alg:FixPreCol})}\label{sec:StageII}

The purpose of this section is to fix the colouring given by \Cref{alg:preColouring} to have no monochromatic vertices. Recall \Cref{def:OnlyMonoVertex} and \Cref{lem:OnlyMonoVertex}.

\begin{definition}
    Let $\Fexcss = $ \raisebox{-11pt}{\includegraphics[]{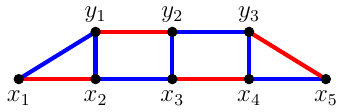}}. Namely, $\Fexcss$ is a red-blue colouring of a $\Fexc$, with vertices and edges labelled as in \Cref{def:OnlyMonoVertex}, where the red edges are $\{x_1 x_2, y_1 y_2, y_2 x_3, x_3 x_4, x_4 y_3, y_3 y_5\}$ and the blue edges are $\{x_1 y_1, y_1 x_2, x_2 x_3, y_2 y_3, x_4 x_5\}$.
\end{definition}

\begin{algorithm}[Fixing monochromatic vertices]\label{alg:FixPreCol}
Consider the partial colouring $\phi_1$ (the outcome of \Cref{alg:preColouring}). Obtain a new partial colouring $\phi_2$ from $\phi_1$, as follows. 
\begin{itemize}
    \item 
        For each copy $F$ of $\Fexcs$ (with vertices labelled as in \Cref{def:OnlyMonoVertex}), do:
        \begin{itemize}
            \item 
                Recolour $F$ according to $\Fexcss$. (Notice that, unlike $\Fexcs$, the coloured graph $\Fexcss$ is not symmetric, so if we relabelled $x_i$ to $x_{6-i}$ and $y_j$ to $y_{4-j}$ in $\Fexcs$, we would get a different colouring. We choose one of these colourings arbitrarily.)
            \item   
                Update $P' \leftarrow (P' \setminus \{x_1x_2,\, x_3x_4\} \cup \{x_1 y_1 x_2,\, x_3y_2y_3x_4\})$.
        \end{itemize}
    \item
        Delete all directions.
\end{itemize}
\end{algorithm}

\begin{lemma}\label{lem:PreColouringPropAfterFix}
 The partial colouring $\phi_2$ has the following properties.
\begin{enumerate}[label = \rm(\arabic*)]
    \item \label{itm:phi-2-deg}
        if a vertex is incident to at least two coloured edges, then at least one of them is red and at least one is blue,
    \item \label{itm:phi-2-path}
        every maximal monochromatic path, except for possibly $P'$, has an end touching two edges of the opposite colour, 
    \item \label{itm:phi-2-cycle}
        there are no monochromatic cycles,
    \item \label{itm:phi-2-dist}
        if $u$ is contained in some coloured edge, then its distance from $P$ is at most 3.
\end{enumerate}
\end{lemma}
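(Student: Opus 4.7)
The plan is to leverage the precise characterisation of Lemma~\ref{lem:OnlyMonoVertex}\ref{itm:mono-vx-F}: the only vertices of monochromatic degree $3$ in $\phi_1$ are the central vertices $y_2$ of copies of $\Fexcs$ parallel to $P$. Algorithm~\ref{alg:FixPreCol} recolours each such copy to $\Fexcss$, so the four properties can be checked locally. First I would show that distinct copies of $\Fexcs$ parallel to $P$ are vertex-disjoint, so that the recolouring is unambiguous: each copy occupies five consecutive vertices $x_1, \ldots, x_5$ of $P$ and three off-path vertices $y_1, y_2, y_3$ with prescribed adjacencies, and any overlap either forces a vertex of degree at least $4$ (impossible since $G$ is cubic) or forces the two copies to coincide (using that $P$ is a geodesic).

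Having established disjointness, a direct inspection of $\Fexcss$ gives the following local picture. Every vertex of $F$ has both a red and a blue neighbour in $F$, and no vertex of $F$ has monochromatic degree $3$. The red subgraph of $F$ decomposes into the edge $x_1 x_2$ and the path $y_1 y_2 x_3 x_4 y_3 x_5$, while the blue subgraph decomposes into the path $x_1 y_1 x_2 x_3$ together with the edges $y_2 y_3$ and $x_4 x_5$; in particular, neither colour class contains a cycle inside $F$. Property~\ref{itm:phi-2-deg} is then immediate: outside the union of the $F$'s the colouring agrees with $\phi_1$, so Lemma~\ref{lem:OnlyMonoVertex}\ref{itm:mono-vx-two} applies, while inside every $F$ the local listing above already gives both colours at each vertex. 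Property~\ref{itm:phi-2-dist} is equally quick, because all edges of $F$ lie within distance $1$ of $P$, and Lemma~\ref{lem:OnlyMonoVertex}\ref{itm:mono-vx-dist} handles the rest.

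For properties~\ref{itm:phi-2-path} and~\ref{itm:phi-2-cycle}, the only vertices of $F$ with edges leaving $F$ are the $P$-endpoints $x_1$ and $x_5$, and the $P$-edge at each of them is blue both in $\phi_1$ and in $\phi_2$. A case analysis gluing the local $\phi_2$-structure inside $F$ to the surrounding $\phi_1$-structure then shows that every maximal monochromatic path of $\phi_2$ meeting $F$ either has an endpoint flanked by two edges of the opposite colour (inheriting this from $\phi_1$ via Lemma~\ref{lem:OnlyMonoVertex}\ref{itm:mono-vx-path} whenever the path leaves $F$) or coincides with the updated path $P'$. For~\ref{itm:phi-2-cycle}, any monochromatic cycle in $\phi_2$ must use a recoloured edge, hence intersect some $F$; since neither colour class cycles inside $F$, such a cycle would have to exit $F$ through $x_1$ or $x_5$ and close up using edges of $\phi_1$, which would produce a monochromatic cycle in $\phi_1$, contradicting Lemma~\ref{lem:OnlyMonoVertex}\ref{itm:mono-vx-cycle}. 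The main obstacle will be the boundary bookkeeping at $x_1$ and $x_5$ in~\ref{itm:phi-2-path}: checking that the sole permitted exception accounts precisely for the updated $P'$ and nothing else.
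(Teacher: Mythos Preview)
Your overall plan matches the paper's: verify \ref{itm:phi-2-deg} and \ref{itm:phi-2-dist} by combining the local structure of $\Fexcss$ with \Cref{lem:OnlyMonoVertex}, and for \ref{itm:phi-2-path} and \ref{itm:phi-2-cycle} analyse how monochromatic paths and cycles interact with each recoloured copy $F$. Your observation that distinct parallel copies of $\Fexcs$ are vertex-disjoint is correct and useful (the paper leaves this implicit), and your decomposition of the red and blue subgraphs of $\Fexcss$ is accurate.

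There is, however, a genuine gap in your argument for \ref{itm:phi-2-cycle}. You write that a monochromatic cycle in $\phi_2$ ``would have to exit $F$ through $x_1$ or $x_5$ and close up using edges of $\phi_1$, which would produce a monochromatic cycle in $\phi_1$''. This does not follow: the portion of the cycle inside $F$ uses edges whose colours were \emph{changed} when passing from $\Fexcs$ to $\Fexcss$ (for instance $x_4x_5$ is blue in $\phi_2$ but red in $\phi_1$), so the same cycle is typically \emph{not} monochromatic in $\phi_1$, and there is no evident way to replace the $F$-segment by a $\phi_1$-monochromatic path through $F$ with the same endpoints. The paper instead derives \ref{itm:phi-2-cycle} directly from \ref{itm:phi-2-path}: once every maximal monochromatic path other than $P'$ has an end flanked by two edges of the opposite colour, no edge outside $P'$ can lie on a monochromatic cycle, and $P'$ itself is a path. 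Alternatively, you can argue directly: in $\Fexcss$ the blue component containing $x_1$ is $x_1y_1x_2x_3$ and the one containing $x_5$ is $x_4x_5$, and these are disjoint, so no blue cycle can traverse $F$; red paths cannot leave $F$ at all since the only exiting edges at $x_1,x_5$ are blue.

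A smaller caution on \ref{itm:phi-2-path}: your plan to ``inherit from $\phi_1$ via \Cref{lem:OnlyMonoVertex}\ref{itm:mono-vx-path} whenever the path leaves $F$'' runs into trouble for the blue paths that leave $F$, because the maximal blue path in $\phi_1$ containing the outside portion is precisely the $\phi_1$-version of $P'$, which is exactly the exception in \Cref{lem:OnlyMonoVertex}\ref{itm:mono-vx-path}. The clean argument (and essentially what the paper does) is purely local: every maximal monochromatic path that intersects $V(F)$ already has an endpoint \emph{inside} $F$ flanked by two opposite-colour edges (for red these endpoints are $x_2$ and $y_1$; for blue they are $x_3$, $x_4$, $y_2$, $y_3$). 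No appeal to the outside endpoint is needed.
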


\begin{proof}
    For \ref{itm:phi-2-deg}, notice that every vertex which is in a copy of $\Fexcs$ after \Cref{alg:preColouring} is incident to at least one edge of each colour after \Cref{alg:FixPreCol}, and by \Cref{lem:OnlyMonoVertex}, every other vertex which is incident to at least two coloured edges, is incident to at least one of each colour.
    
    Suppose that $Q$ is a maximal monochromatic path which is not $P'$. If it does not intersect the vertices of any copy of $\Fexc$ parallel to $P$ then $Q$ has an end with two neighbours of the opposite colour, by \Cref{lem:OnlyMonoVertex}~\ref{itm:mono-vx-path}.
    Now, if $Q$ is red and intersects the vertices of a copy $F$ of $\Fexc$ parallel to $P$, then it contains a red edge of $F$, which has a vertex incident to two blue edges. If $Q$ is blue and intersects such a copy $F$ then $Q = P'$. Item \ref{itm:phi-2-path} follows.
    
    Item \ref{itm:phi-2-path} shows that there is no monochromatic cycle with edges outside of $P'$. Since $P'$ is a path, there are no monochromatic cycles at all, proving \ref{itm:phi-2-cycle}.
    Item \ref{itm:phi-2-dist} follows directly from \Cref{lem:OnlyMonoVertex}.
\end{proof}

\subsection{Stage III: Creating gadgets (\Cref{alg:CreateGadgetsInPreCol})}\label{sec:StageIII}
In this section we will show that a small modification of the partial colouring $\phi_2$, defined in \Cref{sec:StageII}, contains a blue $\ell$-gadget.

Let $P'$ and $\phi_2$ be the path and partial colouring obtained by running \Cref{alg:preColouring} and then \Cref{alg:FixPreCol}. Write $P' = (p_0 \ldots p_L)$, and note that $L \ge 5\ell+200$.
\begin{claim} \label{claim:gadget-cases}
    One of the following cases holds (or the lemma holds).
    \begin{enumerate}[label = \rm\textbf{\Roman*.}, ref = \rm\Roman*]
        \item \label{caseI-gadgets}
            There is a copy of $\Fexcs$ in $\phi_1$ which is parallel to $P$ and whose distance on $P'$ from both end points of $P'$ is at least $\ell+20$.
        \item \label{caseII-gadgets}
            Case \ref{caseI-gadgets} does not hold, and there is a vertex $p_s \in V(P') \setminus V(P)$, with $s \in [2\ell+40, L - 2\ell - 40]$.  
        \item \label{caseIII-gadgets}
            Cases \ref{caseI-gadgets} and \ref{caseII-gadgets} do not hold, and there exists $s \in [2\ell+100, L - 2\ell-100]$ with such that $p_{s-1}$ and $p_s$ have a common neighbour $w$ outside of $P'$. 
    \end{enumerate}
\end{claim}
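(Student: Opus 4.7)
The plan is to prove the claim by contradiction: assume Cases I, II, III all fail and deduce the conclusion of \Cref{lem:GoodGadgetsExist} directly by invoking \Cref{lem:GoodCaterpillarGadgetsExist} on a long sub-geodesic of $P$ sitting inside the middle of $P'$.

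The key structural observation is that the two modifications used to form $P'$ from $P$ -- step \ref{itm:step-12} of \Cref{alg:preColouring} and the $\Fexcs$-rerouting inside \Cref{alg:FixPreCol} -- act by inserting new vertices between two consecutive $V(P)$-vertices, never deleting or reordering. Hence $V(P)\subseteq V(P')$, the $V(P)$-vertices appear in $P'$ in the same order as in $P$, and any two $V(P)$-vertices that are consecutive in $P'$ are also consecutive in $P$ and connected by the same edge.

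Assume Cases I and II both fail. Then no copy of $\Fexcs$ parallel to $P$ contributes any vertex to a $P'$-index in $[2\ell+40,L-2\ell-40]$, and no step-\ref{itm:step-12} insertion does either, so the middle sub-path $P'' := (p_{2\ell+100}\ldots p_{L-2\ell-100})$ is an intact sub-path of $P$, and in particular a geodesic of length at least $\ell+36$. Assuming also Case III fails, I would verify the no-common-neighbour hypothesis of \Cref{lem:GoodCaterpillarGadgetsExist} for $P''$. Geodesicity of $P$ restricts any $P''$-pair with a common neighbour to $P$-distance $1$ or $2$, and the common neighbour must lie outside $V(P)$. For consecutive pairs, a common neighbour outside $V(P')$ contradicts Case III, while a common neighbour in $V(P')\setminus V(P)$ would have been inserted by step \ref{itm:step-12} between the pair and hence occupy a middle $P'$-index, contradicting Case II. Distance-$2$ pairs $p_s,p_{s+2}$ with an additional common neighbour $w\neq p_{s+1}$ force $w=p_s'=p_{s+2}'$; analysing the third neighbour $z$ of $w$, either $z=p_{s+1}$ (so $w$ is a common neighbour of the consecutive $P$-pair $p_sp_{s+1}$ and step \ref{itm:step-12} inserts it into the middle, again contradicting Case II), or $z\notin V(P)$, in which case a short analysis of the local configuration relative to the colouring induced by steps \ref{itm:step-13}--\ref{itm:step-15} either forces one of the stated cases to hold or extracts a gadget of the explicit form in \Cref{sec:gadgets} directly.

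With $P''$ confirmed to satisfy the hypothesis of \Cref{lem:GoodCaterpillarGadgetsExist}, that lemma produces a partial colouring near $P''$ containing a blue $\ell$-gadget at distance at most $3$ from $P''$ (hence at most $4$ from $P$) and satisfying \ref{itm:extend-degree} and \ref{itm:extend-cycle-technical}, exactly matching the conclusion of \Cref{lem:GoodGadgetsExist}. The main obstacle is the distance-$2$ analysis when $z\notin V(P)$: disentangling the interplay between a putative common neighbour $w=p_s'=p_{s+2}'$ and the insertion history of step \ref{itm:step-12} together with the $\Fexcs$-replacement rules requires a careful case enumeration leveraging cubicity and the geodesic property of $P$, and is what justifies the ``or the lemma holds'' clause of the claim.
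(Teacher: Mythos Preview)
Your overall strategy---show that if Cases I and II fail then the middle of $P'$ coincides with a subpath of $P$, and then either reduce to \Cref{lem:GoodCaterpillarGadgetsExist} or land in Case III---matches the paper's. The structural observation that $P'$ is obtained from $P$ by insertions only, so that consecutive $V(P)$-vertices of $P'$ are consecutive in $P$, is exactly right and is what the paper uses implicitly.

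The gap is in your treatment of the distance-$2$ common-neighbour case. You assume all three cases fail and then, for a pair $p_s,p_{s+2}\in P''$ with common neighbour $w$ and third neighbour $z\notin V(P)$, you defer to ``a short analysis of the local configuration relative to the colouring induced by steps \ref{itm:step-13}--\ref{itm:step-15}'' that ``either forces one of the stated cases to hold or extracts a gadget of the explicit form in \Cref{sec:gadgets} directly''. This is not a proof: you neither carry out the analysis nor explain why such a configuration should yield a Type II gadget together with the required properties \ref{itm:extend-degree} and \ref{itm:extend-cycle-technical}. The paper avoids this entirely by not assuming Case III fails: it argues that once Cases I and II fail and $P''\subseteq P$, any common neighbour of two $P''$-vertices must be at distance~$2$ (the consecutive case having been excluded, since a consecutive common neighbour would be inserted by \ref{itm:step-12} and produce a middle vertex of $V(P')\setminus V(P)$, contradicting Case~II), and then a distance-$2$ common neighbour $w$ outside $P''$ is shown to lie outside $P'$ via the geodesic property---which is precisely the content of Case~III. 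No ad hoc gadget construction is needed.

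Note also that the statement of Case~III has a typo: it should read ``$p_{s-1}$ and $p_{s+1}$'' (distance $2$), as both the paper's proof and the subsequent use in \Cref{alg:CreateGadgetsInPreCol} (where $Q_1=p_{s-1}wp_{s+1}$) confirm. Reading Case~III as a statement about consecutive vertices is likely what pushed you into treating the distance-$2$ situation as something to be eliminated rather than as Case~III itself.
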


\begin{proof}
    Suppose that \ref{caseI-gadgets} and \ref{caseII-gadgets} do not hold. Then, writing $P = (q_0 \ldots q_{L'})$, we claim that the vertices $q_s$ and $q_{s+1}$, with $s \in [2\ell + 60, L' - 2\ell - 60]$ do not have a common neighbour. Indeed, suppose that $q_s$ and $q_{s+1}$ have a common neighbour $w$, and write $w = p_{s'}$. Notice that $s' \in [2\ell + 60, L - 2\ell - 60]$.
    Assuming that \ref{caseI-gadgets} does not hold, we find that $w$ is not in a copy of $\Fexcs$ in $\phi_1$, and thus its edges are not recoloured in $\phi_2$, showing that \ref{caseII-gadgets} holds for $s'$.
    
    It follows that step \ref{itm:step-12} does not occur with $uv_{\ref{itm:step-12}} = q_sq_{s+1}$ and $s \in [2\ell+60, L' - 2\ell - 60]$, and so $P'' := (p_{2\ell + 80} \ldots p_{L - 2\ell - 80})$ is a subpath of $P$. 
    This implies that $P''$ is a geodesic (of length at least $\ell + 36$). If no two vertices in $P''$ have a common neighbour outside of $P''$, then we can apply \Cref{lem:GoodCaterpillarGadgetsExist} to complete the proof of this lemma. So, we assume that there are two vertices in $P''$ that have a common neighbour $w$ outside of $P''$. These vertices are not consecutive in $P''$, by the previous paragraph, so they are at distance $2$ in $P''$. In other words, there exists $s \in [2\ell+81, L-2\ell-81]$ such that $p_{s-1}$ and $p_{s+1}$ have a common neighbour $w$ outside of $P''$. We claim that $w$ is also outside of $P'$. Indeed, suppose not. Then $w$ is at distance at most $1$ from some vertex in $P \setminus \{p_{s-1}, p_s, p_{s+1}\}$; denote this vertex by $p_t$. So $p_{s-1}$ and $p_{s+1}$ are at distance at most $2$ from $p_t$, a contradiction to $P$ being a geodesic. 
\end{proof}

\begin{algorithm}[Creating a gadget]\label{alg:CreateGadgetsInPreCol}
Throughout this description, if $v$ is an interior vertex of $P'$, then we denote by $v'$ the (unique) neighbour of $v$ such that $vv'$ is not an edge of $P'$ (so $vv'$ is a red edge in $\phi_2$). 

We explain how to proceed in each of the three cases given in \Cref{claim:gadget-cases}.
\begin{enumerate}[label = \rm\textbf{\Roman*.}, ref = \rm\Roman*]
    \item 
        There is a copy of $\Fexcs$ in $\phi_1$ which is parallel to $P$ and whose distance on $P'$ from both end points of $P'$ is at least $\ell+20$.
        
        Label the vertices and edges of such a copy of $\Fexcs$ as in \Cref{def:OnlyMonoVertex}. Thus $(x_1 \ldots x_5)$ is a subpath of $P$, and $(x_1 y_1 x_2 x_3 y_2 y_3 x_4 x_5)$ is a subpath of $P'$. Let $s$ be such that $x_4 = p_s$ and suppose that $p_{s+1} = x_5$; notice that $s + \ell + 2 \le L-1$.
        Recolour the edges $p_s p_{s-1}$, $p_{s-2}p_{s-3}$ and $p_{s+\ell} p_{s+\ell+1}$ red. If $p_{s+\ell}' p_{s+\ell+1}'$ is an edge, colour it blue (see \Cref{fig:gadget-case1}).
        
        In this way we get an $\ell$-gadget of Type II as follows: $Q_0=(p_{s-2}\dots p_{s+\ell})$, $Q_1 = (p_{s-1} p_{s+1})$, $e_1 = p_{s-5} p_{s-2}$, $e_2 = p_{s-3} p_{s-2}$, $e_3 = p_{s-3} p_s$, $e_4 = p_{s+\ell} p_{s+\ell+1}$, and $e_5 = p_{s+\ell} p_{s+\ell}'$.
        \begin{figure}[h!]
            \centering
            \includegraphics[scale = .8]{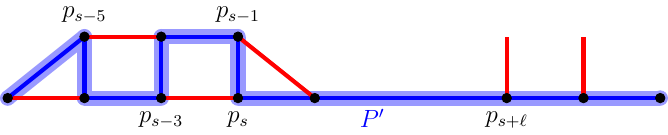}
            \vspace{.3cm}
            \includegraphics[scale = .8]{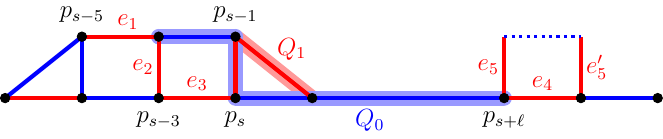}
            \caption{Case I: before and after \Cref{alg:CreateGadgetsInPreCol}}
            \label{fig:gadget-case1}
        \end{figure}
       
    \item \label{caseII-gadgets}
        Case \ref{caseI-gadgets} does not hold, and there is a vertex $p_s \in V(P') \setminus V(P)$, with $s \in [2\ell+40, L - 2\ell - 40]$.  
        
        Recolour the edges $p_{s-3} p_{s-2}$, $p_{s-1} p_s$, and $p_{s+\ell} p_{s + \ell + 1}$ red. If $p_{s-2}' p_{s-3}'$ or $p_{s+\ell}' p_{s+\ell+1}'$ are edges, colour them blue (see \Cref{fig:gadget-case2}).
        
        \begin{figure}[h!]
            \centering
            \includegraphics[scale = .8]{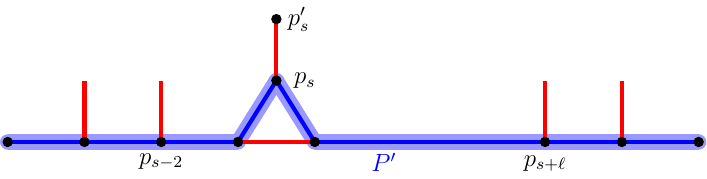} 
            \vspace{.3cm}
            \includegraphics[scale = .8]{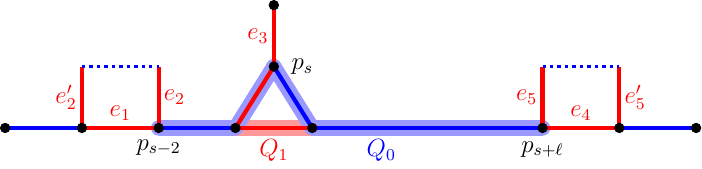} 
            \caption{Case II: before and after \Cref{alg:CreateGadgetsInPreCol}}
            \label{fig:gadget-case2}
        \end{figure}
        
        This gives a gadget of Type II as follows: $Q_0=(p_{s-2}\dots p_{s+\ell})$, $Q_1=(p_{s-1}p_{s+1})$, $e_1=p_{s-3}p_{s-2}$, $e_2=p_{s-2}p_{s-2}'$, $e_3=p_sp_s'$, $e_4=p_{s+\ell}p_{s+\ell+1}$,  $e_5=p_{s+\ell}p_{s+\ell}'$. Note that the edges $e_3$ and $e_2$ may intersect or be equal.

    \item \label{caseIII-gadgets}
    
        Cases \ref{caseI-gadgets} and \ref{caseII-gadgets} do not hold, and there exists $s \in [2\ell+100, L - 2\ell-100]$ with such that $p_{s-1}$ and $p_s$ have a common neighbour $w$ outside of $P'$. 
        
        Recolour the edges $p_{s-3}p_{s-2}$, $p_{s-1}p_s$ and $p_{s+\ell} p_{s+\ell+1}$ red. If $p_{s-2}' p_{s-3}'$ or $p_{s+\ell}' p_{s+\ell+1}'$ are edges, colour them blue (see \Cref{fig:gadget-case3}).
        
        \begin{figure}[h!]
            \centering
            \includegraphics[scale = .8]{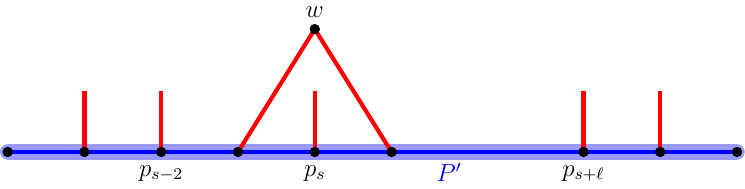} 
            \vspace{.3cm}
            \includegraphics[scale = .8]{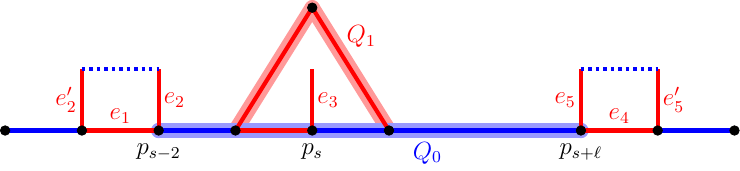} 
            \caption{Case III: before and after \Cref{alg:CreateGadgetsInPreCol}}
            \label{fig:gadget-case3}
        \end{figure}

        This gives a gadget of Type II as follows: $Q_0=(p_{s-2}\dots p_{s+\ell})$, $Q_1=p_{s-1}wp_{s+1}$, $e_1=p_{s-3}p_{s-2}$, $e_2=p_{s-2}p_{s-2}'$, $e_3=p_sp_s'$, $e_4=p_{s+j}p_{s+j+1}$, and $e_5=p_{s+j+1}p_{s+j+1}'$.
        
\end{enumerate}
Denote the partial colouring at the end of this process by $\phi_3$ and write $e_5' := p_{s+\ell+1} p_{s+\ell+1}'$ and $e_2' := p_{s-3}p_{s-3}'$.
\end{algorithm}

\begin{lemma}\label{lem:PartialColProp}
    The partial colouring $\phi_3$ satisfies the following properties, and thus satisfies the requirements of \Cref{lem:GoodGadgetsExist}.
\begin{enumerate}[label = \rm(\arabic*)]
    \item \label{itm:phi-3-dist}
        all coloured edges are at distance at most 4 from the geodesic $P$ and form a connected subgraph,
    \item \label{itm:phi-3-deg}
        if a vertex $u$ is incident to at least two coloured edges, then at least one of them is red and at least one is blue,
    \item \label{itm:phi-3-cycle}
        if $C$ is a cycle in $G$ containing no red edges or no blue edges, then $C$ has two consecutive edges that are not coloured by $\phi_3$.
\end{enumerate}
\end{lemma}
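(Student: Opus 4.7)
I would verify the three properties in turn, with property (3) being the main work.

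For (1), the edges recoloured or newly coloured in \Cref{alg:CreateGadgetsInPreCol} are either edges of $P'$ or edges of the form $p_i'p_j'$ between consecutive ``primes''. Since $V(P')$ lies within distance $1$ of $P$ by the constructions in \Cref{alg:preColouring} and \Cref{alg:FixPreCol}, these new edges lie within distance $2$ of $P$, and together with \Cref{lem:PreColouringPropAfterFix}(4) every coloured edge of $\phi_3$ is within distance $3 \le 4$ of $P$. Connectivity of the coloured subgraph is preserved because each new edge $p_i'p_j'$ attaches to the $\phi_2$-coloured subgraph via the red edge $p_ip_i'$ (present because $p_i$ is interior to $P'$).

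For (2), the only affected vertices are $\{p_{s-3},p_{s-2},p_{s-1},p_s,p_{s+\ell},p_{s+\ell+1}\}$ together with the primes $\{p_{s-3}',p_{s-2}',p_{s+\ell}',p_{s+\ell+1}'\}$. Since $s\in[2\ell+40,L-2\ell-40]$, each $p_i$ in this list is interior to $P'$ in $\phi_2$, hence carries two blue $P'$-edges and a red edge $p_ip_i'$; flipping a $P'$-edge at $p_i$ from blue to red leaves two reds and one blue, preserving (2). At each affected prime $p_i'$, the edge $p_ip_i'$ is already red in $\phi_2$, so colouring a new edge $p_i'p_j'$ blue adds only a blue presence at $p_i'$, and similarly at $p_j'$.

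For (3) I would argue by contradiction. Suppose $C$ is a cycle with no red edges and no two consecutive uncoloured edges (the no-blue case being symmetric). My key sub-claim is an extension of \Cref{lem:PreColouringPropAfterFix}(2) to $\phi_3$: every maximal blue path in $\phi_3$ has an endpoint incident to two red edges of $\phi_3$. This is verified by enumerating the provenances of such paths. Paths surviving unchanged from $\phi_2$ inherit the property from \Cref{lem:PreColouringPropAfterFix}(2). The four truncated $P'$-blue segments $(p_0\ldots p_{s-3})$, $(p_{s-2}p_{s-1})$, $(p_s\ldots p_{s+\ell})$ and $(p_{s+\ell+1}\ldots p_L)$ produced by the three recolourings (with the refinement appropriate for Case I inside $\Fexcss$) each acquire two red edges at the new endpoint, since the recoloured $P'$-edge meets the red edge $p_ip_i'$ there. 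Blue paths extended through a new edge $p_{s-2}'p_{s-3}'$ or $p_{s+\ell}'p_{s+\ell+1}'$ are handled by inspecting the local $D$-structure from \Cref{alg:preColouring} around the primed vertex to exhibit a two-red endpoint.

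Granted the sub-claim, let $S$ be a maximal blue segment of $C$ with endpoints $v_0,v_k \in V(C)$, contained in a maximal blue path $Q$ of $\phi_3$. At $v_0$ the other $C$-edge is uncoloured by assumption, and any further blue edge at $v_0$ would have to lie outside $C$, giving $v_0$ two blue coloured edges and one uncoloured, violating (2); thus $v_0$ — and similarly $v_k$ — is an endpoint of $Q$. By the sub-claim, one of $v_0,v_k$, say $v_0$, is incident to two red edges, so $v_0$ is cubic with one blue and two red coloured edges; but $v_0 \in V(C)$ must have two non-red $C$-edges, while only its single blue edge qualifies. This contradiction completes (3). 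I expect the main obstacle to be the case analysis behind the sub-claim, especially for blue paths extended through the new prime-to-prime edges, where the $\phi_2$-local $D$-structure at the primed vertices has to be carefully unpacked to locate a two-red endpoint in each configuration.
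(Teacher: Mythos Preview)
Your treatment of \ref{itm:phi-3-dist} and \ref{itm:phi-3-deg} is broadly in line with the paper's, though you gloss over one point the paper checks explicitly: that the newly blue edges $f_1 = p_{s-3}'p_{s-2}'$ and $f_2 = p_{s+\ell}'p_{s+\ell+1}'$ were not previously \emph{red} in $\phi_2$ (this follows because every red edge in $\phi_2$ has an endpoint with two blue neighbours, so there is no red path of length $3$).

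The real problem is your argument for \ref{itm:phi-3-cycle}. Your key sub-claim --- that every maximal blue path in $\phi_3$ has an endpoint incident to two red edges --- is simply false. Consider $f_1$ in the case where both $p_{s-3}'$ and $p_{s-2}'$ were roots of their $D$-components with in-degree $1$ in $\phi_1$ (this is allowed by \Cref{cl:StructureOfTrees}). Then in $\phi_2$ each of these vertices has exactly one coloured edge (the red edge to $P'$), and after colouring $f_1$ blue in $\phi_3$ each has one red, one blue, and one \emph{uncoloured} neighbour. Thus $f_1$ is itself a maximal blue path neither of whose endpoints carries two red edges. Your appeal to ``inspecting the local $D$-structure'' cannot manufacture a second red edge that is not there. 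The same failure occurs on the red side: the maximal red paths containing $e_2 e_1 e_2'$ and $e_5 e_4 e_5'$ need not have an endpoint with two blue edges (the vertices $p_{s-3}'$ and $p_{s+\ell+1}'$ may again have an uncoloured third edge), so the ``no-blue case'' is not handled by symmetry either.

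The paper's proof confronts these exceptions head-on. It shows that any maximal monochromatic subpath of a single-colour cycle $C$ must be one of the exceptional paths $f_1$, $f_2$ (blue) or the red paths through $e_2 e_1 e_2'$, $e_5 e_4 e_5'$; every other maximal monochromatic path still has an endpoint with two opposite-colour edges and so cannot occur. It then proves a separate distance claim (that $p_{s-3}$ and $p_{s+\ell+1}$ are at distance at least $5$ in $G$, using that $P$ is a geodesic and a case analysis over Cases \ref{caseI-gadgets}--\ref{caseIII-gadgets}) and finishes by bounding $|C|$ in each of the four possible combinations of exceptional segments, showing in each case that $C$ is long enough to contain two consecutive uncoloured edges. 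Your proposal is missing both the identification of these exceptional paths and the distance argument needed to dispose of them.
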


\begin{proof}
    By \Cref{lem:PreColouringPropAfterFix}, before applying \Cref{alg:CreateGadgetsInPreCol} all coloured edges are at distance at most $3$ from $P$. Note that in \Cref{alg:CreateGadgetsInPreCol} all newly coloured edges (if any) intersect an already coloured edge, and thus are at distance at most $4$ from $P$; therefore, \ref{itm:phi-3-dist} holds. 
    
    By \Cref{lem:PreColouringPropAfterFix}, before applying \Cref{alg:CreateGadgetsInPreCol}, Item \ref{itm:phi-3-deg} above holds, and thus it still holds after applying \Cref{alg:CreateGadgetsInPreCol}, from the way the algorithm is defined. It remains to prove \ref{itm:phi-3-cycle}. 
    
    Consider the blue edges $f_1 := p_{s-3}' p_{s-2}'$ and $f_2' = p_{s+\ell}' p_{s+\ell+1}'$ (if exist). We claim that $f_1$ and $f_2$ were not red in $\phi_2$. Indeed, notice that every red edge in both $\phi_1$ and $\phi_2$ has a vertex with two blue neighbours. It follows that $\phi_2$ has no red path of length $3$, and thus $f_1$ and $f_2$ cannot be red in $\phi_2$. Thus, either $f_1$ is in a blue path in $\phi_3$ which has an end with two red neighbours, or both edges of $f_1$ have an uncoloured neighbour.
    
    We now show that there are no monochromatic cycles. Suppose $C$ is a monochromatic cycle. Then by \Cref{lem:PreColouringPropAfterFix}~\ref{itm:phi-2-cycle}, $C$ intersects an edge that was coloured (or recoloured) in \Cref{alg:CreateGadgetsInPreCol}. 
    By the previous paragraph, $C$ does not contain $f_1$ or $f_2$.
    Notice that all maximal monochromatic paths in $\phi_3$, that intersect an edge coloured in this algorithm, have an end with two edges of the opposite colour, except for possibly the maximal red paths containing $e_2 e_1 e_2'$ or $e_5 e_4 e_5'$ and for $f_1$ or $f_2$ if they do not intersect other blue paths. So we need to rule out the case that $C$ contains one of $e_5$ and $e_2$.
    
    We explain why $C$ does not contain $e_5$. Indeed, suppose otherwise, and recall that $e_5 = p_{s+\ell} p_{s+\ell}'$. Then $p_{s+\ell} \notin P'$ (if $p_{s+\ell}'$ is in $P'$ then it has two blue neighbours). In particular, $e_5$ is not in a copy of $\Fexcss$. 
    Thus $e_5$ was coloured in \ref{itm:step-13} and was not recoloured afterwards.
    Since $e_5$ is at distance at least $10$, say, from the ends of $P'$, the edge $e_5$ does not touch any edges coloured in \ref{itm:step-16} of \Cref{alg:preColouring}. It follows that, in $\phi_1$, the vertex $p_{s+\ell}'$ either has no red neighbours other than $p_{s+\ell}$, or it has both a red and a blue one. In the former case, in $\phi_3$ the vertex $p_{s+\ell}'$ still has red degree $1$. In the latter case, denoting by $u$ the red neighbour of $p_{s+\ell}'$ which is not $p_{s+\ell}$, we find that $u$ has two blue neighbours, and this remains the case in $\phi_3$ (observe that $u \neq p_{s+\ell+1}$). Either way, $e_5$ is not contained in a red cycle, and similarly for $e_2$. 
    
    Suppose that $C$ is a cycle containing edges of at most one colour. Because there are no monochromatic cycles, $C$ contains at least one uncoloured edge, and we may assume that it contains at least one coloured edge (otherwise \ref{itm:phi-3-cycle} holds trivially). Let $R_1, \ldots, R_k$ be the maximal monochromatic subpaths of $C$. We claim that $R_i$ is a maximal monochromatic path also in $\phi_3$, for $i \in [k]$. Indeed, write $R_i = (r_0 \ldots r_{\rho})$ and suppose that $r_0$ is incident to an edge $e \neq r_0 r_1$ coloured with the colour of $R_i$. Then the third edge at $r_0$ has the opposite colour, by \ref{itm:phi-3-deg}, so by assumption on $C$ it cannot contain this third edge and thus it contains $e$, contradicting the maximality of $R_i$.
    Recalling our analysis of maximal monochromatic paths with an end incident with two edges of the same colour, the only candidates for $R_1, \ldots, R_k$ are $f_1, f_2$ (if their vertices are not incident with other blue edges), and the maximal red paths containing $e_5e_4e_5'$ or $e_2 e_1 e_2'$. In other words, either $C$ contains one or both of $f_1$ and $f_2$ and no other coloured edges, or (and this follows similarly to the argument about there being no red cycles) $C$ contains one of $e_5 e_4 e_5'$ and $e_2 e_1 e_2'$ and no other coloured edges.
    
    \begin{claim} \label{claim:uv}
        In Cases \ref{caseII-gadgets} and \ref{caseIII-gadgets}, the vertices $u := p_{s-3}$ and $v := p_{s+\ell+1}$ are at distance at least $5$ from each other. 
    \end{claim}
    
    \begin{proof}
        Notice that in both cases $u$ and $v$ are at distance at least $\ell + 4 \ge 7$ in $P'$.  
        In Case \ref{caseIII-gadgets} the vertices $u$ and $v$ are at distance at least $7$ also in $P$ and thus in $G$. In Case \ref{caseII-gadgets} both $e_1$ and $e_4$ are edges in $P$. Indeed, if say $e_1$ is not in $P$, then, using that Case \ref{caseI-gadgets} does not hold and recalling \ref{itm:step-12} from \Cref{alg:preColouring}, one of the edges $e_2$ and $e_2'$ has a vertex touching two blue edges, a contradiction. It follows that $u$ and $v$ are both in $P$ and are at distance at most $5$ in $P$ and thus in $G$. Either way, $u$ and $v$ are at distance at least $5$ from each other in $G$, so $|C| \ge 10$ and $C$ has two consecutive uncoloured edges. 
    \end{proof}    
    
    We obtain a lower bound on $|C|$ in each of the possible cases, using \Cref{claim:uv} above.
    \begin{itemize}
        \item 
            $C$ contains exactly one of $f_1, f_2$. Then $|C| \ge 3$.
        \item
            $C$ contains both $f_1$ and $f_2$. Then $|C| \ge 6$ as $p_{s-3}'$ and $p_{s+\ell+1}'$ are a distance at least $3$ apart.
        \item
            $C$ contains exactly one of $e_2e_1e_2'$ and $e_5e_4e_5'$. Then $|C| \ge 5$, because $f_1$ and $f_2$, if exist, are blue.
        \item
            $C$ contains both $e_2e_1e_2'$ and $e_5e_4e_5'$. Then $|C| \ge 10$, because $C$ contains $u$ and $v$ which are a distance at least $5$ apart.
    \end{itemize}
    In all cases, it easily follows that $C$ has two consecutive uncoloured edges, proving \ref{itm:phi-3-cycle}.
\end{proof}

\section{Concluding Remarks} \label{sec:conc}
In this paper we essentially prove Wormald's conjecture, showing that large connected cubic graphs, whose number of vertices is divisible by $4$, can be decomposed into two isomorphic linear forests (\Cref{thm:main}). 

In fact, our proof can be tweaked to prove the following stronger statement.

\begin{theorem} \label{thm:main-strong}
    There exists $k_0$ such that the following holds. Suppose that $G$ is a cubic graph on $n$ vertices, where $n$ is divisible by $4$, and $G$ has a component on at least $k_0$ vertices. Then there is a red-blue edge-colouring of $G$ whose colour classes span isomorphic linear forests.
\end{theorem}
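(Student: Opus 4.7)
The plan is to reduce \Cref{thm:main-strong} to the setting of \Cref{thm:main} by processing the components of $G$ other than $C_0$ in carefully chosen pairs, so that their contributions to $r(P_t) - b(P_t)$ largely cancel, and then colouring $C_0$ using the machinery of the proof of \Cref{thm:main} to absorb the bounded residual. Fix $k_0$ to be a sufficiently large constant, let $C_0$ be the designated large component of size $n_0 \ge k_0$, and split the remaining components of $G$ into \emph{small} (size $< k_0$) and \emph{large} (size $\ge k_0$).

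For small components, I will use that there are only finitely many isomorphism types of cubic graphs on at most $k_0$ vertices. Group the small components by isomorphism type; for each type $H$, pair up copies and colour them by a fixed decomposition $\chi_H$ into two linear forests (existing by \Cref{thm:thomassen}) and its red/blue-swap $\bar{\chi}_H$, respectively, so that each pair contributes zero to every $r(P_t)-b(P_t)$ and has equal numbers of red and blue edges. For each large non-$C_0$ component, apply \Cref{thm:main}, using its modification (noted just after \Cref{thm:main} in the introduction, which yields an isomorphic decomposition up to a defect of two extra red $P_1$'s and one extra blue $P_3$) whenever the component has size $\equiv 2 \pmod 4$. Pair up these defective large components arbitrarily and colour-swap one member of each pair, so the defects cancel within each pair. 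After these pairings, at most one leftover small component per isomorphism type and at most one leftover defective large component remain, contributing a residual imbalance vector $\vec{x}$ and edge-count imbalance $y$ of total magnitude at most some constant $M=M(k_0)$. A global parity check using $n \equiv 0 \pmod 4$ (which forces the total number of components of size $\equiv 2 \pmod 4$ to be even) ensures that $n_0$ and $y$ have compatible parities for the next step.

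Next, I will colour $C_0$ by replaying the proof of \Cref{thm:main} on $C_0$ with the targets of \Cref{claim:chi-2}, \Cref{claim:chi-3} and \Cref{claim:chi-4} shifted by $-\vec{x}$ and $-y$ instead of zero. Concretely, apply \Cref{lem:partialColMainLemma} to $C_0$ to obtain an extendable pre-colouring $G_0' \subseteq C_0$ with many gadgets of each relevant length; let $G_0 \subseteq G$ be the union of $G_0'$ with the pre-colourings of the remaining components, and verify that $G_0$ is extendable in $G$ in the sense of \Cref{def:GoodPartialCol}. Properties \ref{itm:extend-degree}, \ref{itm:extend-compt-size} and \ref{itm:extend-cycle-technical} are inherited from the corresponding properties on $G_0'$ together with the obvious local properties of the pre-coloured other components (different components of $G$ are at infinite distance, so any cycle in $G$ sits inside a single component), while property \ref{itm:extend-Hs} is obtained by adding each remaining other-component (which is itself a cubic graph, hence a subdivision of itself with degrees in $\{1,3\}$) into the piece $H_1$ of the decomposition of $G\setminus G_0$. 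Then apply \Cref{lem:MainCol} to $G$ with this $G_0$ to get an approximate colouring, and carry out the balancing steps of \Cref{claim:chi-2}, \Cref{claim:chi-3} and \Cref{claim:chi-4} verbatim, but aiming for the shifted targets.

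The main obstacle is checking that the shifted balancing has enough room. Since all components other than $C_0$ are pre-coloured, the only random decisions in \Cref{lem:MainCol} concern edges inside $C_0$, so a routine repetition of the McDiarmid argument of \Cref{claim:chi-4-Q3} yields an approximate imbalance of order $O(n_0^{2/3})$ rather than $O(n^{2/3})$; meanwhile \Cref{lem:partialColMainLemma} supplies at least $n_0^{0.999}$ gadgets of each required length, which comfortably exceeds both $M$ and the approximate-colouring imbalance once $k_0$ is chosen large enough. The long-path correction of \Cref{claim:chi-2}, the edge-balancing of \Cref{claim:chi-3}, and the gadget-based balancing of \Cref{claim:chi-4} then go through with the altered targets, and combining the resulting colouring of $C_0$ with the pre-colourings of the other components gives the desired isomorphic linear forest decomposition of $G$.
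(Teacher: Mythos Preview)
Your approach differs substantially from the paper's. The paper does not single out one component $C_0$ as an absorber; instead, it applies Thomassen's theorem (\Cref{thm:thomassen}) to \emph{every} component, obtaining a linear-forest 2-colouring with path lengths at most $5$. Each component's colouring is then described by a 10-tuple $(r_1,\ldots,r_5,b_1,\ldots,b_5)$, and for components of size $k$ there are only $O(k^9)$ possible tuples. Pairing up components of the same size with identical tuples (swapping colours on one of each pair) leaves at most $O(k^9)$ unpaired components of each size $k$. The leftover graph $G'$ then has $|V(G')| = O(K^{10})$ where $K$ is the size of its largest component, so almost all vertices of $G'$ lie in components large enough to host the geodesics of \Cref{cl:geodesics}. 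The entire proof of \Cref{thm:main} then runs on $G'$ unchanged---no shifted targets, no absorber, no re-proving of \Cref{lem:MainCol}.

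Your strategy of absorbing a bounded residual into $C_0$ via shifted balancing targets is plausible and could be made to work, but the formalization has a genuine gap. You claim the extended $G_0$ (which includes the fully pre-coloured other components) is extendable in the sense of \Cref{def:GoodPartialCol}, but condition \ref{itm:extend-compt-size} requires every component of $G_0$ to have size $O(\sqrt{\log n})$; this fails outright for the large pre-coloured components of $G$. This is not cosmetic: it is exactly what controls the Lipschitz constant in the McDiarmid step (\Cref{claim:chi-4-Q3}), since the keep/swap random variable of a large pre-coloured component can flip arbitrarily many path counts at once. Your remark that ``a routine repetition of the McDiarmid argument yields $O(n_0^{2/3})$'' shows you see the randomness should live only in $C_0$, but the correct formalization is to apply \Cref{lem:partialColMainLemma} and \Cref{lem:MainCol} to $C_0$ alone and then append the fixed colourings of the other components---not to apply \Cref{lem:MainCol} to $(G,G_0)$. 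Relatedly, your \ref{itm:extend-Hs} argument is confused: the other components lie entirely inside $G_0$, so $G\setminus G_0 = C_0\setminus G_0'$ and there is nothing to add to $H_1$. These issues are fixable, but as written the argument does not go through, and even once fixed it is more intricate than the paper's route.
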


\begin{proof}[Proof sketch]
    We now sketch how this strengthening can be proved. 
    First, we note that our only use of the connectivity assumption on $G$ was in \Cref{cl:geodesics}, where we find $\Omega(n^{1-\eps})$ geodesics of length $m := 10^{10}\sqrt{\log n}$ which are at distance at least $50$ from each other. For this, it actually suffices to have at least say $n/2$ vertices in $G$ be in components of order at least $3\cdot 2^m$. Indeed, running the same proof on the union of components of at least this order yields the same result.
    
    For the proof of \Cref{thm:main-strong}, let $G$ be a cubic graph as in the theorem.
    For each $k$ let $\cC_k$ be the collection of components of $G$ on exactly $k$ vertices. Apply \Cref{thm:thomassen} (Thomassen's theorem) to each component in $\cC_k$ to get a red-blue colouring where every monochromatic component is a path of length at most $5$. To each such colouring, associate a $10$-tuple $(r_1, \ldots, r_5, b_1, \ldots, b_5)$, where $r_i$ and $b_i$ denote the numbers of red and blue components which are paths of length $i$. Notice that the number of possible $10$-tuples is at most the number of non-negative integer solutions to the equation $x_1 + \ldots + x_5 + y_1 + \ldots + y_5 = 3k/2$, where $x_i$ and $y_i$ count the number of edges in red and blue components which are paths of length $i$.
    This number is $\binom{3k/2 + 9}{9} = O(k^9)$. This implies that we can pair up the components in $\cC_k$, leaving at most $O(k^9)$ unpaired components, such that the $10$-tuples corresponding to each pair $(C_1, C_2)$ are identical. Now reverse the colouring in $C_2$ to get a red-blue colouring of $C_1 \cup C_2$ where the red and blue graphs are isomorphic linear forests (with paths of length at most $5$). It thus suffices to consider the graph $G'$ which is the union of unpaired graphs, and this is a graph where the number of components on $k$ vertices is $O(k^9)$, for every $k$. 
    
    We now show that $G'$ can be dealt with via the above paragraph. Indeed, denote by $n$ the number of vertices in $G'$ and by $K$ the number of vertices in the largest component of $G'$. Then $K \ge k_0$ (technically we need to make sure we leave at least one component on at least $k_0$ unpaired; this is clearly possible) and $n \le \sum_{i \le K} O(i^9) = O(K^{10})$. Writing $m := 10^{10}\sqrt{\log n}$, as above, and noting that $m \le n^{1/20}$, say (using that $n$ is large, which follows from $k_0$ being large), we get that there are at most $\sum_{i \le m} O(i^{9}) = O(m^{10}) = O(\sqrt{n})$ components on fewer than $m$ vertices. Thus the first paragraph above can be applied to find a red-blue colouring of $G'$ where the red and blue graphs are isomorphic linear forests, completing the proof.
\end{proof}

Notice that the assumption that the number of vertices is divisible by $4$ in Wormald's conjecture is necessary, as otherwise the number of edges is not divisible by $2$. Nevertheless, we can address the case when the number of vertices is not divisible by $2$, as follows.

\begin{theorem} \label{thm:main-not-divisible-by-4}
    Let $G$ be a connected cubic graph on $n$ vertices, where $n$ is large and $n \equiv 2 \!\pmod 4$. Then there is a red-blue edge-colouring of $G$ and a linear forest $F$, such that the blue graph is isomorphic to the disjoint union of $F$ with a path of length $3$, and the red graph is isomorphic to the disjoint union of $F$ with a matching of size $2$. 
    In particular, the two colour classes are linear forests, and the red graph is isomorphic to the blue one minus one edge.
\end{theorem}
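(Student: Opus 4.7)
The plan is to follow the proof of \Cref{thm:main} with minor modifications to handle the parity obstruction. Since $n \equiv 2 \pmod{4}$, the edge count $|E(G)| = 3n/2$ is odd, so the numbers of red and blue edges in any $2$-colouring must differ in parity. We will therefore target a colouring in which blue has exactly one more edge than red, with $r(P_t) = b(P_t)$ for all $t \notin \{1, 3\}$, while $r(P_3) = b(P_3) - 1$ and $r(P_1) = b(P_1) + 2$; this yields the structure claimed in the theorem.

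The first step is to apply \Cref{lem:partialColMainLemma} and \Cref{lem:MainCol} in exactly the same way as in the proof of \Cref{thm:main} to obtain a base colouring $\chi_1$ with at least $\tfrac{1}{2}n^{0.999}$ surviving $\ell$-gadgets of each colour for every $\ell \in [3, 10^{10}\sqrt{\log n}]$. Then run \Cref{claim:chi-2} verbatim, producing $\chi_2$ with $r(P_t) = b(P_t)$ for all $t \geq 4000\sqrt{\log n}$.

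Next, perform the edge-balancing step (analogous to \Cref{claim:chi-3}) but aim for $|E_b| - |E_r| = 1$ rather than equality; this simply means swapping the colour of one more (or one fewer) edge than in the original argument, which is possible because the quantity $|E_r| - |E_b|$ is odd. Then run \Cref{claim:chi-4} iteratively for $\ell = \lm, \lm - 1, \ldots, 4$ to obtain a colouring $\psi_4$ with $r(P_t) = b(P_t)$ for all $t \geq 4$. Finally, for $\ell = 3$, use exactly one fewer blue $P_3$-gadget than would be needed to equalise, yielding a final colouring $\psi$ with $r(P_3) - b(P_3) = -1$. Since the supply of surviving gadgets is $\Omega(n^{0.999})$ whereas at most $O(n^{5/6})$ are required, the off-by-one adjustments cost nothing.

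To conclude, observe that every vertex lies in exactly one maximal monochromatic path of each colour, so writing $C_c$ for the number of maximal monochromatic paths of colour $c$, we have the identity $|E_c| + C_c = n$ for each $c \in \{\text{red}, \text{blue}\}$. Combined with $|E_b| - |E_r| = 1$, this yields $C_r - C_b = 1$. Together with $r(P_t) = b(P_t)$ for $t \geq 4$ and $r(P_3) - b(P_3) = -1$, the two linear equations $\sum_{t \geq 1} (r(P_t) - b(P_t)) = 1$ and $\sum_{t \geq 1} t\,(r(P_t) - b(P_t)) = -1$ force $r(P_2) = b(P_2)$ and $r(P_1) - b(P_1) = 2$. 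Defining the linear forest $F$ by $f(P_t) := r(P_t) = b(P_t)$ for $t \notin \{1, 3\}$, $f(P_3) := r(P_3)$, and $f(P_1) := b(P_1)$, we get that the blue graph is isomorphic to $F \cup P_3$ and the red graph is isomorphic to $F \cup 2K_2$, as required. The main bookkeeping to check is that the modified edge-balancing and gadget-balancing steps still preserve the absence of monochromatic cycles and the $O(\log n)$ length bound on monochromatic components; these properties carry over directly from the proofs of \Cref{claim:chi-3,claim:chi-4} since our only changes are to the final gadget counts.
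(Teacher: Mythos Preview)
Your proposal is correct and follows essentially the same route as the paper's own sketch: modify the edge-balancing step so that blue has exactly one more edge than red, run the gadget-balancing down to $\ell=3$ stopping one unit short of equality there, and then invoke a counting argument to force $r(P_2)=b(P_2)$ and $r(P_1)=b(P_1)+2$. Your vertex-counting identity $|E_c|+C_c=n$ is a clean way to phrase the final step; the only minor imprecision is that at $\ell=3$ you might need one \emph{more} red gadget rather than one \emph{fewer} blue gadget depending on the sign of $r(P_3)-b(P_3)$, but you clearly have this in mind when you speak of ``off-by-one adjustments''.
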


\begin{proof}[Proof sketch]
    This can be proved by tweaking our proof of the main result, \Cref{thm:main} very slightly, as follows. In \Cref{claim:chi-3}, instead of requiring the number of red and blue edges to be same, make sure that the number of blue edges exceeds the number of red edges by exactly 1 (this can be done analogously). Next, follow the proof \Cref{claim:chi-4} to make sure that the number of red and blue components which are paths of length $t$, with $t \ge 4$, is the same, and that the number of blue components which are paths of length $3$ exceeds the number of such red components by exactly $1$. A counting argument like the one that appears after the proof of \Cref{claim:chi-4} shows that the number of red and blue components which are paths of length $2$ is the same. It thus follows that there are two more red than blue components which are edges, proving \Cref{thm:main-not-divisible-by-4}.
\end{proof}

As our paper is pretty long, we would like to point out that weaker versions of our main result can be proved with considerably less effort. Indeed, first, \Cref{sec:approx} on its own proves an approximate version of our main result (which applies for any cubic graph). 
Additionally, proving \Cref{WormaldConj} for (large connected) cubic graphs with high girth is also a mush easier task. Indeed, if the girth is at least $5$, then there are no geodesics with two vertices having a common neighbour outside the geodesic, making \Cref{sec:GeodesicWithCommon} irrelevant, and if the girth is at least $6$, then Cases \ref{case:alg-comb-b} and \ref{case:alg-comb-c} in \Cref{sec:GeodesicNoCommonNghbs} also become irrelevant; either way, imposing a high girth condition would shorten our paper by about 15 pages.

\section{Open Problems}

We now mention some directions for future research.
As one such direction, it would be interesting to see if our result could be combined with Thomassen's result (that every cubic graph can be $2$-edge-coloured so that monochromatic components are paths of length at most $5$). That is, is it true that every cubic graph can be $2$-edge-coloured such that the two colour classes are isomorphic linear forests with bounded component sizes?

\begin{problem}\label{prob:WormaldShortPaths}
    Is there a constant $c$ such that every cubic graph, whose number of vertices is divisible by $4$, can be 2-edge-coloured such that the two colour classes are isomorphic, and every monochromatic component is a path of length at most $c$?
\end{problem}

We note that the proof of Thomassen (as well as those of Jackson and Wormald \cite{jackson1996linear} and Aldred and Wormald \cite{aldred1998more} with larger constants) is very technical and requires a lot of case analysis. However, we can prove a version of Thomassen's result, where we bound components' length by a constant larger than $5$, in a much simpler way, following the argument in \Cref{claim:chi-2} (this is also similar to an argument from \cite{alon1988linear}). 
Therefore, it is possible that some of the ideas from this paper could be used in order to answer \Cref{prob:WormaldShortPaths}. 

Let us briefly sketch a simpler proof of a weakening of Thomassen's result  here. Given a cubic graph $G$, pick a red-blue colouring of $G$ whose colour classes are linear forests (see \cite{akiyama1981short} for a very short proof of the existence of such a colouring).
For an appropriate constant $c$, let $\cP_r$ be the set of red components that are paths of length at least $c$. Now decompose each path in $\cP_r$ into segments of length in $[c/2, c]$, letting $\cI_r$ be the family of such segments.
For each segment $I \in \cI_r$, let $\cand(I)$ be the set of edges in $I$'s interior that do not close a blue cycle if their colour is swapped; recall that $|\cand(I)| \ge (|I|-3)/2 \ge (c/2-2)/2 = c/4 - 1$.
Now form a graph $H_r$ by joining an edge $e \in \cand(I)$ with an edge $f \in \cand(J)$, for distinct $I, J \in \cI_r$, if they touch the same blue component. Then $H_r$ is a $|\cI_r|$-partite graph, with parts of size at least $c/4 - 1$, whose maximum degree is at most $4$. An application of the Lov\'asz local lemma (or the result in \cite{loh2007independent}) proves the existence of an independent $J_r$ transversal (namely, an independent set consisting of one vertex from each part) in $H_r$, provided that $c$ is large enough. Defining $\cP_b$ etc.\ analogously, and flipping the colours of edges in $J_r \cup J_b$, yields a red-blue colouring where the monochromatic components are paths of length at most $2c+1$.

Another interesting problem in this direction relates to the tightness of Thomassen's result. While it is known that the result is tight, namely that there is a cubic graph that cannot be $2$-edge-coloured with monochromatic components being paths of length at most $4$, the only known minimal examples are graphs on six vertices. It is thus plausible that for larger connected cubic graphs a bound of $4$ on the length of monochromatic components is enough.

\begin{problem}\label{prob:ThomassenWith4}
    Can every large enough connected cubic graph be $2$-edge-coloured so that all monochromatic components are paths of length at most $4$?
\end{problem}

One can also consider vertex versions of the above problems. This has been considered before. In 1990 Ando conjectured that the vertices of any cubic graph can be two-coloured such that the two colour classes induce isomorphic subgraphs. Ando's conjecture was first mentioned in the paper of Abreu, Goedgebeur, Labbate and Mazzuoccolo \cite{abreu2019colourings} where they made an even stronger conjecture, adding the requirement that the two colour classes induce linear forests. Recently, Das, Pokrovskiy and Sudakov \cite{das2021isomorphic} proved Ando's conjecture for large connected cubic graphs. In fact, their proof also verifies a stronger conjecture by Abreu, Goedgebeur, Labbate and Mazzuoccolo \cite{abreu2019colourings} who conjectured a vertex version of Wormald's conjecture, asserting that every cubic graph can be $2$-vertex-coloured so that the two colour classes induce isomorphic linear forest. This was proved for large connected cubic graph with large girth \cite{das2021isomorphic}, but it is open in general. Note that in \cite{das2021isomorphic} it is also proved that for large connected cubic graphs, with unrestricted girth, there is a $2$-vertex-colouring where the two colours induce isomorphic graphs, whose components are paths and cycles.

\begin{conjecture}[Conjecture 2.20 in \cite{abreu2019colourings}] \label{StrongAndo}
    Every cubic graph admits a 2-colouring of its vertex set such that the two colour classes induce isomorphic linear forests.
\end{conjecture}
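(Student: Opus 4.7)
The plan is to adapt the three-stage framework developed in this paper for Wormald's conjecture to the vertex-colouring setting. The overall strategy mirrors the edge-colouring proof: first establish an approximate 2-vertex-colouring whose colour classes induce almost isomorphic linear forests, then define and construct vertex gadgets, and finally combine the two through an extendable pre-colouring. A standing input is that every cubic graph has an even number of vertices, so the two colour classes can, in principle, have equal sizes.

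First I would prove an approximate version analogous to \Cref{lem:MainCol}. Starting from the result of Das, Pokrovskiy and Sudakov \cite{das2021isomorphic}, which produces for every large connected cubic graph a 2-vertex-colouring whose colour classes induce isomorphic subgraphs of maximum degree $2$, I would randomize as in \Cref{sec:approx}: each monochromatic path (or cycle) component is kept or reversed via independent coin flips in a Kempe-like fashion, cycles are broken by local vertex-swaps whose effect is confined to constant-size neighbourhoods, and long paths are shortened by carefully chosen single-vertex recolourings near their midpoints. All decisions are designed symmetrically between the two colours, so that an application of McDiarmid's inequality yields $|r(P_t) - b(P_t)| \le n^{2/3}$ for every $t$, with every monochromatic component a path of length $O(\log n)$. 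Disconnected and small cases can be absorbed by pairing components of equal type, as in the proof sketch of \Cref{thm:main-strong}.

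Second, I would define and construct vertex gadgets. A \emph{blue vertex $\ell$-gadget} is a subgraph $H$ with two 2-vertex-colourings $\chi$ and $\chi'$ such that, for every 2-vertex-colouring of a cubic host $G$ that extends $\chi$ and whose colour classes induce linear forests, replacing $\chi$ by $\chi'$ again produces induced linear forests, decreases $b(P_\ell)$ by exactly one, and preserves $r(P_t)$ and $b(P_t)$ for every $t > \ell$. Natural candidates are obtained by recolouring one or two well-placed vertices near a long induced monochromatic path: the recoloured vertex is removed from its own colour's linear forest and inserted into the opposite one, and the chosen local structure confines the change to components of length at most $\ell$. Following \Cref{sec:GeodesicNoCommonNghbs,sec:GeodesicWithCommon}, one would then locate many such gadgets in small balls around well-separated long geodesics produced by (a version of) \Cref{cl:geodesics}, and combine them via an adaptation of the partial-colouring machinery of \Cref{sec:partial-colouring} into an extendable pre-colouring. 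The final balancing step would follow \Cref{sec:finish}, applying gadgets iteratively from the longest imbalanced length downward, with a separate Lov\'asz local lemma argument (as in \Cref{claim:chi-2}) handling lengths above the gadget range.

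The main obstacle is the rigidity of the vertex-colouring constraint. Flipping the colour of a single vertex in a cubic graph simultaneously alters the induced-degree of up to three of its neighbours, so any ``local'' recolouring has an inherently three-fold global effect on the induced components, compared with the essentially two-fold effect of swapping an edge's colour. This substantially complicates the construction of gadgets that leave all other path counts invariant, since each candidate must withstand a three-fold propagation of the change. I expect the hardest step to be the vertex analogue of \Cref{sec:GeodesicWithCommon}, where common neighbours of vertices of the geodesic produce short cycles near the intended gadget: a single recoloured vertex there can interact with several such cycles at once, and maintaining the vertex analogue of property \ref{itm:extend-cycle-technical} is likely to require a substantially more elaborate iterative repair procedure than the one developed in \Cref{sec:partial-colouring}, probably with a longer case analysis built around the structure of pairs of intersecting short cycles in the neighbourhood of a geodesic.
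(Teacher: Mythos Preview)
The statement you are attempting to prove is not proved in the paper. It is \Cref{StrongAndo}, which the paper lists in the concluding remarks as an \emph{open conjecture}; the paper explicitly says that it ``was recently proved for large connected cubic graphs with large girth \cite{das2021isomorphic}, but it is open in general.'' There is therefore no proof in the paper for your proposal to be compared against.

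As a research plan your outline is reasonable in spirit, and you correctly identify the main obstruction: recolouring a single vertex changes the induced degree at all three of its neighbours, so the local switching operations that drive the edge-colouring proof (cycle-breaking, path-shortening, gadget swaps) do not transplant directly. Two concrete gaps deserve emphasis. First, the starting point you invoke from \cite{das2021isomorphic} yields induced subgraphs of maximum degree $2$ that may contain cycles, and the cycle-breaking step in the vertex setting is genuinely different: swapping a vertex on an induced cycle can create a degree-$3$ vertex in the opposite colour class, so the analogue of the $\chi_2$/$\chi_3$ stages in \Cref{sec:approx} requires a new mechanism, not just a translation. Second, your vertex-gadget definition demands that recolouring a vertex leave all $r(P_t)$ unchanged; but moving a vertex from blue to red \emph{necessarily} alters the red induced graph, so a gadget cannot act on only one colour the way the edge gadgets of \Cref{sec:gadgets} do. Any workable vertex gadget must coordinate the effects on both colours simultaneously, and it is not clear that such objects exist with the required control over path lengths. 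Until those two issues are resolved, the proposal remains a heuristic outline rather than a proof.
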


Ban and Linial \cite{ban2016internal} stated an even stronger conjecture (but under some further restrictions on $G$).

\begin{conjecture}[Conjecture 1 in \cite{ban2016internal}] \label{conj:BanLinial}
Every bridgeless cubic graph, with the exception of the Petersen graph, can
be 2-vertex-coloured so that the two colour classes induce isomorphic graphs that consist of a matching and isolated vertices.
\end{conjecture}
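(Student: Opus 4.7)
The plan is to adapt the three-stage framework developed in this paper (approximate partition via a carefully engineered random process, gadget-based correction, and structural analysis around long geodesics) to the \emph{vertex} setting required by \Cref{conj:BanLinial}. Fix a bridgeless cubic graph $G$ distinct from the Petersen graph. A 2-vertex-colouring with the desired property is exactly a bipartition $(V_1,V_2)$ with $|V_1|=|V_2|$, every vertex having at most one neighbour in its own class, and $e(G[V_1])=e(G[V_2])$. The first step is to establish an approximate analogue: produce a bipartition where every vertex has at most one same-coloured neighbour, the two parts differ in size by $O(n^{2/3})$, and the two induced matching sizes differ by $O(n^{2/3})$. I would start from a uniformly random 2-colouring; the set of ``bad'' vertices (those with $\ge 2$ same-coloured neighbours) has expected size $n/2$, so a deterministic reduction is needed. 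Following the spirit of Thomassen's theorem in the edge setting, one can first produce a colouring in which every monochromatic component has constant size (this is the content of the Ban--Linial/DeVos local condition, known to hold up to the Petersen exception by earlier work on internal partitions), then re-randomise each small component independently, uniformly among its ``good'' local recolourings. McDiarmid's inequality (\Cref{thm:mcdiarmid}) applied to the sum $||V_1|-|V_2||$ and to $|e(G[V_1])-e(G[V_2])|$ should give the claimed $n^{2/3}$ concentration, using that swapping the colour of a single component changes each quantity by $O(1)$.

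The second step is to design \emph{vertex gadgets} playing the role of the edge gadgets of \Cref{def:gadget-abstract}. A vertex gadget is a small induced subgraph $H\subseteq G$ together with two admissible local colourings that differ only inside $H$, with the property that exchanging one for the other alters the difference $|V_1|-|V_2|$ by $\pm 2$ (to correct the size imbalance) or alters $e(G[V_1])-e(G[V_2])$ by $\pm 1$ while preserving the sizes (to correct the matching imbalance), in both cases leaving the ``at most one same-class neighbour'' property intact both inside $H$ and at the interface with the rest of the graph. Concretely one looks for short induced subpaths along which recolouring a single interior vertex is legal; the simplest candidates are induced $P_4$'s or $P_5$'s whose neighbourhoods in the rest of $G$ have appropriate coloured degrees. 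The proof then runs the correction argument of \Cref{claim:chi-4}: provided the approximate step preserves $n^{1-o(1)}$ disjoint gadgets of each type with positive probability, the $O(n^{2/3})$ imbalances can be killed one gadget at a time.

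The third and hardest step is to guarantee the existence of many well-separated gadgets. As in \Cref{sec:GeodesicWithCommon}, the natural strategy is to find $n^{1-o(1)}$ pairwise far geodesics of length $\Theta(\sqrt{\log n})$ (\Cref{cl:geodesics} goes through unchanged) and colour a bounded-radius neighbourhood of each so as to produce a gadget and to satisfy the interface conditions. The main obstacle — and the place where the Petersen exception must surface — is the local case analysis analogous to \Cref{lem:GoodGadgetsExist}: one must show that in a bounded neighbourhood of any sufficiently long geodesic in a bridgeless cubic graph, there is room to choose a correcting vertex gadget. Short cycles and pairs of geodesic vertices with common outside neighbours again form the obstructions, and bridgelessness plays the role that connectivity plays in the edge version, providing the extra edge needed to reroute a swap. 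I expect the Petersen graph to be the unique obstruction precisely because it is the unique bridgeless cubic graph in which no such local modification can be made globally consistent; the proof should thus include a finite case check verifying that any bridgeless cubic graph of girth $\le 5$ that admits no gadget of either type is, in fact, Petersen. This local-to-global step, together with ensuring that gadgets survive the random approximation step (the analogue of \ref{itm:approx-gadget-survive}), is where the bulk of the technical work — and the genuine novelty beyond transporting the edge-version machinery — will lie.
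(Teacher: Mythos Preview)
The paper does not prove \Cref{conj:BanLinial}. It is quoted in the concluding section as an \emph{open problem}: immediately after stating it, the authors write that it ``was proved for some specific cubic graphs \ldots\ but it is still widely open in general, and seems hard.'' There is therefore no proof in the paper to compare your proposal against; what you have written is a research plan for an open conjecture, not an alternative proof of a result in the paper.

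As a plan, there is a genuine circularity in your first step. You propose to start from a 2-vertex-colouring ``in which every monochromatic component has constant size'' and every vertex has at most one neighbour in its own class, citing ``earlier work on internal partitions'' as providing this ``up to the Petersen exception.'' But the existence of any 2-vertex-colouring of a bridgeless cubic graph in which each colour class induces a graph of maximum degree~1 is exactly the Ban--Linial conjecture (minus the isomorphism requirement), and it is not known. The edge-version pipeline in this paper works because Thomassen's theorem genuinely supplies, unconditionally and for every cubic graph, a starting decomposition into short monochromatic paths; there is no vertex analogue available to you. Without such a seed colouring, the subsequent randomisation, concentration via \Cref{thm:mcdiarmid}, and gadget-correction steps have nothing to act on. Your proposal also only addresses large $n$, whereas the conjecture concerns all bridgeless cubic graphs; and the ``finite case check'' you invoke to isolate the Petersen graph is precisely the unresolved core of the problem, not a routine verification.
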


This was proved for some specific cubic graphs~\cite{abreu2018note,ban2016internal} but it is still widely open in general, and seems hard.

We already stated a natural generalization of Wormald's conjecture to higher degrees, Conjecture~\ref{conj:wormaldgeneralized} which generalizes the linear arboricity conjecture.  One can weaken it by asking the following question.  Let $\la(d)$ be the minimum $k$ such that every $d$-regular graph can be decomposed into at most $k$ linear forests. 

\begin{question}
    Let $d \ge 3$ be an integer.
    Is it true that every large connected $d$-regular graph, whose number of edges is divisible by $\la(d)$, can be decomposed into $\la(d)$ isomorphic linear forests?
\end{question}

A similar and perhaps easier question about a balanced decomposition into graphs with maximum degree at most $2$. A simple consequence of Vizing's theorem is that every $d$-regular graph can be decomposed into $\ceil{(d+1)/2}$ graphs of maximum degree $2$. Here we ask if they can also be isomorphic. 

\begin{question}
    Let $d \ge 3$ be an integer.
    Is it true that every large connected $d$-regular graph, whose number of edges is divisible by $\ceil{(d+1)/2}$, can be decomposed into $\ceil{(d+1)/2}$ isomorphic graph of maximum degree $2$?
\end{question}
In this paper we showed that the answer is yes (to both questions) for cubic graphs. We believe this direction might be a potential avenue of further progress towards a solution of the linear arboricity conjecture.  Some of our methods may be applied towards attacking this conjecture, however, there are certain obstacles which would need to be resolved. One of them is to find good ``switchings'' which would allow us to break monochromatic  cycles or monochromatic ``long'' paths. In a cubic graph, it is easy to flip a colour of an edge and still maintain the property of monochromatic components being linear forests, but that is no longer the case when $d$ grows.

\bibliography{cubic}
\bibliographystyle{amsplain}
\end{document}